\documentclass[11pt]{amsart}
\usepackage{geometry}                
\geometry{letterpaper}                   
\usepackage{graphicx}
\usepackage{amssymb}
\usepackage{epstopdf}
\DeclareGraphicsRule{.tif}{png}{.png}{`convert #1 `dirname #1`/`basename #1 .tif`.png}

\newtheorem{theorem}{Theorem}[section]
\newtheorem{lemma}{Lemma}[section]
\newtheorem{corollary}{Corollary}[section]
\newtheorem{proposition}{Proposition}[section]
\newtheorem{conjecture}{Conjecture}[section]
\newtheorem{definition}{Definition}[section]

\newtheorem{remark}{Remark}[section]

\numberwithin{equation}{section}

\def\Z{\Bbb Z}

\def\R{\Bbb R}
\def\P{\Bbb P}
\def\C{\Bbb C}

\def\H{\Bbb H}
\def\E{\Bbb E}
\def\S{\Bbb S}
\def\P{\Bbb P}

\def\D{\Delta}
\def\d{\partial}

\def\e{\epsilon}
\def\a{\alpha}
\def\b{\beta}
\def\g{\gamma}
\def\om{\omega}

\title{Holography of geodesic flows, harmonizing metrics, and billiards' dynamics}
\author{Gabriel Katz}
\address{5 Bridle Path Circle, Framingham, MA 01701, U.S.A.}
\email{gabkatz@gmail.com}

\begin{document}

\maketitle

\begin{abstract} 
Let $(M, g)$ be a Riemannian manifold with boundary, where $g$ is a non-trapping metric. Let $SM$ be the space of the spherical tangent to $M$ bundle, and $v^g$ the geodesic vector field on $SM$. We study the scattering maps $C_{v^g}: \partial^+_1SM \to \partial^-_1SM$, generated by the $v^g$-flow, and the dynamics of the billiard maps $B_{v^g, \tau}: \partial^+_1SM \to \partial^+_1SM$, where $\tau$ denotes an involution, mimicking the elastic reflection from the the boundary $\partial M$. We getting a variety of holography theorems that tackle the inverse scattering problems for $C_{v^g}$ and theorems that describe the dynamics of $B_{v^g, \tau}$. Our main tools are a Lyapunov function $F: SM \to \mathbb R$ for $v^g$ and a special harmonizing Riemannian metrics $g^\bullet$ on $SM$, a metric in which $dF$ is harmonic. For such metrics $g^\bullet$, we get a family of isoperimetric inequalities of the type $vol_{g^\bullet}(SM) \leq vol_{g^\bullet |}(\d(SM))$ and formulas for the average volume of the minimal hypesufaces $\{F^{-1}(c)\}_{c \in F(SM)}$. We investigate the interplay between the harmonizing metrics $g^\bullet$ and the classical Sasaki metric $gg$ on $SM$. Assuming ergodicity of $B_{v^g, \tau}$, we also get Santal\'{o}-Chernov type formulas for the average length of free geodesic segments in $M$ and for the average variation of the Lyapunov function $F$ along the $v^g$-trajectories.  
\end{abstract}

\section{Introduction}

This paper is an extension of \cite{K5}, where we proposed  ``a more topological  approach" to some classical inverse scattering problems. Here we take a similar view of the geometry of scattering maps and of the dynamics of billiard maps. To validate our approach, we need to built some infrastructure in the land of traversing flows  on general manifolds with boundary \cite{K1}, \cite{K2}, \cite{K7}. Then    
we employ this infrastructure to study the geodesic flows.
This effort, in the spirit of ``Integral Geometry and Geometric Probability" by Santal\'{o} (\cite{S}, Chapter 19), and works by Vidal Abascal \cite{V}-\cite{V2}, is in the core of the present paper.  \smallskip

Before we describe our results, let us start with a brief review of different classes of vector fields on manifolds with boundary that occur in the paper (as a general reference, see \cite{K7}). Let $X$ be a connected compact smooth $(n+1)$-dimensional manifold with boundary. A smooth vector field $v$ on $X$ is called {\sf traversing} if it admits a {\sf Lyapunov function} $F: X \to \R$ such that $dF(v) > 0$ everywhere in $X$. 

The trajectories of a traversing vector field are homeomorphic to closed segments or to singletons. Conversely, by \cite{K1}, Corollary 4.1, if all the $v$-trajectories are homeomorphic to closed segments or to singletons, then the field admits a Lyapunov function, and thus is a traversing vector field of the gradient type. 
  
For a smooth vector field $v$ on a smooth compact $(n+1)$-dimensional manifold $X$, such that $v \neq 0$ along $\d X$, let us introduce an important {\sf Morse stratification} $\{\d_j^\pm X(v)\}_{j \in [1, \dim X]}$ of the boundary $\d X$ \cite{Mo}. The stratum $\d_jX =_{\mathsf{def}} \d_jX(v)$ has the following description (see \cite{K1}) in terms of an auxiliary function $z: \hat X \to \R$ that satisfies the three properties:
\begin{eqnarray}\label{eq2.3}
\end{eqnarray}

\begin{itemize}
\item $0$ is a regular value of $z$,   
\item $z^{-1}(0) = \d X$, and 
\item $z^{-1}((-\infty, 0]) = X$. 
\end{itemize}

Employing  $z$, the locus $\d_jX =_{\mathsf{def}} \d_jX(v)$ is defined by the equations: 
$$\big\{z =0,\; \mathcal L_vz = 0,\; \ldots, \;  \mathcal L_v^{(j-1)}z = 0\big\},$$
where $\mathcal L_v^{(k)}$ stands for the $k$-th iteration of the Lie derivative operator $\mathcal L_v$ in the direction of $v$. 
The pure stratum $\d_jX^\circ \subset \d_jX$ is defined by the additional constraint  $\mathcal L_v^{(j)}z \neq 0$. The locus $\d_jX$ is the union of two loci: {\bf (1)} $\d_j^+X$, defined by the constraint  $\mathcal L_v^{(j)}z \geq  0$, and {\bf (2)} $\d_j^-X$, defined by the constraint  $\mathcal L_v^{(j)}z \leq  0$.  

\begin{definition}\label{def.boundary_generic_v}
The vector field  $v$ is {\sf boundary generic} with respect to $\d X$ if the $j$-form
\begin{eqnarray} \label{eq.generic_simple}
dz \wedge d(\mathcal L_vz) \wedge \; \ldots \;  \wedge\; d(\mathcal L_v^{(j-1)}z)
\end{eqnarray}
represents a nonzero section of the bundle $\bigwedge^{j}T_\ast X$ along the locus  ${\d_jX}$ \;  for all $j \in [1, n+1]$.
\end{definition}
For a boundary generic $v$, all the strata $\d_j X$ are smooth $(n+1-j)$-manifolds and the two loci, $\d_j^+X$ and $\d_j^-X$, share a common boundary $\d_{j+1}X$.  If $v$ on $X$ is boundary generic, then each point $x \in \d X$ belongs to a unique minimal stratum $\d_j X \subset \d X$ with a maximal $j = j(x) \leq n+1$.  



In \cite{K2}, Definition 3.2, we introduced another class of vector fields on $X$ which we call {\sf traversally generic}. They are a subclass of the traversing and boundary generic vector fields. Loosely speaking, for a traversally generic $v$, the localized projection of $\d X$ on a transversal to the $v$-flow section $S$ is a Thom-Boardman map (\cite{Bo}) of the combinatorial type $(1, 1, \dots , 1)$. By Theorem 3.5 from \cite{K2}, the traversally generic vector fields form also an open and dense set in the space of all traversing fields. 

Any trajectory $\g$ of a boundary generic and traversing vector field $v$ generates a finite sequence $\omega = (\omega_1, \dots , \omega_q)$
of natural numbers,  the entries of the sequence correspond to $v$-ordered points of the finite set $\g \cap \d X$.  Each point $x \in \g \cap \d X$ contributes to $\omega$ a natural number $j(x)$, the \textsf{multiplicity of tangency} of $\g$ to the boundary $\d X$ at $x$. In fact, $j(x)$ is the index $j$ of the smallest stratum $\d_jX(v)$ to which $x$ belongs. The ordered list $\omega_\g$ of these multiplicities is \textsf{the combinatorial type} of $\g$. For boundary generic and traversing vector fields, the combinatorial types $\omega_\g$ of their trajectories belong to an universal ($X$-independent) poset $\mathbf \Omega^\bullet$, while for traversally generic vector fields, the combinatorial types $\omega_\g$ belong to a subposet $\mathbf \Omega^\bullet_{'\langle n]}$ (see \cite{K4} for its definition and properties). Remarkably, for the traversally generic vector fields, the combinatorial type $\omega_\g$ determines the smooth topological type of the $v$-flow in the vicinity of $\g \subset X$ (\cite{K2}). 

Let $\g_x$ denote the $v$-trajectory through $x \in X$. Any traversing vector field $v$ on $X$ produces a, so called, \textsf{causality map} $C_v$ which takes a portion $\d_1^+X(v)$ of the boundary $\d X$ to the closure $\d_1^-X(v)$ of the complementary portion (see Fig.1). Here $\d_1^\pm X(v)$ stands for the locus in $\d_1 X := \d X$, where $v$ is directed inward/outward of $X$ or is tangent  to $\d X$. By definition, $C_v(x)$ is the point $y \in \d_1^-X(v)$ that resides in $\g_x \cap \d X$ above the point $x \in \d_1^+X(v)$. When no such $y$ exists, we put $C_v(x) = x$.  We stress that, in general, $C_v$ is a \emph{discontinuous map}. 

For the reader convenience, we state Theorem 3.1 from \cite{K4}, crucial for our efforts here.
\begin{theorem}\label{th1.1}{\bf (The Holography Theorem).}
Let $X_1, X_2$ be two smooth compact connected $(n +1)$-manifolds with boundary, equipped with traversing boundary generic vector fields  $v_1, v_2$, respectively. 
\begin{itemize}
\item Then any smooth diffeomorphism  $\Phi^\d: \d X_1 \to \d X_2$, such that $$\Phi^\d \circ C_{v_1} =  C_{v_2} \circ \Phi^\d,$$ extends to a homeomorphism $\Phi: X_1 \to X_2$ which maps $v_1$-trajectories to $v_2$-trajectories so that the field-induced orientations of trajectories are preserved. The restriction of $\Phi$ to each trajectory is a smooth diffeomorphism. \smallskip

\item If each $v_2$-trajectory is either transversal to $\d X_2$ at \emph{some} point, or is simply tangent to $\d X_2$,\footnote{This is the case when the $v$-flow has no trajectories of the combinatorial types $\omega \in (4)_\succeq \bigcup\, (33)_\succeq$} then the homeomorphism $\Phi$ is a smooth diffeomorphism. In particular,  $\Phi$ is a smooth diffeomorphism when $\d X_2$ is \emph{concave} with respect to the $v_2$-flow. \hfill $\diamondsuit$
\end{itemize}
\end{theorem}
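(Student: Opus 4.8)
The plan is to recover, from the boundary data alone, first the trajectory space of $v_1$, then the subdivision of each $v_1$-trajectory into arcs cut out by its intersection with $\d X_1$, and finally the manifold $X_1$ itself as those arcs reassembled along $\d X_1$. The conjugacy $\Phi^\d$ will then transport the entire package to $X_2$, giving the desired $\Phi$; the recurring difficulty is that the causality map is discontinuous, so the reconstruction must be proved continuous (and, in the second bullet, smooth) near the trajectories tangent to $\d X$.

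\emph{Step 1 (the trajectory space from $C_{v_1}$).} Since $v_1$ is traversing and boundary generic, every trajectory $\g$ is a compact segment (or a singleton) meeting $\d X_1$ in a finite $v_1$-ordered set $\g\cap\d X_1=\{x_0\prec x_1\prec\cdots\prec x_q\}$, with $x_0\in\d_1^+X(v_1)$, $x_q\in\d_1^-X(v_1)$, and the intermediate tangency points in $\d_1^+X(v_1)\cap\d_1^-X(v_1)=\d_2X(v_1)$; hence $x_{k+1}=C_{v_1}(x_k)$ for $k<q$, and $\g\cap\d X_1$ is a single finite forward $C_{v_1}$-orbit. Thus the relation ``lying on a common $v_1$-trajectory'' on $\d X_1$ is the equivalence relation generated by $x\sim C_{v_1}(x)$, whose quotient is the trajectory space $\mathcal T(v_1)$ together with its projection $\pi_1:\d X_1\to\mathcal T(v_1)$, and the hypothesis $\Phi^\d\circ C_{v_1}=C_{v_2}\circ\Phi^\d$ forces $\Phi^\d$ to descend to a homeomorphism $\bar\Phi:\mathcal T(v_1)\to\mathcal T(v_2)$ with $\bar\Phi\circ\pi_1=\pi_2\circ\Phi^\d$. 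In particular $\Phi^\d$ carries $\g\cap\d X_1$ order-preservingly to the analogous ordered set on the corresponding $v_2$-trajectory $\g'$, and it respects the Morse stratifications of $\d X_1$ and $\d X_2$.

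\emph{Step 2 (extending over the trajectories).} Each $\g$ is cut by $\g\cap\d X_1$ into compact arcs $\g^{(k)}=[x_k,x_{k+1}]$, $0\le k<q$, whose interiors lie in the interior of $X_1$, and likewise $\g'$ into arcs ${\g'}^{(k)}$. Fixing a Lyapunov function $F_1$ for $v_1$, the restriction $F_1|_\g$ is, because $dF_1(v_1)>0$, a smooth strictly increasing global coordinate on $\g$ --- smooth even across the interior tangency points, since $\g$ is there a smooth integral curve --- and similarly for a Lyapunov function $F_2$ and $F_2|_{\g'}$. I would then set $\Phi|_\g:=(F_2|_{\g'})^{-1}\circ\phi_\g\circ(F_1|_\g)$, where $\phi_\g$ is an orientation-preserving diffeomorphism between the intervals $F_1(\g)$ and $F_2(\g')$ carrying $F_1(x_k)$ to $F_2(x'_k)$ for every $k$. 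This makes the restriction of $\Phi$ to each trajectory a smooth diffeomorphism taking the $v_1$-direction to the $v_2$-direction, as the statement demands, and produces a bijection $\Phi:X_1\to X_2$ that restricts to $\Phi^\d$ on $\d X_1$ and to $\bar\Phi$ on trajectory spaces.

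\emph{Step 3 (continuity, and smoothness in the second bullet).} The part requiring real work is the continuity of $\Phi$ --- and, in the second bullet, its smoothness --- which must be checked near a trajectory $\g_0$ tangent to $\d X_1$, where the combinatorial type of nearby trajectories, hence the arc-subdivision, jumps. Here I would invoke the standard local models: since $v_1$ is boundary generic, the germ of $(X_1,v_1)$ at $\g_0$ is prescribed by $\om_{\g_0}\in\mathbf\Omega^\bullet$ via the Morse-type normal forms of \cite{K1,K2}, and in such a model both the causality data and the coordinate $F_1$ are explicit: a tangency point of multiplicity $j$ either persists or breaks up into finitely many nearby transversal intersection points, and the arcs between consecutive boundary points shrink to zero, as one approaches $\g_0$. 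Choosing the reparametrizations $\phi_\g$ of Step 2 to depend continuously (resp. smoothly) on these local coordinates and to send collapsing arcs to collapsing arcs makes $\Phi$ continuous, hence a homeomorphism, $X_1$ being compact and $X_2$ Hausdorff. Organizing the fibrewise $\phi_\g$ into a single globally continuous --- and, under the extra hypothesis, globally $C^\infty$ --- map is the step I expect to be the main obstacle; concretely it amounts to exhibiting $X_1$ as an iterated mapping cylinder over $\mathcal T(v_1)$ assembled from $C_{v_1}$. For the second bullet one reads off from the same local models that the collapsing is $C^\infty$-tame exactly when no tangency of multiplicity $\ge 4$ and no pair of multiplicity-$3$ tangencies occurs, i.e. $\om_{\g_0}\notin(4)_{\succeq}\cup(33)_{\succeq}$; and when $\d X_2$ is concave there are no tangencies at all, $C_{v_2}$ is a diffeomorphism, $X_1$ and $X_2$ are honest (double) mapping cylinders, and $\Phi$ is manifestly a diffeomorphism.
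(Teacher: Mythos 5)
This theorem is not proved in the present paper at all: it is imported verbatim from \cite{K4}, and the closest in-paper argument (the proof of Theorem \ref{double_holography}) reconstructs $(X,\mathcal F(v))$ by embedding $X$ into $\mathcal T(v)\times\R$ via $x\mapsto(\Gamma(x),f(x))$ and a separation argument, so continuity is obtained structurally rather than by gluing trajectory-wise interpolations. Your Steps 1--2 are fine and do match the standard first moves: the boundary set $\gamma\cap\partial X_1$ of each trajectory is a finite $C_{v_1}$-orbit, $\mathcal T(v_1)$ is the quotient of $\partial X_1$ by the relation generated by $x\sim C_{v_1}(x)$, the conjugacy descends to $\bar\Phi:\mathcal T(v_1)\to\mathcal T(v_2)$, and Lyapunov functions give smooth coordinates along individual trajectories. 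The problem is Step 3: the continuity of the assembled map across the discontinuity locus of $C_{v_1}$ (the trajectories through $\partial_2^+X_1(v_1)$), and its smoothness under the second bullet's hypothesis, is precisely the content of the theorem, and you explicitly leave it as ``the main obstacle.'' Saying that one can ``choose the reparametrizations $\phi_\gamma$ to depend continuously (resp.\ smoothly) on the local coordinates'' presupposes a coherent global choice, which is exactly what has to be constructed -- e.g.\ by showing that $\Gamma^\partial$ and the embedding of $X_i$ into $\mathcal T(v_i)\times\R$ are continuous and that $\bar\Phi$ together with an interval-wise matching yields a continuous bijection of compact Hausdorff spaces; likewise, for the second bullet you only restate the footnote about $(4)_\succeq\cup(33)_\succeq$ rather than extracting smoothness from the local models. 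As written, the proposal is a plausible plan whose decisive step is acknowledged but not carried out.

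There is also a concrete error at the end: concavity of $\partial X_2$ with respect to the $v_2$-flow does \emph{not} mean there are no tangencies. It means all tangencies are of the concave type ($\partial_2X_2(v_2)=\partial_2^+X_2(v_2)$), so tangent trajectories meet $\partial X_2$ in isolated points from outside; the causality map $C_{v_2}$ is still discontinuous along the ``shadow'' of $\partial_2^+X_2(v_2)$, and $X_2$ is not an honest mapping cylinder over $\partial_1^+X_2(v_2)$. The reason the concave case falls under the second bullet is that every tangency there is simple, so each trajectory is either somewhere transversal to $\partial X_2$ or is simply tangent -- the smoothness argument is still needed, it is not ``manifest.'' Separately, your passing claim that $\Phi^\partial$ respects the full Morse stratification of the boundaries is a nontrivial assertion that would itself require proof (it is not needed for your construction, but it should not be stated as an automatic consequence of the conjugacy).
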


\noindent {\bf Remark 1.1.} The hypothesis in the second bullet of Theorem \ref{th1.1} are perhaps superfluous: we conjecture that the conjugating homeomorphism $\Phi: X_1 \to X_2$ is always a diffeomorphism. \hfill $\diamondsuit$ 

\begin{definition}\label{def1.1}  A Riemannian metric $g$ on a connected compact manifold $M$ with boundary is called \textsf{non-trapping} if $(M, g)$ has no closed geodesics and no geodesics of an infinite length (the later are homeomorphic to an open or a semi-open interval). \hfill $\diamondsuit$ 
\end{definition}

Let $M$ be a compact connected smooth Riemannian manifold with boundary. We assume that the metric $g$ on $M$ is {\sf non-trapping}. In fact, $g$ is non-trapping if and only if the geodesic flow $v^g$ on the spherical tangent bundle $SM = SM(g)$ admits a Lyapunov function $F: SM \to \R$ so that $dF(v^g) > 0$ \cite{K5}. 
The space $\mathcal G(M)$ of non-trapping Riemannian metrics on $M$ forms an open set in the space of all Riemannian metrics.  

In \cite{K5}, we introduce a class of Riemannian metrics $g$ on $M$ which we call  \textsf{geodesically boundary generic} or {\sf boundary generic} for short. 

\begin{definition}\label{def.boundary_generic_g}
A metric $g$ on $M$ is {\sf boundary generic}, if the geodesic vector field $v^g$ on $SM$ is boundary generic  with respect to $\d(SM)$ in the sense of Definition \ref{def.boundary_generic_v}.
\hfill $\diamondsuit$ 
\end{definition}

For a boundary generic metric $g$, the boundary $\d M$ is ``generically curved" in $g$. In particular, if each component of $\d M$ is strictly convex or concave in $g$, then $g$ is boundary generic.  The metrics $g$ in which $\d M$ is geodesically closed represent the extreme failure to be boundary generic. 

We speculate that the space $\mathcal G^\dagger(M)$ of geodesically boundary generic non-trapping metrics is open and dense in the space of all non-trapping metrics $\mathcal G(M)$ and prove that it is indeed open (\cite{K5}). \smallskip

Here is one of the main results from \cite{K5}, which we will use on many occasions (see Fig.2).

\begin{theorem}\label{th1.2}{\bf (The topological rigidity of the geodesic flow for the inverse scattering problem).}\smallskip

Let $(M_1, g_1)$ and  $(M_2, g_2)$ be two smooth compact connected Riemannian $n$-manifolds with boundaries.  Let the metrics $g_1$, $g_2$ be geodesically boundary generic, and let $g_2$ be non-trapping.
 
Assume that the scattering maps
$$C_{v^{g_1}}: \d_1^+(SM_1)(v^{g_1}) \to \d_1^-(SM_1)(v^{g_1})\;\; \text{and} \;\; C_{v^{g_2}}: \d_1^+(SM_2)(v^{g_2}) \to \d_1^-(SM_2)(v^{g_2})$$ are conjugate by a smooth diffeomorphism $\Phi^\d: \d_1(SM_1) \to \d_1(SM_2)$.  \smallskip

Then $g_1$ is also non-trapping, and $\Phi^\d$ extends to a homeomorphism $\Phi: SM_1 \to SM_2$, which takes each $v^{g_1}$-trajectory to a $v^{g_2}$-trajectory. Moreover, $\Phi$, being restricted to any $v^{g_1}$-trajectory, is an orientation-preserving smooth diffeomorphism.
\smallskip

If  the metric $g_2$ is such that any geodesic curve in $M_2$ is either transversal to $\d M_2$ at \emph{some} point or is simply tangent to $\d M_2$, then $g_1$ must have the same property, and the conjugating homeomorphism $\Phi: SM_1 \to SM_2$ is a diffeomorphism.    \hfill $\diamondsuit$
\end{theorem}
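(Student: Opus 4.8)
The plan is to derive Theorem \ref{th1.2} from the Holography Theorem (Theorem \ref{th1.1}) applied to the compact manifolds with boundary $X_1:=SM_1$, $X_2:=SM_2$ (each of dimension $2n-1$), carrying the geodesic vector fields $v_1:=v^{g_1}$, $v_2:=v^{g_2}$, with the given boundary diffeomorphism $\Phi^\d\colon \d(SM_1)\to\d(SM_2)$ conjugating $C_{v^{g_1}}$ to $C_{v^{g_2}}$. By hypothesis both $v_1$ and $v_2$ are boundary generic in the sense of \cite{K1}, and, since $g_2$ is non-trapping, the Lyapunov-function characterization of non-trapping metrics recalled above (\cite{K5}) makes $v_2$ traversing on $SM_2$. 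Thus the only hypothesis of Theorem \ref{th1.1} not yet available is that $v_1=v^{g_1}$ be traversing --- equivalently, by the same characterization, that $g_1$ be non-trapping. So the first, and main, task is to deduce non-trapping of $g_1$ from the mere existence of the conjugacy $\Phi^\d$.

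For that I would argue by contradiction, exploiting the tameness of causality maps of traversing boundary generic fields. By \cite{K1}, \cite{K2}, \cite{K4}, the map $C_{v^{g_2}}$ is piecewise smooth with respect to the finite Morse stratification $\{\d_j(SM_2)(v^{g_2})\}_j$; its fixed-point set lies in the codimension-one tangency stratum $\d_2(SM_2)(v^{g_2})$; the scattering-length function is bounded (finiteness of geodesic segments plus compactness of $SM_2$); and only finitely many combinatorial types, all from the universal poset $\mathbf\Omega^\bullet$, are realized by its trajectories. If $g_1$ were trapping, then $SM_1$ would contain a nonempty closed $v^{g_1}$-invariant set $\mathcal Z$, namely the union of the $v^{g_1}$-trajectories failing to reach $\d(SM_1)$ in forward or in backward time. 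I claim this forces $C_{v^{g_1}}$ to fail the above tameness: geodesics of $M_1$ entering along directions that approach the trace on $\d(SM_1)$ of the stable (resp. unstable) set of $\mathcal Z$ have unbounded sojourn time and wind about $\mathcal Z$, producing oscillatory discontinuities of $C_{v^{g_1}}$ and/or a fixed-point set which is not a tame codimension-one stratum --- none of which can be carried to the corresponding data of $C_{v^{g_2}}$ by the diffeomorphism $\Phi^\d$. Hence $g_1$ is non-trapping. I expect this step to be the main obstacle; the subtlest point is handling geodesics of $M_1$ that entirely avoid $\d M_1$ (interior closed or interior infinite geodesics), where one must show that the boundary of the associated caustic / invariant region is itself swept by geodesics reaching $\d M_1$ with unbounded sojourn time.

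With $g_1$ now known to be non-trapping, $v_1$ and $v_2$ are traversing boundary generic vector fields on $SM_1$, $SM_2$ with causality maps conjugated by the diffeomorphism $\Phi^\d$, so the first bullet of Theorem \ref{th1.1} applies verbatim: $\Phi^\d$ extends to a homeomorphism $\Phi\colon SM_1\to SM_2$ that maps each $v^{g_1}$-trajectory onto a $v^{g_2}$-trajectory, preserves the field-induced orientations of trajectories, and restricts to a smooth diffeomorphism on every trajectory. This is the principal assertion of Theorem \ref{th1.2}.

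For the last statement, assume that every geodesic of $M_2$ is transversal to $\d M_2$ at some point or is simply tangent to it; this is precisely the hypothesis of the second bullet of Theorem \ref{th1.1} for $v_2=v^{g_2}$ on $X_2=SM_2$, so that bullet upgrades $\Phi$ to a \emph{smooth} diffeomorphism $SM_1\to SM_2$. Being a diffeomorphism of the pair $\big(SM_1,\d(SM_1)\big)$ onto $\big(SM_2,\d(SM_2)\big)$ that sends trajectories to trajectories, $\Phi$ preserves the orders of contact of trajectories with the boundary and their order along the trajectory, hence the combinatorial types $\om_\g$; since, by the footnote to Theorem \ref{th1.1}, the $g_2$-hypothesis means no $v^{g_2}$-trajectory has combinatorial type in $(4)_\succeq\bigcup (33)_\succeq$, the same is true for $v^{g_1}$, i.e. every geodesic of $M_1$ is transversal to $\d M_1$ at some point or is simply tangent to it. This yields the remaining conclusion.
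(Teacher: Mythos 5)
Your overall architecture is the intended one: the paper itself does not prove Theorem \ref{th1.2} here (it is imported from \cite{K5}), and the expected derivation is exactly your reduction to the Holography Theorem \ref{th1.1} with $X_i=SM_i$, $v_i=v^{g_i}$, plus the observation that a trajectory-preserving diffeomorphism of pairs $(SM_i,\partial(SM_i))$ preserves orders of contact, so the ``transversal somewhere or simply tangent'' property transfers from $g_2$ to $g_1$. Those parts of your write-up are fine. The problem is that the one step which is not a formal consequence of Theorem \ref{th1.1} --- deducing that $g_1$ is non-trapping from the mere existence of the conjugating boundary diffeomorphism $\Phi^\partial$ --- is precisely the hard content of the theorem, and your treatment of it is a heuristic, not a proof. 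You argue that trapping of $g_1$ would produce ``unbounded sojourn times and oscillatory discontinuities'' of $C_{v^{g_1}}$ that cannot be matched by the tame $C_{v^{g_2}}$, but (i) sojourn time is not part of the data: the hypothesis only gives a set-theoretic conjugacy of the causality maps by a boundary diffeomorphism, so unboundedness of lengths must first be converted into a conjugation-invariant property of the map $C_{v^{g_1}}$ itself (e.g.\ failure of the local semi-algebraic/analytic models of \cite{K2}, or of the structure of the discontinuity locus), which you do not do; and (ii) your argument presupposes that the trapped set is approached by boundary-emanating geodesics, which is exactly the point you concede you cannot handle (``geodesics of $M_1$ that entirely avoid $\partial M_1$'').

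That this gap is genuine, and not a routine verification, is illustrated by the paper's own example (Remark 7.1): the flat torus with a strictly convex disk removed is boundary generic and \emph{trapping}, yet every geodesic emanating from the boundary returns to the boundary in finite length (rational directions close up through their initial point, irrational ones are dense and hit the disk). So the trapped trajectories are entirely invisible in the naive sense: the fixed-point set of $C_{v^{g_1}}$ is just the tangency locus, exactly as for a non-trapping metric, and no individual boundary trajectory has infinite sojourn. Ruling such metrics out requires showing that trapping is nevertheless reflected in the fine topological structure of the causality map near directions that shadow the trapped set (here, unboundedness of return lengths near the rational grazing directions and the resulting wild behaviour of $C_{v^{g_1}}$), and then that this structure is incompatible with conjugacy to the causality map of a traversing, boundary generic flow, for which lengths are uniformly bounded (a Lyapunov function $F_2$ with $dF_2(v^{g_2})\ge\epsilon>0$ on the compact $SM_2$ gives the bound). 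This is the work carried out in \cite{K5}, and it is missing from your proposal; as written, the first and main claim of the theorem (``$g_1$ is also non-trapping'') is not established, and everything after it is conditional on that claim.
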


One of the goals of this paper is to study close relatives of the scattering maps $C_{v^g}$, the \textsf{billiard maps} $B_{v^g}$ and their dynamics. The map $B_{v^g}$ is obtained from $C_{v^g}$ by composing it with the reflection diffeomorphism $\tau_g: \d(SM) \to \d(SM)$ that takes any unitary vector, tangent to $M$ at a point from $\d M$, to its mirror image, the boundary $\d M$ being the mirror.\bigskip

\begin{itemize}
\item {\it Now, let us describe some of our results in the order they appear in the paper.}  
\end{itemize}

{\sf In Section 2}, we show how special closed differential $n$-forms $\Theta$ on a compact $(n+1)$-dimensional manifold $X$ generate a measure $\mu_\Theta$ on $\d X$ such that the  causality map $C_v: \d_1^+X(v) \to \d_1^-X(v)$ is a measure-preserving transformation (Theorem \ref{prop.C_v_preserves}). \smallskip

{\sf In Section 3}, we combine the causality maps $C_v$ with measure-preserving involutions $\tau$ on the boundary $\d X$ to introduce {\sf proto-billiard maps} $B_{v, \tau}:  \d_1^+X(v) \to \d_1^+X(v)$,  dynamical measure-preserving systems (Theorem \ref{proto-billiard}).  \smallskip

{\sf In Section 4}, we deal with {\sf intrinsically harmonic} Lyapunov functions $f: X \to \R$ and Lyapunov $1$-forms $\a$ on $X$, specially adjusted to the given vector field $v$  (Theorem \ref{th2.1}). They go hand in hand with so called $v$-{\sf harmonizing} metrics $g$ on $X$ (see Definition \ref{harmonizing pair}). The $v$-harmonizing pairs $(g, \a)$ or $(g, df)$ each produces a pair of mutually orthogonal {\sf minimal (taut) foliations} $\mathcal F(v)$, $\mathcal G(\a)$ of dimensions $1$ and $n$, respectively (Corollary \ref{foliations}). 

Theorem \ref{traversing_harmonizing} claims that, for a traversing boundary generic $v$, there exists a $v$-harmonizing pair $(g, df)$, such that the $n$-form $\Theta = \ast_g(df)$ defines a measure on $\d X$ with respect to which $C_v$ is a measure-preserving map. Here ``$\ast_g$" denotes the {\sf Hodge star operator}. 

For a traversing vector field $v$, the differential form $\Theta$ helps also to define, in the spirit of \cite{S}, a measure $\mu_\Theta$ on {\sf the space of trajectories} $\mathcal T(v)$ (see Definition \ref{def2.4} and \cite{K3}). For a traversing vector field $v$ and a $v$-harmonizing pair $(g, df)$, Corollary \ref{iso_for_X} establishes the inequality 
 $$vol_\Theta(\mathcal T(v))\; \leq \; \frac{1}{2} vol_{g|}(\d X).$$
  If, in addition, we normalize the Lyapunov function $f$ so that it takes values in the interval $[0,1]$, then we also get an {\sf isoperimetric inequality} $$vol_g(X)\; \leq \; vol_{g|} (\d X),$$ valid for any $v$-harmonizing $g$. 
 
Assuming that the $v$-harmonizing metric $g$ and differential $df$ are $v$-invariant,  Theorem \ref{th2.2} describes the residual structures on the boundary $\d X$ that allow for a reconstruction of $X, v$ and $g$, up to a diffeomorphism of $X$. This theorem is the first among several results that we call {\sf holographic} (see also \cite{K5}, \cite{K8}).  \smallskip

{\sf In Section 5}, we apply the results about general traversing vector flows from the previous sections to the geodesic flows $v^g$ on the space of tangent spherical bundle $SM \to M$, where $M$ is a compact manifold with boundary. The $v^g$-flow is generated by a non-trapping boundary generic metric $g$ on $M$. 

Corollary \ref{hyperbolic} provides numerous examples of non-trapping metrics on codimension zero compact submanifolds of hyperbolic and Euclidean spaces.

In Theorem \ref{foliations on SM}, for a boundary generic non-trapping metric $g$ on $M$, 
we construct a $v^g$-harmonizing and $v^g$-invariant metric $g^\bullet$ on $SM$ and a {\sf well-balanced} (see Definition \ref{well-balanced}) Lyapunov function $F: SM \to \R$, so that $1$-dimensional foliation $\mathcal F(v^g)$ and orthogonal to it $(2n-2)$-dimensional foliation $\mathcal G(F) := \{F^{-1}(c)\}_{c \in \R}$ are {\sf minimal} in $g^\bullet$. Moreover, we prove that the scattering map $C_{v^g}: \d_1^+(SM) \to \d_1^-(SM)$ preserves the measure, defined by the harmonic form $\Theta: = \ast_{g^\bullet}(dF)$.

Let $g$ be as  above.  In Theorem \ref{th3.2}, for a given $v^g$-invariant volume form $\Omega$ on $SM$ and a Lyapunov function $F$, we use the form $\Theta:= v^g\, \rfloor \, \Omega$ to construct a $v^g$-harmonizing metric $g^\bullet$ such that $\ast_{g^\bullet}(dF) = \Theta$.  Let $vol_{\Theta}(\mathcal T(v^g))$ denote the $\Theta$-induced volume of {\sf the space of geodesics} $\mathcal T(v^g)$. Then we prove the inequality $$vol_{\Theta}(\mathcal T(v^g))\; \leq \; \frac{1}{2} vol_{g^\bullet |}(\d(SM)).$$  Normalizing $F$ so that $F(SM) \subset [0, 1]$, we get the isoperimetric inequality $$vol_{g^\bullet}(SM)\; \leq \; vol_{g^\bullet |}(\d(SM)).$$  

In holography Theorem \ref{th3.3}, we reconstruct the space $SM$, the vector field $v^g$, and the $v^g$-invariant harmonizing metric $g^\bullet$, up to a diffeomorphism of $SM$, in terms of some enhanced scattering data. \smallskip

{\sf In Section 6}, we study generalized billiard maps $B_{v^g, \tau}: \d_1^+(SM) \to \d_1^+(SM)$ on billiard tables $(M, g)$, were $g$ boundary generic and non-trapping (see Theorem \ref{main_theorem} and Corollary \ref{infinite_return_for_billiard}). We employ the restrictions to $SM$ of the fundamental Liouville $1$-form $\b_g$ and the symplectic $2$-form $\om_g = d\b_g$.  In holography Theorem \ref{eq.beta}, we show how to reconstruct the form $\b_g$ from its restriction $\b_g^{\d^+}$ to the boundary $\d(SM)$, the restriction $F|_{\d(SM)}$ of the Lyapunov function $F: SM \to \R$, and the scattering map $C_{v^g}$. \smallskip

{\sf In Section 7}, we study the dynamics of {\sf ergodic} (see Definition \ref{def9.7}) billiard maps $B_{v^g, \tau}$ on the billiards $(M, g)$, were $g$ is boundary generic and non-trapping. We apply the classical Birkhoff Theorem \cite{Bi} to the measure $\mu_\Theta$ on $\d(SM)$, generated by an appropriately constructed closed and $v^g$-invariant $(2n-2)$-form $\Theta$ (see Theorem \ref{ergodic_billiard}). 

In Theorem \ref{F-variation}, we compute the {\sf spacial and time averages} of the variation $\D_F$ of the Lyapunov function $F: SM \to \R$ along the $v^g$-trajectories; for the ergodic billiards, both computations produce the same result.  
Theorem \ref{balanced F-variation} is a version of these computations for a given Lyapunov function $F$ and the form $\Theta = \om_g^{n-1}$, the $(n-1)^{st}$ exterior power of the symplectic $2$-form $\om_g$ on $SM$. It employs a $v^g$-harmonizing metric $g^\bullet$ on the space $SM$. In contrast, Theorem \ref{ell(gg)} is a result of a similar calculation of the {\sf average length} of the $v^g$-trajectories in the {\sf Sasaki metric} $gg$ on $SM$ (\cite{Sa}). It utilizes the same form $\Theta = \om_g^{n-1}$. In this case, the results of the computation can be expressed directly in terms of the volumes $vol_g(M)$ and $vol_{g|_{\d M}}(\d M)$ (see formulas (\ref{ell_gg}) and (\ref{B})). For any non-trapping $g$, this leads to the following inequality (Corollary \ref{diameter}): $$vol_g(M)\; \leq \; c(n) \cdot \mathsf{gd}(M, g) \cdot vol_{g|}(\d M),$$  where $c(n) > 0$ is a well-known universal constant, and $\mathsf{gd}(M, g)$ denotes the maximal length of {\sf free} geodesic arcs in $M$ (see Fig.3). The later inequality resembles one classical inequality from \cite{Cr}, where $\mathsf{gd}(M, g)$ is replaced by the diameter of $M$. 

In Theorem \ref{A(g_bullet)}, we derive formulas for computing the volume $A_{g^\bullet}(c)$ of a taut slice $F^{-1}(c)$ ($c \in \R$) in the $v^g$-harmonizing metric $g^\bullet$, as well as the average value of $A_{g^\bullet}(c)$. 

 
\smallskip
{\it Acknowledgments:} I am grateful  to Christofer Croke, Serge Tabachnikov, and Gunther Uhlmann for very enlightening conversations.


\section{Causality maps of traversing flows as measure-preserving transformations}






Let $X$ be a connected compact smooth $(n+1)$-manifold with boundary, and $v$ a smooth traversing and boundary generic vector field on $X$. 
\smallskip
\begin{figure}[ht]\label{fig9.3}
\centerline{\includegraphics[height=1.7in,width=2.4in]{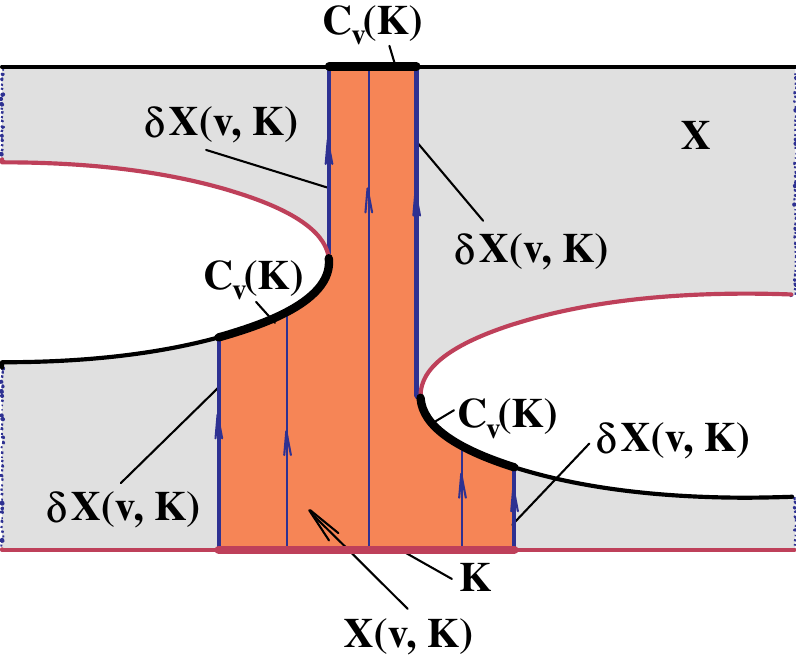}}
\bigskip
\caption{\small{The set $X(v, K)$ and its boundary for a codimension zero submanifold  $K \subset \d_1^+X$. Note the image $C_v(K)$ of $K$ under the map $C_v$.}} 
\end{figure}

As the lemma below testifies, the causality maps $$C_v: \d_1^+X(v) \to \d_1^-X(v),$$ although discontinuous, have  some ``positive features": they preserve certain $n$-dimensional measures on the $n$-manifold $\d X$, the measures that are amenable to $v$.  

\begin{lemma}\label{lem2.1} Let $X$ be a compact smooth oriented $(n+1)$-manifold with boundary, carrying a boundary generic traversing vector field $v$. 
We denote by $\Omega^\d$ a positive volume $n$-form on $\d X$, consistent with its orientation. Let $\Theta \in \bigwedge^{n}(T^\ast X)$ be a differential $n$-form on $X$, subject to the constraints:
\begin{enumerate}
\item $\Theta(v \wedge w)= 0$ for any  polyvector $w \in \bigwedge^{n-1}(TX)$,
\item $d \Theta = 0$,
\item the function $(\Theta \big/ \Omega^\d): \d X \to \R$ is nonnegative on $\d_1^+X(v)$ and nonpositive on $\d_1^-X(v)$.
\end{enumerate}
We denote by $\mu^\d$ the Lebesgue measure on $\d X$, induced by some Riemannian metric on $\d X$. Then the following properties are valid:
\begin{itemize}
\item  the Lie derivative $\mathcal L_v(\Theta)  = 0$,
\item restricting the $n$-form $\Theta$ to the boundary $\d X$, for any $\mu^\d$-measurable set $K \subset \d_1^+X(v)$, we get
\begin{eqnarray}\label{eq2.1}
\int_K \Theta  = \Big|\int_{C_v(K)} \Theta \Big|.
\end{eqnarray}
\end{itemize}
\end{lemma}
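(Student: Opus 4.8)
\medskip
\noindent\textbf{Proof proposal.} The plan is to establish $\mathcal L_v(\Theta)=0$ directly from Cartan's formula, and then to prove (\ref{eq2.1}) by integrating $d\Theta$ over the region of $X$ swept by the $v$-flow from $K$ up to $C_v(K)$ and invoking Stokes; the passage from a ``nice'' $K$ to an arbitrary $\mu^\d$-measurable $K$ is a measure-theoretic postscript. For the first bullet: constraint (1) says precisely that the interior product $\iota_v\Theta$ vanishes identically (evaluating on an arbitrary $(n-1)$-polyvector $w$ gives $(\iota_v\Theta)(w)=\Theta(v\wedge w)=0$), so together with $d\Theta=0$ from (2), Cartan's magic formula yields $\mathcal L_v(\Theta)=\iota_v(d\Theta)+d(\iota_v\Theta)=0$.

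For (\ref{eq2.1}) I would first treat the case in which $K$ is a compact codimension-zero submanifold of $\d_1^+X(v)$, transverse to the Morse strata $\{\d_jX(v)\}$, disjoint from the tangency locus $\d_2X(v)$, and small enough that $C_v|_K$ is continuous with $C_v(K)$ also disjoint from $\d_2X(v)$; since $\dim \d_2X(v)=n-1$, all the excluded configurations are $\mu^\d$-null, and such $K$ generate the Borel $\sigma$-algebra of $\d_1^+X(v)$ modulo null sets. Put $X(v,K):=\bigcup_{x\in K}\g_x\big([x,\,C_v(x)]\big)$, the union of the $v$-trajectory arcs running from each $x\in K$ to its causal image, as in the figure. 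By the structure theory of traversing boundary generic flows (\cite{K1}, \cite{K2}), $X(v,K)$ is a compact $(n+1)$-manifold with corners whose boundary splits, up to a set of $\mu^\d$-measure zero, as the disjoint union of the ``floor'' $K$, the ``ceiling'' $C_v(K)$, and a ``lateral'' part $\d^{\mathrm{lat}}$ assembled from entire trajectory arcs (those lying over $\d K$) and from portions of $\d X$ along which $v$ is tangent to $\d X$ (neighborhoods in $\d X$ of the tangency points met by arcs over the interior of $K$). Along $\d^{\mathrm{lat}}$ the field $v$ is everywhere tangent, so choosing at each point a basis $v,e_2,\dots,e_n$ of the tangent space of this hypersurface, constraint (1) gives $\Theta(v\wedge e_2\wedge\dots\wedge e_n)=0$; hence $\Theta$ restricts to zero on $\d^{\mathrm{lat}}$ and $\int_{\d^{\mathrm{lat}}}\Theta=0$.

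Now orient $X(v,K)$ by the orientation of $X$, equip $\d X(v,K)$ with the induced boundary orientation, and apply Stokes together with $d\Theta=0$: $0=\int_{X(v,K)}d\Theta=\int_K\Theta+\int_{C_v(K)}\Theta+\int_{\d^{\mathrm{lat}}}\Theta=\int_K\Theta+\int_{C_v(K)}\Theta$. It remains to identify this boundary orientation with the one underlying $\Omega^\d$ on the floor and on the ceiling. Along $K\subset\d_1^+X(v)$ the field $v$ points into $X$, so near $K$ the region $X(v,K)$ fills the interior side of $X$ and the outward normal of $X(v,K)$ coincides with that of $X$; along $C_v(K)\subset\d_1^-X(v)$ the field $v$ points out of $X$, and since $X(v,K)$ stops at the ceiling it again fills the interior side there, so once more the outward normals agree. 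Thus both integrals in the Stokes identity may be read with respect to $\Omega^\d$; by (3) the first is $\ge 0$ and the second is $\le 0$, so the identity becomes $\int_K\Theta=-\int_{C_v(K)}\Theta=\big|\int_{C_v(K)}\Theta\big|$, which is (\ref{eq2.1}) for nice $K$. For an arbitrary $\mu^\d$-measurable $K$ one notes that $C_v$ is Borel and injective off the null set $\d_2X(v)$, so $C_v(K)$ is universally measurable; the class of sets satisfying (\ref{eq2.1}) is a monotone class containing the nice submanifolds and all null sets, hence is the whole Borel $\sigma$-algebra, and (\ref{eq2.1}) extends to all $\mu^\d$-measurable $K$ by the regularity of the bounded, finite measure $|\Theta|$ on $\d X$. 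I expect the one genuine obstacle to be the geometric assertion in the middle paragraph — that $X(v,K)$ is a manifold with corners with boundary decomposing as floor $\cup$ ceiling $\cup$ lateral, with $v$ tangent to the lateral part — which relies on the boundary-generic local normal forms of \cite{K1,K2}; granting that, the remainder is Cartan's identity, Stokes, and orientation bookkeeping.
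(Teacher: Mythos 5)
Your proposal is correct and follows essentially the same route as the paper: Cartan's formula $\mathcal L_v\Theta = d(v\rfloor\Theta) + v\rfloor d\Theta$ for the first bullet, and for (\ref{eq2.1}) an application of Stokes' theorem to the flow-swept region $X(v,K)$, using $d\Theta=0$ and the $v$-horizontality of $\Theta$ to kill the lateral (trajectory-built) part of the boundary, followed by a measure-theoretic extension to arbitrary measurable $K$. The only cosmetic differences are that the paper reduces directly to compact piecewise-differentiable submanifolds $N\subset\d_1^+X(v)$ and cites the local models of \cite{K2} (Lemmas 3.1 and 3.4) for the structure of $X(v,N)$ and for the null-set/measurability transfer, whereas you first restrict to ``nice'' $K$ away from the tangency and discontinuity loci and then approximate; your explicit orientation bookkeeping fills in a sign discussion the paper leaves implicit.
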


\begin{proof} Using the identity $\mathcal L_v(\Theta) =   d(v \rfloor \Theta) + v \rfloor d\Theta$ for the Lie derivative and properties $(1)$ and $(2)$ of $\Theta$ from the lemma hypotheses, we conclude that $\mathcal L_v(\Theta)  = 0$, that is, the form $\Theta$ is invariant under the $v$-flow. Thanks to property $(1)$, $\Theta$ is a ``horizontal" form.
\smallskip

Let $v$ be a boundary generic traversing vector field and $\Theta$ an $n$-form on $X$ as in the hypotheses of the lemma.  For any Lebesgue-measurable $K \subset \d X$, we \emph{define} its measure $\mu_\Theta(K)$  by the formula
\begin{eqnarray}\label{eq2.2} 
\mu_\Theta(K) =_{\mathsf{def}} \int_{K \cap \d_1^+X(v)} \Theta\; -  \int_{K \cap \d_1^-X(v)} \Theta.
\end{eqnarray}
Note that this formula makes sense since, for a boundary generic $v$, the sets $\d^\pm_1X(v)$ are smooth manifolds, and the intersection of two Lebesgue-measurable sets is again Lebesgue-measurable.\smallskip

For any set $A \subset \d_1^+X(v)$, we denote by  $X(v, A)$ the set, formed by the $v$-trajectories through the points of $A$. 

Consider the locus $\d_2^+X(v) \subset \d(\d_1^+X(v))$, the closure of points of the boundary $\d X$, where the $v$-flow is simply tangent to the boundary and the boundary is {\sf concave} with respect to the flow in the sense of \cite{K1}. For a boundary generic field $v$, let $\mathcal X$ denote the set $X(v, \d_2^+X(v))$, the union of $v$-trajectories that contain points from the locus $\d_2^+X(v)$.  Then $\mathcal X \cap \textup{int}(\d_1^+X(v))$ is the discontinuity locus of the causality pap $C_v$.

For a boundary generic $v$, Lemmas 3.1 and 3.4 from \cite{K2} provide us with local real semi-algebraic models of the domain and range of the causality map $C_v$, as well as with local real analytic models of the causality map itself away from the $(n-1)$-dimensional $\mathcal X \cap \textup{int}(\d_1^+X(v))$. The word ``local" here means ``in the vicinity of each $v$-trajectory". These local models imply, in particular, that $\mu^\d(C_v(A)) = 0$ for any set $A \subset \d_1^+X$ whose Lebesgue measure $\mu^\d(A) = 0$. 
They imply also that 
the $C_v$-image of a Lebesgue measurable set $K \subset \d_1^+X$ is Lebesgue measurable in $\d_1^-X$. 
\smallskip

In order to prove that the sum of the Lebesgue integrals $$\int_K \Theta + \int_{C_v(K)} \Theta \; = \; 0$$ for any Lebesgue measurable $K \subset \d_1^+X(v)$, it will suffice to show that $\int_N \Theta + \int_{C_v(N)} \Theta = 0$ for all $n$-dimensional compact piecewise differentiable (``$\mathsf{PD}$" for short) manifolds $N \subset \d_1^+X(v)$.


For any $\mathsf{PD}$-submanifold $N \subset \d_1^+X$, we form the set $X(v, N)$. Again, thanks to the local models of Lemmas 3.1 and 3.4 from \cite{K2}, the locus $X(v, N)$ is a piecewise differentiable manifold with boundary. 
Its oriented boundary $\d X(v, N)$ is formed by the three $\mathsf{PD}$-manifolds: $N$, $C_v(N) \subset \d_1X$, and the rest, which we denote $\delta X(v, N)$. The latter is built out of segments of $v$-trajectories (see Fig.1). 
So, by the Stokes Theorem, $$\int_{X(v, N)} d\Theta \; = \; \int_{N} \Theta + \int_{C_v(N)}\Theta + \int_{\delta X(v, N)}\Theta.$$
Since $d\Theta = 0$, we get $\int_{X(v, N)} d\Theta = 0$. Using that $\Theta$ is vertical and $\delta X(v, N)$ consists of $v$-trajectories, we get $$\int_{\delta X(v, N)}\Theta = 0,$$  which implies formula (\ref{eq2.1}) and the measure-defining formula (\ref{eq2.2}). 
%
%
%
%
%
\hfill
\end{proof}

\begin{definition}\label{def.dual} Let $v$ be a smooth non-vanishing vector field on a compact orientable $(n+1)$-dimensional manifold $X$ and $\mathcal H$ a $n$-dimensional distribution, transversal to $v$. 
 Consider a 
 differential $n$-form $\Theta$ on $X$. 
\begin{enumerate}
\item We say that $\Theta$ \textsf{integrally dual} to $v$, if: 
\begin{itemize}
\item $d\Theta = 0$ 
\item the kernel of $\Theta$ is a $1$-dimensional distribution on $X$,
\item $v \in \ker(\Theta)$,
\item $\pm\Theta |_{\d_1^\pm X(v)} \geq 0$.
\end{itemize}
\smallskip

\item We call $\Theta$ \textsf{integrally dual} to the pair $(v, \mathcal H)$, if  $\Theta$ integrally dual to $v$ and $\Theta|_{\mathcal H} > 0$ with respect to the orientation of $\mathcal H$, induced by $v$ and the orientation of $X$.\footnote{It follows from $\Theta|_{\mathcal H} > 0$ that the kernel $\ker(\Theta)$ is a $1$-dimensional distribution.}
 \hfill $\diamondsuit$ 
\end{enumerate} 
\end{definition} 

With Definition \ref{def.dual} in place, we may rephrase  Lemma \ref{lem2.1} as follows: 

\begin{proposition}\label{prop.C_v_preserves}
For a traversing  vector field $v$ on a compact smooth $(n+1)$-manifold $X$, any integrally dual to $v$ $n$-form $\Theta$ is $v$-invariant. Moreover, $\Theta$ defines a measure $\mu_\Theta$ on $\d  X$ such that the causality map $C_v$ is this measure preserving transformation. \hfill $\diamondsuit$
\end{proposition}

The rest of this section contains few elementary lemmas to be used in what follows.

\begin{lemma}\label{unique} Let $X$ be a smooth compact oriented $(n+1)$-manifold. 
\begin{itemize}
\item For a given non-vanishing vector field $v$ on $X$, an integrally dual form $\Theta$ is unique, up to the multiplication by a smooth function $h: X \to \R$ such that $dh \wedge \Theta = 0$ in $X$, and $h|_{\d X} \geq 0$. 
\smallskip

\item For a given non-vanishing vector field $v$ and a transversal to it $n$-distribution $\mathcal H$ on $X$, a form $\Theta$, integrally dual to $(v, \mathcal H)$, is unique, up to the multiplication by a smooth positive function $h: X \to \R_+$ such that $dh \wedge \Theta = 0$ in $X$. 
\end{itemize}
\end{lemma}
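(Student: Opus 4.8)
The plan is to work pointwise and then globalize. First I would fix a point $x \in X$ and analyze the linear-algebraic constraint. At $x$, the condition ``$\ker(\Theta_x)$ is $1$-dimensional'' means $\Theta_x$ is a nonzero decomposable $n$-covector: indeed, an $n$-form on an $(n+1)$-dimensional space has a kernel of dimension at least $1$, and exactly $1$ iff it is (up to scalar) the interior product of the volume form with a single nonzero vector, equivalently $\Theta_x = \iota_{w_x}\Omega_x$ for some $w_x \neq 0$ and some (any fixed) volume form $\Omega$. The requirement $v \in \ker(\Theta)$ then forces $w_x \parallel v_x$, so $\Theta_x = \lambda(x)\,\iota_{v_x}\Omega_x$ for a unique scalar $\lambda(x) \neq 0$. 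Hence any two forms $\Theta, \Theta'$ each satisfying the ``kernel'' and ``$v \in \ker$'' bullets differ by multiplication by a nowhere-zero function $h := \Theta'/\Theta$ (the ratio being well-defined since $\iota_v\Omega$ is a nowhere-vanishing section of $\bigwedge^n T^\ast X$, because $v$ is non-vanishing). Smoothness of $h$ follows because locally $\Theta, \Theta'$ are smooth sections and $\iota_v\Omega$ is a smooth nowhere-zero section, so $h$ is a ratio of smooth functions with nonvanishing denominator.

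Next I would impose the remaining bullets. Both $\Theta$ and $\Theta' = h\Theta$ are closed; expanding $0 = d(h\Theta) = dh \wedge \Theta + h\, d\Theta = dh \wedge \Theta$ gives exactly the stated constraint $dh \wedge \Theta = 0$ on $X$. For the sign condition: on $\d_1^+ X(v)$ we have $\Theta|_{\d_1^+ X(v)} \geq 0$ and $\Theta'|_{\d_1^+ X(v)} = h\,\Theta|_{\d_1^+ X(v)} \geq 0$; since $\d_1^+ X(v)$ has nonempty interior where $\Theta$ restricts to a strictly positive $n$-form (a standard fact: where $v$ points strictly inward, $\iota_v\Omega$ restricts to a volume form on $\d X$ with a definite sign, so $\Theta|_{\d X} \neq 0$ there), $h$ must be $\geq 0$ on that open set, and by continuity $h|_{\d X} \geq 0$. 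Conversely, if $h$ is smooth with $dh \wedge \Theta = 0$ and $h|_{\d X} \geq 0$, one checks $h\Theta$ satisfies all four bullets of Definition \ref{dual}(1): closedness from $d(h\Theta) = dh\wedge\Theta = 0$; the kernel of $h\Theta$ equals $\ker(\Theta)$ wherever $h \neq 0$, and I must argue $h$ is nonvanishing — here is where a subtlety lurks, see below; $v \in \ker(h\Theta)$ is immediate; and the sign condition $\pm (h\Theta)|_{\d_1^\pm X(v)} \geq 0$ follows from $h|_{\d X} \geq 0$ and the corresponding condition for $\Theta$.

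For the second bullet, the argument is identical except that $\Theta|_{\mathcal H} > 0$ strictly, so from $\Theta'|_{\mathcal H} = h\,\Theta|_{\mathcal H} > 0$ and $\Theta|_{\mathcal H} > 0$ we get $h > 0$ everywhere on $X$ (not merely $h \geq 0$ on the boundary), and conversely any smooth positive $h$ with $dh\wedge\Theta = 0$ preserves integral duality to $(v,\mathcal H)$ since strict positivity of $h\,\Theta|_{\mathcal H}$ is automatic.

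The main obstacle I anticipate is the following gap in the first bullet's ``converse'' direction: from $h|_{\d X} \geq 0$ and $dh \wedge \Theta = 0$ one must deduce that $h$ is strictly positive on $\mathrm{int}(X)$ (otherwise $\ker(h\Theta)$ degenerates where $h = 0$ and $h\Theta$ fails to be integrally dual). The equation $dh \wedge \Theta = 0$ says precisely that $dh$ annihilates the $n$-dimensional hyperplane field $\ker$-complement — more usefully, that $dh \in \mathrm{span}(\Theta$'s annihilator$)$, i.e. $h$ is locally constant along every leaf of the (integrable, since $v$ is traversing and $\Theta$ is closed) codimension-one distribution $\ker(\iota \cdot \Theta)^{\perp}$... rather, $dh(v) $ need not vanish. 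The right statement: $dh \wedge \Theta = 0$ with $\Theta = \lambda \iota_v\Omega$ forces $dh$ to vanish on the $n$-plane field $\{\iota_v\Omega = \cdot\}$'s kernel hyperplane, which contains $v$; hence $h$ is constant along the $v$-trajectories. Since $v$ is traversing, every trajectory meets $\d X$, where $h \geq 0$; but to get \emph{strict} positivity on the interior one needs $h > 0$ at the entry point, which holds on $\d_1^+ X(v)$-interior points but could fail at boundary tangency points. I would resolve this either by strengthening the hypothesis to ``$h|_{\d X} \geq 0$ and $h$ not identically zero on any trajectory's boundary points'' — which is what the local semialgebraic models of \cite{K2} guarantee for forms arising in practice — or by noting that $h$ constant on trajectories plus $h|_{\d_1^+X(v)} \geq 0$ with the generic-boundary structure forces $h > 0$ on the dense set of trajectories transversal to $\d X$, hence everywhere by continuity and constancy along trajectories. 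I would flag this as the one point requiring care and handle it via the trajectory-constancy observation.
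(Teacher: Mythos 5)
Your argument is correct and is essentially the paper's proof: your pointwise identification $\Theta_x=\lambda(x)\,\iota_{v_x}\Omega_x$ of the $n$-covectors killed by $v$ is exactly the paper's rank-one subbundle $\Lambda_v\subset\bigwedge^nT^\ast X$, after which both proofs deduce $\Theta'=h\Theta$, get $dh\wedge\Theta=0$ from $0=d(h\Theta)$, read off $h|_{\d X}\geq 0$ from the boundary sign condition, and get $h>0$ in the $(v,\mathcal H)$ case from $\Theta|_{\mathcal H}>0$. The ``gap'' you flag concerns only the converse direction (that every such $h$ yields another integrally dual form), which the lemma does not assert and the paper does not prove beyond noting that a positive $h$ preserves the sign condition, so no repair is needed; your observation that $dh\wedge\Theta=0$ is equivalent to $dh(v)=0$, i.e.\ constancy of $h$ along trajectories, is correct (after the self-correction) but extraneous here.
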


\begin{proof} Consider the $n$-dimensional bundle $\bigwedge^n T^\ast X \to X$. 
The linear constraints on a form $\theta \in \bigwedge^n T^\ast X$, imposed by the property $\big\{v \in \ker(\theta)\big\}$,  define a $1$-dimensional subbundle $\Lambda_v$ of the bundle $\bigwedge^n T^\ast X$. Let us denote by $\Gamma(\Lambda_v)$ the linear space of smooth sections-forms  $\Theta$ of the line bundle $\Lambda_v \to X$. Let $\Gamma^\star(\Lambda_v)$ denote the subspace of nowhere vanishing sections from $\Gamma(\Lambda_v)$. Consider the kernel $\mathcal K_v$ of the differential $$d: \Gamma(\Lambda_v) \longrightarrow \Gamma(\bigwedge^{n+1} T^\ast X) \approx C^\infty(X, \R).$$ 
Since $\dim(\Lambda_v) =1$, any two sections $\Theta, \Theta' \in \Gamma(\Lambda_v)$ differ by a functional multiple, i.e., $\Theta' = h \cdot \Theta$. 

When $\Theta, \Theta' \in \mathcal K_v \cap \Gamma(\Lambda_v)$, using that $d\Theta = 0$, we get $0= d(h \Theta) = dh\wedge \Theta$. So the functional coefficient $h: X \to \R$ is such that $dh \wedge \Theta = 0$ identically in $X$. The requirement $\pm h \cdot\Theta |_{\d_1^\pm X(v)} \geq 0$ implies that $h: \d X \to \R$ must be nonnegative on $\d_1X$.

When $\Theta, \Theta' \in \mathcal K_v \cap \Gamma^\star(\Lambda_v)$ are integrally dual to $(v, \mathcal H)$, the both forms are nonsingular and $h > 0$ on $X$. 
Therefore, for a positive $h$, if $\pm\Theta |_{\d_1^\pm X(v)} \geq 0$, then $\pm\Theta' |_{\d_1^\pm X(v)} \geq 0$ as well. 
\end{proof}

\noindent {\bf Remark 2.1.}
We stress that Lemma \ref{unique} does not claim the \emph{existence} of an integrally dual form $\Theta$ for a given $v$ or $(v, \mathcal H)$. It follows from Theorem \ref{th2.1} that, for any non-vanishing vector field $v$, 
there is no \emph{local} obstruction to the existence of integrally dual form $\Theta$. In fact, at least for any {\sf invariant Calabi's vector field} $v$ (as in Definition \ref{Calabi_field}, second bullet) admits such a form. On the other hand, by Lemma \ref{nildivergent_form}, the existence of an integrally dual to $v$ form $\Theta$ is equivalent to an existence of a $v$-invariant volume form $\Omega$ on $X$.
\hfill $\diamondsuit$
\smallskip



\begin{lemma}\label{cone} For a given non-vanishing vector field $v$, the space  $\mathcal D(v)$ of $n$-forms $\Theta$, integrally dual to $v$, is convex in the space of all $n$-forms, provided $\mathcal D(v) \neq \emptyset$. \smallskip

Similarly, for a given non-vanishing vector field $v$ and a $n$-distribution $\mathcal H$, transversal to $v$, the space  $\mathcal D(v, \mathcal H)$ of $n$-forms $\Theta$, integrally dual to $(v, \mathcal H)$, is convex in the space of all $n$-forms, provided $\mathcal D(v, \mathcal H) \neq \emptyset$.
\end{lemma}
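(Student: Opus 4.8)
The plan is to verify the defining conditions of integral duality directly, exploiting that each of them is either linear or a convex (half-space) constraint on the $n$-form $\Theta$. So let $\Theta_0, \Theta_1 \in \mathcal{D}(v)$, fix $t \in [0,1]$, and set $\Theta_t := (1-t)\Theta_0 + t\Theta_1$. The conditions to check are: (a) $d\Theta_t = 0$; (b) $v \in \ker(\Theta_t)$; (c) $\pm \Theta_t|_{\d_1^\pm X(v)} \geq 0$; and (d) $\ker(\Theta_t)$ is a $1$-dimensional distribution.

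First I would dispatch the easy conditions. Exterior differentiation is linear, so (a) is immediate: $d\Theta_t = (1-t)\, d\Theta_0 + t\, d\Theta_1 = 0$. For (b), the contraction map $w \mapsto w\, \rfloor\, (\cdot)$ (equivalently the pairing $\Theta(v \wedge \,\cdot\,)$) is linear in the form, and both $\Theta_0$ and $\Theta_1$ are $v$-horizontal; hence $v\, \rfloor\, \Theta_t = (1-t)(v\, \rfloor\,\Theta_0) + t(v\, \rfloor\,\Theta_1) = 0$, so $v \in \ker(\Theta_t)$. For (c), fix a positive volume $n$-form $\Omega^\d$ on each of the smooth manifolds $\d_1^\pm X(v)$; since $\Theta \mapsto (\Theta/\Omega^\d)$ is linear and pointwise real-valued, on $\d_1^+X(v)$ we have $(\Theta_t/\Omega^\d) = (1-t)(\Theta_0/\Omega^\d) + t(\Theta_1/\Omega^\d) \geq 0$ as a convex combination of nonnegative numbers (with coefficients $1-t, t \geq 0$), and symmetrically $\leq 0$ on $\d_1^-X(v)$.

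The one genuinely non-obvious point — and the place where the convexity structure is actually being used rather than mere linearity — is condition (d) for $\mathcal{D}(v, \mathcal{H})$, i.e.\ showing that a convex combination of two forms each of which is positive on $\mathcal{H}$ is again nonvanishing on $\mathcal{H}$. This is where I would route the argument through the stronger hypothesis: if $\Theta_0|_{\mathcal H} > 0$ and $\Theta_1|_{\mathcal H} > 0$ with respect to the $v$-and-$X$-induced orientation of $\mathcal H$, then at each point $\Theta_t|_{\mathcal H}$ is a positive combination of two positive numbers, hence positive; by the footnote after Definition~\ref{dual}, $\Theta_t|_{\mathcal H} > 0$ already forces $\ker(\Theta_t)$ to be $1$-dimensional. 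So $\Theta_t \in \mathcal{D}(v, \mathcal{H})$, proving convexity of $\mathcal{D}(v,\mathcal{H})$. For $\mathcal{D}(v)$ itself the rank condition is subtler: the set $\{\theta \in \bigwedge^n T_x^\ast X : \dim \ker \theta = 1\}$ is not convex in general, so I would instead observe that, having fixed the nonvanishing section framework of Lemma~\ref{unique}, any $\Theta_i \in \mathcal{D}(v)$ with $v \in \ker \Theta_i$ and $\Theta_i \neq 0$ automatically has $\ker(\Theta_i)$ exactly one-dimensional (an $n$-form on an $(n+1)$-space with kernel of dimension $\geq 2$ is zero), write $\Theta_i = h_i \Theta$ for a fixed nonvanishing $\Theta \in \Gamma^\star(\Lambda_v)$ with $h_i \geq 0$ on $\d X$, so that $\Theta_t = ((1-t)h_0 + t h_1)\Theta$ with coefficient function still nonnegative on $\d X$ — hence $\Theta_t$ is again of the form $h\Theta$ with $h|_{\d X} \geq 0$, and is nonzero wherever $h_0, h_1$ are not simultaneously zero; $\ker(\Theta_t)$ then coincides with $\ker(\Theta) = \langle v\rangle$ on that locus. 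The point I expect to need care with is exactly this last clause — whether $\Theta_t$ can acquire new zeros in the interior — and the honest statement is that $\mathcal{D}(v)$ as defined permits forms vanishing on subsets of $X$, so the kernel condition should be read as holding where $\Theta_t \neq 0$; with that reading the convexity claim goes through verbatim from the linearity of $d$, of contraction by $v$, and of the ratio map against $\Omega^\d$.
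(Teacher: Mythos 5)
Your proof is correct and follows essentially the same route as the paper's: the paper's own argument is precisely the pointwise check that closedness, $v\,\rfloor\,\Theta=0$, and the boundary sign condition $\pm\Theta|_{\d_1^\pm X(v)}\geq 0$ are preserved under convex combinations, plus positivity on $\mathcal H$ in the second case. The only difference is that you also address the requirement that $\ker(\Theta_t)$ be $1$-dimensional, which the paper's proof silently skips; your resolution for $\mathcal D(v,\mathcal H)$ (positivity on $\mathcal H$ already forces a one-dimensional kernel) is exactly right, and the caveat you flag for $\mathcal D(v)$ in fact evaporates on a connected $X$ on which $v$ is somewhere transversal to $\d X$: two members of $\mathcal D(v)$ are nowhere-zero sections of the line bundle $\Lambda_v$, hence positive functional multiples of one another once the boundary sign condition pins down the sign, so the convex combination acquires no zeros and its kernel stays $\langle v\rangle$ everywhere.
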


\begin{proof} 
Consider two integrally dual to $v$ forms $\Theta_1, \Theta_2$. 
Evidently, if $\Theta_1$ and $\Theta_2$ are closed, so is their linear combination, the form $\Theta = t\Theta_1 + (1-t)\Theta_2$, where $t \in [0,1]$. Also, if $v\in \ker(\Theta_1)$ and $v \in \ker(\Theta_2)$, then $v \in \ker(\Theta)$. The positivity condition $\pm \Theta|_{\d^\pm_1X(v)} \geq 0$ follows, since $t \geq 0$ and $1-t \geq 0$. 
If $\Theta_1$ and $\Theta_2$ are integrally dual to $(v, \mathcal H)$, then, in addition, $\Theta|_{\mathcal H} > 0$ since  $\Theta_1|_{\mathcal H} > 0$ and $\Theta_2|_{\mathcal H} > 0$.
\end{proof}


\begin{definition}\label{div}  Let $v$ be a non-vanishing vector field on a smooth oriented $(n+1)$-manifold $X$. 
We say that $v$ is \textsf{intrinsically nildivergent} if there exists a volume $(n+1)$-form $\Omega$ on $X$ such that $d(v\,\rfloor \, \Omega) = 0$. \hfill $\diamondsuit$
\end{definition}

 Let $v = \sum_{i=1}^{n+1} a_i \d_{x_i}$ in some local coordinates $(x_1, \dots , x_{n+1})$ on $X$. A volume form $\Omega = f \cdot dx_1 \wedge  \dots \wedge x_{n+1}$ produces nil-divergent form $\Theta = v\rfloor \Omega$,  if the function $f$ satisfies the equation $\langle \nabla f, v \rangle = - \textup{div}(v) f$, where $\langle\;, \;\rangle$ denotes the Euclidean scalar product, $\nabla f$ the gradient of $f$, and  $\textup{div}(v) := \sum_{i=1}^{n+1} \frac{\d a_i}{\d x_i}$.  
\smallskip

Let us describe one mechanism that produces $\Theta$, integrally dual to a given $v$. 

\begin{lemma}\label{nildivergent_form} A non-vanishing vector field $v$ on a $(n+1)$-dimensional $X$ is intrinsically nildivergent with the help of $(n+1)$-form $\Omega$ if and only if $\Omega$ is $v$-invariant. Then the form $\Theta := v\,\rfloor \, \Omega$  is integrally dual to $v$ (see Definition \ref{div}), and $\Theta$ is $v$-invariant as well.
Moreover, if $X$ is oriented, then there is a $n$-distribution $\mathcal H$ on $X$ that is transversal to $v$ and such that $\Theta |_{\mathcal H} > 0$.
\end{lemma}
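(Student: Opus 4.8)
The plan is to read every assertion off Cartan's identity $\mathcal L_v(\eta) = d(v\rfloor\eta) + v\rfloor d\eta$ (valid for any form $\eta$), together with the facts that $\Omega$ has top degree and that $v$ is nowhere zero. For the equivalence: since $\Omega\in\bigwedge^{n+1}(T^\ast X)$ on the $(n+1)$-manifold $X$, automatically $d\Omega = 0$, so $\mathcal L_v(\Omega) = d(v\rfloor\Omega)$; hence $d(v\rfloor\Omega) = 0$ if and only if $\mathcal L_v(\Omega) = 0$, i.e. if and only if $\Omega$ is $v$-invariant. Setting $\Theta := v\rfloor\Omega$, the same identity gives $\mathcal L_v(\Theta) = d(v\rfloor\Theta) + v\rfloor d\Theta = d\big(v\rfloor(v\rfloor\Omega)\big) + 0 = 0$, so $\Theta$ is $v$-invariant as well; equivalently $\mathcal L_v(v\rfloor\Omega) = v\rfloor\mathcal L_v(\Omega) = 0$, since $[\mathcal L_v, \iota_v] = \iota_{[v,v]} = 0$.

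Next I would verify the three clauses of Definition \ref{dual} for $\Theta = v\rfloor\Omega$. Closedness $d\Theta = 0$ is exactly the nildivergence hypothesis. For the kernel: $v\rfloor\Theta = v\rfloor(v\rfloor\Omega) = 0$, so $\R v\subseteq\ker\Theta$; conversely, if $w\in T_pX$ is not a multiple of $v$, complete $(v,w)$ to a basis $(v,w,e_1,\dots,e_{n-1})$ of $T_pX$, so that nondegeneracy of $\Omega$ gives $(w\rfloor\Theta)(e_1,\dots,e_{n-1}) = \Omega(v,w,e_1,\dots,e_{n-1})\neq 0$, whence $w\notin\ker\Theta$. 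Thus $\ker\Theta = \R v$ is a $1$-dimensional distribution (smooth, since $v$ vanishes nowhere) containing $v$, and in particular $\Theta$ vanishes nowhere. For the sign clause, along $\d X$ write $v = c\,\nu + t$, with $\nu$ the outward normal and $t$ tangent to $\d X$; when $\Theta = v\rfloor\Omega$ is evaluated on an $n$-tuple of vectors tangent to $\d X$, the term $t$ contributes nothing, so $\Theta|_{\d X}$ equals $c$ times the boundary volume form determined by $\Omega$. Since $c$ keeps one sign on the interior of $\d_1^+X(v)$ and the opposite sign on the interior of $\d_1^-X(v)$, the inequalities $\pm\Theta|_{\d_1^\pm X(v)}\geq 0$ hold --- after replacing $\Omega$ by $-\Omega$ if the boundary-orientation (Stokes) convention fixed earlier so requires, a replacement that changes neither the volume-form property nor the $v$-invariance of $\Omega$. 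Hence $\Theta$ is integrally dual to $v$.

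For the final assertion, take any Riemannian metric $h$ on $X$ and put $\mathcal H := (\R v)^{\perp_h} = (\ker\Theta)^{\perp_h}$, a rank-$n$ distribution transversal to $v$; any vector-bundle complement of the line field $\R v$ would serve equally well. For a local frame $(w_1,\dots,w_n)$ of $\mathcal H$, transversality makes $(v,w_1,\dots,w_n)$ a frame of $TX$, so $\Theta(w_1,\dots,w_n) = \Omega(v,w_1,\dots,w_n)\neq 0$ by nondegeneracy; orienting $\mathcal H$ so that such a frame is positive precisely when $(v,w_1,\dots,w_n)$ is positively oriented in $X$ --- this is exactly the orientation "induced by $v$ and the orientation of $X$" --- makes this value positive, so $\Theta|_{\mathcal H} > 0$. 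The step most likely to demand care is the sign/orientation bookkeeping in the third clause of Definition \ref{dual} and in the orientation of $\mathcal H$; once the paper's boundary-orientation convention is pinned down, the rest is a direct consequence of Cartan's identity and the nondegeneracy of $\Omega$, so I anticipate no genuine obstacle.
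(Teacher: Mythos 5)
Your argument is correct and follows essentially the same route as the paper's proof: Cartan's formula (with $d\Omega=0$ automatic in top degree) for the equivalence and for the $v$-invariance of $\Theta$, nondegeneracy of $\Omega$ for $\ker\Theta=\R v$, a normal/tangential splitting of $v$ along $\d X$ for the sign clause, and a metric-orthogonal complement of $v$ for $\mathcal H$. The only cosmetic difference is that the paper picks a metric $g$ with volume form $\Omega$ and uses the \emph{inward} unit normal $\nu$, so that $(v\rfloor\Omega)|_{\d X}\big/(\nu\rfloor\Omega)|_{\d X}$ is (a positive multiple of) $\cos\angle_g(v,\nu)$, which has the required signs on $\d_1^\pm X(v)$ by definition of these loci --- so no replacement of $\Omega$ by $-\Omega$ is needed --- and then $\mathcal H=v^{\perp_g}$ serves as the transversal distribution.
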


\begin{proof} Since $d\Omega =0$, we get $\mathcal L_v(\Omega) = d(v\,\rfloor \, \Omega)$. Thus $d(v\,\rfloor \, \Omega) =0$ if and only if $\Omega$ is $v$-invariant.
By Definition \ref{div}, an intrinsically nildivergent form $\Theta := v\,\rfloor \, \Omega$ is closed.

 Since  $v\,\rfloor \,\Theta := v\,\rfloor \,(v\,\rfloor \, \Omega) =0$, we get $v \in \ker(\Theta)$. Moreover, since $\Omega$ is a volume form, $\dim(\ker(\Theta)) =1$. As in Lemma \ref{lem2.1}, it follows that $\Theta$ is a $v$-invariant form. 
 
Let us pick a metric $g$ on $X$ so that $\Omega$ is its volume form. Let $\nu$ be the unit vector field, inward normal in the metric $g$ to $\d X$ in $X$. Then the quotient  $(v\rfloor \, \Omega)|_{\d X}/(\nu  \rfloor \, \Omega)|_{\d X}$  of the two $n$-forms, being restricted to $\d X$, equals to $\cos(\angle_g(v, \nu))$, a non-negative function on $\d_1^+X(v)$ and non-positive on $\d_1^-X(v)$ by the very definition of these two loci. The $g$-orthogonal to $v$ subbundle $v^\perp_g \subset T_\ast X$ is the desired distribution $\mathcal H$.
\end{proof}

\begin{lemma}\label{lem2.3} 
A diffeomorphism $\phi: X \to X$ transforms any form $\Theta$, intergrally dual to a given vector field $v$, into the form $\phi^\ast(\Theta)$, intergrally dual to $\phi^{-1}_\ast(v)$.

In particular, if $v$ is nildivergent with the help of a $(n+1)$-volume form $\Omega$, and a diffeomorphism $\phi$ is such that $\phi^\ast(\Omega) = \pm \Omega$, then the vector field $\phi_\ast(v)$ is nil-divergent. 
\end{lemma}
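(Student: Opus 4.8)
The approach is purely formal: I will check, one clause at a time, the four conditions of Definition \ref{dual}(1) for the pair $\big(\phi^\ast(\Theta),\ \phi^{-1}_\ast(v)\big)$, invoking only the naturality of the exterior derivative, of the interior product, and of the Lie derivative under a diffeomorphism, together with the elementary fact that a diffeomorphism $\phi$ of the manifold-with-boundary $X$ restricts to a diffeomorphism of $\d X$ and carries $\textup{int}(X)$ onto $\textup{int}(X)$. The field $\phi^{-1}_\ast(v)$ is non-vanishing since $\phi$ is a diffeomorphism, so the setting of Definition \ref{dual} applies to it.

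\emph{Closedness and kernel.} Since $\phi^\ast$ commutes with $d$, the hypothesis $d\Theta = 0$ yields $d(\phi^\ast\Theta) = \phi^\ast(d\Theta) = 0$. For the kernel I will use the identity $\phi^\ast(w\, \rfloor\, \eta) = (\phi^{-1}_\ast w)\, \rfloor\, (\phi^\ast\eta)$, valid for any diffeomorphism; applied to $v\, \rfloor\, \Theta = 0$ it gives $(\phi^{-1}_\ast v)\, \rfloor\, (\phi^\ast\Theta) = \phi^\ast(v\, \rfloor\, \Theta) = 0$, so $\phi^{-1}_\ast v\in\ker(\phi^\ast\Theta)$. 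Because $(d\phi_x)^\ast$ is a linear isomorphism on exterior powers, $\ker\big((\phi^\ast\Theta)_x\big) = (d\phi_x)^{-1}\big(\ker\Theta_{\phi(x)}\big)$ is again $1$-dimensional and is spanned by $(\phi^{-1}_\ast v)_x$.

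\emph{The sign clause.} Since $\phi$ and $\phi^{-1}$ preserve $\d X$ and $\textup{int}(X)$, a tangent vector points into $X$ at $x$ iff its $d\phi_x$-image points into $X$ at $\phi(x)$, and tangency to $\d X$ is diffeomorphism invariant; hence $\phi^{-1}_\ast v$ is inward (resp.\ tangent) at $x$ exactly when $v$ is inward (resp.\ tangent) at $\phi(x)$, so $\d_1^\pm X(\phi^{-1}_\ast v) = \phi^{-1}\big(\d_1^\pm X(v)\big)$. Restricting to $\d X$, the inequality $\pm\Theta|_{\d_1^\pm X(v)}\ge 0$ is carried by the diffeomorphism $\phi|_{\d X}$ to $\pm(\phi^\ast\Theta)|_{\d_1^\pm X(\phi^{-1}_\ast v)}\ge 0$. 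When $\phi$ preserves the orientation of $X$ this is immediate from the behavior of the boundary orientation; in the orientation-reversing case it holds after replacing the orientation of $X$ by its opposite, or, more invariantly, one may appeal to Lemma \ref{nildivergent_form}, which identifies integral duality to $v$ with the condition $\Theta = v\, \rfloor\, \Omega$ for some $v$-invariant volume form $\Omega$ --- a description transparently natural under $\phi^\ast$.

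\emph{The nildivergent corollary.} This follows from the first assertion with $\phi$ replaced by $\phi^{-1}$, but it is cleaner to argue directly. If $v$ is intrinsically nildivergent with the help of $\Omega$, then by Lemma \ref{nildivergent_form} the form $\Omega$ is $v$-invariant, i.e.\ $\mathcal L_v\Omega = 0$. Naturality of the Lie derivative gives $\mathcal L_{\phi_\ast v}(\phi_\ast\Omega) = \phi_\ast(\mathcal L_v\Omega) = 0$; and $\phi^\ast\Omega = \pm\Omega$ forces $\phi_\ast\Omega = (\phi^{-1})^\ast\Omega = \pm\Omega$, whence $\mathcal L_{\phi_\ast v}\Omega = 0$. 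Since $d\Omega = 0$ (a top-degree form), Cartan's formula gives $d\big((\phi_\ast v)\, \rfloor\, \Omega\big) = \mathcal L_{\phi_\ast v}\Omega = 0$, which is exactly the statement that $\phi_\ast v$ is intrinsically nildivergent with the help of $\Omega$. I do not anticipate a genuine obstruction here: every step is an instance of naturality of $d$, $\rfloor$, $\mathcal L$ and of the elementary behavior of $\d X$ and $\d_1^\pm X(v)$ under a diffeomorphism. The only point needing a word of care is the orientation dependence hidden in clause (4) of Definition \ref{dual}, handled as indicated above.
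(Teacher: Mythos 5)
Your proposal is correct and follows essentially the same route as the paper: naturality of $d$ and of the interior product gives closedness and the kernel condition for $\phi^\ast(\Theta)$, and the diffeomorphism-invariance of the strata $\d_1^\pm X(\cdot)$ transports the sign condition. Your handling of the nildivergent clause via $\mathcal L_{\phi_\ast v}\Omega = 0$ is just the Lie-derivative rephrasing of the paper's direct computation $d\big(\phi^{-1}_\ast(v)\,\rfloor\,\phi^\ast(\Omega)\big)=0$, with the mild advantage that it addresses $\phi_\ast(v)$ exactly as stated.
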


\begin{proof}  If $v\, \rfloor \, \Theta = 0$, then by naturality, $\phi^{-1}_\ast(v)\, \rfloor \,\phi^\ast(\Theta) = 0$. 
Also by naturality,  $d(\phi^\ast(\Theta)) = \phi^\ast(d\Theta) = 0$.

Any diffeomorphism $\phi$ maps $\d_1^\pm X(v)$ to $\d_1^\pm X(\phi_\ast(v))$.  On the other hand, if $\pm\Theta(w) \geq 0$ for a polyvector $w \in \bigwedge^n T_\ast(\d_1^\pm X(v))$, then $\pm\phi^\ast(\Theta)(\phi^{-1}_\ast(w)) \geq 0$. Thus, if $\pm\Theta |_{\d_1^\pm X(v)} \geq 0$, then $\pm \phi^\ast(\Theta) |_{\d_1^\pm X(\phi^{-1}_\ast(v))} \geq 0$. 
\smallskip

If $d(v \, \rfloor \, \Omega) = 0$ and $\phi^\ast(\Omega) = \pm\Omega$, then $d(\phi^{-1}_\ast(v) \, \rfloor \,\phi^\ast(\Omega)) = d(v  \, \rfloor \, \pm \Omega) = 0.$ So $\phi^{-1}_\ast(v)$ is nildivergent, provided that $v$ is and $\phi$ preserves, up to a sign,  the volume form. Hence the group of volume-preserving/reversing diffeomorphisms $\mathsf{Diff}(X, \Omega)$ of $X$ acts naturally on the space of nildivergent vector fields.
\end{proof}

\section{On proto-billiards maps and Poincar\'{e} return maps}

In order to introduce some \emph{dynamics} in our discussion of the causality maps of traversing flows, we will need to assume the validity of the following property.  
\smallskip

\noindent $\bullet$ {\bf The Involution Hypotheses.} \emph{Let $X$ be a compact connected smooth $(n+1)$-manifold $X$ with boundary. For a traversing boundary generic vector field $v$ and a differential $n$-form $\Theta$, integrally dual on $X$ to $v$ (as in Definition  \ref{def.dual}), we assume that there exists a diffeomorphism $\tau: \d_1^-X(v) \to \d_1^+X(v)$ such that 
\begin{eqnarray}\label{tau}
\tau^\ast\big(\Theta|_{\d_1^+X(v)}\big) = \Theta|_{\d_1^-X(v)}.
 \end{eqnarray}
 }
 %
\indent Occasionally, we will assume that $\tau$ is the restriction to $\d_1^+X(v)$ of a smooth involution $\hat\tau: \d X \to \d X$, whose fixed point set is the locus $\d_2X(v) := \d(\d_1^+X(v)) = \d(\d_1^-X(v))$. In the case of billiard maps on a Riemannian manifold $M$, $\tau$ is a smooth involution $\d(SM) \to \d(SM)$, induced by the ellastic reflection of tangent vectors from $TM |_{\d M}$ with respect to the boundary $\d M$.
\smallskip
 
Of course, in general, such $\tau: \d_1^-X(v) \to \d_1^+X(v)$ may be unavailable (in particular, when $\d_1^+X(v)$ and $\d_1^-X(v)$ are not diffeomorphic)! However, assuming its existence, the \textsf{proto-billiard map}
\begin{eqnarray}\label{proto_billiard}
B_{v, \tau} := \tau \circ C_v:\; \d_1^+X(v) \to \d_1^+X(v),
\end{eqnarray}
the composition of the causality map $C_v$ with the diffeomorphism $\tau$, by Proposition \ref{prop.C_v_preserves}, preserves the measure $\mu_\Theta$. As a result, for such $\tau$ and $\Theta$, it is possible to talk about the \emph{dynamics} of $\mu_\Theta$-preserving iterations $\{(B_{v,\, \tau})^{\circ k}\}_{k\in \Z_+}$ of $B_{v, \tau}$.\smallskip

In what follows, we say that a property is valid \textsf{almost everywhere}, if it may be violated only for the set of points of zero measure.

\begin{theorem}\label{proto-billiard} Let $X$ be a compact smooth $(n+1)$-manifold, equipped with a traversing and boundary generic vector field $v$ and an integrally dual to it $n$-form $\Theta$. 
Let $\tau: \d_1^-X(v) \to \d_1^+ X(v)$ be a diffeomorphism which satisfies the Involution Hypotheses (\ref{tau}). \smallskip

Then the proto-billiard map $B_{v, \tau}: \d_1^+X(v) \to \d_1^+X(v)$ from (\ref{proto_billiard}) has the  \textsf{infinite return property}:  any point $x \in \d_1^+X(v)$ has an open neighborhood $U \subset \d_1^+X(v)$ such that, for almost every $x' \in U$, the sets $\{(B_{v, \tau})^{\circ k}(x')\}_k$ intersect $U$ for infinitely many $k$'s. \smallskip

Let $N$ be an open neighborhood of $\d_1X$ in $X$. In particular, the return property holds for an intrinsically nildivergent 
traversing and boundary generic $v$ and for $\tau$, induced by a diffeomorphism $\tilde\tau : N \to N$ such that $\tilde\tau^\ast(\Omega) = -\Omega$ and $\tilde\tau_\ast(v) = -v$, where $\Omega$ is a volume form.
\end{theorem}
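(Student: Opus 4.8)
The plan is to deduce the infinite return property from the classical Poincar\'{e} recurrence theorem applied to the measure-preserving transformation $(\d_1^+X(v), B_{v,\tau}, \mu_\Theta)$. The key point to establish first is that $\mu_\Theta$ is a \emph{finite} measure on $\d_1^+X(v)$: this follows because $\Theta$ is a smooth $n$-form restricted to the compact manifold $\d_1^+X(v)$, so $\int_{\d_1^+X(v)}|\Theta| < \infty$, and by property (3) of Definition \ref{dual} the integrand $\Theta|_{\d_1^+X(v)}$ is already nonnegative, so $\mu_\Theta(\d_1^+X(v)) = \int_{\d_1^+X(v)}\Theta$ is finite. Next, $B_{v,\tau}$ preserves $\mu_\Theta$: by Theorem \ref{C_v_preserves} the causality map $C_v$ pushes $\mu_\Theta|_{\d_1^+X(v)}$ to $\mu_\Theta|_{\d_1^-X(v)}$ (here one uses Lemma \ref{lem2.1}, equation (\ref{eq2.1}), on Lebesgue-measurable sets, together with the fact from the proof of Lemma \ref{lem2.1} that $C_v$ sends null sets to null sets and measurable sets to measurable sets), and then the hypothesis (\ref{tau}) on $\tau$ says precisely that $\tau$ carries $\mu_\Theta|_{\d_1^-X(v)}$ back to $\mu_\Theta|_{\d_1^+X(v)}$; composing, $B_{v,\tau} = \tau\circ C_v$ preserves $\mu_\Theta$.

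With these two facts in hand, I would invoke Poincar\'{e} recurrence: for any $\mu_\Theta$-measurable set $U$ with $\mu_\Theta(U) > 0$, almost every $x'\in U$ returns to $U$ infinitely often under iteration of $B_{v,\tau}$. To match the statement of the theorem, I need, given an arbitrary $x\in\d_1^+X(v)$, an open neighborhood $U$ of $x$ with $\mu_\Theta(U) > 0$. This is where a little care is required, since $\Theta$ need not be a volume form on $\d_1^+X(v)$ --- its restriction could vanish on a nonempty open set, and near $\d_2^+X(v)$ the form $\Theta$ restricted to $\d X$ degenerates. The honest resolution is that the conclusion as stated should be read for points $x$ in the support of $\mu_\Theta$: for such $x$ every open neighborhood $U$ has $\mu_\Theta(U) > 0$, and Poincar\'{e} recurrence applies directly. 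For $x$ outside the support there is nothing to prove in the sense that the relevant set of ``good'' points $x'$ is empty up to measure zero; alternatively one notes that in the cases of interest (e.g. $\Theta = v\rfloor\Omega$ with $\Omega$ a volume form, $\mathcal H$ transversal, $\Theta|_{\mathcal H} > 0$) the form $\Theta|_{\d_1^+X(v)}$ is strictly positive on the interior and $\mu_\Theta$ has full support, so the statement holds verbatim for every $x$. I would phrase the write-up to make this transparent, treating the full-support case (which covers the nildivergent application) as the primary one.

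For the final sentence of the theorem --- the nildivergent case with $\tilde\tau$ --- I would argue as follows. Take $\Theta := v\rfloor\Omega$, which by Lemma \ref{nildivergent_form} is integrally dual to $v$ and $v$-invariant, and moreover (again by Lemma \ref{nildivergent_form}) there is a transversal $n$-distribution $\mathcal H$ with $\Theta|_{\mathcal H} > 0$, so $\mu_\Theta$ has full support on $\d_1^+X(v)$. Now $\tilde\tau: N\to N$ satisfies $\tilde\tau^\ast(\Omega) = -\Omega$ and $\tilde\tau_\ast(v) = -v$; restricting to $\d_1 X\subset N$ and using naturality of the interior product, $\tilde\tau^\ast(v\rfloor\Omega) = (\tilde\tau^\ast v)\rfloor(\tilde\tau^\ast\Omega) = (-v)\rfloor(-\Omega) = v\rfloor\Omega = \Theta$, and since $\tilde\tau_\ast(v) = -v$ reverses the flow direction along the boundary, $\tilde\tau$ interchanges $\d_1^+X(v)$ and $\d_1^-X(v)$. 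Hence its restriction $\tau := \tilde\tau|_{\d_1^-X(v)}: \d_1^-X(v)\to\d_1^+X(v)$ is a diffeomorphism satisfying (\ref{tau}), so the first part of the theorem applies and yields the infinite return property, now valid for every $x\in\d_1^+X(v)$ because $\mu_\Theta$ has full support.

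The step I expect to be the genuine obstacle, as opposed to bookkeeping, is the measurability/null-set behavior of $C_v$ needed to conclude that $B_{v,\tau}$ is an honest $\mu_\Theta$-preserving transformation of the measure space $(\d_1^+X(v),\mu_\Theta)$ despite $C_v$ being discontinuous along $\mathcal X\cap\mathrm{int}(\d_1^+X(v))$; but this is already supplied by the proof of Lemma \ref{lem2.1} (the local real-analytic models from Lemmas 3.1 and 3.4 of \cite{K2}, giving $\mu^\d(C_v(A)) = 0$ when $\mu^\d(A) = 0$ and preservation of measurability), so in the write-up I would simply cite that discussion and then the classical Poincar\'{e} recurrence theorem. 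Everything else is a routine assembly.
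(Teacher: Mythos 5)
Your proposal is correct and follows essentially the same route as the paper: measure-preservation of $B_{v,\tau}$ obtained by combining Theorem \ref{C_v_preserves} with the compatibility (\ref{tau}), finiteness of $\mu_\Theta(\d_1^+X(v))$, and the classical Poincar\'e recurrence theorem (the paper cites \cite{W} for this), with the nildivergent case handled by the identical computation $\tilde\tau^\ast(v\,\rfloor\,\Omega)=(-v)\,\rfloor\,(-\Omega)=v\,\rfloor\,\Omega=\Theta$ via Lemma \ref{lem2.3}. Your additional discussion of the support of $\mu_\Theta$ is a sensible clarification the paper leaves implicit (a $\mu_\Theta$-null neighborhood makes the claim vacuous), but it does not change the argument.
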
 

\begin{proof} Combining Proposition \ref{prop.C_v_preserves} with the property 
$\tau^\ast(\Theta|_{\d_1^+X(v)}) = \Theta|_{\d_1^-X(v)}$,
we conclude that  the  proto-billiard map $B_{v, \tau}$ preserves the measure  $\mu_\Theta$ on $\d_1^+X(v)$. Since the volume $\mu_\Theta(\d_1^+X(v))$ is finite, the standard Poincar\'{e} argument (see \cite{W}) 
about  measure-preserving transformations leads to the infinite return property. 
\smallskip

In the case of nildivergent $v$, we have $\Theta := v \rfloor \Omega$, where $\Omega$ is a $v$-invariant volume form.  Assuming that a diffeomorphism $\tau : \d_1^+X(v) \to \d_1^-X(v)$ admits a lifting $\tilde\tau: N \to N$ so that $\tilde\tau^\ast(\Omega) = -\Omega$ and $\tilde\tau_\ast(v) = -v$ and employing Lemma \ref{lem2.3}, we get 
$$\tilde\tau^\ast(\Theta) = \tilde\tau^\ast(v\; \rfloor \;\Omega) = (-v)\; \rfloor\; (-\Omega) = v\; \rfloor \; \Omega = \Theta$$ 
in $N$. So $B_{v, \tau}$ preserves the measure  $\mu_\Theta$ on $\d_1^+X(v)$, and hence the return property follows.
\end{proof}

In the sections to come, we will strive to construct involutions $\tau$, subject to (\ref{tau}), for the geodesic vector field $v^g$ on the tangent spherical fibration $SM \to M$ with a compact Riemannian manifold $(M, g)$ for the base. 
\smallskip

\begin{definition}\label{gradient_field}
 A smooth vector field $v$ on $X$ is {\sf gradient-like}, if there are a smooth function $f: X \to \R$ and a Riemannian metric $g$ in the vicinity of the zero locus $Z(v)$ of $v$ so that: 
{\bf (1)} $df(v) > 0$ in $X\setminus Z(v)$, and {\bf (2)} $v$ is the gradient $\nabla_g(f)$ in the vicinity of $Z(v)$.  
\hfill $\diamondsuit$
\end{definition}

\begin{definition}\label{Lyap}
Given a smooth vector field $v$ on a compact manifold $X$,  consider an open finite cover $\mathcal U =\{U_i\}$ of $X$ such that,   in each $U_i$, $v$ admits a smooth Lypunov function $f_i: U_i \to \R$, satisfying the following constraints: 
$df_i(v) \geq 0 \text{ in } U_i, \text{ and } df_i(v) > 0 \text{ in } \hfill \break U_i \setminus (U_i \cap Z(v)).$
\smallskip

We call the minimal cardinality of such covers $\mathcal U$ the {\sf Lyapunov genus} of $v$ and denote it by $\mathsf{Lyap}(v)$. \hfill $\diamondsuit$
\end{definition}

By definition, for any gradient-like vector field $v$, $\mathsf{Lyap}(v) =1$. 



\begin{lemma} Let $X$ be a compact manifold, and $v$ a non-vanishing vector field on $X$. If a smooth hypersurface $H \subset X$ bounds a domain $X_1 \subset X$ so that, for any $v$-trajectory $\g$, the connected components of $\g \cap X_1$ and of $\g \cap (X \setminus \mathsf{int}(X_1))$ are singletons or closed intervals, then $\mathsf{Lyap}(v) \leq 2$.
\end{lemma}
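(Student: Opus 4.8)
The plan is to manufacture a Lyapunov function on $X$ by patching together two local Lyapunov functions, one adapted to $X_1$ and one adapted to its complement. The first step is to analyze what the hypothesis gives us on each piece. Since every $v$-trajectory $\gamma$ meets $X_1$ in a disjoint union of singletons and closed intervals (and likewise for $\overline{X \setminus X_1}$), the restriction of $v$ to the manifold-with-corners $X_1$ is a traversing vector field on $X_1$: indeed, a vector field whose every trajectory is a singleton or closed segment admits a Lyapunov function, by Corollary 4.1 of \cite{K1} (the converse direction quoted in the introduction). The mild subtlety is that $X_1$ is not a smooth manifold with boundary but has corners along the locus where $H$ is tangent to $v$; this is handled exactly as boundary-generic traversing fields are handled in \cite{K1}, so I will invoke that theory to produce a smooth $f_1 : X_1 \to \R$ with $df_1(v) > 0$ on $X_1$ (here $Z(v) = \emptyset$ since $v$ is non-vanishing, so the Lyapunov condition is the strict one everywhere). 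Symmetrically, I get $f_2 : \overline{X \setminus X_1} \to \R$ with $df_2(v) > 0$.

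Next I would set $U_1$ to be a small open thickening of $X_1$ in $X$ and $U_2$ a small open thickening of $\overline{X \setminus X_1}$, so that $\{U_1, U_2\}$ is an open cover of $X$ of cardinality $2$. The remaining point is to extend $f_1$ from $X_1$ to all of $U_1$ keeping $df_1(v) > 0$, and similarly for $f_2$. This is a standard extension argument for traversing fields: a Lyapunov function on a $v$-convex compact set extends to a Lyapunov function on a neighborhood because the condition $df(v) > 0$ is open and one can push $f_1$ outward along the flow in a collar of $H$ (choosing the thickening thin enough that no new trajectory behavior intrudes). Concretely, one takes a collar $H \times (-\e, \e)$ transverse-ish to $v$, uses the flow-box form of $v$ there, and interpolates; strict positivity of $df_1(v)$ on the compact set $X_1$ gives the room to do this. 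Having done so, $f_1$ is a genuine local Lyapunov function on $U_1$ and $f_2$ one on $U_2$, so $\mathsf{Lyap}(v) \leq 2$ by Definition \ref{Lyap}.

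The main obstacle is the corner structure of $X_1$ along the tangency locus $H \cap \{v \text{ tangent to } H\}$: one must be sure that "every trajectory meets $X_1$ in singletons and segments" really does force $v|_{X_1}$ into the traversing framework of \cite{K1}, rather than, say, permitting a trajectory to graze $H$ infinitely often. In fact the hypothesis rules this out directly — a trajectory with infinitely many grazing contacts would have $\gamma \cap X_1$ with infinitely many components — so the corner issue is only a matter of citing the boundary-generic (or, after a $C^0$-small perturbation of $H$ rel $v$, boundary-generic) version of the existence theorem for Lyapunov functions. The extension-across-the-collar step is routine once positivity is in hand, so I expect no genuine difficulty there; the care is entirely in the first paragraph's reduction. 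One should also note $X$ need not itself be a manifold with boundary in the statement as written, but nothing in the argument uses $\d X$, so this is immaterial — the cover $\{U_1,U_2\}$ and the local Lyapunov functions are all that Definition \ref{Lyap} requires.
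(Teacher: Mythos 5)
Your argument is correct and takes essentially the same route as the paper's proof: split $X$ along $H$ into $X_1$ and $\overline{X\setminus \mathsf{int}(X_1)}$, invoke the result of \cite{K1} (Lemma 4.1) that a vector field all of whose trajectory components are singletons or closed segments is of gradient type and hence admits a Lyapunov function on each piece, and conclude $\mathsf{Lyap}(v)\leq 2$. Your additional care about thickening to an open cover merely fills in a routine detail the paper leaves implicit (and the worry about corners is unnecessary, since the smooth hypersurface $H$ bounds $X_1$ as a smooth manifold with boundary; tangency of $v$ to $H$ does not create corners).
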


\begin{proof} If the connected components of $\g \cap X_1$ and $\g \cap X \setminus \mathsf{int}(X_1)$ are singletons or closed intervals, then, by Lemma 4.1 from \cite{K1}, the vector fields $v|_{X_1}$ and $v|_{X_2}$ are of the gradient type and thus each vector field admits a Lyapunov function.  So we get $\mathsf{Lyap}(v) \leq 2$.
\end{proof}

Thus, to show that $\mathsf{Lyap}(v) \leq 2$ for a non-vanishing $v$, it would suffice to find a separating hypersurface $H \subset X$ that chops all the $v$-trajectories $\g$ into either closed segments $\{\g_\a \subset X_1\}_\a$ and $\{\g_\b \subset X_2\}_\b$, whose boundaries reside in $H$, or into isolated singletons (produced by connected components of the loci where $\g$ is tangent to $H$). For example, if $Z(v)$ and the set $C(v)$ of closed $v$-trajectories are finite, and any trajectory that is not homeomorphic to a closed segment asymptotically approaches $Z(v) \cup C(v)$ at least in one direction, then such a chopping hypersurface $H$ exists. 
\smallskip




For a non-vanishing $v$ with $\mathsf{Lyap}(v) = 2$, let us consider the following construction that ``substitutes" for the desired proto-billiard map $\tau:\d_1^-X(v) \to \d_1^+X(v)$.
\smallskip



Let $Y$ be a \emph{closed} smooth $(n+1)$-dimensional manifold, equipped with a volume $(n+1)$-form $\Omega$, 
and a non-vanishing vector field $v$. All these structures on $Y$ are presumed to be smooth. In addition, assume that the following properties hold:
\begin{eqnarray}\label{double1}
\end{eqnarray}
\begin{itemize}
\item $Y$ is a union of two compact manifolds, $X_1$ and $X_2$, that share a smooth boundary $\d X_1 = \d X_2$ and such that $\mathsf{int}(X_1) \cap \mathsf{int}(X_2) = \emptyset$,
\item $v$ is boundary generic with respect to the hypersurface $\d X_1 \subset Y$,
\item For $i=1,2$, the restriction of $v$ to $X_i$ admits a Lyapunov function $f_i: X_i \to \R$, 
\item $v$ is nildivergent on $Y$, i.e., $d(v \, \rfloor \, \Omega) = 0$ for a volume form $\Omega$ on $Y$.
\end{itemize}

Thanks to the existence of $f_i: X_i \to \R$ and the third bullet in (\ref{double1}), the vector field $v_i := v|_{X_i}$ is  traversing and boundary generic on $X_i$. 

Note that $\d_1^\pm X_1(v_1) = \d_1^\mp X_2(v_2)$. Thus we have two causality maps:
\begin{eqnarray}\label{two_caus}
C_{v_1}: \d_1^+ X_1(v_1) \to \d_1^- X_1(v_1) = \d_1^+ X_2(v_2) \;\; \text{and} \nonumber \\ C_{v_2}: \d_1^+ X_2(v_2) \to \d_1^- X_2(v_2) = \d_1^+ X_1(v_1).
\end{eqnarray}

Their composition produces the \textsf{Poincar\'{e} return map}
\begin{eqnarray}\label{Poincare}
P_v := C_{v_2} \circ C_{v_1}:\;  \d_1^+ X_1(v_1) \to  \d_1^+ X_1(v_1).
\end{eqnarray}

Note that the locus $\d_2^- X_1(v_1) = \d_2^+ X_2(v_2)$ is the fixed point set of the map $C_{v_1}$, but not of $C_{v_2}$; similarly, the locus $\d_2^+ X_1(v_1) = \d_2^- X_2(v_2)$ is the fixed point set of the map $C_{v_2}$, but not of $C_{v_1}$. 
Note also that if $v_1$ is concave in $X_1$, then $v_2$ is convex  in $X_2$; in such a case, $C_{v_1}$ is discontinuous, but $C_{v_2}$ is continuous. Thus, $C_{v_2}$ may play the role of $\tau$. 
\smallskip

\begin{corollary} Under the hypotheses (\ref{double1}), the Poincar\'{e} return map $P_v$ preserves the measure $\mu_\Theta$ on $\d_1^+ X_1(v_1) = \d_1^- X_1(v_2)$, induced by the closed $v$-invariant $n$-form $\Theta :=v \, \rfloor \, \Omega$. 

As a result, any point $x \in \d_1^+X(v_1)$ has an open neighborhood $U \subset \d_1^+X(v_1)$ such that, for almost every $x' \in U$, the sets $\{(P_v)^k(x')\}_k$ intersect $U$ for infinitely many $k$'s.
\end{corollary}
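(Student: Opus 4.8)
The plan is to reduce the corollary to the already-established machinery of Theorem \ref{C_v_preserves} and Theorem \ref{proto-billiard}, treating the Poincar\'{e} return map $P_v = C_{v_2} \circ C_{v_1}$ in exactly the same way that the proto-billiard map $B_{v,\tau} = \tau \circ C_v$ was handled. The key observation is that under the hypotheses (\ref{double1}), the single $n$-form $\Theta := v \,\rfloor\, \Omega$ simultaneously plays the role of an integrally dual form \emph{for both} $v_1$ on $X_1$ and $v_2$ on $X_2$, and moreover its restriction to the common boundary $\d X$ is literally the same form whether we regard $\d X$ as $\d X_1$ or as $\d X_2$. So the map $C_{v_1}$ sending $\d_1^+X_1(v_1)$ into $\d_1^-X_1(v_1)$ is measure-preserving for $\mu_\Theta$ by Theorem \ref{C_v_preserves}, and likewise $C_{v_2}$ sending $\d_1^+X_2(v_2) = \d_1^-X_1(v_1)$ into $\d_1^-X_2(v_2) = \d_1^+X_1(v_1)$ is measure-preserving for the \emph{same} measure $\mu_\Theta$. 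Composing two $\mu_\Theta$-preserving maps gives a $\mu_\Theta$-preserving self-map of $\d_1^+X_1(v_1)$. Here $\tau$ of the Involution Hypotheses is effectively replaced by $C_{v_2}$ itself, which already has the required measure-matching property built in.

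Concretely, first I would verify that $\Theta := v \,\rfloor\, \Omega$ is integrally dual to $v_1$ on $X_1$: by the fourth bullet of (\ref{double1}) together with Lemma \ref{nildivergent_form}, $\Theta$ is closed, $v_1 \in \ker \Theta$ with a one-dimensional kernel, and $\pm\Theta|_{\d_1^\pm X_1(v_1)} \ge 0$ by the sign computation $(v\rfloor\Omega)/(\nu\rfloor\Omega)|_{\d X} = \cos\angle_g(v,\nu)$ in the proof of Lemma \ref{nildivergent_form}; the same statement with $X_1, v_1$ replaced by $X_2, v_2$ holds verbatim, with the caveat that $\d_1^\pm X_2(v_2) = \d_1^\mp X_1(v_1)$, so the sign conventions are exchanged but the integrally-dual property survives because it is symmetric in the $\pm$ labels. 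Then I would invoke Theorem \ref{C_v_preserves} twice to get that $C_{v_1}$ and $C_{v_2}$ each preserve $\mu_\Theta$, and conclude that $P_v = C_{v_2} \circ C_{v_1}$ preserves $\mu_\Theta$ on $\d_1^+X_1(v_1)$.

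For the second assertion — the infinite return property — I would repeat the Poincar\'{e} recurrence argument used in the proof of Theorem \ref{proto-billiard}: since $\d X$ is compact and $\Theta|_{\d X}$ restricts to a finite measure, $\mu_\Theta(\d_1^+X_1(v_1)) < \infty$, and the standard measure-theoretic recurrence theorem (cited there from \cite{W}) applies directly to the $\mu_\Theta$-preserving transformation $P_v$: every $x$ has a neighborhood $U$ such that, for $\mu_\Theta$-almost every $x' \in U$, the orbit $\{(P_v)^k(x')\}_k$ meets $U$ infinitely often. Essentially this corollary is Theorem \ref{proto-billiard} with the external involution $\tau$ replaced by the intrinsic second causality map $C_{v_2}$, so almost nothing new needs proving.

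The main obstacle — and it is a bookkeeping obstacle rather than a genuine difficulty — is keeping the orientation and sign conventions straight across the identification $\d X = \d X_1 = \d X_2$. When one flips the ambient manifold from $X_1$ to $X_2$, the roles of ``inward'' and ``outward'' swap, so $\d_1^+X_1(v_1) = \d_1^-X_2(v_2)$; one must check that the positivity clause in Definition \ref{dual}, read from the $X_2$ side, is compatible with the positivity clause read from the $X_1$ side for the \emph{same} $\Theta$, which it is precisely because that clause asks for $+\Theta \ge 0$ on the $+$ stratum and $-\Theta \ge 0$ on the $-$ stratum, a condition invariant under interchanging the two strata. One should also note that the measure $\mu_\Theta$ defined by formula (\ref{eq2.2}) uses both strata with opposite signs, so it is manifestly the same measure on $\d X$ no matter which side one computes from. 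Once this is settled, the proof is a two-line application of the earlier theorems.
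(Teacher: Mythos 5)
Your proposal is correct and follows essentially the same route as the paper: the paper's proof simply observes that $\Theta = v\,\rfloor\,\Omega$ satisfies the hypotheses of Lemma \ref{lem2.1} on both $X_1$ and $X_2$, so $C_{v_1}$ and $C_{v_2}$ each preserve $\mu_\Theta$ and hence so does their composition $P_v$, with the recurrence statement then following from finiteness of $\mu_\Theta$ exactly as in Theorem \ref{proto-billiard}. Your extra sign bookkeeping (the swap $\d_1^\pm X_1(v_1)=\d_1^\mp X_2(v_2)$ compensated by the flip of the inner normal, i.e.\ of $\cos\angle_g(v,\nu)$, when the ambient manifold changes) is exactly the point the paper leaves implicit, so no gap remains.
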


\begin{proof} Under the hypotheses (\ref{double1}), the closed $n$-form $\Theta :=v \, \rfloor \, \Omega$ has all the properties, listed in Lemma \ref{lem2.1}, on both manifolds, $X_1$ and $X_2$. By this lemma, both maps, $C_{v_1}$ and $C_{v_2}$, preserve the measure $\mu_\Theta$, induced by the restriction of $\Theta$ to $\d X$, and so does their composition, the Poincar\'{e} return map $P_v$.
\end{proof}
\smallskip


\noindent{\bf Remark 3.1.} Let $\mathcal F(v)$ and $\mathcal F(v_1), \mathcal F(v_2)$ be the $1$-dimensional oriented foliations, produced by the vector fields $v$ and $v_1 := v|_{X_1}, v_2 := v|_{X_2}$, respectively.  Although, according to Holography Theorem \ref{th1.1}, the map $C_{v_i}$ ($i=1,2$) allows for a reconstruction of the topological type of the pair $(X_i, \mathcal F(v_i))$, the Poincar\'{e} return map $P_v$ in (\ref{Poincare}) \emph{alone}  seems to be insufficient for a reconstruction of the pair $(Y, \mathcal F(v))$. 

Also, while each trajectory space $\mathcal T(v_i) := \d_1^+ X_1(v_i)\big/ \{x \sim  C_{v_i}(x)\}$ is a separable compact space (actually, a $CW$-complex), the quotient $\mathcal T(v) := \d_1^+ X_1(v_i)\big/ \{x \sim  P_v(x)\}$ typically is pathological (non-separable).
\hfill $\diamondsuit$
\begin{theorem}{\bf (Double Holography)}\label{double_holography} 
Under the hypotheses (\ref{double1}), the two causality maps $C_{v_1}$ and $C_{v_2}$ from (\ref{two_caus})
are sufficient for a reconstruction of the topological type of the pair $(Y, \mathcal F(v))$.

If each $v$-trajectory hits the locus $\d X_1 = \d X_2$ transversally at \emph{some} point or is quadratically tangent to it at \emph{some} point, and 
$df_1 = df_2$ in the vicinity of $\d X_1$, 
then $C_{v_1}$ and $C_{v_2}$ are sufficient for a reconstruction of the \emph{smooth} topological type of the pair $(Y, \mathcal F(v))$.
\end{theorem}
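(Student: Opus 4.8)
The idea is to glue the two reconstructions provided by the Holography Theorem (Theorem \ref{th1.1}) for the pieces $(X_1, v_1)$ and $(X_2, v_2)$ along their common boundary $\d X$. Applying Theorem \ref{th1.1} to $(X_i, v_i)$ with the identity conjugation of $C_{v_i}$ with itself tells us that the topological type of $(X_i, \mathcal{F}(v_i))$ is determined by the data $(\d X, C_{v_i})$, together with the decomposition $\d X = \d_1^+ X_i(v_i) \cup \d_1^- X_i(v_i)$ and the field-induced orientations of trajectories. More precisely, following the construction in \cite{K4}, one builds a model $\hat X_i$ as the quotient of the mapping cylinder of $C_{v_i}: \d_1^+ X_i(v_i) \to \d_1^- X_i(v_i)$ by the combinatorial-type data (the strata $\d_j X_i(v_i)$), and this model is canonically homeomorphic to $X_i$ respecting the $1$-foliation. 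The key observation is that the gluing identification $\d_1^\pm X_1(v_1) = \d_1^\mp X_2(v_2)$ is \emph{already visible} in the two given maps: the domain of $C_{v_1}$ is the range locus of $C_{v_2}$ and vice versa, sitting inside the single manifold $\d X$. So I would:

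\emph{Step 1.} From $(\d X, C_{v_1})$ reconstruct $(X_1, \mathcal{F}(v_1))$ up to a homeomorphism $\Phi_1$ fixing $\d X$ pointwise and preserving trajectory orientations, via Theorem \ref{th1.1}; symmetrically reconstruct $(X_2, \mathcal{F}(v_2))$ via $\Phi_2$.

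\emph{Step 2.} Form $\hat Y := \hat X_1 \cup_{\d X} \hat X_2$, the union of the two reconstructed models along the identity map of $\d X$. Since $\Phi_1|_{\d X} = \mathrm{id}_{\d X} = \Phi_2|_{\d X}$, the homeomorphisms $\Phi_1, \Phi_2$ patch to a homeomorphism $\Phi: \hat Y \to Y$. The foliations $\mathcal{F}(v_1)$ and $\mathcal{F}(v_2)$ glue to a $1$-dimensional foliation on $\hat Y$ because along $\d X$ the trajectory of $v$ through a point $x$ is locally the concatenation of its $v_1$-trajectory-germ in $X_1$ and its $v_2$-trajectory-germ in $X_2$; the field-induced orientations match across $\d X$ precisely because $v$ is a single vector field (the orientation inherited from $v_1$ agrees with that from $v_2$), and the combinatorial type of a $v$-trajectory $\g$ through a boundary point is the concatenation of the combinatorial types of $\g \cap X_1$ and $\g \cap X_2$ at that point. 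Hence $\Phi$ carries $\mathcal{F}(v^{\hat Y})$ to $\mathcal{F}(v)$, proving the first assertion.

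\emph{Step 3 (smooth version).} Under the second bullet of Theorem \ref{th1.1}, i.e.\ when every $v$-trajectory meets $\d X_i$ transversally somewhere or is simply tangent — which holds in particular when every $v$-trajectory hits $\d X_1 = \d X_2$ transversally at some point — the reconstructions $\Phi_i$ are diffeomorphisms. The only new issue is smoothness of the glued $\Phi$ \emph{across} $\d X$. This is where the hypothesis $df_1 = df_2$ near $\d X$ enters: the Lyapunov functions $f_i$ are part of the local normal-form data in \cite{K2} near boundary points, and the local semialgebraic models for $C_{v_i}$ near a trajectory are expressed in terms of the ambient transversal coordinates determined by $f_i$; requiring $df_1 = df_2$ near $\d X$ forces the two one-sided collar structures to agree to first order along $\d X$, so the transition between the $X_1$-side and the $X_2$-side normal forms is smooth, and one concludes that $\Phi$ is a diffeomorphism of $\hat Y$ onto $Y$ carrying foliation to foliation, as required.

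\textbf{Main obstacle.} The genuinely delicate point is Step 3: verifying that the two locally-constructed diffeomorphisms match smoothly along the gluing hypersurface $\d X$. Unlike the closed-segment case inside a single $X_i$, here $\d X$ is an interior hypersurface of $Y$ through which trajectories pass, so the reconstructed smooth structure on $\hat Y$ near $\d X$ must be shown independent of which side one approaches from. This is exactly what the condition $df_1 = df_2$ near $\d X$ is designed to guarantee — it ensures the germ of the Lyapunov function, hence the germ of the flow-box coordinate transverse to $\d X$, is unambiguous — but making this rigorous requires carefully unpacking the local models of Lemmas 3.1 and 3.4 of \cite{K2} on each side and checking their compatibility, rather than just invoking Theorem \ref{th1.1} as a black box.
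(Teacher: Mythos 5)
Your proposal takes essentially the same route as the paper's proof: reconstruct each half $(X_i,\mathcal F(v_i))$ from its causality map $C_{v_i}$ by causal holography, glue the two reconstructions along the common boundary $\d X_1=\d X_2$ (where the oriented foliations concatenate), and in the smooth case use the somewhere-transversality of trajectories together with $df_1=df_2$ near $\d X_1$ to make the two one-sided structures match differentiably across the gluing hypersurface. The only cosmetic difference is that the paper carries out the per-piece reconstruction explicitly, via the quotient $\mathcal T(v_i)=\d X_i/\{x\sim C_{v_i}(x)\}$ and the embedding $x\mapsto(\Gamma_i(x),f_i(x))\in\mathcal T(v_i)\times\R$ (citing \cite{K8}), whereas you invoke Theorem \ref{th1.1} and the model construction of \cite{K4} as a black box; the smooth-matching step is treated at the same (brief) level of detail in both.
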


\begin{proof} Let $i = 1, 2$. Note that $C_{v_i}$ allows for a reconstruction of the trajectory space $\mathcal T(v_i)$ as the quotient space $\d X_i\big/ \{x \sim C_{v_i}(x)\; | \; x\in \d_1^+X_i(v_i)\}$. 
This construction produces a continuous map $\Gamma^\d_i: \d X_i \to \mathcal T(v_i)$,  the restriction of the obvious map $\Gamma_i: X_i \to \mathcal T(v_i)$ to the boundary $\d X_i$. 

Let $\mathcal H_i$ denote the $1$-dimensional foliation of $\mathcal T(v_i)\times \R$ by the fibers of the obvious trivial fibration  $\pi_i:  \mathcal T(v_i)\times \R \to  \mathcal T(v_i)$.

We pick a pair Lyapunov functions $f_i: X_i \to \R$ ($i = 1, 2$). They help to realize $(X_i, \mathcal F(v_i))$ as the pull-back of $(\mathcal T(v_i)\times \R,\, \mathcal H_i)$ under the embedding $\b_i: X_i \to \mathcal T(v_i)\times \R$, given by the formula $\b_i(x) = (\Gamma_i(x), f_i(x))$. Note that $\b_i^\d := \b_i|: \d X_i \to \mathcal T(v_i)\times \R$ separates $\mathcal T(v_i)\times \R$ into two regions, so that  the compact region is $\b_i (X_i)$.  Therefore, the knowledge of the imbedding $\b_i^\d$ (which is produced using $C_{v_i}$ and $f_i|_{\d X_i}$ only) is sufficient for a reconstruction of the topological type of the pair $(X_i, \mathcal F(v_i))$ (\cite{K8}, Theorem 4.1). 

Recall that $Y = X_1 \bigcup_{\{\d X_1 = \d X_2\}} X_2$ and that $ \mathcal F(v_1)$ and $\mathcal F(v_2)$ match across $\d X_1 = \d X_2$ to form the smooth foliation $ \mathcal F(v)$.
Now, we glue $\b_1(X_1)$ and $\b_2(X_2)$ via the homeomorphism $\b_2 \circ \b_1^{-1}: \b_1(\d X_1) \to \b_2(\d X_2)$. The result of the gluing is a manifold $Z$, homeomorphic to $Y$.
The two $v$-oriented foliations  $\mathcal H_1 = \b_1(\mathcal F(v_1))$ and $\mathcal H_2 = \b_2(\mathcal F(v_2))$ match continuously (in fact, piecewise differentiably) in the vicinity of $\d X_1= \d X_2$, thus forming topological $1$-dimensional foliation on $Y$, which is homeomorphic to $\mathcal F(v)$. 

If any $v$-trajectory is \emph{somewhere} transversal to $\d X_1 = \d X_2$ or somewhere quadratically tangent to it, then by an argument as in \cite{K4}, Theorem 4.1 and Lemma 4.3, these data are sufficient for reconstructing the \emph{smooth} topological types of $(X_i, \mathcal F(v_i))$ (basically, since the solutions of an ODE depend smoothly on the initial data). Assuming that $df_1 = df_2$ in the vicinity of $\d X_1$ (this is hypotheses is restrictive: it implies the existence of a closed $1$-form $\a$ on $Y$ such that $\a(v) > 0$), the gluing map is the identity on $\d X_1$ and on the tangent bundle $TX|_{\d X_1}$. As a result, the foliations  $\mathcal F(v_1)$ and $\mathcal F(v_2)$ match differentiably across $\d X_1$. 
\end{proof}





\section{On the $v$-harmonizing metrics and the associated minimal foliations}
%
Any closed and co-closed nonsingular 1-form $\a$ on a compact Riemannian  $(n+1)$-manifold $X$ produces a beautiful geometric structure: a pair of mutually orthogonal foliations $\mathcal F_\a$ and $\mathcal G_\a$ of dimensions 1 and $n$, respectively,  both of which are {\sf minimal} \cite{K6}, \cite{Su}, provided that the form $\a$ has the following global property:
 \smallskip

\noindent $\bullet$ {\bf The Calabi Condition \cite{Ca}:} 
\textsf{Through each point $x \in X$, there exists a smooth path $\g$ such that either $\g$ is a loop or a segment, with its ends residing in $\d X$, and}  
\begin{eqnarray}\label{Calabi}
\a(\dot\g) > 0. 
\end{eqnarray}

If a closed $1$-form $\a$ satisfies (\ref{Calabi}), it is called {\sf transitive} \cite{Ca}. In fact, in \cite{Ca}, Calabi studied closed $1$-forms $\a$ that may have Morse type singularities (different from extrema) and are transitive. He proves that such transitive $\a$ is {\sf intrinsically harmonic}, i.e., there exists a Riemannian metric $g$ so that  $\a$ is also co-closed in $g$. Moreover, the transitivity of $\a$ is also necessary for its intrinsic harmonicity.
\smallskip

Throughout this paper, we embed properly a given compact $(n+1)$-dimensional manifold $X$ into an open manifold $\hat X$ (when $X$ is closed, $\hat X = X$) and extend $v$ to a non-vanishing vector field $\hat v$. We treat $(\hat X, \hat v)$ as a ``germ" surrounding  $(X, v)$.\smallskip

Let $\theta$ be a closed $1$ form on $S^1 \times D^n$, the pull-back of the canonical $1$-form on $S^1$ under the obvious projection $S^1 \times D^n \to S^1$. 
Similarly, consider the obvious function $D^1 \times D^n \to D^1 \subset \R$ and denote by $\theta$ the differential of this function. 

\begin{definition}\label{Calabi_tube}{\bf (The Calabi tubes)}
Assume that a compact oriented manifold $X$ is equipped with a non-vanishing vector field $v$. 
\begin{itemize}
\item We call an orientation-preserving  diffeomorphism $\kappa: S^1 \times D^n \to \hat X$ or $\kappa: D^1 \times D^n \to \hat X$ a {\sf Calabi tube}, if $\theta(\kappa^{-1}_\ast(v)) > 0$. 
\smallskip

\item We call a Calabi tube $v$-{\sf invariant}, if the orientation-preserving  diffeomorphisms $\kappa: S^1 \times D^n \to \hat X$  or $\kappa: D^1 \times D^n \to \hat X$ are such that $v$ is tangent to every curve $\{\kappa(S^1 \times u)\}_{u \in D^n}$ or to every curve $\{\kappa(D^1\times u)\}_{u \in D^n}$, respectively. \smallskip 

\item We call a $v$-invariant Calabi tube {\sf balanced} if the function $(\kappa^{-1})^\ast(\theta)(v)$ is constant along each $v$-trajectory. 
\hfill $\diamondsuit$
\end{itemize}
\end{definition}


\begin{definition}\label{Calabi_field} \noindent {\bf (The Calabi vector fields)} 
\begin{itemize}
\item We say that a non-vanishing vector field $v$ on $X$ is a \textsf{Calabi field}, if $X$ admits a cover by Calabi tubes (whose images reside in $\hat X$).\smallskip

\item We say that a non-vanishing vector field $v$ on $X$ is an \textsf{invariant Calabi field}, if $X$ admits a cover by $v$-invariant Calabi tubes.\smallskip

\item We say that an invariant Calabi vector field $v$ on $X$ is an \textsf{balanced}, if $X$ admits a cover by balanced $v$-invariant Calabi tubes.
\hfill $\diamondsuit$
\end{itemize}
\end{definition}

\noindent {\bf Remark 4.1.} It follows from Lemma \ref{travesing_is_Calabi} below that if $v$ is a Calabi field, admitting a cover by $a$ toroidal and $b$ cylindrical 
$v$-invariant Calabi tubes, then $\mathsf{Lyap}(v) \leq 2a +b$.
\hfill $\diamondsuit$
\smallskip

\begin{lemma}\label{invariant_versa_balanced} If $v$ is an invariant Calabi field with respect to a cover of $X$ by Calabi tubes, 
then $v$ is a balanced invariant Calabi field (with respect to a differently parametrized Calabi cover). So the second and third bullets in Definition \ref{Calabi_field}  are equivalent requirements. 
\end{lemma}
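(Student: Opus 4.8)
The plan is to work locally tube-by-tube and show that any $v$-invariant Calabi tube can be reparametrized so as to become balanced, without destroying the invariance or the covering property. Fix a $v$-invariant Calabi tube $\kappa: D^1 \times D^n \to \hat X$ (the closed case $\kappa: S^1\times D^n \to \hat X$ is handled identically, replacing $D^1$ by $S^1$). Pulling everything back by $\kappa$, we may assume $\hat X$ is a neighborhood of $D^1\times D^n$, that $v$ is tangent to the curves $D^1\times\{u\}$, and that $\theta = d t$ where $t$ is the $D^1$-coordinate. Write $v = a(t,u)\,\partial_t$ with $a > 0$ (this uses precisely that $v$ is tangent to the slices and $\theta(v) = a > 0$). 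The function $\theta(v) = a$ is in general not constant along each $v$-trajectory $D^1\times\{u\}$, i.e. along each line $t\mapsto (t,u)$.

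The key step is to construct a fibrewise reparametrization of the $D^1$-factor that turns $a$ into a constant. For each fixed $u$, set
\begin{eqnarray}\label{phi_u}
\varphi_u(t) \;=\; \ell(u)\cdot\frac{\int_{t_0}^{t} a(s,u)^{-1}\,ds}{\int_{D^1} a(s,u)^{-1}\,ds},
\end{eqnarray}
where $t_0$ is the left endpoint of $D^1$ and $\ell(u) := \big(\int_{D^1} a(s,u)^{-1}\,ds\big)$, chosen so that $\varphi_u$ maps $D^1$ diffeomorphically onto an interval of length $\ell(u)$ depending smoothly on $u$. Since $a$ is smooth and strictly positive on the compact slice, $\varphi_u$ is a smooth diffeomorphism of $D^1$ onto its image, depends smoothly on $u$, and $\varphi'_u(t) = a(t,u)^{-1}/\big(\ell(u)^{-1}\int_{D^1}a^{-1}\big) = a(t,u)^{-1}$ after the normalization is unwound—so that in the new coordinate $t' = \varphi_u(t)$ the vector field becomes $v = a\,\partial_t = a\cdot\varphi'_u(t)\,\partial_{t'} = \partial_{t'}$, whence $\theta'(v) \equiv 1$ is constant along each trajectory. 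After a further affine rescaling of $t'$ (absorbing $\ell(u)$) one arranges the target to be exactly $D^1$, at the mild cost that $\theta'(v)$ is now the positive constant $c(u)$ on the slice over $u$; since ``balanced'' only requires constancy along each trajectory, not a uniform constant, this is enough. Orientation-preservation is preserved because each $\varphi_u$ is increasing. The composite $\kappa' := \kappa\circ(\varphi^{-1}\times \mathrm{id})$ is then a balanced $v$-invariant Calabi tube with the same image as $\kappa$.

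Applying this reparametrization to every tube in a given $v$-invariant Calabi cover of $X$ yields a balanced $v$-invariant Calabi cover of $X$ with the same underlying tube-images; hence $v$ is a balanced invariant Calabi field. The converse implication (third bullet $\Rightarrow$ second bullet) is trivial, since a balanced invariant Calabi tube is in particular an invariant Calabi tube, so the two notions coincide.

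I expect the only real subtlety to be the \emph{smooth} dependence of $\varphi_u$ on the parameter $u$ near the endpoints of $D^1$ (and, in the closed case, the well-definedness of $\varphi_u$ on $S^1$, which holds because $\theta$ integrates to the period and one rescales by exactly $\int_{S^1} a^{-1}$). Smoothness in $u$ is clear on the interior from smoothness of $a$ and differentiation under the integral sign; at the boundary one notes that $\varphi_u$ extends smoothly since $a$ is smooth up to $\partial D^1$ and bounded away from $0$. No global obstruction arises because the construction is entirely local to each tube and the tubes are contractible.
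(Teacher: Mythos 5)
Your proposal is correct and is essentially the paper's own argument: the paper likewise writes $\kappa^{-1}_\ast(v)=f(x,y)\,\partial_x$ and reparametrizes each tube by $g(x,y)=\int_0^x f(t,y)^{-1}dt\big/\int_0^1 f(t,y)^{-1}dt$, which is exactly your $\varphi_u$ after the affine rescaling, with the same treatment of the toroidal case via periodicity. The only (shared, harmless) nuance is that the resulting $\theta(v)$ is a trajectory-wise constant $c(u)$ rather than identically $1$, which is all that balancedness requires.
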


\begin{proof} Let $x: D^1 \times D^n \to D^1$ and $y: D^1 \times D^n \to D^n$ be the obvious coordinates on the product.  Let $\kappa^{-1}: U_{\hat\g^\star} \to D^1 \times D^n$ be as in Definition \ref{Calabi_tube}, second bullet, and put $\theta = dx$. 
Our goal is to construct a new diffeomorphism $\tilde\kappa^{-1}: U_{\hat\g^\star} \to D^1 \times D^n$ so that the image of each $v$-trajectory $\g \subset U_{\hat\g^\star}$ is still the fiber of the projection $y: D^1 \times D^n \to D^n$ and the function $[(\tilde\kappa^{-1})^\ast\theta](v)$ is constant on $\g$. 

Let $\kappa^{-1}_\ast(v) = f(x,y)\,\d_x$. So $\theta(\kappa^{-1}_\ast(v)) = dx(\kappa^{-1}_\ast(v))$ is the function $f(x, y) > 0$. Consider the auxiliary function $$g(x, y) = \int_0^x \frac{dt}{f(t, y)}\Big/ \int_0^1 \frac{dt}{f(t, y)},$$ which is strictly increasing in $x \in [0, 1]$ and has the property $g(0, y) =0,\, g(1, y) =1$. We define the diffeomorphism $\phi: D^1 \times D^n \to D^1 \times D^n$ by the formula  $(x', y') := (g(x, y), y)$. Then the $\phi$ transforms $f(x, y)\,\d_x$ into $\d_{x'}$.
\smallskip

The case of a toroidal Calabi tube $U_{\hat\g^\star}$ is similar: $x: S^1 \times D^n \to S^1$ is a circular-valued map, viewed as a function with the period $1$, and $f(x+1, y) = f(x, y)$ for all $x, y$. Under these assumptions, the same formulas deliver the desired diffeomorphism $\phi: S^1 \times D^n \to S^1 \times D^n$ such that $[(\tilde\kappa^{-1})^\ast\theta](v) = 1$.
\end{proof}

\begin{lemma}\label{travesing_is_Calabi} Any traversing vector field $v$ is an invariant balanced Calabi field. Also, $\mathsf{Lyap}(v) =1$.
\end{lemma}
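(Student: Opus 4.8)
The plan is to separate the two assertions, since the statement $\mathsf{Lyap}(v)=1$ is essentially immediate. By the definition of a traversing field, $v$ admits a global Lyapunov function $F\colon X\to\R$ with $dF(v)>0$ everywhere; in particular $v$ is nowhere vanishing, so $Z(v)=\emptyset$. Hence the trivial cover $\mathcal U=\{X\}$ together with $f_1:=F$ satisfies $df_1(v)>0$ on $X\setminus Z(v)=X$, and Definition~\ref{Lyap} gives $\mathsf{Lyap}(v)=1$ at once (as already noted, this is the case for any gradient-like field).

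For the Calabi's field claim, I would work with the ambient germ $(\hat X,\hat v)$ in which $X$ is properly embedded, shrinking $\hat X$ if necessary so that $\hat v$ is still nowhere vanishing there (this holds automatically on a small enough germ). The key observation is that every point $x\in X$ — including the points of $\partial X$, which are \emph{interior} points of $\hat X$ — has $v(x)\neq0$, so the classical flow-box (straightening) theorem furnishes a chart $\kappa_x\colon D^1\times D^n\to\hat X$, a diffeomorphism onto a neighborhood of $x$ with $\kappa_x(0,0)=x$, in which $(\kappa_x)^{-1}_\ast(v)=\partial_{x_1}$, where $x_1$ is the coordinate on the $D^1$-factor; composing with a reflection of one $D^n$-coordinate if needed, we arrange $\kappa_x$ to be orientation-preserving. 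Then $\theta\big((\kappa_x)^{-1}_\ast(v)\big)=dx_1(\partial_{x_1})=1>0$, so $\kappa_x$ is a Calabi's tube in the sense of Definition~\ref{Calabi_tube}; moreover $v$ is tangent to each curve $\kappa_x(D^1\times u)$, so the tube is $v$-invariant; and $(\kappa_x^{-1})^\ast(\theta)(v)\equiv1$ is trivially constant along every $v$-trajectory, so the tube is in fact \emph{balanced}.

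Finally, the open sets $\kappa_x\big(\mathrm{int}(D^1)\times\mathrm{int}(D^n)\big)$ are neighborhoods of $x$ in $\hat X$ and hence cover the compact set $X$; extracting a finite subcover $\{\kappa_{x_1},\dots,\kappa_{x_N}\}$ exhibits $X$ as covered by balanced $v$-invariant Calabi's tubes. By Definition~\ref{Calabi_field}, $v$ is therefore an invariant balanced Calabi's field (in particular an invariant Calabi's field). One could alternatively obtain the ``balanced'' property from Lemma~\ref{invariant_versa_balanced}, but here it is built into the flow-box normal form and needs no rescaling.

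As for the main obstacle: there is no deep difficulty, but the point that requires care is the role of the germ $\hat X$ — one must use that each boundary point of $X$ is an honest interior point of $\hat X$, so that it receives a \emph{full} flow-box tube $D^1\times D^n$ rather than a half-tube — and the fact that the chart can be chosen orientation-preserving, matching the standing assumption that $X$ (hence $\hat X$) is oriented. Everything else is the standard straightening of a nonvanishing vector field together with a compactness argument.
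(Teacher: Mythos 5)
Your treatment of $\mathsf{Lyap}(v)=1$ is the same as the paper's (a traversing field admits a global Lyapunov function, so the trivial cover works), and your remark that a flow-box chart is automatically balanced is correct as far as it goes. The problem is with the Calabi-field part: you produce the tubes by the straightening theorem at each point, so your argument never uses that $v$ is traversing and would prove verbatim that \emph{every} non-vanishing vector field on $X$ (for instance one with a closed orbit in the interior) is an invariant balanced Calabi field. While the literal wording of Definition~\ref{Calabi_tube} does not explicitly forbid short tubes, this reading trivializes Definition~\ref{Calabi_field} and misses the content the lemma is meant to deliver: a Calabi tube is intended to thicken a path of the kind appearing in Calabi's condition (\ref{Calabi}) --- a loop, or a segment running from $\partial X$ to $\partial X$ --- and a small flow box around an interior point does not witness that condition at its points.

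The paper's proof is built exactly around the step you skip. By traversality each trajectory $\gamma\subset X$ is a compact segment; it is extended to $\hat\gamma\subset\hat X$ with $\partial\hat\gamma\subset\hat X\setminus X$, and the transverse disk $D^n$ is chosen so small that the resulting product tube $U_{\hat\gamma}\approx I\times D^n$ has its top and bottom faces $\delta U_{\hat\gamma}$ disjoint from $X$; only the lateral boundary (over $\partial D^n$) meets $X$. This ``whole-trajectory'' feature is what is used downstream, in the proof of Theorem~\ref{th2.1}(3): the functions $f_{\hat\gamma}=\psi\cdot\tilde f_{\hat\gamma}$, cut off by a bell function $\psi$ in the transverse directions only, extend smoothly by zero over $X$ precisely because the end faces of the tube lie outside $X$, and their sum gives the exact Lyapunov form $\alpha=df$. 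With your short interior flow boxes the end faces sit inside $X$, and on the top face $f_{\hat\gamma}$ equals $\psi>0$, so it cannot be extended by zero and the construction collapses. So the step where the traversing hypothesis actually enters --- enclosing each entire trajectory, pushed slightly beyond $X$, into a product tube whose ends avoid $X$, and then reparametrizing it to be balanced via Lemma~\ref{invariant_versa_balanced} --- is missing from your argument; your balancing observation is correct but is attached to the wrong tubes.
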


\begin{proof} By the definition of a traversing vector field (see \cite{K1}), each $v$-trajectory $\g$ is a closed segment or a singleton. We consider a closed segment $\hat \g \subset \hat X$ of the $\hat v$-trajectory that contains $\g$ in its interior and such that $\d\hat\g \subset \hat X \setminus X$. We take a disk-shaped smooth transversal section $D^n$ of the $\hat v$-flow at a point $o \in \g$ and form a small $\hat v$-invariant tubular neighborhood $U_{\hat\g}$ of $\hat\g$ in $\hat X$ by taking the union of $\hat v$-trajectories through $D^n$ and ``trimming" this sheaf, as described below. We denote by $V_{\hat\g}$ the trimmed set of $\hat v$-trajectories that pass through the sphere $\d D^n$.

Since $\d\hat\g \cap X = \emptyset$, we may pick the tube $U_{\hat\g} \supset \hat\g$ so narrow (equivalently, the section $D^n$ so small), that $X \cap (\delta U_{\hat\g}) = \emptyset$, where $\delta U_{\hat\g} := \d(U_{\hat\g}) \setminus V_{\hat\g}$ . This choice helps us to introduce a product structure $\kappa_{\hat\g}: I \times D^n \approx U_{\hat\g}$ in $U_{\hat\g}$ so that: 

{(1)} $\kappa_{\hat\g}(I) \times o = \hat\g$, where the $\kappa_{\hat\g}|_I$ is an orientation-preserving diffeomorphism, 

{(2)} $U_{\hat \g}$ consists of segments of $\hat v$-trajectories,  

{(3)} each slice $\kappa_{\hat\g}(t \times D^n)$, where $t \in I$, is transversal to the $\hat v$-flow, and 

{(4)} $\kappa_{\hat\g}$ is an orientation-preserving diffeomorphism with respect to the orientation of $U_{\hat\g}$, induced by the preferred orientation of $\hat X$. 

This choice of the tube $U_{\hat\g}$ satisfies all the properties, listed in Definition \ref{Calabi_tube}, second bullet. So $U_{\hat\g}$ is a $v$-invariant Calabi tube, which contains $\g$. By Lemma \ref{invariant_versa_balanced}, there is a reparametrization of $U_{\hat\g}$ so that $U_{\hat\g}$ becomes balanced.

By the compactness of $X$, it admits a finite subcover by $v$-invariant balanced Calabi tubes $\{U_{\hat\g}\}_{\hat\g}$. So $v$ is an invariant balanced Calabi field.
\smallskip

Since $v$ admits a global Lyapunov function \cite{K1}, we get $\mathsf{Lyap}(v) =1$.
\end{proof}


Given a Riemmanian metric $g$ on a $(n+1)$-dimensional $X$, we consider the {\sf Hodge star operator} $\ast_g: T^\ast X \to \bigwedge^n T^\ast X$ which is a bundle isomorphism. In local coordinates, and with respect to a local basis $\{e_j^\star\}_j$ in $T^\ast X$ and some associated dual local basis $\{\eta_j^\star\}_j$ in $\bigwedge^n T^\ast X$, the operator $\ast_g$ is given by the $(n+1)\times (n+1)$-matrix $\mathsf G =  \sqrt{\det(\mathsf g)}\cdot (\mathsf g)^{-1}$, where $\mathsf g = (g_{jk})$.  For $n \geq 2$, since $\det(\mathsf G) = \det(\mathsf g)^{(n-1)/2}$, remarkably,  the operator $\ast_g$ determines the metric $g$ (\cite{Ca}). 
\smallskip

Recall that the {\sf co-derivative operator},  acting on differential $p$-forms on $X$, is defined by 
 $$\delta =_{\mathsf{def}} (-1)^{(n+1)(p+1)+1}\, (\ast_g) \circ\, d\,\circ (\ast_g).$$ We say that a $p$-form $\a$ is {\sf co-closed} if $\delta\a = 0$. The closed and co-closed forms $\a$ are {\sf harmonic}, i.e., they satisfy the Laplace equation $(d + \delta)^2\a = 0$; however, on manifolds with boundary, not any harmonic form is closed and co-closed! 
\smallskip

\noindent {\bf Remark 4.1.} Given a $1$-form $\a$ on $(X, g)$, we denote by $\a_{\mathsf{tan}}$ the $1$-form in $TX|_{\d X}$ that coincides with $\a$ on $T(\d X)$ and vanishes on the normal vector field $\nu_g(\d X, X)$. By definition, $\a_{\mathsf{norm}} = \a - \a_{\mathsf{tan}}$, as sections of  $TX|_{\d X}$. 
\smallskip

On manifolds with boundary, the basic relation between closed and co-closed forms and the DeRham cohomology is a bit subtle, as described in \cite{CTGM}; for $1$-form $\a$ on $X$, the relation is given by  
 $$H^1(X; \R) \approx \{\a|\; d\a = 0,\; \delta\a = 0,\; \a_{\mathsf{norm}} = 0\},$$ 
 $$H^1(X; \d X; \R) \approx \{\a|\; d\a = 0,\; \delta\a = 0,\; \a_{\mathsf{tan}} = 0\}. $$
 \hfill $\diamondsuit$
\smallskip

The main ideas for proving the next theorem can be found in \cite{K6}, as a special case of Theorem C. However, in Theorem \ref{th2.1} below, the given ingredient is the vector field $v$, not a closed 1-form $\a$ as in \cite{K6}. 

\begin{theorem}\label{th2.1} Let $v$ be a Calabi vector field (see Definition \ref{Calabi_field}, the $1^{st}$ bullet) on a compact $(n+1)$-manifold $X$, $n \geq 2$.\smallskip

{$(1)$} Then there exists a smooth $1$-form $\a$ and a metric $g$ on $X$  such that: 
\begin{itemize}
\item $\a(v) > 0$
\item $d(\ast_g \a) = 0$, 
\item $\dim(\ker(\ast_g \a)) = 1$, 
\item $\a \wedge \ast_g \a = \ast_g(1)$ is a 
volume form on $X$.
\end{itemize}
\smallskip

{$(2)$} Let $v$ be a $v$-\emph{invariant} Calabi vector field 
on $X$. Then, in addition to the properties in {$(1)$}, one may choose $\a$ to be a $v$-invariant $1$-form. The $n$-form $\Theta := \ast_g \a$ is integrally dual to $v$ (see Definition \ref{def.dual}), 
 i.e., in addition to the bulleted properties above, $v \in \ker(\Theta)$. Moreover, one may choose the metric $g$ on $X$ to be $v$-invariant.
\smallskip

{$(3)$} If $v$ is a \emph{traversing} vector field, then $v$ is a balanced $v$-invariant Calabi field.  Moreover, $\a = df$, an exact $1$-form. For an appropriate $g$, the harmonic $n$-form $\Theta = \ast_g \a$ is integrally dual of $v$, and $f: X \to \R$ is a harmonic Lyapunov function for $v$.  Furthermore, one may choose the metric $g$ on $X$ to be $v$-invariant.
\end{theorem}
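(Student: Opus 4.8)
The plan is to split the argument into a linear‑algebra step that reduces all three parts to the existence of a suitable closed $n$‑form $\Theta$ on $X$, and the genuinely analytic core, which is imported from \cite{K6}, Theorem C. For the reduction, note that for any nowhere‑vanishing $1$‑form $\a$ and any metric $g$ one has $\a\wedge\ast_g\a = |\a|_g^2\,\ast_g(1)$ and $\ast_g\a = \a^\sharp\,\rfloor\,\mathrm{vol}_g$; hence $\ker(\ast_g\a) = \mathrm{span}(\a^\sharp)$ is automatically one‑dimensional, the last two bullets of {\bf(1)} collapse to the single normalization $|\a|_g = 1$, and $d(\ast_g\a) = 0$ is just co‑closedness of $\a$. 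Conversely, given a nowhere‑zero closed $n$‑form $\Theta$ I would manufacture $(g,\a)$ thus: $L := \ker\Theta$ is a line field (since $\Theta\neq 0$); pick an $n$‑plane distribution $\mathcal H$ complementary to $L$; then $\Theta|_{\mathcal H}$ is a fibrewise volume form, so there is a fibre metric on the bundle $\mathcal H$ whose fibrewise volume, extended horizontally by the rule $L\,\rfloor\,(\cdot) = 0$, equals $\Theta$; with $\lambda$ the $1$‑form having $\ker\lambda = \mathcal H$ and $g := \lambda\otimes\lambda + (\text{this }\mathcal H\text{-metric})$, $L\perp_g\mathcal H$, a direct computation gives $|\lambda|_g = 1$ and $\ast_g\lambda = \Theta$, so $\a := \pm\lambda$ with the sign fixed so that $v$ has positive $L$‑component satisfies all four bullets. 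Thus {\bf(1)} is equivalent to exhibiting a nowhere‑zero closed $n$‑form on $X$ along whose kernel $v$ has a definite component, and the ``integrally dual'' assertions in {\bf(2)}, {\bf(3)} to exhibiting such a $\Theta$ with in addition $v\in\ker\Theta$ (so $L = \mathrm{span}(v)$) and $\pm\Theta|_{\d_1^\pm X(v)}\geq 0$.

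For {\bf(1)} the Calabi‑tube structure of Definition \ref{Calabi_field} supplies the local solutions for free: on each tube $\kappa: D^1\times D^n\to\hat X$ (or $S^1\times D^n\to\hat X$) take $\a_i = \kappa^\ast\theta$, $g_i = \kappa^\ast(\theta^{\otimes 2}+g_{D^n})$, $\Theta_i = \ast_{g_i}\a_i = \kappa^\ast(\mathrm{vol}_{D^n})$, which is closed and has $\a_i(v) > 0$ and $v$ positively transverse to $\ker\Theta_i$ by the defining inequality of a Calabi tube; so there is no local obstruction. Globalizing these local pieces into a single closed $\Theta$ — equivalently, by Lemma \ref{nildivergent_form}, producing a volume form on $X$ preserved by some nowhere‑vanishing field — is the heart of the matter and is exactly the content of \cite{K6}, Theorem C (in the lineage of \cite{Ca}, \cite{Su}): a Sullivan‑type foliation‑cycle / Hahn–Banach argument shows that the obstruction to gluing vanishes precisely when the glued $1$‑form is transitive in the sense of \eqref{Calabi}, and transitivity holds here because $v$ is a Calabi field — every point lies on a $v$‑loop or a $v$‑segment with ends on $\d X$, along which any $1$‑form positive on $v$ is positive. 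This global gluing is the step I expect to be the main obstacle; relative to \cite{K6} the only new wrinkle is replacing the prescribed closed $1$‑form there by the prescribed vector field here, which the preceding sentence handles.

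For {\bf(2)} I would run the same scheme $v$‑equivariantly: take $v$‑invariant Calabi tubes, reparametrized to be balanced (Lemma \ref{invariant_versa_balanced}) so that $v = \d_x$ and the local $g_i,\Theta_i$ are genuinely $v$‑invariant, and glue them with a partition of unity constant along $v$‑trajectories — pulled back from the trajectory space $\mathcal T(v)$ — using that $v$‑invariance survives summation; this yields a $v$‑invariant metric and a $v$‑invariant nowhere‑zero closed $n$‑form $\Theta$ with $v\in\ker\Theta$, whence $\a := \ast_g^{-1}\Theta$ is $v$‑invariant, and writing $\Theta = v\,\rfloor\,\Omega$ with $\Omega := \mathrm{vol}_g/|v|_g$ (automatically $v$‑invariant because $g$ is), Lemma \ref{nildivergent_form} certifies that $\Theta$ is integrally dual to $v$ and supplies $\pm\Theta|_{\d_1^\pm X(v)}\geq 0$. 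The one bookkeeping subtlety is that a sum of $\mathcal H$‑metrics sharing a common $n$‑volume need not share it; I would circumvent this by patching the metrics $v$‑invariantly first and then rescaling the resulting closed form onto the target horizontal form by their ($v$‑invariant) positive ratio, re‑absorbing the rescaling into the $\mathcal H$‑metric.

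For {\bf(3)}, $v$ traversing is a balanced $v$‑invariant Calabi field by Lemma \ref{travesing_is_Calabi} and $\mathcal T(v)$ is a compact $CW$‑complex, so the invariant partitions of unity above are genuinely available and {\bf(2)} applies; to make $\a$ exact I would fix a global Lyapunov function $f$ with $df(v) > 0$ (\cite{K1}) and take $\mathcal H := \ker(df)$ as the horizontal distribution, so that $\mathrm{grad}_g f$ is proportional to $v$, the normalization of {\bf(1)} forces $|df|_g = 1$ and $v\in\ker(\ast_g df)$, and $\ast_g df$ coincides with the closed horizontal $n$‑form produced above; then $df$ is harmonic and $f$ a harmonic Lyapunov function for $v$, while the metric can be kept $v$‑invariant throughout. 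The delicate point in {\bf(3)} is reconciling the global gluing with both exactness of $\a$ and $v$‑invariance of $g$; the natural route is to carry out the gluing on the model $\mathcal T(v)\times\R$, where the relevant closed $1$‑form is the pullback of the coordinate differential, and then pull everything back to $X$ along $x\mapsto(\Gamma(x),f(x))$ as in the proof of Theorem \ref{double_holography}. In all three parts the genuinely hard ingredient is the \cite{K6}‑type globalization; everything else is linear algebra and bookkeeping.
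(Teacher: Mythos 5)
Your overall architecture matches the paper's: extract local data from Calabi tubes, assemble a $1$-form $\a$ with $\a(v)>0$ and a closed $n$-form $\Theta$ with $\a\wedge\Theta>0$, then produce $g$ with $\ast_g\a=\Theta$ (your explicit $g=\lambda\otimes\lambda+h_{\mathcal H}$ construction is in substance Calabi's Lemma 1, which the paper invokes), and your treatment of $v$-invariance in {\bf(2)} via trajectory-constant weights and of exactness in {\bf(3)} via a global Lyapunov function with $\mathcal H=\ker(df)$ is essentially what the paper does. The genuine gap is exactly the step you flag as ``the heart of the matter'': you outsource the globalization --- the existence of one closed, nowhere-zero $n$-form $\Theta$ with the required positivity (and, in {\bf(2)}--{\bf(3)}, with $\ker\Theta=\mathrm{span}(v)$) --- to \cite{K6}, Theorem C via a Sullivan-type Hahn--Banach/transitivity argument. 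That citation does not deliver this: Theorem C of \cite{K6}, like Calabi's theorem \cite{Ca}, takes a \emph{closed} transitive $1$-form as its input, whereas here the prescribed datum is the vector field $v$, and the glued $1$-form $\sum_i\a_{\hat\g_i}$ is \emph{not} closed --- the paper stresses both points explicitly (``Unfortunately, $\a_{\hat\g}$ is not closed!'', and the remark before Theorem \ref{th2.1} that the given ingredient is $v$, not a closed $\a$ as in \cite{K6}). Transitivity of a non-closed form is not a hypothesis that machinery accepts, so your bridging sentence (``the only new wrinkle\ldots which the preceding sentence handles'') does not close the gap; as written, the key existence statement is unproved.

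The irony is that no such machinery is needed, and the paper's proof shows the step is elementary rather than the main obstacle: each local form $\Theta_{\hat\g}$ is the pull-back of $vol_{D^n}$ damped by a bell function supported in the interior of its tube, hence extends by zero to a globally defined \emph{closed} $n$-form on $X$, and after passing to a finite subcover (compactness is the only global input) the sum $\Theta=\sum_i\Theta_{\hat\g_i}$ is the desired form; the remaining checks --- $\a(v)>0$, $\a\wedge\Theta>0$ for Calabi's lemma, and in the invariant case $v\in\ker\Theta$, $\mathcal L_v\Theta=0$ and the boundary signs --- are pointwise and use only that the (invariant, balanced) tubes cover $X$. In the same spirit, the $v$-invariance of $g$ in {\bf(2)} is obtained in the paper not by an abstract rescaling argument but by weighting the local star operators with functions $\phi_{\hat\g}$ satisfying $\mathcal L_v\phi_{\hat\g}=0$, which is a cleaner version of your ``re-absorb the rescaling into the $\mathcal H$-metric'' fix. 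If you replace the black-boxed appeal to \cite{K6} by this direct summation, the rest of your outline goes through and essentially reproduces the paper's argument.
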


\begin{proof} Here is a general plan for proving the theorem: {\bf (i)} starting with the Calabi vector field $v$, we construct $v$-amenable $1$-form $\a$ (in the third bullet, $\a = df$, where $f$ is a Lyapunov function for $v$) and a closed $n$-form $\Theta$ so that $\a \wedge  \Theta > 0$; {\bf (ii)} then we construct the metric $g$ for which $\ast_g(\a) = \Theta$.  \smallskip

We consider a cover $\hat{\mathcal U}$ of $X$ by Calabi tubes $U_{\hat \g}$, where $\hat\g$ runs over the set of all $\hat v$-trajectories that have a nonempty intersection with $X$. Using compactness of $X$, we pick a finite subcover  $\{U_{\hat\g_i}\}_{i}$ of $\hat{\mathcal U}$ so that  $X = \bigcup_i\; (U_{\hat\g_i} \cap X)$.\smallskip

As before, we divide  Calabi tubes into two types: for the first type, the core $\hat\g$ of $U_{\hat \g}$ is a closed segment, for the second type, the core is a simple loop. For each of the tubes, we fix a product structure, given by a diffeomorphism $\kappa_{\hat \g}$ with the properties as in Definition \ref{Calabi_tube}, the first bullet.

For each tube of the first type, we consider a function $\tilde f_{\hat\g} : U_{\hat \g} \to I$, the pull-back by $\kappa^{-1}_{\hat\g}$ of the obvious function $I \times D^n \to I$. By the definition of the Calabi tube, $d \tilde f_{\hat\g}(v) > 0$ in $U_{\hat \g}$.
Similarly, for each tube $U_{\hat \g}$ of the second type, with the help of $\kappa^{-1}_{\hat\g}$, we produce a $1$-form $\tilde\a_{\hat\g}$ in $U_{\hat \g}$ such that $\tilde\a_{\hat\g}(v) > 0$.
 
Let $\tilde\psi: D^n \to \R_+$ be a smooth non-negative bell function with the support in the interior of $D^n$ and such that all its partial derivatives vanish along $\d D^n$. Using the $(\kappa^{-1}_{\hat\g})$-induced projection $p: U_{\hat \g} \to D^n$, we form the pull-back function $\psi = \tilde\psi \circ p$ and multiply $\tilde f_{\hat\g}$ by $\psi$ to get a smooth function $f_{\hat\g}: U_{\hat \g} \to \R$ with the support in the interior of $U_{\hat \g}$. 
Thus $df_{\hat\g}(\hat v) > 0$ in the interior of $U_{\hat\g}$ and $df_{\hat\g}(\hat v) \geq 0$ globally. \smallskip


Similarly, for Calabi tubes of the toroidal kind, we put $\a_{\hat\g} := \psi \cdot \tilde\a_{\hat\g}$, where $\psi: U_{\hat\g} \to \R_+$ is the pull-back of the bell function $\tilde\psi: D^n \to \R_+$ under the $(\kappa^{-1}_{\hat\g})$-induced projection $U_{\hat\g} \to D^n$. Thanks to the choice of $\tilde\psi$, this $1$-form is well-defined globally. Again, $\a_{\hat\g}(\hat v) > 0$ in the interior of $U_{\hat\g}$ and $\a_{\hat\g}(\hat v) \geq 0$ globally. Unfortunately, this $\a_{\hat\g}$ is not closed! 
\smallskip

For Calabi tubes of both kinds, we introduce the $n$-form $\Theta_{\hat\g}$ on $U_{\hat\g}$ as the pull-back, under the map $(\kappa_\g)^{-1}$, of the standard volume form $vol_{D^n}$ on $D^n$, being multiplied by the bell function $\tilde\psi: D^n \to \R_+$. Evidently, $\Theta_{\hat\g}$ extends trivially on $X$. Since $\Theta_{\hat\g}$ depends only on the coordinates in $D^n$, for the dimensional reason, we get $d\Theta_{\hat\g} = 0$.  

Moreover, the restriction  of the function $\pm\Theta_{\hat\g} /\Omega^\d$ to $\d_1^\pm X(v) \setminus \d_2X(v)$ is positive.  Indeed, consider the subtube $U_{\hat \g}^\dagger \subset U_{\hat\g}$ that is the preimage of $\g \subset \hat\g$ under the projection $U_{\hat \g} \to \hat\g$, delivered by the product structure on the tube $U_{\hat\g}$.
Then, in the case of $\d_1^- X(v) \setminus \d_2X(v)$ (or of $\d_1^+ X(v) \setminus \d_2X(v)$), $\hat v$ points outside (inside) of both domains, $X$ and $U_{\hat \g}^\dagger$.  
\smallskip

Next, we define the global $1$-form $\a$ on $X$ by the formula $\sum_i \a_{\hat\g_i}$, and the global $n$-form $\Theta$ by the formula $\sum_i \Theta_{\hat\g_i}$. Since
$d\Theta_{\hat\g_i}=0$ by its construction, $\Theta$ is a closed form. Again, since $\a_{\hat\g_i}(\hat v) > 0$ in the open set $U_{\hat\g_i}$ for all $i$, we conclude that $\a(v) > 0$ in $X$.  \smallskip

Finally, with the candidates $\a$ and $\Theta$ in place, we consider the Hodge star bundle isomorphism $\ast_g: T^\ast X \to \bigwedge^n T^\ast X$, where the Riemmanian metric $g$ on $X$ to be consructed. Recall that, for $n \geq 2$, the operator $\ast_g$ determines the metric $g$ (\cite{Ca}). Therefore, it suffices to pick any $g$ such that $\ast_g(\a) = \Theta$. By the construction of $\a$ and $\Theta$, we get $\a \wedge \ast_g \a > 0$. By Lemma 1 from \cite{Ca}, such a metric $g$ exists. 
Moreover, Calabi's argument (see \cite{Ca}, pages 110-112) insures that $\a \wedge \ast_g \a = vol_g := \ast_g(1)$, the $g$-induced volume $(n+1)$-form on $X$. (For the reader's convenience, we will sketch his argument below.) \smallskip
%

With respect to such a choice of $g$, the form $\Theta$ is closed, its kernel is $1$-dimensional,  $\a(v) > 0$, and $\a \wedge \ast_g \a = vol_g$. So claim {$(1)$} from of the theorem is valid.\smallskip

Note that, for each $v$-\emph{invariant} Calabi tube, $v \in \ker(\Theta_{\hat\g_i})$ and $\mathcal L_v(\Theta_{\hat\g_i}) = 0$ for all $i$. Thus, for a $v$-invariant Calabi vector field $v$, in addition, we get $v \in \ker(\Theta)$ and $\mathcal L_v(\Theta) = 0$. 

Thanks to Lemma \ref{invariant_versa_balanced}, by choosing an appropriate parametrization of the invariant Calabi tubes from the cover $\hat{\mathcal U}$, we may assume that $\a(v)$ is constant along the $v$-trajectories. Therefore $\a$ is a $v$-invariant form (i.e., $\mathcal L_v\a = 0$). \smallskip

Next, we are going to show that we may choose  the metric $g$ in {$(2)$} to be $v$-invariant. To achieve this, we need to revisit the argument in \cite{Ca}. 

We notice that, for a given $v$ and $\a$ and $\Theta$ as above, the choice of $g$ is far from being unique. Using the product structure in an invariant balanced Calabi tube $U_{\hat\g}$, we introduce there local coordinates $\{x_i\}_{i \in [0, n]}$ such that: (i) $dx_0 = \a$, where $x_0: U_{\hat\g} \to S^1$ or $x_0: U_{\hat\g} \to \R^1$, depending on the type of the tube, (ii)  $v = \d_{x_0}$ (using that $U_{\hat\g}$ is balanced), and (iii) $dx_1 \wedge \dots \wedge dx_n = \Theta$.  
Then we define the dual basis in $\bigwedge^n T^\ast U_{\hat\g}$ as $$\eta_0 := dx_1 \wedge \dots \wedge dx_n,\, \eta_1 := - dx_0 \wedge dx_2 \wedge \dots \wedge dx_n,\, \eta_2 := dx_0 \wedge dx_1 \wedge dx_3 \dots \wedge dx_n, \; \dots \, \text{etc.}$$
Again, following \cite{Ca}, for $n \geq 2$, this choice of bases $\{dx_i\} \in T^\ast U_{\hat\g}$ and $\{\eta_i\} \in \bigwedge^n T^\ast U_{\hat\g}$ defines a unique candidate for the local star operator $\ast_{g_{\hat\g}}: T^\ast U_{\hat\g} \to \bigwedge^n T^\ast U_{\hat\g}$ that takes each $dx_i$ to $\phi_{\hat\g} \cdot \eta_i$, where the smooth functions $\{\phi_{\hat\g} : U_{\hat\g} \to \R_+\}_{\hat \g}$ form a finite partition of unity, subordinate to the cover $\hat{\mathcal U}$, and such that $\mathcal L_v(\phi_{\hat\g}) = 0$.  

We notice that all the forms $\{dx_i\}$ and $\{\eta_i\}$ are $v$-invariant. Moreover, since each $\phi_{\hat\g}$ does not depend on $x_0$, we get that $\{\phi_{\hat\g} \cdot \eta_i\}_i$ are $v$-invariant as well. Therefore, the local star operators $\ast_{g_{\hat\g}}$ must be also $v$-invariant. \smallskip

Finally, we pick the $v$-invariant operator $\ast_g := \sum_{\hat\g} \ast_{g_{\hat\g}}$ 
 which has the desired properties: $\ast_g(\a) = \Theta$, $\a \wedge \Theta = vol_g$. Therefore, the corresponding metric $g$ must be invariant as well.

Hence, the claim {$(2)$} is valid.\smallskip

When $v$ is traversing, by Lemma \ref{travesing_is_Calabi}, $X$ admits a cover by $v$-invariant Calabi tubes, which are cylinders only. As a result, $\a = df$, where $f:= \sum_i f_i$. Therefore, $\a$ is exact! Moreover, since $d\a =0$ and $d\Theta = 0$, both $\a$ and $\Theta$ are harmonic in $g$. By the constructions of $\a$ and $\Theta$ above, they satisfy all the the properties from the claim {$(3)$} of this theorem, including the property $\mathcal L_vg = 0$. 
\end{proof}




The properties of forms $\Theta$ and $\a$, listed in Theorem \ref{th2.1}, motivate the following:

\begin{definition}\label{harmonizing pair}  For a non-vanishing vector field $v$ on a smooth compact $(n+1)$-manifold $X$, consider the space $\mathcal Har(v)$ of smooth Riemannian metrics $g$ on $X$, paired with smooth $1$-forms  $\a$, such that: 
\begin{enumerate}
\item  $\a(v) > 0$,
\item $d\a = 0$,
\item the $n$-form $\Theta :=_{\mathsf{def}} \ast_g(\a)$ is closed (and thus $\a$ is \emph{harmonic} in the metric $g$),
\item  $v \in K(\Theta)$, the kernel of $\Theta$,
\item $\a \wedge \Theta$, is the $g$-induced $(n+1)$-volume form on $X$, 
\item the function $\Theta/ \Omega^\d \geq 0$ on $\d_1^+X(v)$ and $\Theta/ \Omega^\d \leq 0$ on $\d_1^-X(v)$, where $\Omega^\d$ denotes a volume $n$-form on $\d X$, consistent with its orientation.
\end{enumerate}

For a given $v$, we say that  $(g, \a)$ is a \textsf{harmonizing pair}, if all the six properties 
are valid.
 
\hfill $\diamondsuit$
\end{definition}

\noindent {\bf Remark 4.2.} Note the main difference between Definition \ref{harmonizing pair} and the list of properties in the claim {$(1)$} of Theorem \ref{th2.1}: namely, the form $\a$ must be closed in Definition \ref{harmonizing pair}.
The form $\Theta = \ast_g\a$ is required to be closed in both Definition \ref{harmonizing pair} and Theorem \ref{th2.1}.\hfill $\diamondsuit$
 \smallskip

Thus, by Theorem \ref{th2.1}, any traversing boundary generic vector field $v$ admits a $v$-harmonizing pair $(g, df)$. 
\smallskip 







The proof of the corollary below can be found in \cite{K6}, as a special case of Theorem C. See also \cite{Su} and the proof of Corollary \ref{Plateau} for a sketch of the argument in Theorem C.

\begin{corollary}\label{foliations} For any non-vanishing vector field $v$ and a $v$-harmonizing pair $(g, \a)$ on $X$ (as in Definition \ref{harmonizing pair}), the following properties hold: 
\begin{itemize}
\item the $1$-foliation $\mathcal F(v)$, determined by $v$, is formed by the \emph{geodesic} curves in $g$,
\smallskip 

\item the $n$-foliation $\mathcal G(\a)$, defined by the closed $1$-form $\a$, consists of leaves $\mathcal L$ that \emph{minimize} the $g$-induced $n$-volume among all sufficiently small perturbations of $\mathcal L$ that are 
 fixed on $\d X$. 
\smallskip

\item the leaves of $\mathcal F(v)$ and of $\mathcal G(\a)$ are mutually orthogonal in $g$.
\hfill $\diamondsuit$
\end{itemize}
\end{corollary}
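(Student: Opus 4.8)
The plan is to derive all three bullets of Corollary~\ref{foliations} from the structural properties of a harmonizing pair $(g,\a)$ listed in Definition~\ref{harmonizing pair}, reducing everything to a pointwise computation in a convenient orthonormal coframe. First I would set up notation: at a point $p\in X$, since $\a(v)>0$ the $1$-form $\a$ is nonzero, so $\ker(\a)$ is an $n$-plane field; I want to show it is exactly the $g$-orthogonal complement of $v$. This follows from property (5): $\a\wedge\Theta=\a\wedge\ast_g\a=vol_g$ is the volume form, which by the defining property of the Hodge star is equivalent to saying $\a$ has unit $g$-norm and $\ast_g\a$ is its dual $n$-form, so $\ker(\ast_g\a)=(\ker\a)^{\perp_g}$ is the line spanned by the metric dual $\a^{\sharp}$; combined with property (4), $v\in\ker(\Theta)$, we get that $v$ is (up to positive scale, using $\a(v)>0$) the metric dual of $\a$, hence $v^{\perp_g}=\ker(\a)$. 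This is precisely the third bullet: the leaves of $\mathcal F(v)$ (integral curves of $v$) and of $\mathcal G(\a)$ (integral hypersurfaces of the integrable distribution $\ker\a$, which is integrable because $d\a=0$) are mutually orthogonal.

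For the first bullet, I would use the fact that $\a=|v|_g^{-1}\,v^{\flat}$ (the normalized metric dual of $v$) is closed. A standard computation shows that for a unit vector field $u=v/|v|_g$ with $u^{\flat}$ closed, the integral curves of $u$ are geodesics: indeed $du^{\flat}=0$ means $\nabla_X u^{\flat}$ is symmetric in the relevant sense, and feeding $X=u$ gives $\nabla_u u=0$ after using $|u|_g\equiv 1$ (so $\nabla_u u\perp u$) and the closedness ($g(\nabla_u u, Y)=g(\nabla_Y u,u)+d u^\flat(u,Y)=\tfrac12 Y(|u|^2)=0$ for all $Y$). Since reparametrizing a trajectory does not change its image, the unparametrized leaves of $\mathcal F(v)$ are geodesics in $g$. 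This is the content of the first bullet.

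For the second bullet — that each leaf $\mathcal L$ of $\mathcal G(\a)$ is $g$-volume-minimizing among small compactly-supported perturbations fixing $\d X$ — I would invoke the calibration argument. The closed $1$-form $\a$, having unit norm by (5), is a calibration: its Hodge dual $\Theta=\ast_g\a$ is a closed $n$-form with comass one whose restriction to each leaf $\mathcal L$ of $\ker\a$ equals the induced Riemannian $n$-volume form (because on $T\mathcal L=\ker\a=v^{\perp_g}$, the form $\ast_g\a$ restricts to the volume element of the orthogonal complement of the unit vector $\a^\sharp$). Then for any competitor $\mathcal L'$ homologous to $\mathcal L$ rel boundary, $\mathrm{vol}_g(\mathcal L)=\int_{\mathcal L}\Theta=\int_{\mathcal L'}\Theta\le\mathrm{vol}_g(\mathcal L')$, the middle equality by Stokes since $d\Theta=0$, and the last inequality since $\Theta$ has comass one. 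This is exactly Sullivan's/the author's Theorem~C argument referenced in \cite{K6},\cite{Su}, and I would cite it rather than reprove it.

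The main obstacle I anticipate is purely expository: making the identification ``$v$ is a positive multiple of $\a^\sharp$'' airtight from properties (4) and (5) alone, since a priori $\ker\Theta$ and $\ker\a$ are both lines/hyperplanes and one must check the Hodge-star bookkeeping (signs, the fact that $\a\wedge\ast_g\a=vol_g$ forces $|\a|_g=1$) carefully in dimension $n+1$. Once that identification is in place, the three bullets are immediate consequences of (i) closedness of $\a$ giving integrability and the geodesic property, and (ii) the calibration inequality; none of these requires new ideas beyond what is already invoked for Theorem~\ref{th2.1} and the cited \cite{K6},\cite{Su}.
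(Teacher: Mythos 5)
Your proposal is correct and follows essentially the same route as the paper, which simply cites Theorem C of \cite{K6} (and \cite{Su}) and sketches the same calibration argument --- $vol_{g|}(\mathcal L)=\int_{\mathcal L}\Theta$, Stokes for $d\Theta=0$, and the comass-one inequality --- in the proof of Corollary \ref{Plateau}. Your explicit verifications that property (5) forces $|\a|_g\equiv 1$ and $v$ to be a positive multiple of $\a^\sharp$, and the standard closed-unit-coform computation giving $\nabla_u u=0$, are exactly the details the paper delegates to those references, and they are carried out correctly.
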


\begin{corollary}\label{Plateau} Let $v$ be a traversing vector field on compact smooth $(n+1)$-manifold $X$ with boundary, $f: X \to \R$ a Lyapunov function for $v$, and $(g, df)$ a $v$-harmonizing pair. \smallskip

Then the Plateau problem for each of the $(n-1)$-dimensional contours $f^{-1}(c) \cap \d X$ has a smooth solution $f^{-1}(c)$ in $(X, g)$ for any regular value $c$ of $f|_{\d X}$. If $H_n(X; \R) = 0$, then this $n$-volume minimizing solution is unique.  \smallskip

For all sufficiently close (in the $C^\infty$-topology) to $v$ traversing vector fields $\tilde v$ and the corresponding $\tilde v$-harmonizing pairs $(\tilde g, df)$, the Plateau problem for the contour $f^{-1}(c) \cap \d X$ still has a smooth solution.
\end{corollary}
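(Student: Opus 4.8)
The plan is to derive this as a consequence of Corollary \ref{foliations} together with the classical calibration argument for the Plateau problem. First I would invoke Theorem \ref{th2.1}(3): since $v$ is traversing (hence boundary generic can be arranged, and in any case traversing fields are invariant balanced Calabi fields), there is a $v$-harmonizing pair $(g, df)$ with $\Theta := \ast_g(df)$ closed and $df \wedge \Theta = vol_g$. By Corollary \ref{foliations}, the $n$-foliation $\mathcal G(df) = \{f^{-1}(c)\}$ consists of leaves that are locally $g$-volume minimizing among compactly supported perturbations fixed on $\partial X$. The key observation is that the closed $n$-form $\Theta$ is a \emph{calibration}: on each leaf $f^{-1}(c)$ it restricts to the induced $g$-volume form (this is precisely what $df \wedge \Theta = vol_g$ together with $df|_{Tf^{-1}(c)} = 0$ gives, after checking the comass is $1$, which follows from $\ker\Theta = \mathbb{R}\cdot v$ being $g$-orthogonal to the leaves). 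Hence for any regular value $c$ of $f|_{\partial X}$, the contour $\Sigma_c := f^{-1}(c) \cap \partial X$ is an $(n-1)$-cycle in $\partial X$, the slice $f^{-1}(c)$ is a smooth compact $n$-manifold with $\partial(f^{-1}(c)) = \Sigma_c$ (by Sard and the regular value theorem applied to $f$ and $f|_{\partial X}$, using that $v$ is traversing so $f$ has no interior critical points along trajectories — one arranges $f$ to have no interior critical points at all, or argues leaf-by-leaf via the product structure of the Calabi tubes), and it is calibrated by $\Theta$, hence $g$-volume minimizing in its homology class rel $\partial X$.

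The main steps in order: (i) fix the harmonizing pair $(g, df)$ from Theorem \ref{th2.1}(3) and record $\Theta = \ast_g df$, $d\Theta = 0$, $df\wedge\Theta = vol_g$, $\ker\Theta = \mathbb{R}v$; (ii) verify $\Theta$ is a calibration of comass one — for a unit simple $n$-vector $\xi$ at a point, $\Theta(\xi) \le 1$ with equality iff $\xi$ spans the $g$-orthogonal complement of $v$, which is exactly $T f^{-1}(c)$; (iii) check that $f^{-1}(c)$ is a smooth compact $n$-submanifold with boundary $\Sigma_c$ for $c$ a regular value of $f|_{\partial X}$, so it is an admissible competitor for the Plateau problem for the contour $\Sigma_c$; (iv) apply the calibration inequality: for any other $n$-chain $Z$ with $\partial Z = \Sigma_c$ and $[Z - f^{-1}(c)]$ supported in $X$, we have $vol_g(f^{-1}(c)) = \int_{f^{-1}(c)}\Theta = \int_Z \Theta \le vol_g(Z)$, where the middle equality uses Stokes and $d\Theta = 0$; (v) for uniqueness when $H_n(X;\mathbb{R}) = 0$, note that two minimizers bound an $n$-cycle, which is null-homologous, and equality in the calibration inequality forces any competitor achieving the minimum to be tangent to $\mathcal G(df)$ almost everywhere, hence to coincide with the leaf; (vi) for the stability statement, observe that the construction in Theorem \ref{th2.1} depends continuously on $v$ in $C^\infty$, so for $\tilde v$ near $v$ one gets $\tilde\Theta$ near $\Theta$ with the same calibration property relative to the (unchanged) function $f$ — here one uses that $df$ itself need not change, only the metric $\tilde g$ adjusting so that $\ast_{\tilde g}(df) = \tilde\Theta$ remains closed and $v$-horizontal for $\tilde v$; the slices $f^{-1}(c)$ are the same smooth submanifolds and remain calibrated, hence still solve the Plateau problem (now in $(X,\tilde g)$).

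The main obstacle I anticipate is step (iii) combined with the passage from ``locally volume-minimizing leaf'' (which is what Corollary \ref{foliations} literally provides) to ``globally minimizing solution of the Plateau problem for the full contour $\Sigma_c$.'' The leaf $f^{-1}(c)$ may be disconnected or may fail to be the \emph{entire} zero set one wants if $f$ has interior critical points; one must either arrange $f$ to be free of interior critical points (possible since $v$ is traversing — glue the local coordinates $x_0$ from the balanced Calabi tubes, or simply note $df(v) > 0$ everywhere already precludes interior critical points of $f$ as a function on $X$... wait, it does: $df(v)>0$ on all of $X$ forces $df \ne 0$ everywhere, so $f$ has \emph{no} critical points in $X$ at all, and only the critical values of $f|_{\partial X}$ are in play), so this simplifies. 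The genuinely delicate point is then the global minimization and uniqueness: the calibration argument handles minimization cleanly once $\Theta$ is verified to be a calibration, but one must be careful that competitors are allowed to be arbitrary (integral or $C^1$) $n$-chains with the prescribed boundary, and that the homology constraint (rel $\partial X$) is the right ambient class — this is where $H_n(X;\mathbb{R}) = 0$ enters to pin down the class uniquely and get genuine uniqueness of the minimizer rather than just within a homology class. I would handle this by working with the relative class in $H_n(X, \partial X; \mathbb{R})$ and the long exact sequence, noting $f^{-1}(c)$ represents a well-defined such class (the Poincaré dual of $[df]$, essentially), and $\Theta$ calibrates within it.
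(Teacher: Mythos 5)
Your proposal is correct and follows essentially the same route as the paper: the paper's own argument is exactly the calibration computation with $\Theta=\ast_g(df)$ closed, restricting to the leafwise volume form, Stokes' theorem giving $\int_{\mathcal H}\Theta=\int_{f^{-1}(c)}\Theta$ for competitors $\mathcal H$ with the same contour, and the strict inequality $vol_{g|}(\mathcal H)>vol_{g|}(f^{-1}(c))$ off the foliation, with $H_n(X;\R)=0$ invoked precisely as you do to make arbitrary competitors homologous to the leaf and to force uniqueness. Your extra observations (no interior critical points of $f$ since $df(v)>0$, comass-one verification via $\ker\Theta=\R v\perp_g\ker df$, and the stability claim reducing to the fact that $f$ remains Lyapunov for nearby $\tilde v$ and the unchanged slices are calibrated by $\ast_{\tilde g}(df)$) are consistent with, and fill in, the paper's sketch, which otherwise defers details to Theorems C and D of \cite{K6}.
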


\begin{proof} The proof may be extracted from \cite{K6}, Theorems C and D, which deal with a more general setting than the one required here. Let us sketch their main trust.
Since $f$ is a Lyapunov function for $v$ and $v$ is traversing, $f$ attends is extrema on $\d X$.  For any non-critical for $f|_{\d X}$ value $c$, the leaves $\mathcal G_c := f^{-1}(c)$, where $c \in f(X)$, of the foliation $\mathcal G(df)$ are nonsingular. By the construction of $v$-harmonizing $g$, we get $vol_g(\mathcal G_c) = \int_{\mathcal G_c} \Theta$, where $\Theta = \ast_g(df)$. Consider any small smooth perturbation $\mathcal H$ of the hypersurface $\mathcal G_c$, supported in the interior of $X$ so that $\d\mathcal H = \d \mathcal G_c$. Like $\mathcal G_c$, the hypersurface $\mathcal H$ is transversal to $v$. By the Stokes' theorem, $\int_{\mathcal H} \Theta = \int_{\mathcal G_c} \Theta$ since $d \Theta = 0$ and $\mathcal H$ and $\mathcal G_c$ are cobordant. On the other hand, using orthogonality of $v$ to $\mathcal G$, we conclude that, at each point $x \in \mathcal H$ where $T_x\mathcal H$ is not tangent to the locus $\mathcal G_{f(x)}$, the $n$-volume form $dg|_{T_x\mathcal H} > \Theta|_{T_x\mathcal H}$. Thus $vol_{g|}(\mathcal H) > \int_{\mathcal H} \Theta = \int_{\mathcal G_c} \Theta = vol_{g|}(\mathcal G_c)$, unless $\mathcal H$ is tangent to $\mathcal G$ almost everywhere, in which case, $\mathcal H = \mathcal G_c$. Therefore, $\mathcal G_c$ minimizes the $g$-induced $n$-volume locally, provided that $\mathcal G_c \cap \d X$ is fixed. 

If $H_n(X; \R) = 0$, then $\mathcal H\, \cup -\mathcal G_c$ is a trivial $\R$-cycle for any relative cycle $\mathcal H$ that shares with $\mathcal G_c$ its boundary $\mathcal G_c \cap \d X$. Hence,  similar arguments work for such an $\mathcal H$, namely, $vol_{g|}(\mathcal H) > vol_{g|}(\mathcal G_c)$. Thus the volume minimizing solution $\mathcal H$ of the Plateau problem for the contour $f^{-1}(c) \cap \d X$ is unique. 
\end{proof}

Combining Theorem \ref{th2.1} with Lemma \ref{lem2.1} leads instantly to the following claim. 

\begin{theorem}\label{traversing_harmonizing} For any traversing boundary generic vector field $v$ on a $(n+1)$-manifold $X$, there exists a $v$-\emph{invariant} metric $g$  and a smooth Lyapunov function $f: X \to \R$ such that $(g, df)$ is a $v$-harmonising pair.
\smallskip 

For any such pair $(g, df) \in \mathcal Har(v)$, the measure $\mu_\Theta$ on $\d X$, induced by the closed $n$-form $\Theta =_{\mathsf{def}} \ast_g(df)$ on $X$ via formula (\ref{eq2.2}), is preserved under the causality map $C_v$. The forms $\Theta$ and $df$ are $v$-invariant.
\hfill $\diamondsuit$
\end{theorem}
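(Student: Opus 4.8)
The plan is to deduce Theorem \ref{traversing_harmonizing} by combining the three ingredients already assembled in the excerpt: Lemma \ref{travesing_is_Calabi}, which tells us a traversing vector field is a balanced $v$-invariant Calabi field; Theorem \ref{th2.1}, which produces a $v$-invariant metric $g$, an exact $v$-invariant $1$-form $\a = df$, and a closed $v$-invariant $n$-form $\Theta = \ast_g(df)$ whenever $v$ is such a Calabi field (claim {\bf (3)}); and Lemma \ref{lem2.1} (equivalently Theorem \ref{C_v_preserves}), which says an integrally dual $n$-form defines a $C_v$-preserved measure. Strictly speaking Theorem \ref{th2.1}{\bf (3)} needs $n \geq 2$, i.e. $\dim X \geq 3$; the low-dimensional cases $n \in \{0,1\}$ are either trivial or must be handled separately (for $n=1$ one can argue directly on surfaces, or cite \cite{K6}), so I would dispose of them in a remark and assume $n \geq 2$ in the main argument.

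First I would invoke Lemma \ref{travesing_is_Calabi}: $v$ traversing $\Rightarrow$ $v$ is a balanced $v$-invariant Calabi field (and $\mathsf{Lyap}(v)=1$). Then apply Theorem \ref{th2.1}{\bf (3)} to obtain a Riemannian metric $g$ on $X$, a Lyapunov function $f\colon X\to\R$ (so $df(v)>0$, hence $f$ is genuinely a Lyapunov function for $v$), with the properties that $df$ is closed (automatic), $\Theta := \ast_g(df)$ is closed, $\ker(\Theta)$ is $1$-dimensional with $v \in \ker(\Theta)$, $df \wedge \Theta = \mathrm{vol}_g$ is a volume form, and — the part that used the balanced invariant Calabi structure and the $v$-invariant partition of unity in the proof of Theorem \ref{th2.1} — both $g$ and $df$, and hence $\Theta$, are $v$-invariant ($\mathcal L_v g = 0$, $\mathcal L_v(df)=0$, $\mathcal L_v\Theta = 0$). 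This establishes the first sentence of the theorem: $(g, df)$ is a $v$-harmonizing pair in the sense of Definition \ref{harmonizing pair}, once one also checks property (6), the sign condition $\pm\Theta|_{\d_1^\pm X(v)} \geq 0$; but that sign condition is built into the construction of $\Theta$ in the proof of Theorem \ref{th2.1} (the paragraph where $\pm\Theta_{\hat\g}/\Omega^\d$ is shown positive on $\d_1^\pm X(v)\setminus\d_2 X(v)$, and by continuity $\geq 0$ on the closures), so no new work is needed — I would just cite it.

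Second, for the dynamical conclusion I would observe that any $v$-harmonizing pair $(g, df)$ gives an $n$-form $\Theta = \ast_g(df)$ that is \emph{integrally dual to $v$} in the sense of Definition \ref{dual}: it is closed (property (3) of Definition \ref{harmonizing pair}), its kernel is $1$-dimensional containing $v$ (properties (4)–(5): $df\wedge\Theta$ being a volume form forces $\dim\ker\Theta = 1$ since $\Theta$ is nonzero on a complement of $v$), and it satisfies the boundary sign condition (property (6)). Having verified the hypotheses of Theorem \ref{C_v_preserves} (equivalently Lemma \ref{lem2.1}), we conclude immediately that $\Theta$ is $v$-invariant — which we already knew in the invariant case, but which also holds for an arbitrary harmonizing pair — and that the measure $\mu_\Theta$ on $\d X$ defined by formula (\ref{eq2.2}) is preserved by the causality map $C_v$. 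The final assertion that $\Theta$ and $df$ are $v$-invariant is exactly the output of Theorem \ref{th2.1}{\bf (3)} recorded above.

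The argument is essentially a bookkeeping assembly, so there is no serious obstacle; the only point requiring genuine care is the \emph{simultaneous} $v$-invariance of $g$ and $df$ (not merely of $\Theta$), which is why the theorem statement hinges on the delicate part of the proof of Theorem \ref{th2.1}{\bf (3)} — the construction of a $v$-invariant partition of unity $\{\phi_{\hat\g}\}$ and the resulting $v$-invariant local Hodge-star operators that patch to a global $v$-invariant $\ast_g$. If one is willing to take Theorem \ref{th2.1} as a black box, as the excerpt explicitly permits, then the present theorem follows in a few lines. I would also remark that the metric $g$ so produced is far from unique (Lemma \ref{unique} and Lemma \ref{cone}), so there is a whole convex family of harmonizing pairs, all giving $C_v$-invariant measures on $\d X$.
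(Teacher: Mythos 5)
Your proposal is correct and follows essentially the same route as the paper, which derives this theorem directly by combining Theorem \ref{th2.1} (claim {\bf (3)}, whose proof contains the $v$-invariant construction of $g$, $df$ and $\Theta$) with Lemma \ref{lem2.1} (equivalently Theorem \ref{C_v_preserves}) for the $C_v$-invariance of $\mu_\Theta$. Your extra bookkeeping (checking integral duality, the boundary sign condition, and the dimension caveat) is consistent with, and just makes explicit, what the paper treats as immediate.
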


In combination with Theorem \ref{traversing_harmonizing}, Corollary \ref{foliations} leads to the following claim.


\begin{corollary}\label{from_g_to_foli} Let $v$ be a traversing boundary generic vector field on $X$, and $(g, df)$ a $v$-harmonizing pair. \smallskip
Assume that no pair of distinct points $a, b \in \d X$ admits two distinct $g$-geodesics that reside in $X$ and connect $a$ and $b$.\footnote{For example, this is the case when $(X, g)$ is contained in a larger Riemannian manifold $(\hat X, \hat g)$ such that any two distinct points in $\hat X$ belong to a single $\hat g$-geodesic.} \smallskip

Then the the knowledge of the causality map $C_v: \d_1^+X(v) \to \d_1^-X(v)$ and of the $v$-harmonizing metric $g$ on $X$ allows for the reconstruction of the foliation $\mathcal F(v)$.
\end{corollary}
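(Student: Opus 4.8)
The plan is to reconstruct the foliation $\mathcal F(v)$ leaf by leaf by identifying, for each $x \in \d_1^+X(v)$, the geodesic arc in $(X,g)$ that realizes the leaf $\mathcal F(v)$ through $x$. The key input is Corollary \ref{foliations}: since $(g, df)$ is a $v$-harmonizing pair, every leaf of $\mathcal F(v)$ is a $g$-geodesic, and it meets $\d X$ exactly in the pair of points $\{x, C_v(x)\}$ when $x \in \d_1^+X(v) \setminus \d_2^+X(v)$ (and degenerates to the singleton $\{x\}$ when $x$ is a fixed point of $C_v$, e.g. on the locus where $v$ is tangent to $\d X$). Thus the data $(C_v, g)$ give us, for almost every $x$, a pair of boundary points that we know lie on a common geodesic leaf.

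First I would note that knowing $g$ means knowing the exponential map and hence the entire collection of geodesic arcs in $X$ with endpoints on $\d X$. For a point $x \in \d_1^+X(v)$ and its causal partner $y = C_v(x) \in \d_1^-X(v)$, the leaf $\g_x$ of $\mathcal F(v)$ is a geodesic joining $x$ to $y$. The uniqueness hypothesis — no two distinct points of $\d X$ are joined by two distinct $g$-geodesics — tells us this geodesic arc is the \emph{unique} one between $x$ and $y$, so it is pinned down by $x$, $y$, and $g$ alone, with no further ambiguity. Running $x$ over a full-measure subset of $\d_1^+X(v)$ and taking closures recovers every leaf of $\mathcal F(v)$ as a geodesic segment in $(X,g)$; the remaining leaves (those entirely interior, tangent to $\d X$, or of higher-order tangency type, all forming a set of measure zero in the trajectory space) are then filled in by continuity, using that $\mathcal F(v)$ is a foliation and that geodesics depend smoothly on their endpoints away from conjugate-point degeneracies.

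One technical point to address carefully: the orientation of each leaf. The causality map already distinguishes source $x \in \d_1^+X(v)$ from target $y \in \d_1^-X(v)$, so the geodesic arc inherits a preferred direction, namely from $x$ to $y$; this matches the $v$-induced orientation of $\mathcal F(v)$, so the reconstruction is of the oriented foliation. A second point is the behavior near $\d_2 X(v)$, where $C_v$ is discontinuous and leaves may be tangent to $\d X$: there one uses the local semi-algebraic models of $C_v$ from Lemmas 3.1 and 3.4 of \cite{K2} (invoked already in the proof of Lemma \ref{lem2.1}) together with continuity of the foliation to patch across the discontinuity locus $\mathcal X \cap \mathrm{int}(\d_1^+X(v))$. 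I expect the main obstacle to be precisely this patching across the measure-zero exceptional set: away from it everything is a clean "endpoints plus metric determine the geodesic" argument, but showing that the family of reconstructed geodesic arcs assembles into the genuine smooth foliation $\mathcal F(v)$ — rather than merely agreeing with it almost everywhere — requires the fine structure of $C_v$ near tangency strata and the fact that $v$ is boundary generic, so that the closure operation does not introduce spurious leaves.
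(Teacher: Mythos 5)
Your proposal is correct and follows essentially the same route as the paper: by Corollary \ref{foliations} the trajectory segment from $x$ to $C_v(x)$ is a $g$-geodesic, and the no-double-geodesic hypothesis forces it to coincide with the unique geodesic arc joining these two boundary points, so $(C_v, g)$ determine each leaf. The extra care you devote to the tangency/discontinuity locus and orientation goes beyond the paper's brief argument but is not needed for the stated conclusion (note that a traversing field has no entirely interior leaves, so that case does not arise).
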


\begin{proof} Consider a geodesic curve $[\g(g)]$ that connects two points, $x \in \d X$ and $C_v(x) \in \d X$. By Corollary \ref{foliations}, the segment $[\g]$ of the $v$-trajectory that connects $x$ and $C_v(x)$ is a geodesic curve. By the uniqueness hypotheses, we get $[\g(g)] = [\g]$. So the entire $\g$ can be reconstructed from $C_v$ and the $v$-harmonizing metric $g$.
\end{proof}




\begin{lemma}\label{lemA} Let $v$ be a traversing boundary generic vector field on a compact connected smooth Riemannian $(n+1)$-manifold $(X, g)$ with boundary. 
Then 
the measure $\mu_\Theta$ on $\d X$ can be recovered from the $g$-induced volume $n$-form $\Omega^\d$ on $\d X$ and the function $$\phi^\d =_{\mathsf{def}}(\Theta |_{\d X})/ (\Omega^\d):\; \d X \to \R$$ 
---the $\cos$ of the angle, formed by $v$ and the inner normal $\nu_g$ to $\d X$.\smallskip
\end{lemma}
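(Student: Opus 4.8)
The plan is to note that the measure $\mu_\Theta$, although introduced in (\ref{eq2.2}) by integrating an $n$-form $\Theta$ that a priori lives on all of $X$, in fact only sees the pull-back $\Theta|_{\d X}$ of $\Theta$ to the boundary, and that by the very definition of $\phi^\d$ this pull-back is $\phi^\d\cdot\Omega^\d$. So the first step is just to substitute $\Theta|_{\d X}=\phi^\d\,\Omega^\d$ into the defining formula (\ref{eq2.2}): for every Lebesgue-measurable $K\subset\d X$,
\[
\mu_\Theta(K)\;=\;\int_{K\,\cap\,\d_1^+X(v)}\phi^\d\,\Omega^\d\;-\;\int_{K\,\cap\,\d_1^-X(v)}\phi^\d\,\Omega^\d .
\]

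Next I would invoke the sign constraint built into the notion of an integrally dual form (Definition \ref{dual}, equivalently hypothesis (3) of Lemma \ref{lem2.1}): $\pm\,\Theta|_{\d_1^\pm X(v)}\geq 0$, i.e. $\phi^\d\geq 0$ on $\d_1^+X(v)$ and $\phi^\d\leq 0$ on $\d_1^-X(v)$. Hence the first integrand above equals $|\phi^\d|\,\Omega^\d$, and the second, once the minus sign is absorbed, also equals $|\phi^\d|\,\Omega^\d$. Since $\d_1^+X(v)\cup\d_1^-X(v)=\d X$ while the overlap $\d_1^+X(v)\cap\d_1^-X(v)=\d_2 X(v)$ is a smooth submanifold of dimension $n-1$, hence $\Omega^\d$-null, the two pieces add up, without double-counting, to
\[
\mu_\Theta(K)\;=\;\int_K|\phi^\d|\,\Omega^\d .
\]
Thus $\mu_\Theta$ is exactly the measure on $\d X$ that is absolutely continuous with respect to $\Omega^\d$ with density $|\phi^\d|$; it is therefore visibly a function of the pair $(\Omega^\d,\phi^\d)$ alone, which is the assertion of the lemma.

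To justify the parenthetical identification of $\phi^\d$ with $\cos\angle_g(v,\nu_g)$ --- in the case of the natural choice $\Theta=v\,\rfloor\, vol_g$, with $vol_g=\ast_g(1)$ and $v$ of unit $g$-length --- I would reuse the local computation already carried out in the proof of Lemma \ref{nildivergent_form}. Near $\d X$ one has $vol_g=\nu_g^{\,\flat}\wedge\Omega^\d$ with the orientation convention that ties $\Omega^\d$ to the inner normal $\nu_g$; decomposing $v=\langle v,\nu_g\rangle_g\,\nu_g+v^{\mathsf{tan}}$ and noting that $v^{\mathsf{tan}}\,\rfloor\, vol_g$ restricted to $T(\d X)$ is proportional to $\nu_g^{\,\flat}$ and hence vanishes there, one gets $(v\,\rfloor\, vol_g)|_{\d X}=\langle v,\nu_g\rangle_g\,\Omega^\d=\cos\angle_g(v,\nu_g)\,\Omega^\d$. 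The sign behaviour of $\phi^\d$ on $\d_1^\pm X(v)$ is then automatic from the definition of those two loci, so that hypothesis (3) above is built in rather than assumed.

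I do not expect a genuine obstacle: the argument is little more than a change of variables in (\ref{eq2.2}). The only points needing care are the orientation bookkeeping relating $\Omega^\d$ to the inward normal $\nu_g$ (so the cosine comes out with the correct sign on $\d_1^+X(v)$ versus $\d_1^-X(v)$) and the observation that the tangency stratum $\d_2 X(v)$, along which $\phi^\d$ vanishes, is $\Omega^\d$-null, so that the splitting $\d X=\d_1^+X(v)\cup\d_1^-X(v)$ produces no over-counting in passing to $\int_K|\phi^\d|\,\Omega^\d$.
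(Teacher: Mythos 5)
Your proposal is correct and follows essentially the same route as the paper's own (one-sentence) proof: both simply observe that $\Theta|_{\d X}=\phi^\d\cdot\Omega^\d$ and substitute into the defining formula (\ref{eq2.2}), so $\mu_\Theta$ is determined by the pair $(\Omega^\d,\phi^\d)$. Your extra bookkeeping --- the sign analysis giving $\mu_\Theta(K)=\int_K|\phi^\d|\,\Omega^\d$, the $\Omega^\d$-nullity of $\d_2X(v)$, and the local computation identifying $\phi^\d$ with $\cos\angle_g(v,\nu_g)$ --- just fills in details the paper leaves implicit.
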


\begin{proof} 
By the definition of the auxiliary function $\phi^\d: \d X \to [-1, 1]$ and using that $\Theta |_{K(df)}$ and $\Omega^\d$ on $\d X$ both are the $g$-induced volume $n$-forms, we have $\Theta(w) = \phi^\d(x) \cdot \Omega^\d(w)$ for any $w \in \Lambda^n T_x(\d X)$.
\hfill 
\end{proof}


By collapsing each $v$-trajectory to a point, we get a quotient {\sf trajectory space} $\mathcal T(v)$. We denote by $\Gamma: X \to \mathcal T(v)$ the quotient map, and by $\Gamma^\d: \d X \to \mathcal T(v)$ its restriction to $\d X$.\smallskip

For a traversing boundary generic vector field $v$, let $\mathcal Y$ denotes the set of $v$-trajectories that pass through the tangency locus $\d_2X(v)$. Let $\mathcal K(v) =_{\mathsf{def}} \mathcal Y \cap \d_1^+X(v)$. For such a $v$, the set $\mathcal K(v)$ is compact $(n-1)$-dimensional $CW$-complex, so  $\mu_\Theta(\mathcal K(v)) = 0$. In fact, $\Gamma^\d : \d_1^+X(v) \to \mathcal T(v)$ is a homeomorphism on the complement to the zero-measure set $\mathcal K(v)$.\smallskip

This observation motivates the following definition.

\begin{definition}\label{def2.4} 
Let $v$ be a boundary generic traversing vector field on a compact connected smooth $(n+1)$-manifold $X$ with boundary, and let $\Theta$ be a differential $n$-form that is integrally dual of $v$ (as in Lemma \ref{lem2.1}).  
\begin{itemize}
\item We introduce a measure $\tilde\mu_\Theta$ on the trajectory space $\mathcal T(v)$ by the formula $$\tilde\mu_\Theta(A) =_{\mathsf{def}} \mu_\Theta((\Gamma^\d)^{-1}(A)),$$ where $A \subset \mathcal T(v)$ is such that its $\Gamma^\d$-preimage is Lebesgue-measurable in the compact manifold $\d_1^+X(v)$.\smallskip

\item Then we interpret the integral $\int_{\d_1^+X(v)} \Theta$ as the \textsf{volume of the trajectory space} $\mathcal T(v)$ with respect to the measure $\tilde\mu_\Theta$ on $\mathcal T(v)$, induced by $\Theta$. 
\hfill $\diamondsuit$
\end{itemize}
\end{definition}

\begin{theorem}\label{iso_for_X} Let $v$ be a smooth traversing vector field on a smooth compact connected manifold with boundary. For any $v$-harmonizing pair $(g, df)$, the $g$-induced volume form $\Omega^\d$ on $\d X$, and the $\cos$-function $\phi^\d: \d X \to [-1, 1]$ allow for a computation of the $\Theta$-induced volume 
of the trajectory space $\mathcal T(v)$ via each of the following two formulas:  
$$vol_\Theta(\mathcal T(v)) = \pm \int_{\d^\pm X(v)} \phi^\d \cdot \Omega^\d.$$
Therefore, letting $g^\d := g|_{\d X}$, we get $vol_\Theta(\mathcal T(v))\; \leq \; vol_{g^\d}(\d^\pm X(v))$, which implies that  
\begin{eqnarray}\label{volume}
vol_\Theta(\mathcal T(v))\; \leq \;\frac{1}{2} vol_{g^\d}(\d X(v)).
\end{eqnarray}
\smallskip

Assuming that the restriction $f^\d: \d X \to \R$ of the Lyapunov function $f: X \to \R$ takes values in the interval $[0, 1]$,\footnote{which is always possible to achieve by an affine transformation of the target $\R$} for any such $v$-harmonizing pair $(g, df)$,
we get the ``holographic" isoperimetric inequality
\begin{eqnarray}\label{perimetric}
vol_g(X)\; \leq \; vol_{g^\d}(\d X).
\end{eqnarray}
\end{theorem}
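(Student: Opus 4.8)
\emph{Proof plan.} The plan is to reduce the whole statement to two applications of Stokes' theorem — one to the closed $n$-form $\Theta := \ast_g(df)$, and one to the volume form $vol_g = df\wedge\Theta$ (property (5) of a $v$-harmonizing pair in Definition \ref{harmonizing pair}) — together with the bound $|\phi^\d|\le 1$ supplied by Lemma \ref{lemA}. For the identity $vol_\Theta(\mathcal T(v)) = \pm\int_{\d_1^\pm X(v)}\phi^\d\cdot\Omega^\d$, I would first recall that $vol_\Theta(\mathcal T(v)) = \int_{\d_1^+X(v)}\Theta$ by Definition \ref{def2.4}, while $\Theta|_{\d X} = \phi^\d\cdot\Omega^\d$ by Lemma \ref{lemA}; this settles the ``$+$'' case. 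For the ``$-$'' case I would use that $\Theta$ is closed: Stokes gives $\int_{\d X}\Theta = \int_X d\Theta = 0$, and since for a boundary generic $v$ the tangency locus $\d_2X(v) = \d_1^+X(v)\cap\d_1^-X(v)$ is an $(n-1)$-dimensional submanifold of $\d X$, hence $\mu^\d$-null, I may split $\int_{\d X}\Theta = \int_{\d_1^+X(v)}\Theta + \int_{\d_1^-X(v)}\Theta$ and conclude $\int_{\d_1^+X(v)}\Theta = -\int_{\d_1^-X(v)}\Theta$. (Equivalently, one can invoke formula (\ref{eq2.1}) of Lemma \ref{lem2.1} with $K=\d_1^+X(v)$.) Since $\pm\phi^\d\ge 0$ on $\d_1^\pm X(v)$ and $\Omega^\d>0$, both quantities $\pm\int_{\d_1^\pm X(v)}\phi^\d\Omega^\d$ are nonnegative and equal, which is the displayed formula.

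For the inequality (\ref{volume}): here $\Omega^\d$ is the $g$-induced volume form, so $vol_{g^\d}(\d_1^\pm X(v)) = \int_{\d_1^\pm X(v)}\Omega^\d$, and $|\phi^\d|\le 1$ by Lemma \ref{lemA}. Hence, for either choice of sign, $vol_\Theta(\mathcal T(v)) = \pm\int_{\d_1^\pm X(v)}\phi^\d\Omega^\d \le \int_{\d_1^\pm X(v)}\Omega^\d = vol_{g^\d}(\d_1^\pm X(v))$. Adding the ``$+$'' and ``$-$'' inequalities and using $vol_{g^\d}(\d_1^+X(v)) + vol_{g^\d}(\d_1^-X(v)) = vol_{g^\d}(\d X)$ — valid because $\d_2X(v)$ is $\mu^\d$-null — gives $2\,vol_\Theta(\mathcal T(v)) \le vol_{g^\d}(\d X)$, which is (\ref{volume}).

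For the isoperimetric inequality (\ref{perimetric}), normalize $f$ so that $f^\d := f|_{\d X}$ has image in $[0,1]$. Since $(g,df)$ is $v$-harmonizing, $vol_g = df\wedge\Theta$, and since $d\Theta = 0$ we have $df\wedge\Theta = d(f\,\Theta)$; Stokes therefore gives $vol_g(X) = \int_X d(f\Theta) = \int_{\d X} f^\d\,(\Theta|_{\d X}) = \int_{\d X} f^\d\,\phi^\d\,\Omega^\d$. On $\d_1^+X(v)$ one has $0\le f^\d\le 1$ and $0\le\phi^\d\le 1$, so the integrand is $\le 1$ there; on $\d_1^-X(v)$ one has $f^\d\ge 0$ and $\phi^\d\le 0$, so the integrand is $\le 0$ there. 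Consequently $vol_g(X) \le \int_{\d_1^+X(v)}\Omega^\d = vol_{g^\d}(\d_1^+X(v)) \le vol_{g^\d}(\d X)$, which is even slightly stronger than (\ref{perimetric}).

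I expect no serious obstacle: the substantive content is already in Theorem \ref{th2.1} (existence of the harmonizing pair and the identities $\ast_g(df)=\Theta$, $df\wedge\Theta = vol_g$), in Lemma \ref{lem2.1} (closedness and flow-invariance of $\Theta$, plus the measurability apparatus), and in Lemma \ref{lemA} ($\Theta|_{\d X} = \phi^\d\Omega^\d$ with $|\phi^\d|\le 1$). The only point that needs a word of care is the legitimacy of splitting $\int_{\d X} = \int_{\d_1^+X(v)} + \int_{\d_1^-X(v)}$ and of applying Stokes over the semialgebraically stratified boundary pieces $\d_1^\pm X(v)$; this is exactly what the local models and measurability statements quoted in the proof of Lemma \ref{lem2.1} guarantee, together with the fact that the overlap $\d_2X(v)$ is lower-dimensional and hence Lebesgue-null.
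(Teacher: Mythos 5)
Your proposal is correct and follows essentially the same route as the paper: Lemma \ref{lemA} to write $\Theta|_{\d X}=\phi^\d\,\Omega^\d$, Definition \ref{def2.4} together with the closedness of $\Theta$ (Stokes, i.e.\ formula (\ref{eq2.1})) for the two signed expressions of $vol_\Theta(\mathcal T(v))$ and the bound $|\phi^\d|\le 1$ for (\ref{volume}), and Stokes applied to $d(f\,\Theta)=df\wedge\Theta=vol_g$ for (\ref{perimetric}). Your only deviation is the harmless refinement of discarding the nonpositive contribution over $\d_1^-X(v)$, which yields the marginally sharper bound $vol_g(X)\le vol_{g^\d}(\d_1^+X(v))$.
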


\begin{proof} Put $\Theta = \ast_g(df)$. Examining Definition \ref{def2.4}, for each $v$-harmonizing pair $(g, df)$, the measure $\tilde\mu_\Theta$ on $\mathcal T(v)$  can be reconstructed from the following data:
\begin{itemize} 

\item the locus $\d_1^+X(v)$, 

\item the map $\Gamma^\d: \d_1^+X(v) \to \mathcal T(v)$ (whose \emph{generic} fiber is a singleton),

\item  the volume $n$-form $\Omega^\d$ on $\d_1^+X(v)$, induced by $g^\d$,

 \item  the ``$\cos$" function $\phi^\d$. 
\end{itemize}
\smallskip

Thus, we conclude that $vol_\Theta(\mathcal T(v)) = \int_{\d_1^+X(v)} \phi^\d \cdot \Omega^\d$, the volume of the trajectory space, can be reconstructed from the data in the first, third, and fourth bullet.
\smallskip

Since $df(v) > 0$ in $X$, we notice that $f$ attends its extrema on $\d X$. 
Hence, if $f: \d X \to [0, 1]$, then $f: X \to [0, 1]$.
Therefore, by Stokes' Theorem, we get 
$$vol_g(X) = \int_X df \wedge \Theta = \int_{\d X} f \cdot \Theta \leq \int_{\d X} 1\cdot  |\phi^\d|\cdot \Omega^\d \leq vol_{g^\d}(\d X).$$
Therefore, the volume of the bulk $X$ does not exceed the surface area of its boundary. This fact may please our fellow physicists who contemplate about black holes...
\end{proof}

\begin{definition}\label{well-balanced} Let $X$ be a compact connected smooth manifold with boundary, and $v$ a traversing vector field on it. A Lyapunov function $f: X \to \R$ is {\sf well-balanced} if $df(v) = 1$.

\hfill $\diamondsuit$
\end{definition}

By Theorem \ref{th2.1}, any traversing vector field admits a well-balanced Lyapunov function.\smallskip

Let $\mathsf{Diff}(X, \d X)$ be the group of the smooth diffeomorphisms of $X$ that are identities on $\d X$ and whose differentials are the identities on the bundle $TX|_{\d X}$. The group $\mathsf{Diff}(X, \d X)$ acts naturally on the space $\mathcal R(X)$ of smooth Riemannian metrics on $X$.

\begin{theorem}\label{th2.2} Let $v$ be a boundary generic traversing vector field on a compact connected manifold $X$ with boundary. Consider a  $v$-harmonizing pair $(g, df)$ (as in Definition \ref{harmonizing pair}), where the metric $g$ is $v$-\emph{invariant}, and the Lyapunov function is \emph{well-balanced}.

Assume that each $v$-trajectory $\g$ is either transversal to $\d X$ at \emph{some} point, or is quadratically tangent to $\d X$ at \emph{some} point $x$ so that $x = \g \cap \d X$.\footnote{This is the case when the $v$-flow is strictly concave or convex with respect to each component of $\d X$. We conjecture that this hypotheses is superfluous.}  
\smallskip

Then the following boundary-confined data: 
\begin{itemize}
\item the causality map $C_v: \d_1^+X(v) \to \d_1^-X(v)$, 
\item the restriction $g^\d = g|_{\d X}$ of the metric $g$ to the boundary, 
\item the restriction $f|_{\d X}$ of the Lyapunov function $f$ to the boundary,  
\item the angle-function $\theta: \d X \to S^1$, generated by $v$ and the inner normal  vector field $\nu$ to $\d X$ in $X$,
\end{itemize}
allow for a reconstruction of the smooth topological type of $X$ and of the metric $g$ on it, up to the natural $\mathsf{Diff}(X, \d X)$-action on the space $\mathcal R(X)$ of Riemannian metrics on $X$.
\end{theorem}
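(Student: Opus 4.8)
The plan is to reconstruct in two stages: first the smooth data $(X, v, f)$, and then the $v$-invariant metric $g$ by propagating its boundary behaviour along the $v$-flow.

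\textbf{Stage 1 (the triple $(X, v, f)$).} From $C_v$ alone form the trajectory space $\mathcal T(v) = \d_1^+X(v)\big/\{x \sim C_v(x)\}$ together with the quotient map $\Gamma^\d:\d X \to \mathcal T(v)$, as in the proof of Theorem \ref{double_holography}. Using $f|_{\d X}$, build the embedding $\b^\d:\d X \to \mathcal T(v)\times\R$, $\b^\d(x) = (\Gamma^\d(x), f(x))$; its image separates $\mathcal T(v)\times\R$ into two pieces, the compact one $R$ being a model of $X$, with the vertical $1$-foliation $\{*\}\times\R$ restricting to a model of $\mathcal F(v)$. Because $f$ is \emph{well-balanced} ($df(v)=1$), the vertical field $\d_t$ corresponds to $v$ with no rescaling, and $f$ corresponds to the coordinate $t$. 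By the holography results of \cite{K4}, \cite{K8}, the homeomorphism $X \to R$ is in fact a \emph{diffeomorphism} under the present hypothesis that every $v$-trajectory is transversal to $\d X$ somewhere or is quadratically tangent to it at a single point --- this is exactly the situation handled by the second bullet of Theorem \ref{th1.1}. Thus $(X, v, f)$ is reconstructed up to the action of $\mathsf{Diff}(X, \d X)$.

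\textbf{Stage 2 (the metric $g$).} Exploit the structure of a $v$-harmonizing pair $(g, df)$ with $g$ being $v$-invariant and $f$ well-balanced: $\mathcal L_v g = 0$; by Corollary \ref{foliations} the field $v$ is $g$-orthogonal to the slices $\{f = c\}$, whence $\nabla_g f = \|v\|_g^{-2}\, v$ and $g_x(v, \cdot) = \|v\|_g^2\, df_x$ on all of $T_xX$; and $\|v\|_g$ is constant along each $v$-trajectory. Now fix a boundary point $x$ at which $v$ is transversal to $\d X$ and which is not a critical point of $f|_{\d X}$ (such $x$ are dense in $\d_1^+X(v)$). On $T_xX = T_x(\d X)\oplus(\text{normal line})$ one knows $g_x|_{T_x(\d X)} = g^\d$, while $df_x$ is known on all of $T_xX$ (it is $d(f|_{\d X})$ on $T_x(\d X)$ and equals $1$ on $v$, and these span). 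The relation $g_x(v, \cdot) = \lambda\, df_x$ with $\lambda = g_x(v,v) = \|v\|_g^2 > 0$ then expresses $g_x(v,\cdot)$ --- hence the $g$-orthogonal splitting $v = v_{\mathsf{tan}} + v^\perp$ and the normal line $\R v^\perp$ --- in terms of $\lambda$, and $\cos\theta(x) = g_x(v, v^\perp)/(\|v\|_g\,\|v^\perp\|_g)$ supplies the one further scalar equation that solves for $\lambda$; concretely $\|v\|_g^2 = \sin^2\theta\,\big/\,\|\nabla_{g^\d}(f|_{\d X})\|^2_{g^\d}$ at $x$. Thus $g_x$ is recovered at every such $x$. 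To pass to the interior, work in the model $R \subset \mathcal T(v)\times\R$ of Stage 1: there $v = \d_t$, so $v$-invariance is precisely $t$-independence of the metric coefficients, and $g$ on $R$ is determined by its values along the entry graph $\{(\tau, t_0(\tau))\}$, which are exactly the $g_x$ computed above. Since the trajectories through the admissible $x$ are dense in $X$, this $t$-independent symmetric $2$-tensor agrees with $g$ on a dense set, hence --- being forced to be positive definite and, by the local semi-algebraic models of \cite{K2}, smooth across the exceptional loci $\d_2X(v)$ and the critical set of $f|_{\d X}$ --- it agrees with $g$ everywhere. This recovers $g$ up to $\mathsf{Diff}(X, \d X)$, as claimed.

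\textbf{Main obstacle.} The substantive difficulty is the smoothness assertion in Stage 1 --- that $C_v$ together with $f|_{\d X}$ pin down the smooth, not merely the topological, type of $(X, v)$ --- which is exactly where the transversality / simple-tangency hypothesis is used and which I would quote from \cite{K4} rather than reprove. The remaining technical point is that the pointwise recipe for $g_x$ degenerates precisely where $v$ is $g$-normal to $\d X$, i.e. at critical points of $f|_{\d X}$ where $\theta = 0$; this is absorbed by the density argument together with the constancy of $\|v\|_g$ along trajectories, which lets one read off the missing value at the exit point $C_v(x)$ instead.
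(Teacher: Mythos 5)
Your overall route is the paper's: Stage 1 (recovering $(X,\mathcal F(v),f)$, hence $v$ via $df(v)=1$, from $C_v$ and $f|_{\d X}$ through the embedding into $\mathcal T(v)\times\R$ and the holography theorem of \cite{K4}) is exactly the paper's first step, and Stage 2 follows the same scheme of splitting $g$ into its components along $v$ and along $\ker(df)$, fixing the boundary values, and spreading them through $X$ by $v$-invariance. The genuine gap is in your determination of the scalar $\lambda=\|v\|_g^2$. The formula $\|v\|_g^2=\sin^2\theta\,/\,\|\nabla_{g^\d}(f|_{\d X})\|^2_{g^\d}$ is a $0/0$ expression exactly where $v$ is $g$-normal to $\d X$, and neither of your escape routes is available in general: the inadmissible locus $\{\sin\theta=0\}$ is the critical set of $f|_{\d X}$, which can have nonempty interior (an open piece of $\d X$ lying in a level set of $f$), so the admissible points need not be dense and the trajectories through them need not fill $X$; and the exit point $C_v(x)$ can be just as degenerate as $x$. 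The extreme case $X=[0,1]\times Y$, $v=\d_t$, $f=t$, $g=dt^2+h$ satisfies every hypothesis of the theorem, yet there every boundary point is inadmissible, so your recipe recovers $g$ only on $T(\d X)$ and says nothing about $g(v,v)$.

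The gap is closable, but by an observation you did not make: conditions (4) and (5) of Definition \ref{harmonizing pair} give $v\parallel\nabla_g f$ and $df\wedge\ast_g(df)=\|df\|_g^2\, vol_g=vol_g$, hence $\|df\|_g=1$, which together with $df(v)=1$ forces $\|v\|_g\equiv 1$; thus $\lambda=1$ identically and no boundary formula (and no density argument) is needed, only the $v$-invariant propagation you already describe. Alternatively, the paper's own proof avoids the issue by reading $g^\d$ as the restriction of $g$ to the full bundle $TX|_{\d X}$, i.e.\ including the normal block $(g_\nu)^\d$, and then rotating the splitting $(g_\nu)^\d\oplus(g_\tau)^\d$ into $(g^\uparrow)^\d\oplus(g^\perp)^\d$ by the angle $\theta$ before spreading it by $v$-invariance; with either repair your argument matches the paper's.
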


\begin{proof} 
By Theorem \ref{th2.1}, there is a $v$-harmonizing pair $(g, df)$ with a $v$-invariant $g$.

 Let $\mathcal M(f)$ be the codimension one foliation on $X$, defined by the connected components of the hypersurfaces of $f$-constant level. Recall that, due to the critical points of $f: \d X \to \R$, the leaves of $\mathcal M(f)$ may be singular. However, as before, we view $\mathcal M(f)$ as the intersection of a nonsingular foliation $\mathcal M(\hat f)$ on an open manifold $\hat X \supset X$ with $X$. 

Since $df(v) > 0$, every leaf of $\mathcal M(f)$ intersects with every leaf of $\mathcal F(v)$ at a singleton at most. Again, since $df(v) = 1$, $f$ attends its extrema on the boundary. As a result, any hypersurface  $f^{-1}(c)$ has a nonempty intersection with $\d X$. Moreover, each point $x \in X$ is uniquely determined by a point $y \in \d X \cap \g_x$, where $\g_x$ stands for the $v$-trajectory through $x$, and by the value $f(x)$. Thus, the pair of smooth foliations $(\mathcal M(\hat f),\, \mathcal F(\hat v))$ delivers a ``coordinate grid" for $X$, so that the intersections $\mathcal M(\hat f) \cap \d X$ and $\mathcal F(\hat v)  \cap \d X$ provide the ``holographic structure" from which $X$ will be recovered. We notice that  the \emph{ordered} finite set $\g_x \cap \d X$ may be interpreted as the $C_v$-trajectory of $y$, where $y \in \g_x \cap \d X$ is the minimal element.

By Definition \ref{harmonizing pair}, the distribution $K(df) \subset TX$ by the kernels of $df$ is the $g$-orthogonal compliment $K_v^\perp$ to the field $v$. Also $v \in K(\ast_g(df)) \subset TX$, the distribution by the kernels of $\ast_g(df)$. The leaves of $\mathcal M(f)$ and $\mathcal F(v)$ are $g$-orthogonal. We denote by $g^\perp$ the restriction of $g$ to the $n$-dimensional distribution $K^\perp_v$, and by $g^\uparrow$ the restriction of $g$ to the $1$-dimensional distribution $K_v := K(\ast_g(df))$. Since the pair $(df, g)$ is $v$-invariant, so are the pairs $(K^\perp_v, g^\perp)$ and $(K_v, g^\uparrow)$. Therefore, knowing the $v$-invariant restrictions $g^\perp$ and $g^\uparrow$ is sufficient for determining the metric $g = g^\perp \oplus g^\uparrow$.

On the other hand, by the $v$-invariant property of $g$, if we know $g|_{f^{-1}(c)}$ in the vicinity of a $v$-trajectory $\g$ for one particular value of $c \in \R$, then we know all the restrictions $\{g|_{f^{-1}(c')}\}_{c' \in \R}$ in vicinity of $\g$, provided $f^{-1}(c') \cap \g \neq \emptyset,\; f^{-1}(c) \cap \g \neq \emptyset$. 
Similarly, by the $v$-invariance of $(df, g)$, if we know the restriction of $g$ to the $\g$-tangent line at one particular point, we know the restriction of $g$ to the $\g$-tangent line at any other point along $\g$.

By the Holography Theorem \ref{th1.1}, the map $C_v$ determines the pair $(X, \mathcal F(v))$, up to a diffeomorphism $\Phi: X \to X$ that is the identity on $\d X$. The property of $\Phi$ being a diffeomorphism (and not just a homeomorphism) depends on the property of each $v$-trajectory $\g$ being either transversal to $\d X$ at some point, or being quadratically tangent to $\d X$  at some point (so that $\g \cap \d X$ is a singleton). By  the proof of Theorem \ref{th1.1}, the map $C_v$ determines the triple $(X, \mathcal F(v), \mathcal M(f))$, up to a diffeomorphism $\Phi: X \to X$ that is the identity on $\d X$ (see \cite{K4}, Theorem 4.1). 

 
Let $ (g_\tau)^\d$ denote the restriction of the metric $g$ to $T_\ast(\d X)$, and $(g_\nu)^\d$ to the normal bundle $\nu(\d X, X)$.

Since $\Phi$ is assumed to fix the boundary $\d X$ and the map $\theta: \d X \to S^1$, where $\theta := \angle_g(v, \nu)$, its action on the bundle $TX|_{\d X}$ is trivial.

Let $(g^\uparrow)^\d$ denotes the restriction of the metric $g^\uparrow$ to the foliation $\mathcal F(v) |_{\d X}$, and let $(g^\perp)^\d$ denotes the restriction of  $g^\perp$ to the foliation $\mathcal M(f) |_{\d X}$. The knowledge of $g^\d$ and $\theta$ makes it possible to determine the orthogonal decomposition $g|_{\d X} =  (g^\uparrow)^\d \oplus   (g^\perp)^\d$ along $\d X$.  Note that the plane, spanned by the vectors $\nu(x)$ and $v(x)$ at $x \in \d X$, is orthogonal to the subspace $T_x(\d X) \cap K^\perp_{v(x)}$. The orthogonal $(2 \times 2)$-matrix $A(\theta)$, representing the rotation on the angle $\theta$, connects the decomposition $g|_{\d X} = (g^\uparrow)^\d \oplus  (g^\perp)^\d$ to the decomposition $g|_{\d X}  =  (g_\nu)^\d\, \oplus \,(g_\tau)^\d$. So knowing $(g_\nu)^\d \oplus (g_\tau)^\d$ and $\theta$ determines $(g^\uparrow)^\d \oplus (g^\perp)^\d$.  


By the $v$-invariant property of  $(g, df)$,  the decomposition $g|_{\d X} = (g^\uparrow)^\d \oplus  (g^\perp)^\d$  spreads uniquely to an orthogonal decomposition $g =  g^\uparrow\, \oplus \, g^\perp$ in $X$.  

Therefore the quadruple $(f|_{\d X},\,  g|_{\d X},\, \theta,\, C_v)$ determines $g$, up to the natural $\Phi$-action. 
\hfill
\end{proof}
\section{The scattering maps, the $v^g$-harmonizing metrics, and their isoperimetric inequalities}

The lemma below describes conditions under which the scattering map $C_{v^g}$ (see Fig.2) is a  $\mu_\Theta$-\emph{measure-preserving} transformation for the appropriate choice of $(2n-2)$-form $\Theta$ on $SM$. In a way, this lemma it is a special case of Lemma \ref{lem2.1}.

\begin{lemma}\label{lem5.2} Assume that a metric $g$ on a compact connected Riemannian $n$-manifold $M$ with boundary is non-trapping and boundary generic. 

Pick a  $(2n-2)$-volume form $\Omega^\d$ on $\d(SM)$, consistent with its orientation. 
Let $\Theta$ be a differential $(2n - 2)$-form on $SM$  such that:
\begin{itemize}
\item $\dim(\ker(\Theta)) =1$,
\item the geodesic field $v^{g} \in \ker(\Theta)$,
\item $d \Theta = 0$,
\item the function $\Theta/\Omega^\d: \d(SM) \to \R$ is positive (negative) only on the interior of $\d_1^+(SM)$ (of $\d_1^-(SM)$). 
\end{itemize}

Then the scattering map $C_{v^g}: \d_1^+(SM) \to \d_1^-(SM)$ preserves the $\Theta$-induced measure $\mu_\Theta$ on $ \d_1(SM)$, i.e.,
$$\Big |\int_{C_{v^g}(K)}  \Theta \Big | =  \int_K \Theta $$
for any Lebesgue-measurable set $K \subset  \d_1^+(SM)$. 
\end{lemma}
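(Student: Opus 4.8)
The plan is to reduce Lemma \ref{lem5.2} directly to Lemma \ref{lem2.1} (equivalently, Theorem \ref{C_v_preserves}) applied to the ambient manifold $X = SM$, which is a compact oriented $(2n-1)$-manifold with boundary, carrying the traversing boundary generic vector field $v = v^g$. First I would verify that the hypotheses of Lemma \ref{lem5.2} on $\Theta$ are exactly a reformulation of hypotheses $(1)$--$(3)$ of Lemma \ref{lem2.1}, with $n$ there replaced by $2n-2$ here (since $\dim SM = 2n-1$, a "horizontal" top-codimension-one form on $SM$ is a $(2n-2)$-form). Concretely: the condition $d\Theta = 0$ is identical; the pair "$\dim(\ker\Theta) = 1$ and $v^g \in \ker\Theta$" implies $\Theta$ is $v^g$-horizontal, i.e. $\Theta(v^g \wedge w) = 0$ for every polyvector $w \in \bigwedge^{2n-3}(T(SM))$, which is hypothesis $(1)$; and the sign condition on $\Theta/\Omega^\d$ over $\mathrm{int}(\d_1^\pm(SM))$ gives hypothesis $(3)$ up to the measure-zero tangency locus $\d_2(SM)(v^g)$, which does not affect any integral.

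Next I would invoke the standing assumption that $g$ is non-trapping, so by \cite{K5} the geodesic field $v^g$ on $SM$ admits a Lyapunov function $F$ with $dF(v^g) > 0$; thus $v^g$ is a traversing vector field on $SM$, and by hypothesis $g$ is boundary generic, meaning $v^g$ is boundary generic with respect to $\d(SM)$ in the sense of \cite{K1}. Hence all the structural facts used in the proof of Lemma \ref{lem2.1} are available on $X = SM$: the Morse stratification $\{\d_j(SM)(v^g)\}$, the smoothness of $\d_1^\pm(SM)(v^g)$, the local semi-algebraic models of the domain, range and the causality (here scattering) map $C_{v^g}$ away from the $(2n-3)$-dimensional discontinuity locus $\mathcal X \cap \mathrm{int}(\d_1^+(SM))$, and the consequence that $C_{v^g}$ sends Lebesgue-null sets to Lebesgue-null sets and Lebesgue-measurable sets to Lebesgue-measurable sets.

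With this in hand, the proof is the word-for-word transcription of Lemma \ref{lem2.1}: using $\mathcal L_{v^g}\Theta = d(v^g \rfloor \Theta) + v^g \rfloor d\Theta$ together with $v^g \in \ker\Theta$ and $d\Theta = 0$ we get $\mathcal L_{v^g}\Theta = 0$, so $\Theta$ is $v^g$-invariant. Then for any compact $\mathsf{PD}$-submanifold $N \subset \d_1^+(SM)$ one forms the $\mathsf{PD}$-manifold $X(v^g, N) = SM(v^g, N)$ swept by the geodesic trajectory segments through $N$, whose oriented boundary consists of $N$, $C_{v^g}(N)$, and a piece $\delta SM(v^g, N)$ built from geodesic trajectory segments; Stokes' theorem plus $d\Theta = 0$ plus the vanishing $\int_{\delta SM(v^g, N)} \Theta = 0$ (because $\Theta$ is $v^g$-horizontal and $\delta SM(v^g, N)$ is ruled by $v^g$-trajectories, along whose tangent direction $\Theta$ annihilates) give $\int_N \Theta + \int_{C_{v^g}(N)} \Theta = 0$, hence $\big|\int_{C_{v^g}(N)}\Theta\big| = \int_N \Theta$ after accounting for the sign condition $(4)$. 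Approximating an arbitrary Lebesgue-measurable $K \subset \d_1^+(SM)$ from inside by such $N$, and using that $C_{v^g}$ preserves measurability and nullity, yields the claimed identity $\big|\int_{C_{v^g}(K)}\Theta\big| = \int_K \Theta$. The only point requiring a little care — and the one I would flag as the main (mild) obstacle — is the bookkeeping at the tangency/discontinuity locus: one must check that the sets where $C_{v^g}$ is discontinuous, and where the boundary stratum $\d_2(SM)(v^g)$ sits, are of measure zero in $\d_1^\pm(SM)$ so that the PD-decomposition of $\partial\big(SM(v^g,N)\big)$ and the Stokes computation are legitimate; this is precisely what the local models of Lemmas 3.1 and 3.4 of \cite{K2} supply, exactly as in the proof of Lemma \ref{lem2.1}, so no new argument is needed beyond verifying that $g$ being non-trapping and boundary generic puts us in their scope.
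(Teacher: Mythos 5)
Your proposal is correct and follows the same route as the paper: the paper's own proof simply notes that non-trapping implies $v^g$ is traversing on $SM$ and geodesic boundary genericity of $g$ means $v^g$ is boundary generic relative to $\d(SM)$, and then invokes Lemma \ref{lem2.1} verbatim, exactly as you do (your extra transcription of the Stokes/horizontality argument is just an unwinding of that cited lemma).
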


\begin{proof} Since the metric $g$ on $M$ is non-trapping, the geodesic field $v^g$ is  traversing on $SM$ (\cite{K5}, Lemma 2.2). Since the metric $g$ is boundary generic (see Definition 2.4 and Lemma 3.3 in \cite{K5}) relative to $\d M$, the field $v^g$ is boundary generic relative to $\d(SM)$. 
Thus, Lemma \ref{lem2.1} is applicable to $v^g$. By that lemma, we get that $C_{v^g}$ preserves the measure $\mu_\Theta$.  
\hfill 
\end{proof}

%
\begin{figure}[ht]\label{figAA}
\centerline{\includegraphics[height=1.7in,width=3in]{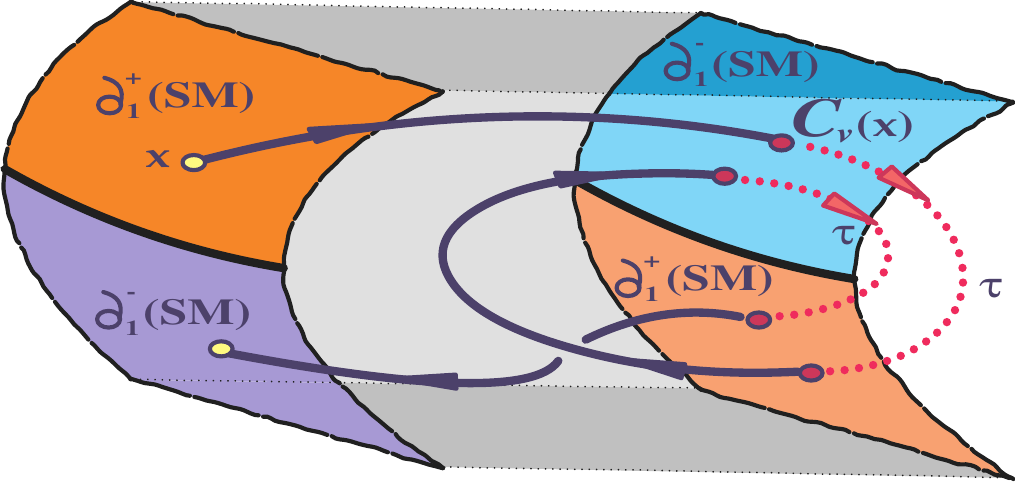}}
\bigskip
\caption{\small{A scattering (causality) map $C_{v^g}: \d_1^+(SM) \to \d_1^-(SM)$ and an involution $\tau: \d_1^-(SM) \to \d_1^+(SM)$, mimicking the elastic reflection from $\d M$. The two bold arcs that separate $\d_1^+(SM)$ and $\d_1^-(SM)$ represent the locus $\d_2(SM)$.}} 
\end{figure}

\begin{lemma}\label{how to get well-balanced} Let $M$ be a codimention zero smooth compact submanifold of a compact connected Riemannian manifold $(L, g)$ so that $M \cap \d L = \emptyset$.  Assume that each geodesic in $L$ that intersects with $M$ hits $\d L$ transversally at a pair of distinct points\footnote{This condition means that $\d L$ looks ``convex", as seen from $M$.}. 

Then the metric $g|_M$ is non-trapping, and the geodesic flow on $SM$ admits a well-balanced Lyapunov function.
\end{lemma}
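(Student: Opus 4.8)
The plan is to produce a Lyapunov function for $v^g$ on $SM$ whose derivative along $v^g$ equals $1$ identically, by exploiting the ambient manifold $L$. First I would observe that since $M$ is compact and $M\cap\partial L=\emptyset$, there is an $\e>0$ such that the $\e$-neighborhood $M_\e$ of $M$ still has compact closure inside $L$ and still satisfies the transversality hypothesis: every geodesic of $L$ meeting $M$ exits through $\partial M_\e$ transversally at two distinct points. Thus every $g$-geodesic arc that enters $M$ has a well-defined finite entry and exit on $\partial M_\e$, and in particular no geodesic arc meeting $M$ can be closed or of infinite length; restricting to $M$ itself, this shows $g|_M$ is non-trapping (equivalently, $v^{g|_M}$ is traversing on $SM$, by \cite{K5}, Lemma 2.2). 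This takes care of the first conclusion.

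\smallskip

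For the well-balanced Lyapunov function, the key device is arc length measured along geodesics of $L$. For a unit tangent vector $u\in SM$, let $\g_u$ be the maximal $g$-geodesic of $L$ through $u$ with $\dot\g_u(0)=u$, and let $a(u)<0$ be the (unique, by transversality and the choice of $M_\e$) negative time at which $\g_u$ meets $\partial M_\e$ on the ``backward'' side. Define $F: SM\to\R$ by $F(u) = -a(u)$, the arc-length distance, traveled backward along the $L$-geodesic carrying $u$, from the footpoint of $u$ to the backward intersection with $\partial M_\e$. Because the flow box of $v^g$ pushes $u$ along its own geodesic at unit speed, the backward-intersection time of $\phi_t(u)$ is $a(u)+t$ for small $t$ (the same geodesic, just re-based), hence $F(\phi_t(u)) = F(u) + t$, which gives $dF(v^g)=1$ everywhere on $SM$. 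Smoothness of $F$ follows from smooth dependence of geodesics on initial data together with the transversality of $\g_u$ to $\partial M_\e$ (the implicit function theorem applied to the exit-time equation), using boundary genericity to handle the behavior near $\partial(SM)$ if needed; in fact $F$ is the restriction to $SM$ of a smooth function defined on an open neighborhood of $SM$ inside $S L|_{M_\e}$, which is the cleanest way to organize the regularity argument. Then $F$ is a well-balanced Lyapunov function in the sense of Definition \ref{well-balanced}.

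\smallskip

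The main obstacle I anticipate is the regularity of $F$ up to and along the boundary $\partial(SM) = SM|_{\partial M}\cup\,(\text{unit vectors tangent to }\partial M)$, and more precisely the possibility that a geodesic $\g_u$ is tangent to $\partial M$ — then the ``arc inside $M$'' may be a proper subset of what I measure, but this is harmless because I am deliberately measuring against $\partial M_\e$, not $\partial M$, and $\g_u$ is transversal to $\partial M_\e$ by hypothesis; so the tangency-to-$\partial M$ phenomena, which are exactly what boundary genericity of $g$ is designed to control, do not interfere with the definition or smoothness of $F$. A secondary point to check is that $a(u)$ is genuinely well-defined and finite for \emph{every} $u\in SM$ (not merely generic $u$): this is immediate from the stated hypothesis, since the footpoint of $u$ lies in $M$, so $\g_u$ does meet $M$ and therefore exits $\partial M_\e$ transversally in both time directions at finite times. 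Once $F$ is in hand, $dF(v^g)=1>0$ gives both that $v^g$ is traversing (re-deriving non-trapping of $g|_M$, now via \cite{K5}) and that $F$ is well-balanced, completing the proof.
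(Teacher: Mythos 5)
Your construction is, at its core, the same as the paper's: measure arc length along the maximal $L$-geodesic carrying $u \in SM$ from its backward intersection with an outer boundary to the footpoint of $u$, deduce $dF(v^g)=1$ from the re-basing identity $F(\phi_t(u))=F(u)+t$, and deduce smoothness from transversality of the lifted geodesic to the boundary of the sphere bundle together with smooth dependence of geodesics on initial data. The paper does exactly this, with the outer boundary being $\d L$ itself.

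The genuine gap is your very first step: the claim that for small $\e$ the neighborhood $M_\e$ ``still satisfies the transversality hypothesis,'' i.e.\ that every $L$-geodesic meeting $M$ crosses $\d M_\e$ transversally at exactly two points. Nothing in the hypotheses gives this: the assumption controls only the incidence of such geodesics with $\d L$. For a general compact $M$ the set $\d M_\e$ need not even be a smooth hypersurface, and even when it is, a geodesic through $M$ may be tangent to $\d M_\e$ or cross it several times (leave and re-enter), since no convexity of $\d M_\e$ is available. At such geodesics your $a(u)$ (``the unique negative time at which $\g_u$ meets $\d M_\e$'') is ill-defined or discontinuous; choosing, say, the first backward crossing destroys the identity $F(\phi_t(u))=F(u)+t$ as soon as a new crossing of $\d M_\e$ appears between the two footpoints, and smoothness fails at tangencies --- precisely the phenomena your transversality appeal was meant to exclude. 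The repair is immediate and is what the paper does: discard $M_\e$ altogether and define $F(u)$ as the length of the geodesic arc from the backward intersection of $\g_u$ with $\d L$ (unique and transversal by hypothesis, since the footpoint of $u$ lies in $M$) to the footpoint of $u$. Then well-definedness, the identity $dF(v^g)=1$, and smoothness (implicit function theorem at the transversal crossing of the lift with $\d(SL)$, the transversality persisting for nearby initial data) all go through verbatim, and non-trapping of $g|_M$ follows from the existence of this Lyapunov function.
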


\begin{proof} Each geodesics $\g$ in $M$ extends to a unique geodesic $\g_L$ in $L$, which intersects with $\d L$ at a pair of points $a(\g_L)$ and $b(\g_L)$. The orientation of $\g$ picks one of these two points, say $a(\g_L)$, as the starting point of $\g_L$. 
Consider the function $f(x) := \ell_g(x, a(\g_L))$, where $x \in \g_L$ and $\ell_g(x, a(\g_L))$ is the length of the geodesic arc $[x, a(\g_L)] \subset \g_L$. Let $\tilde\g_L$ be a lift of $\g_L$ to the spherical bundle $SL$. Topologically, $\tilde\g_L$ is a closed interval. 

For any point $(m, v) \in SM$, we take the trajectory $\tilde\g_L((m, v))$ of the geodesic flow on $SL$ through $(m, v)$. We define the function $F: \tilde\g_L((m, v)) \to \R$ as the pull-back of $f: \g_L \to \R$. In fact, $F((n, v))$ is the length (in the Sasaki metric $gg$) of the segment $[(a(\g_L), w(\g_L)),\, (n, v)]$ of $\tilde\g_L$, where $w(\g_L)$ is the tangent to $\g_L$ vector at $a(\g_L)$ and $v$ is tangent to $\g_L$ at $n \in L$. Using that $\tilde\g_L((m, v))$ is transversal to $\d(SL)$ at the point $(a(\g_L), w(\g_L))$, we get that $F$ is a smooth function of $(n, v)$.  Indeed, the solutions ODEs depend smoothly on the initial data, provided that such data vary along a transversal section of the flow. Evidently, $dF(v^g) > 0$ since $f$ is increasing along $\g_L$. The variation of $F$ along any segment $\D$ of $\tilde\g_L$ equals to its length in $gg$, which, in turn, equals the length of $\pi(\D)$ in $g$. In particular, for any $\D \subset \tilde\g_L \cap SM$, the variation of $F$ along $\D$ is the length of $\pi(\D)$ in $g$. Thus $F$ is well-balanced in $M$. 
\end{proof}

\begin{corollary}\label{hyperbolic} 
Let $(N, g^\dagger)$ be a closed hyperbolic (or flat) $n$-dimensional manifold, and $M$ a compact smooth $n$-manifold, equipped with a submersion $p: M \to N$ such that the natural homomorphism $p_\ast: \pi_1(M) \to \pi_1(N)$ of the fundamental groups is trivial.
Let $g$ denote the $p$-induced pull-back to $M$ of the hyperbolic (or flat) metric $g^\dagger$ on $N$.

Then $g$ is a non-trapping metric, and $(M, g)$ admits a well-balanced Lyapunov function  $F: SM \to \R$. \smallskip

In particular, any compact smooth domain $M$ in the hyperbolic space $\H^n$ or Euclidean space $\E^n$  admits a non-trapping hyperbolic or Euclidean metric with a well-balanced Lyapunov function  $F: SM \to \R$. And so does any sufficiently small domain $M$ in the spherical space $\S^n$. 
\end{corollary}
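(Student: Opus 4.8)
The plan is to deduce everything from Lemma \ref{how to get well-balanced} by exhibiting, in each case, a Riemannian manifold $(L, g)$ containing $M$ as a codimension-zero submanifold with $M \cap \d L = \emptyset$ and such that every geodesic of $L$ meeting $M$ hits $\d L$ transversally at two distinct points. Once such an $L$ is produced, Lemma \ref{how to get well-balanced} immediately gives that $g|_M$ is non-trapping and that the geodesic flow on $SM$ carries a well-balanced Lyapunov function, which is exactly the assertion.

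First I would treat the ambient-manifold case. Let $N$ be closed hyperbolic (the flat case is identical) and suppose $\pi_1(M) \to \pi_1(N)$ is trivial; then the inclusion $M \hookrightarrow N$ lifts to the universal cover, embedding $M$ isometrically into $\H^n$ as a compact set $\tilde M$. Since $\tilde M$ is compact, choose $R$ so large that $\tilde M$ lies in the interior of a geodesic ball $L := \overline{B_R(p)} \subset \H^n$ with $\tilde M \cap \d L = \emptyset$. Every complete geodesic of $\H^n$ that meets the bounded set $\tilde M$ is a bi-infinite line, hence exits the ball $B_R(p)$ in both directions and crosses the sphere $\d L = S_R(p)$ transversally (geodesic spheres in $\H^n$ are strictly convex, so a line is never tangent to them, and it meets each such sphere in at most two points). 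This verifies the hypothesis of Lemma \ref{how to get well-balanced} with $(L, g) = (\overline{B_R(p)}, g_{\H^n})$, and pushing the resulting structure back down to $M \subset N$ via the covering isometry gives the claim. The same argument with $\E^n$ in place of $\H^n$ handles the flat case, and the statement about an arbitrary compact domain $M \subset \H^n$ (or $\E^n$) with smooth boundary is the special case where $M$ is already sitting inside the model space: enclose $M$ in a large geodesic ball $L$ and argue as above.

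For the spherical case one cannot enclose $M$ in all of $\S^n$ (geodesics are closed, so the metric is trapping globally), which is why the statement is restricted to \emph{sufficiently small} domains $M$. Here I would take $M$ contained in an open hemisphere, or more precisely in a geodesic ball $B_\rho(p) \subset \S^n$ with $\rho < \pi/2$, and set $L := \overline{B_{\rho'}(p)}$ for some $\rho < \rho' < \pi/2$. For $\rho' < \pi/2$ the geodesic sphere $\d L = S_{\rho'}(p)$ is strictly convex and the closed ball $L$ is geodesically convex and contains no closed geodesic and no pair of conjugate points in its interior; hence any geodesic segment of $\S^n$ that enters $B_\rho(p)$, when maximally extended inside $L$, is an arc meeting $\d L$ transversally at two distinct endpoints (transversality and the two-point condition follow from strict convexity of $S_{\rho'}(p)$ together with $\rho' < \pi/2$, which rules out tangency). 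Applying Lemma \ref{how to get well-balanced} to this $(L, g)$ finishes the spherical case.

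The only genuine point requiring care — the main obstacle — is the verification, in each geometry, that geodesics meeting $M$ cross the chosen boundary sphere $\d L$ \emph{transversally and in exactly two points}, i.e. that the enclosing sphere is ``convex as seen from $M$'' in the precise sense of the footnote to Lemma \ref{how to get well-balanced}. In the hyperbolic and Euclidean cases this is clean because geodesic spheres are globally strictly convex and complete geodesics are lines, so no radius restriction is needed; in the spherical case it forces the condition $\rho' < \pi/2$ (convexity of geodesic balls fails at and beyond the hemisphere), which is precisely the source of the ``sufficiently small'' hypothesis. Everything else is a direct invocation of Lemma \ref{how to get well-balanced}.
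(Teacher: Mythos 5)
Your proposal is correct and follows essentially the same route as the paper: lift $M$ isometrically to the universal cover using the triviality of $\pi_1(M)\to\pi_1(N)$, enclose the compact lift in a convex region $L$, and invoke Lemma \ref{how to get well-balanced}. The only cosmetic differences are that the paper argues in the Poincar\'{e} model with a concentric ball $B^n(r_\star)$, $r_\star$ close to $1$ (and takes a hemisphere bounded by a fixed equator in the spherical case), whereas you appeal directly to strict convexity of geodesic balls (of radius $<\pi/2$ in the spherical case) to get the two transversal boundary crossings.
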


\begin{proof} Since the homomorphism $p_\ast : \pi_1(M) \to \pi_1(N)$ is trivial, $M$ admits a 
lifting $\tilde p:  M \to \tilde N$ to the universal cover $\tilde N = \H^n$ (or $\tilde N = \E^n$) of $N$. This lifting is also a submersion. Hence, it suffices to treat the case of a submersion $\tilde p: M \to \H^n$ (or of $\tilde p: M \to \E^n$) which pulls back the hyperbolic (or Euclidean) metric to form $g$. 

Let us consider the hyperbolic case first. 
In the Poicar\'{e} model, $\H^n$ is identified with the interior of the unit Euclidean ball $B^n$, and the geodesics in $\H^n$ with the arcs of circles in $B^n$ (or diameters through the origin)  that are orthogonal to its boundary $\d B^n$. 

Let $B^n(r) \subset B^n$ be a concentric Euclidean ball of radius $r \in (0, 1)$. Since $M$ is compact, we may assume that $\tilde p(M) \subset B^n(r_0)$ for some $r_0 < 1$. Then there is $r_\star \in (r_0, 1)$ such that any geodesic arc $\hat\g$ that intersects with $B^n(r_0)$ hits $\d B^n(r_\star)$ at a pair of points and is transversal to $\d B^n(r_\star)$ at the intersections. Let us explain informally the last claim (we leave the details to the reader): in the hyperbolic metric, as $r_\star \to 1$, $B^n(r_0)$ becomes infinitesimally small relative to $B^n(r_\star)$, and the geodesic arcs through $B^n(r_0)$ approach the diameters of $B^n$; at the same time, in the Euclidean metric, $\d B^n(r_\star)$ approaches $\d B^n$.  

Now, picking $r_\star$ very close to $1$ and the ball $B^n(r_\star)$ for the role of $L$ in Lemma \ref{how to get well-balanced}, we validate the corollary (in the hyperbolic case) by pulling back, with the help of the bundle map $S\tilde p: SM \to S\H^n$,  the Lyapunov function on $SB^n(r_0)$, built in the lemma. \smallskip

The Euclidean case is similar: we pick an Euclidean ball $B^n(r_0) \supset \tilde p(M)$, where the radius $r_0 \in (0, +\infty)$. Then there is $r_\star \in (r_0, +\infty)$ such that any line $\hat\g$ that intersects with $B^n(r_0)$ hits the sphere $\d B^n(r_\star)$ at a pair of points and is transversal there to $\d B^n(r_\star)$.
Specifically, for a pair $(\vec a, \vec v)$, where $\vec a \in \E^n$ is such that $\|\vec a\| \leq r_0 < r_\star$ and $\vec v \in T_{\vec a}(\E^n)$ has the norm $1$, we may pick the composition of the submersion $S\tilde p$ with the Lyapunov function
\begin{eqnarray}\label{eq.Lyapunov_for_E}
F(\vec a, \vec v) := - \big(r_\star^2 - \|\vec a\|^2 + \langle \vec a, \vec v \rangle^2 \big)^{\frac{1}{2}} + \langle \vec a, \vec v \rangle.
\end{eqnarray} 
This composition serves as a Lyapunov function on $SM$ for any submersion $p$ such that $\tilde p(M) \subset B^n(r_0)$, where $r_0 < r_\star$.

Let a finite group $G$ act freely and smoothly on the sphere $\S^n$. Finally, the spherical case $p: M \to \S^n/G$ requires $\tilde p(M) \subset \S^n$ being so small that any geodesic circle that intersects with $\tilde p(M)$ hits transversally some fixed equator $\S^{n-1} \subset \S^n$. 
\end{proof}


The next theorem is a direct application of Theorem \ref{th2.1}, claim (3), and  Theorem \ref{traversing_harmonizing}, together with Corollary \ref{foliations},  to the geodesic flows on Riemannian manifolds with boundary that admit a non-trapping metric $g$.


\begin{theorem}\label{foliations on SM} Let $g$ be a boundary generic non-trapping Riemannian metric on a smooth compact connected $n$-manifold $M$ with boundary. Then the following claims hold:
\begin{itemize}
\item The geodesic vector field $v^g$ admits a well-balanced Lyapunov function $F: SM \to \R$ 
and a $v^g$-harmonizing pair $(g^\bullet, dF)$, where $g^\bullet$ is a $v^g$-invariant Riemmanian metric on $SM$.  \smallskip

\item In the metric $g^\bullet$, the leaves of the $1$-foliation $\mathcal F(v^g)$ are geodesic curves\footnote{As they are also in the Sasaki metric $gg$.}, and the leaves $\{F = c\}_{c \in \R}$ of the orthogonal $(2n-2)$-foliation $\mathcal G(F)$ are the volume-minimizing relative hypersurfaces in $\big(SM,\, \d(SM)\big)$. \smallskip

\item The $(2n-2)$-form $\Theta =_{\mathsf{def}}  \ast_{g^\bullet}(d F)$ has all the  properties from Lemma \ref{lem5.2}. As a result, such a form $\Theta$ defines a measure $\mu_{\Theta}$ on $\d(SM)$, which is preserved by the scattering map $C_{v^g}: \d_1^+(SM) \to \d_1^-(SM)$. 
\hfill $\diamondsuit$
\end{itemize}
\end{theorem}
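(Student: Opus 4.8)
The plan is to read this statement as a direct transcription of the general theory of Sections 2 and 4 to the special case $X = SM$, $v = v^g$, so that each bullet reduces to invoking an already-proved result applied to $v^g$. Throughout I take $n \geq 2$, so that $\dim SM = 2n-1 \geq 3$ as required by Theorem \ref{th2.1} (for $n=1$ the situation is degenerate), and I assume $SM$ oriented, as holds whenever $M$ is.

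First I would check that $v^g$ is a traversing boundary generic vector field on the compact manifold $SM$. Non-trapping of $g$ is, by the characterization recalled in the Introduction (cf.\ \cite{K5}, Lemma 2.2), equivalent to $v^g$ admitting a Lyapunov function with positive derivative along $v^g$, hence $v^g$ is traversing; and geodesic boundary genericity of $g$ is by definition boundary genericity of $v^g$ relative to $\d(SM)$. With this in hand, Lemma \ref{lem2.1}, Theorem \ref{th2.1}, Theorem \ref{traversing_harmonizing}, Corollary \ref{foliations}, Corollary \ref{Plateau} and Lemma \ref{lem5.2} all become available for $v^g$. For the first bullet I would then quote Theorem \ref{traversing_harmonizing} (equivalently Theorem \ref{th2.1}(3)): it furnishes a $v^g$-invariant metric $g^\bullet$ on $SM$ and a Lyapunov function $F$ with $(g^\bullet, dF) \in \mathcal Har(v^g)$, so that $\Theta := \ast_{g^\bullet}(dF)$ is closed, $v^g \in \ker\Theta$, $\dim\ker\Theta = 1$ (since $dF$ is nowhere zero and $\ast_{g^\bullet}$ is a bundle isomorphism), $dF \wedge \Theta = vol_{g^\bullet}$, and $\pm\Theta|_{\d_1^\pm(SM)} \geq 0$. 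To arrange in addition that $F$ be well-balanced, $dF(v^g) = 1$, I would re-inspect the construction in the proof of Theorem \ref{th2.1}(3): choosing the Calabi tubes balanced in the strong form $\theta(v^g) \equiv 1$ (Lemma \ref{invariant_versa_balanced}) and the bell functions $\psi_i$ so as to form a partition of unity constant along the $v^g$-trajectories, one has $d\psi_i(v^g) = 0$ and hence $dF(v^g) = \sum_i \psi_i \equiv 1$, while none of the remaining properties of the pair is disturbed.

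For the second bullet I would apply Corollary \ref{foliations} to $(g^\bullet, dF)$: the leaves of $\mathcal F(v^g)$ are geodesics of $g^\bullet$, the leaves $\{F = c\}$ of $\mathcal G(dF)$ minimize the $g^\bullet$-induced $(2n-2)$-volume among compactly supported perturbations fixed on $\d(SM)$, and the two foliations are $g^\bullet$-orthogonal. The phrase ``volume-minimizing proper hypersurfaces in $(SM, \d(SM))$'' I would read as Corollary \ref{Plateau} applied with $X = SM$: for each regular value $c$ of $F|_{\d(SM)}$, the slice $F^{-1}(c)$ solves the Plateau problem for the contour $F^{-1}(c) \cap \d(SM)$, uniquely so when $H_{2n-2}(SM; \R) = 0$. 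For the third bullet I would simply observe that the four hypotheses imposed on $\Theta$ in Lemma \ref{lem5.2} --- one-dimensional kernel, $v^g \in \ker\Theta$, $d\Theta = 0$, and $\Theta/\Omega^\d$ of the appropriate strict sign on the interiors of $\d_1^\pm(SM)$ --- are precisely what $(g^\bullet, dF) \in \mathcal Har(v^g)$ provides (the strictness on the interior holding because there $v^g$ is transversal to $\d(SM)$ while $dF \wedge \Theta$ is a volume form), so that Lemma \ref{lem5.2} delivers the measure $\mu_\Theta$ on $\d(SM)$ and its $C_{v^g}$-invariance.

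The hard part is none of these citations but the small bookkeeping in the first bullet: Theorem \ref{th2.1}(3) as literally stated does not package a \emph{well-balanced} $F$ together with a $v^g$-\emph{invariant} harmonizing metric $g^\bullet$, so one must verify --- by running through the Calabi-tube construction and Calabi's Lemma 1 of \cite{Ca} that realizes $\ast_{g^\bullet}(dF) = \Theta$ with $dF \wedge \Theta = vol_{g^\bullet}$ --- that the normalization $dF(v^g) \equiv 1$ coexists with the $v^g$-invariance of both $dF$ and $g^\bullet$ and with the sign constraints along $\d(SM)$; the partition-of-unity-along-trajectories choice indicated above is what makes this go through.
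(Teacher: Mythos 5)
Your proposal follows essentially the same route as the paper, which offers no separate proof and simply declares the theorem ``a direct application'' of Theorem \ref{th2.1}(3), Theorem \ref{traversing_harmonizing}, and Corollary \ref{foliations} (with Lemma \ref{lem5.2} supplying the measure-preservation), after observing that non-trapping plus boundary genericity of $g$ makes $v^g$ traversing and boundary generic on $SM$. Your extra bookkeeping --- arranging $dF(v^g)\equiv 1$ via balanced Calabi tubes and a trajectory-constant partition of unity, and checking the strict sign of $\Theta/\Omega^\d$ on the interiors of $\d_1^\pm(SM)$ --- correctly fills in details the paper asserts without comment (e.g.\ ``By Theorem \ref{th2.1}, any traversing vector field admits a well-balanced Lyapunov function'').
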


\smallskip

\begin{theorem}\label{th3.2}
Let $g$ be a boundary generic non-trapping Riemannian metric on a smooth compact connected $n$-manifold $M$ with boundary, and let $F: SM \to \R$ be a Lyapunov function for the geodesic vector field $v^g$.

Let $\Theta$ be a $(2n-2)$-form on $SM$ with the properties as in Lemma \ref{lem5.2}. In particular, we may choose a nil-divergent $\Theta := v^g \rfloor \Omega$, where $\Omega$ is any $v^g$-invariant volume form on $SM$. 
\smallskip

Then the following claims hold:
\begin{itemize}
\item There exists a Riemannian metric $g^\bullet$ on $SM$ such that $\ast_{g^\bullet}(dF) = \Theta$.

\item The function $F$ may be chosen to be well-balanced and the metric $g^\bullet$ to be  $v^g$-invariant.

\item In the $\Theta$-induced measure, the volume of the space $\mathcal T(v^g)$ of geodesics on $M$ satisfies the inequality: 
$$vol_{\Theta}(\mathcal T(v^g)) =_{\mathsf{def}}\; \int_{\d_1^\pm(SM)} \Theta \; \leq \;  vol_{(g^\bullet)^\d}(\d_1^\pm(SM)),$$
where $(g^\bullet)^\d$ denotes the restriction of the metric $g^\bullet$ to the boundary $\d(SM)$.
As a result, 
\begin{eqnarray}
vol_{\Theta}\big(\mathcal T(v^g)\big) \,  \leq \; \frac{1}{2}\, vol_{(g^\bullet)^\d}\big(\d(SM)\big).
\end{eqnarray}

\item Assuming that the Lyapunov function $F: SM \to \R$ is chosen so that  $F(\d(SM)) \subset [0, 1]$,  the following isoperimetric inequality holds:
\begin{eqnarray}
vol_{g^\bullet}(SM) \,  \leq \;  vol_{(g^\bullet)^\d}(\d(SM)).
\end{eqnarray}
\end{itemize}
\end{theorem}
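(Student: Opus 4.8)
The plan is to run everything through the general theory of traversing flows from Sections~2 and~4, applied to $X=SM$, a compact $(2n-1)$-manifold with boundary carrying the geodesic field $v^g$. Since $g$ is non-trapping, $v^g$ is traversing on $SM$ (\cite{K5}, Lemma~2.2), and since $g$ is boundary generic, $v^g$ is boundary generic relative to $\d(SM)$ --- exactly as recalled in the proof of Lemma~\ref{lem5.2} --- so Lemma~\ref{lem2.1}, Theorem~\ref{th2.1} and Theorem~\ref{iso_for_X} all apply with their ambient dimension read as $2n-1$ (I take $n\ge 2$, so $\dim SM\ge 3$ and Calabi's reconstruction of a metric from its Hodge star is available). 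The argument then has three moves: realize the prescribed $\Theta$ as $\ast_{g^\bullet}(dF)$; observe that $(g^\bullet,dF)$ is a $v^g$-harmonizing pair (Definition~\ref{harmonizing pair}); and invoke Theorem~\ref{iso_for_X}.

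For the first move, the one genuinely new point is the pointwise positivity $dF\wedge\Theta>0$. Since $F$ is a Lyapunov function, $dF(v^g)>0$; since $\ker\Theta$ is $1$-dimensional and spanned by $v^g$, the restriction of $\Theta$ to any complement of $\R v^g$ in $T(SM)$ is nondegenerate; hence $dF\wedge\Theta$ is a nowhere-zero $(2n-1)$-form, and orienting $SM$ compatibly with the sign convention on $\Theta$ in the last bullet of Lemma~\ref{lem5.2} makes it a positive volume form. Calabi's Lemma~1 (\cite{Ca}; this is precisely step (ii) in the proof of Theorem~\ref{th2.1}) then produces a metric $g^\bullet$ with $\ast_{g^\bullet}(dF)=\Theta$, and his argument moreover gives $dF\wedge\Theta=vol_{g^\bullet}$. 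For the $v^g$-invariant refinement I would first note that $\Theta$ is automatically $v^g$-invariant, since $\mathcal L_{v^g}\Theta=d(v^g\,\rfloor\,\Theta)+v^g\,\rfloor\,d\Theta=0$, and then take $F$ well-balanced (possible by Theorem~\ref{th2.1}, cf.\ the remark after Definition~\ref{well-balanced}), so that $dF$ is $v^g$-invariant too, because $\mathcal L_{v^g}(dF)=d(dF(v^g))=d(1)=0$. Exactly as in the proof of Theorem~\ref{th2.1}(2), in flow-box coordinates adapted to $v^g$ with $F$ as the affine parameter along trajectories, the closed, $v^g$-invariant, $\ker$-$\R v^g$ form $\Theta$ straightens to the normal form $dx_1\wedge\cdots\wedge dx_{2n-2}$, so that $(dF,dx_1,\dots,dx_{2n-2})$ is a $v^g$-invariant coframe; the local star operators built from it with a $v^g$-invariant partition of unity are $v^g$-invariant, and their sum is a $v^g$-invariant $g^\bullet$. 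The displayed special case $\Theta:=v^g\,\rfloor\,\Omega$ is subsumed: by Lemma~\ref{nildivergent_form}, $d\Theta=0$ exactly when $\Omega$ is $v^g$-invariant, and then $\Theta$ meets every hypothesis of Lemma~\ref{lem5.2}.

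For the remaining moves I would verify the six conditions of Definition~\ref{harmonizing pair} for $(g^\bullet,dF)$: $dF(v^g)>0$; $d(dF)=0$; $\Theta=\ast_{g^\bullet}(dF)$ is closed by hypothesis; $v^g\in\ker\Theta$ by hypothesis; $dF\wedge\Theta=vol_{g^\bullet}$ from Calabi's argument; and the sign of $\Theta|_{\d(SM)}$ on $\d_1^\pm(SM)$ is the last bullet of Lemma~\ref{lem5.2}. Hence $(g^\bullet,dF)$ is a $v^g$-harmonizing pair, $\mathcal T(v^g)$ is the (oriented) space of geodesics of $(M,g)$, and Theorem~\ref{iso_for_X} applies verbatim: with the cosine function $\phi^\d=(\Theta|_{\d(SM)})/\Omega^\d$ of Lemma~\ref{lemA} (here $|\phi^\d|\le 1$, since condition~(5) forces $|dF|_{g^\bullet}=1$) one gets $vol_\Theta(\mathcal T(v^g))=\int_{\d_1^+(SM)}\Theta=\int_{\d_1^+(SM)}\phi^\d\,\Omega^\d\le vol_{(g^\bullet)^\d}(\d_1^+(SM))$, and symmetrically with $\d_1^-(SM)$; since $\int_{\d_1^+(SM)}\Theta=-\int_{\d_1^-(SM)}\Theta$ by (\ref{eq2.1}), averaging yields $vol_\Theta(\mathcal T(v^g))\le\frac{1}{2}vol_{(g^\bullet)^\d}(\d(SM))$. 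Finally, choosing the Lyapunov function (and the attendant $g^\bullet$) so that $F(\d(SM))\subset[0,1]$, and using that $F$, being a Lyapunov function for the traversing $v^g$, attains its extrema on $\d(SM)$, we get $F(SM)\subset[0,1]$; then Stokes' theorem together with $dF\wedge\Theta=vol_{g^\bullet}$ and $d\Theta=0$ gives $vol_{g^\bullet}(SM)=\int_{SM}dF\wedge\Theta=\int_{\d(SM)}F\,\Theta\le\int_{\d_1^+(SM)}\Theta\le vol_{(g^\bullet)^\d}(\d(SM))$, which is the derivation of (\ref{perimetric}).

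I expect the whole difficulty to be concentrated in the first move: solving $\ast_{g^\bullet}(dF)=\Theta$ for the metric (Calabi's lemma) and, for a \emph{prescribed} $\Theta$ rather than one built to order as in Theorem~\ref{th2.1}, making the solution $v^g$-invariant --- the saving grace being that the hypotheses of Lemma~\ref{lem5.2} pin $\Theta$ into precisely the local normal form, in $v^g$-adapted balanced coordinates, that Calabi's construction in Theorem~\ref{th2.1}(2) consumes. Once that is secured, the harmonizing-pair check and the two inequalities are routine bookkeeping layered on Theorem~\ref{iso_for_X}.
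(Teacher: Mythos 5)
Your proposal is correct and takes essentially the same route as the paper's own proof: pass to the traversing field $v^g$ on $X=SM$, obtain a well-balanced $F$ and a $v^g$-invariant $g^\bullet$ with $\ast_{g^\bullet}(dF)=\Theta$ by the Calabi-lemma and invariant flow-box construction of Theorem \ref{th2.1}(2)--(3), check that $(g^\bullet,dF)$ is a $v^g$-harmonizing pair, and read off both inequalities from Theorem \ref{iso_for_X}. If anything, you are more careful than the paper's terse proof about realizing the \emph{prescribed} $\Theta$ (rather than a form constructed along the way) as $\ast_{g^\bullet}(dF)$, which is the correct reading of the first bullet.
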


\begin{proof} By the properties of the non-trapping metric $g$ on $M$, the geodesic field $v^g$ on $SM$ is traversing (\cite{K5}, Lemma 2.2).  So there exists a smooth function $F: SM \to \R$ with the property $dF(v^g) > 0$ (\cite{K1}, Lemma 4.1). By Theorem \ref{th2.1}, $F$ may be chosen to be well-balanced. Also by Theorem \ref{th2.1}, there exists a $v^g$-harmonizing and $v^g$-invariant metric $g^\bullet$ on $SM$ (in which $dF$ is co-closed). 
In particular, along the locus $\d_1^+(SM)$, the field $v^g$ points inside of $SM$, so that the closed form $\Theta := \ast_{g^\bullet}(d F)|_{\d_1^+(SM)}$ is positively proportional to the $g^\bullet$-induced volume form in the interior of $\d_1^+(SM)$. The coefficient of proportionality is $\cos\big(\angle(\nu, v^g)\big)$, where $\nu$ is the inward normal (in the metric $g^\bullet$) to $\d(SM)$ in $SM$. 

By Corollary  \ref{foliations}, the leaves of the $1$-foliation $\mathcal F(v^g)$ are geodesic curves in the $v^g$-harmonizing metric $g^\bullet$, and the leaves of the orthogonal $(2n-2)$-foliation $\mathcal G(F) =_{\mathsf{def}} \{F^{-1}(c)\}_{c \in \R}$ are the volume-minimizing hypersurfaces $H$ (among all hypersurfaces $\Sigma$ with the property $\Sigma \cap \d(SM) = H \cap \d(SM)$). This validates the first two bullets of the theorem. 

The last two bullets follow directly from Corollary \ref{iso_for_X} and Theorem \ref{th2.1}. \hfill
\end{proof}

\noindent {\bf Remark 5.1} Note that $vol_{\Theta}(\mathcal T(v^g)) = vol_{\Theta}(\mathcal T(-v^g))$, so that the volume of $\mathcal T(v^g)$ can be also expressed in terms of integration over the locus $\d_1^+(SM)(-v^g) = \d_1^-(SM)(v^g)$. \hfill $\diamondsuit$

\begin{theorem}\label{th3.3} 
Let $g$ be a boundary generic non-trapping Riemannian metric on a smooth compact connected $n$-manifold $M$ with boundary. Assume that any geodesic curve in $M$ is either transversal to $\d M$ or is simply tangent to $\d M$ at \emph{some} point. 

We choose a $v^g$-invariant and $v^g$-harmonising pair $(g^\bullet, dF)$ on $SM$ with the Lyapunov function $F$ being well-balanced \footnote{By Theorem \ref{foliations on SM}, such a pair $(g^\bullet, dF)$ exists.}.\smallskip

Then the scattering map $C_{v^g}: \d_1^+(SM) \to \d_1^-(SM)$, the restriction $F^\d: \d(SM) \to \R$ of $F$ and of the metric $g^\bullet$ to the boundary $\d(SM)$, and the $g^\bullet$-induced angle map $\theta: \d(SM) \to S^1$ allow for a reconstruction of: 
\begin{itemize}
\item the space $SM$, 

\item the geodesic vector field $v^g$, 

\item and of the metric $g^\bullet$ on $SM$, 
\end{itemize}
up to the natural action of diffeomorphisms 
$\Phi: SM \to SM$ that are the identity map on $\d(SM)$ and whose differentials are the identity map on the bundle $T(SM)|_{\d(SM)}$. 

\end{theorem}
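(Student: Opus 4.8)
The plan is to reduce Theorem \ref{th3.3} to Theorem \ref{th2.2}, applied to the traversing boundary generic vector field $v^g$ on the compact $(2n-1)$-manifold $X := SM$. First I would verify that all hypotheses of Theorem \ref{th2.2} are met for this choice. Since $g$ is non-trapping, $v^g$ is traversing on $SM$ by \cite{K5}, Lemma 2.2; since $g$ is boundary generic on $M$, the field $v^g$ is boundary generic with respect to $\d(SM)$ by Definition 2.4 and Lemma 3.3 of \cite{K5}. By Theorem \ref{foliations on SM}, there exists a $v^g$-invariant $v^g$-harmonizing pair $(g^\bullet, dF)$ with $F$ well-balanced, so the structural hypotheses on $(g^\bullet, dF)$ in Theorem \ref{th2.2} hold. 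It remains to translate the transversality/tangency hypothesis: Theorem \ref{th2.2} requires that each $v^g$-trajectory $\g$ in $SM$ is either transversal to $\d(SM)$ at some point, or quadratically tangent to $\d(SM)$ at a single point $x = \g \cap \d(SM)$. A $v^g$-trajectory in $SM$ projects to a geodesic arc in $M$, and the order of contact of $\g$ with $\d(SM)$ at a point matches the order of contact of the geodesic with $\d M$ at the corresponding footpoint (this is the content of the local models in \cite{K5}, or can be read off from Definition 1.1 of the present paper together with \cite{K1}). So the assumption ``every geodesic in $M$ is either transversal to $\d M$ somewhere or simply tangent to $\d M$ at some point'' is exactly the needed hypothesis on $v^g$; in particular, the ``simply tangent'' case forces $\g\cap\d(SM)$ to be a single quadratic-tangency point, because a geodesic simply tangent to $\d M$ exits immediately (it belongs to a combinatorial type $\omega\in(2)_\succeq$ with a single boundary point, and this excludes the types $(4)_\succeq\cup(33)_\succeq$ and higher that would violate the smooth-conjugacy hypothesis of Theorem \ref{th1.1}).

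With these identifications in place, I would simply invoke Theorem \ref{th2.2} verbatim: the boundary-confined data
\[
\big(\, C_{v^g},\ g^\bullet|_{\d(SM)},\ F|_{\d(SM)},\ \theta:\d(SM)\to S^1\,\big),
\]
where $\theta = \angle_{g^\bullet}(v^g,\nu)$ with $\nu$ the inner $g^\bullet$-normal, determine the smooth topological type of $SM$ together with the metric $g^\bullet$, up to the action of $\mathsf{Diff}(SM,\d(SM))$ on $\mathcal R(SM)$. Moreover, the proof of Theorem \ref{th2.2} proceeds by first using the Holography Theorem \ref{th1.1} (in its smooth form, legitimate precisely because of the transversality/simple-tangency hypothesis) to reconstruct the pair $(SM,\mathcal F(v^g))$ — hence the geodesic vector field $v^g$ up to the same diffeomorphism group — and only afterwards reconstructs $g^\bullet$ by spreading the orthogonal splitting $g^\bullet|_{\d(SM)} = (g^\bullet{}^{\uparrow})^\d \oplus (g^\bullet{}^{\perp})^\d$ along the $v^g$-trajectories using $v^g$-invariance. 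So all three reconstructed objects — $SM$, $v^g$, and $g^\bullet$ — are exactly the output of Theorem \ref{th2.2}, and the claimed diffeomorphism group is identical to the one there.

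The only genuine content beyond quoting Theorem \ref{th2.2} is the dictionary between the boundary geometry of $(M,\d M)$ and that of $(SM,\d(SM))$, and I would spell out three small points. (i) The contact-order correspondence above, which certifies the hypothesis of Theorem \ref{th2.2} from the hypothesis on geodesics in $M$. (ii) The angle function: $\theta$ on $\d(SM)$ is the $g^\bullet$-angle between $v^g$ and the $g^\bullet$-inner normal to $\d(SM)$; this is part of the given data, so nothing must be derived from $g$ on $M$ — one just notes that $\theta$ together with $g^\bullet|_{\d(SM)}$ yields the orthogonal rotation $A(\theta)$ relating $(g^\bullet_\nu)^\d\oplus(g^\bullet_\tau)^\d$ to $(g^\bullet{}^{\uparrow})^\d\oplus(g^\bullet{}^{\perp})^\d$ exactly as in the proof of Theorem \ref{th2.2}. (iii) That $\d(SM)$ is a smooth closed manifold and $\d_2(SM)(v^g)$ is the fixed locus used implicitly, which follows from boundary genericity. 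I expect the main (though modest) obstacle to be item (i): making precise that a geodesic in $M$ which is simply tangent to $\d M$ lifts to a $v^g$-trajectory meeting $\d(SM)$ at a single quadratic-tangency point, with no spurious extra intersections; this requires recalling the local semialgebraic models for $v^g$ near $\d(SM)$ from \cite{K2}, \cite{K5} and checking that the combinatorial type is preserved under the projection $SM\to M$. Everything else is a direct citation of Theorem \ref{th2.2} and Theorem \ref{th1.1}.
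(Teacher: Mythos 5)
Your proposal is correct and follows essentially the same route as the paper: reduce to Theorem \ref{th2.2} applied to $(SM, v^g)$ with the $v^g$-invariant harmonizing pair $(g^\bullet, dF)$ supplied by Theorem \ref{foliations on SM}, with the only real work being the dictionary between boundary tangency of geodesics in $M$ and of $v^g$-trajectories in $SM$. The paper differs only in two small details you should make explicit: it recovers the vector field $v^g$ (not merely the foliation $\mathcal F(v^g)$) from the reconstructed grid by using the well-balancing $dF(v^g)=1$, and it handles the smoothness of the conjugating map $\Phi$ by citing Theorem \ref{th1.2}, whose hypothesis matches the one of Theorem \ref{th3.3} verbatim, rather than by translating into the quadratic-tangency hypothesis of Theorem \ref{th2.2} as you do.
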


\begin{proof} By Theorem \ref{traversing_harmonizing} and Theorem \ref{th2.2}, the knowledge of  $F^\d: \d(SM) \to \R$, the metric $(g^\bullet)^\d$ on $SM$, and angle map $\theta: \d(SM) \to S^1$, allow for a reconstruction of the smooth topological type of $SM$, the foliation $\mathcal F(v^g)$, and the metric $g^\bullet$, provided that $g^\bullet$ is $v^g$-invariant. Using that $dF(v^g) = 1$, we can reconstruct $v^g$ from $\mathcal F(v^g)$. In general, by Theorem \ref{th1.2}, the reconstruction is possible up to a homeomorphism $\Phi: SM \to SM$ that is fixed on $\d(SM)$ and whose restriction to each $v^g$-trajectory is an orientation-preserving diffeomorphism. Under the hypotheses of Theorem \ref{th3.3}, again by Theorem \ref{th1.2}, $\Phi$ is a smooth diffeomorphism. 
With the help of the $\Phi$-invariant map $\theta: \d(SM) \to S^1$, we conclude that the differential $D\Phi$ must act trivially on $T(SM)|_{\d(SM)}$.

However, assuming that the smooth topological type of $SM$ is known, we may drop the assumption that any geodesic curve in $M$ is either transversal to $\d M$ at some point or is simply tangent to $\d M$: indeed, the knowledge of the metric $g^\bullet$ on the complement to the locus $SM(v^g, (33)_\succeq \cup (4)_\succeq)$ of codimension $3$ at least, by continuity, is sufficient for the reconstruction of $g^\bullet$ everywhere.
\end{proof}

Moving away from the non-trapping metrics, we propose the following notions.

\begin{definition}\label{geodesically harmonic} Let $N$ be a smooth compact manifold and let $g^\bullet$ be a  Riemmanian metric on $SN$ such that $\|v^g\|_{g^\bullet} =1$. Consider a  $1$-form $\a_\bullet = \a_\bullet(g^\bullet)$ on $SN$ that is defined by the two properties: $(1)$\, 
 $\ker(\a_\bullet) \perp_{g^\bullet} v^g$, and $(2)$\,  $\a_\bullet(v^g) = 1$.

We say that a Riemmanian metric $g$ on $N$ is \textsf{geodesically harmonic}, if there exists a Riemmanian metric $g^\bullet$ on $SN$ as above and such that the $1$-form $\a_\bullet$ is closed and co-closed.

A Riemmanian metric $g$ on $N$ is \textsf{invariantly geodesically harmonic}, if it is geodesically harmonic and the metric $g^\bullet$ is $v^g$-invariant.
\smallskip

We denote by $\mathcal{H}ar(N)$ ($\mathcal{H}ar_{inv}(N)$) the space of (invariantly) geodesically harmonic metrics on $N$. \hfill $\diamondsuit$
\end{definition}

By Theorem \ref{foliations on SM}, any non-trapping and boundary generic metric $g$ on a connected compact $M$ is invariantly geodesically harmonic. Indeed, just follow the well-traveled path (see the proof of Theorem \ref{th2.1}): take a well-balanced Lyapunov function $F$, put $\a = dF$, construct a closed $v^g$-invariant $n$-form $\Theta$, whose kernel is spanned by $v^g$ and such that $\a \wedge \Theta > 0$, and finally, construct a $v^g$-invariant metric $g^\bullet$ so that $\Theta = \ast_{g^\bullet}(\a)$. Then $\ker(\Theta) \perp_{g^\bullet} \ker(dF)$. Thus, $g$ is invariantly geodesically harmonic in the sense of Definition \ref{geodesically harmonic}.\smallskip

However, the geodesically harmonic metrics on \emph{closed} manifolds $M$ seem to be a rare phenomenon. For instance, if $H^1(SM; \R)=0$ (for $\dim(M) \geq 3$, this is equivalent to $H^1(M; \R)=0$) no such metric $g^\bullet$ is available, since the cohomology class of $\a$ is nontrivial.\smallskip

Recall also that the $v^g$-\emph{invariant} Sasaki's metrics $gg$ on $SM$ are extremely rare indeed. According to \cite{Be}, Proposition 1.104, $gg$ is $v^g$-invariant if and only if the sectional curvature of $(M, g)$ is identically $1$. 
\smallskip

\noindent {\bf Remark 5.1.} Note that if $(N, g)$ is (invariantly) geodesically harmonic, then any smooth codimension zero proper submanifold $(M, g|_M) \subset (N, g)$ (with boundary generic metric $g|_M$) is automatically (invariantly) geodesically harmonic. \hfill $\diamondsuit$
\smallskip


Let us describe numerically how far a given metric $g^\bullet$ on $SM$ is from being ``harmonic".  Speaking informally, we would like to measure the ``distance" between the standard {\sf contact structure} $\b_g$ on $SM$ (see (\ref{eq.beta})) and {\sf taught foliations} $\mathcal G$ on $SM$ that are transversal to $v^g$. 

Let $\|\sim \|_2^{g^\bullet}$ denote the $L_2$-norm of differential forms on $SM$ in the metric $g^\bullet$. Let the $1$-form $\a_\bullet$ be as in Definition \ref{geodesically harmonic}. 
Then the quantity  $$\delta(g^\bullet) : = \sqrt{\big(\|d\a_\bullet \|_2^{g^\bullet}\big)^2 + \big(\|d(\ast_{g^\bullet}(\a_\bullet))\|_2^{g^\bullet}\big)^2}$$ measures the failure of a  candidate metric $g^\bullet$ to deliver the geodesic harmonicity of $g$. 

Evidently, if $g$ is geodesically harmonic, then $\delta(g^\bullet) = 0$ for an appropriate $g^\bullet$. \smallskip

\begin{definition} 
Let $(M, g)$ be a connected compact Riemannian manifold. Consider the number 
\begin{eqnarray}\label{D(g)}
\mathcal D(g) : = \inf_{\{g^\bullet\}}\, \big\{\delta(g^\bullet)\big\},
\end{eqnarray}
where $g^\bullet$ runs over all Riemannian metrics on $SM$ such that $vol_{g^\bullet}(SM) =1$. 
\smallskip

If $g^\bullet$ runs over all $v^g$-\emph{invariant} Riemannian metrics on $SM$ 
such that $vol_{g^\bullet}(SM) =1$, we get a similar to (\ref{D(g)}) quantity $\mathcal D_{inv}(g) \geq \mathcal D(g)$. \hfill $\diamondsuit$
\end{definition}

Recall that $\mathcal D_{inv}(g) = \mathcal D(g) = 0$ for any non-trapping $g$ on a compact connected manifold with boundary.

\begin{conjecture}\label{D(g) = 0} If $\mathcal D(g) = 0$, then $g$ is geodesically harmonic. If $\mathcal D_{inv}(g) = 0$, then $g$ is invariantly geodesically harmonic. \hfill $\diamondsuit$
\end{conjecture}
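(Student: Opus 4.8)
\smallskip
\noindent {\bf Towards a proof of Conjecture \ref{D(g) = 0}.} The conjecture says that the infimum in (\ref{D(g)}) is attained whenever it equals zero, so the natural route is the direct method. Assuming $\mathcal D(g)=0$, the plan is to fix a minimizing sequence of Riemannian metrics $g^\bullet_k$ on $SM$, with $\|v^g\|_{g^\bullet_k}=1$ and $vol_{g^\bullet_k}(SM)=1$, for which
$$\delta(g^\bullet_k)^2 \;=\; \big(\|d\a_{\bullet,k}\|_2^{g^\bullet_k}\big)^2 + \big(\|d(\ast_{g^\bullet_k}\a_{\bullet,k})\|_2^{g^\bullet_k}\big)^2 \;\longrightarrow\; 0,$$
and to extract from it a genuine metric $g^\bullet$ realizing $\delta(g^\bullet)=0$; for $\mathcal D_{inv}(g)=0$ one restricts throughout to $v^g$-invariant metrics. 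A first useful remark is a decoupling: each $\a_{\bullet,k}$ is determined by the transversal hyperplane field $\mathcal H_k:=\ker\a_{\bullet,k}$ alone, via the normalization $\a_{\bullet,k}(v^g)=1$, so the closedness defect $d\a_{\bullet,k}$ depends on $\mathcal H_k$ only, while the metric induced on $\mathcal H_k$ enters only through the co-closedness defect and the two $L^2$-norms. The plan then splits into: \textbf{(i)} use $\|d\a_{\bullet,k}\|_2^{g^\bullet_k}\to 0$ to produce a \emph{closed}, nowhere-vanishing limit form $\a_\infty$ on $SM$ with $\a_\infty(v^g)\equiv 1$; and \textbf{(ii)} manufacture a $v^g$-harmonizing metric from $\a_\infty$.

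Step \textbf{(ii)} is essentially formal and uses nothing beyond the $v^g$-invariance of the Liouville volume form $\Omega$ on $SM$ (cf.\ Lemma \ref{nildivergent_form} and the construction in Theorem \ref{th3.2}). Given such an $\a_\infty$, set $g^\bullet:=\a_\infty\otimes\a_\infty+h$, where $h$ is any Riemannian metric on the distribution $\ker\a_\infty$, extended by $h(v^g,\cdot)=0$, whose volume form on $\ker\a_\infty$ equals $\pm\,v^g\,\rfloor\,\Omega$. A direct check shows that the $1$-form associated to $g^\bullet$ in the sense of Definition \ref{geodesically harmonic} is $\a_\infty$ itself, that $\|v^g\|_{g^\bullet}=1$, and that $vol_{g^\bullet}=\pm\Omega$; hence $\ast_{g^\bullet}\a_\infty = v^g\,\rfloor\,vol_{g^\bullet}$ and $d(\ast_{g^\bullet}\a_\infty)=\mathcal L_{v^g}(\pm\Omega)=0$, so $\a_\infty$ is co-closed in $g^\bullet$ and $g$ is geodesically harmonic. (Should Step \textbf{(i)} furnish only $\a_\infty(v^g)>0$ in place of $\equiv 1$, one renormalizes: when $M$ has boundary and $g$ is non-trapping, $v^g$ is traversing and one solves $v^g(u)=1-\a_\infty(v^g)$ along the trajectories and replaces $\a_\infty$ by $\a_\infty+du$; in general one invokes Calabi's theorem \cite{Ca}, whose hypothesis --- transitivity of $\a_\infty$ --- is automatic along trajectory segments reaching $\d(SM)$ and is the delicate point only on closed $SM$, which is why geodesically harmonic closed manifolds are scarce.) Thus the whole conjecture reduces to Step \textbf{(i)}.

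For Step \textbf{(i)} I would fix a background metric $g_0$ on $SM$ and Hodge-decompose $\a_{\bullet,k}=h_k^{\mathrm{harm}}+df_k+\delta_0\b_k$ with respect to $g_0$, so that $d\a_{\bullet,k}=d\delta_0\b_k$. \emph{If} the metrics $g^\bullet_k$ remained uniformly two-sided comparable to $g_0$, then $\|d\delta_0\b_k\|_2^{g^\bullet_k}\to 0$ would force $d\delta_0\b_k\to 0$ in $L^2(g_0)$; ellipticity of $d\delta_0$ on co-exact forms would then give $\delta_0\b_k\to 0$ in $H^1$, the harmonic parts $h_k^{\mathrm{harm}}$ lie in a fixed finite-dimensional space and subconverge, and after subtracting an exact correction restoring $\a_{\bullet,k}(v^g)\equiv 1$ one would land on a closed limit $\a_\infty$ with $\a_\infty(v^g)\equiv 1$, hence nowhere vanishing. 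Feeding $\a_\infty$ into Step \textbf{(ii)} would finish; running the same argument inside the closed affine subspace of $v^g$-invariant metrics, whose limits stay $v^g$-invariant, would settle $\mathcal D_{inv}(g)=0$.

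The principal obstacle, and the reason this is only a conjecture, is precisely the italicized hypothesis just used: the unit-volume normalization does not prevent the minimizing sequence from \emph{collapsing}. Since $\|v^g\|_{g^\bullet_k}=1$ pins the $v^g$-direction, degeneration can occur only transversally, but $g^\bullet_k$ may still blow up along some directions of $\mathcal H_k$ and shrink along others, so that $\|d\a_{\bullet,k}\|_2^{g^\bullet_k}\to 0$ can be achieved by the metric degenerating rather than by $\mathcal H_k$ becoming integrable. To sidestep this I would not look for a metric limit at all: following Sullivan's calculus of foliation cycles and the Sullivan--Rummler characterization of taut codimension-one foliations (compare \cite{Su} and the proof of Theorem C in \cite{K6}), I would pass to weak-$\ast$ limits of the normalized volume measures and of the transversal hyperplane fields and show that $\delta(g^\bullet_k)\to 0$ forces the limiting data to carry a closed de Rham current positive on a limiting foliation transverse to $v^g$ --- a tautness certificate --- which by Sullivan--Rummler is equivalent to the existence of an honest Riemannian metric on $SM$ in which the defining closed $1$-form of that foliation is co-closed, i.e.\ to $g$ being geodesically harmonic. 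The hard technical points are showing that the limiting current is non-degenerate (not concentrated on a proper subset) and controlling the $v^g$-normalization through the collapse; this is where a complete proof must do the real work.
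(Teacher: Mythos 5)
This statement is an open conjecture in the paper; no proof of it is given there, so there is nothing of the author's to compare your argument against, and the question is only whether your proposal closes the conjecture on its own. It does not. Your Step (ii) is essentially correct and is really a repackaging of machinery already in the paper: given a closed, nowhere-vanishing $\a_\infty$ with $\a_\infty(v^g)\equiv 1$, setting $g^\bullet=\a_\infty\otimes\a_\infty+h$ with $h$ chosen on $\ker\a_\infty$ so that $vol_{g^\bullet}=\pm\Omega$ for a $v^g$-invariant volume form $\Omega$ (the Liouville form always works) does give $\ast_{g^\bullet}\a_\infty=v^g\,\rfloor\,vol_{g^\bullet}$ and hence co-closedness, exactly as in Lemma \ref{nildivergent_form} and Theorems \ref{th2.1}, \ref{th3.2} (for the invariant statement you still owe an argument that $h$ can be chosen $v^g$-invariant, which on a closed $SM$ is not automatic). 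So the entire content of the conjecture is your Step (i), and that step is precisely where the proposal stops being a proof.

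The gap is the one you name yourself, but it is worth stating why it is fatal as written. The normalization $vol_{g^\bullet_k}(SM)=1$, $\|v^g\|_{g^\bullet_k}=1$ gives no two-sided comparability with a background metric, so the Hodge-theoretic compactness argument collapses, and the fallback via Sullivan--Rummler is only gestured at: weak-$\ast$ limits of the plane fields $\mathcal H_k$ need not be integrable, so there is no limiting foliation to which the tautness criterion applies; the limiting foliation cycle may be degenerate or supported on a proper subset; and, most importantly, the conjecture is quantitative in disguise. On a closed $SM$ with $H^1(SM;\R)=0$ no closed, co-closed, nonvanishing $\a_\bullet$ can exist (it would be a nonzero harmonic $1$-form), so proving the conjecture in that case means proving a uniform lower bound $\delta(g^\bullet)\geq c>0$ over all unit-volume metrics; your scheme would have to extract from the minimizing sequence a closed current representing a nontrivial class in $H^1$, and you give no mechanism that prevents $\|d\a_{\bullet,k}\|_2^{g^\bullet_k}\to 0$ through metric degeneration while the classes $[\a_{\bullet,k}]$ do not even make sense (the forms are not closed). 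Until the non-degeneracy of the limit object and the control of the $\a_\bullet(v^g)=1$ normalization through the collapse are established, this is a plausible program consistent with the paper's expectations, not a proof.
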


In the spirit of Conjecture \ref{D(g) = 0}, $\mathcal D(g)$ should measure the failure of geodesic harmonicity for a given metric $g$ on $M$.
\smallskip

\noindent {\bf Problem 5.1.} 
\begin{itemize}
\item Estimate $\delta(gg)$ for the Sasaki metric $gg$ on $SM$.
\smallskip

\item Compute $\delta(gg)$ for the $v^g$-invariant Sasaki metric $gg$ on any compact symmetric space $(N, g)$ of rank one. By \cite{Be}, these are Riemmanian manifolds $\S^n$, $\R \P^n$, $\C \P^n$, $\H \P^n$, and $\C a\P^2$ with constant sectional curvature $1$. \hfill $\diamondsuit$
\end{itemize}
\smallskip

\section{On the holography of billiard maps for non-trapping metrics}

In this section, we will derive direct applications of the results from the previous sections to the geodesic flows of non-trapping metrics on manifolds with boundary.\smallskip

 Let $(M, g)$ be a compact connected smooth Riemannian $n$-manifold with boundary and $v^g$ the geodesic vector field on the tangent unitary bundle $SM$. 
 
 Any point on the boundary of $SM$ is represented by a pair $(x, w)$, where $x \in \d M$ and $w \in T_xM$ is a unit vector. In the local coordinates $(\vec q, \vec p)$ on $TM$, the spherical fibration $SM \subset TM$ is the locus $\{H(\vec q, \vec p) =1\}$ for  the Hamiltonian 
 \begin{eqnarray}\label{H}
 H(\vec q, \vec p) := \frac{1}{2}\sum_{1\leq i,j \leq n} g_{ij}(q)\, p_ip_j.
 \end{eqnarray}
 
It is easy to see that the loci $\d_1^\pm(SM)(v^g)$ are $g$-independent (for example, $\d_1^+(SM)(v^g)$ consists of pairs $(x, w)$, where $x \in \d M$ and $w \in T_xM$ belongs to the closed half-space, whose vectors point inside of $M$). So we will denote these loci  by ``$\d_1^\pm(SM)$". 
\smallskip
 
 Any tangent vector $\dot q \in T_qM$, where $q \in \d M$, is a sum of $a\cdot n + b \cdot t$, where $n$ is the inner normal to $\d M$ in $M$ (with respect to the metric $g$), $t \in T_q(\d M)$, and $a, b \in \R$.  Consider a smooth involution $\tau_g: \d(SM) \to \d(SM)$ that takes any tangent to $M$ vector $\dot q  = a\cdot n + b \cdot t$ at $q \in \d M$, to the vector $\tau_g(\dot q) = -a\cdot n + b \cdot t$, the orthogonal reflection of $\dot q$ with respect to the hyperplane $T_q(\d M)$. 
The involution $\tau_g$ induces a $g$-isometry of each sphere $\{S_qM \subset T_qM\}_{q \in \d M}$ with respect to its equator. 
  Evidently, $\tau_g$ maps $\d_1^\pm(SM)$ to $\d_1^\mp(SM)$ (see Fig.2). 
 \smallskip
 
 One can generalize the construction of the mirror reflection $\tau_g: \d(SM) \to \d(SM)$ in the spirit of Finsler structures \cite{Car} as follows. Consider any smooth involution $\tau: \d(SM) \to \d(SM)$ which is a map of the spherical fibration $\eta: \d(SM) = SM|_{\d M}  \to \d M$, which is the identity on its base $\d M$ and such that $\tau$-fixed locus $\d(SM)^\tau$ is $S(\d M) = \d(SM) \cap T(\d M)$. 
 
For example, we may consider a new smooth Riemannian metric $\tilde g$ in the vector bundle $TM|_{\d M} \to \d M$ and the $\tilde g$-generated spherical fibration $\tilde SM \to \d M$. Then, for each point $x \in \d(SM)$, we take the ray $\ell_x$ in $T_xM$ through $x$ and the origin, produce the unique point $y \in \tilde SM$ that belongs to $\ell_x$, apply the reflection $\tau_{\tilde g}$ to $y$, and finally produce the point $\tau(x) := \ell_{\tau_{\tilde g}(y)} \cap SM$. 
\smallskip
 
The results of this section rely on the existence of a differential $(2n-2)$-form $\Theta$ on $SM$, subject to the following properties (that mirror the list in Definition \ref{def.dual}):
\smallskip
\begin{eqnarray}\label{properties_of_Theta}
\end{eqnarray}
\begin{itemize}
\item $d \Theta = 0$,
\item $\dim(\ker(\Theta)) = 1$,
\item $v^g \in \ker(\Theta)$,
\item $\pm\Theta|_{\d_1^\pm(SM)} \geq 0$,
\item $\tau^\ast(\Theta |_{\d_1(SM)}) = \Theta|_{\d_1(SM)}$ with respect to a given involution $\tau: \d(SM) \to \d(SM)$ as above.
\end{itemize}
\smallskip

Let us recall now few basic constructions  from Symplectic Geometry. Let $\b$ be the tautological Liouville $1$-form  (locally, ``$\vec {p^\ast}\cdot \vec{dq}$") on  the cotangent bundle $T^\ast M$, viewed as a smooth section of the bundle $T^\ast(T^\ast M) \to T^\ast M$. Let $\om = - d \b$ be the canonic  symplectic form on $T^\ast M$ (locally, ``$\vec {dq}\wedge \vec{dp^\ast}$"). 

The metric $g$ on $M$ gives rise to a bundle isomorphism $\Phi_g: TM \to T^\ast M$. Consider the pull-backs $\b_g =_{\mathsf{def}} \Phi_g^\ast(\b)$ of $\b$ and 
$\om_g =_{\mathsf{def}} \Phi_g^\ast(\om)$ of $\om$ under the diffeomorphism $\Phi$. 

Let $\mu$ be the unitary (in $g$) radial vector field, normal to $SM$ in $TM$ and tangent to the fibers of $TM \to M$. 
Then $\b_g = \pm\, \mu\,  \rfloor \, \om_g$. 

In local coordinates $(q^1, \dots, q^n,\, p^1, \dots, p^n)$ on $TM$, these forms may be written as 
\begin{eqnarray}\label{eq.beta}
\b_g = \sum_{i, j} g_{ij}\,p^i\, dq^j,
\end{eqnarray}
and 
\begin{eqnarray}\label{eq.omega}
\om_g = \sum_{i, j} g_{ij}\, dq^i \wedge dp^j + \sum_{i, j, k} \frac{\partial g_{ij}}{\partial q^k}\, p^i\, dq^j \wedge dq^k.
\end{eqnarray}

By the Liouville theorem, both $\b_g$ and $\om_g = d\b_g$ are $v^g$-invariant forms, that is, $\mathcal L_{v^g} (\b_g) = 0$ and $\mathcal L_{v^g}(\om_g) = 0$.  Moreover, using the formulas (\ref{eq.beta}) and (\ref{H}), we get $\b_g(v^g) = 1$.
\smallskip

We denote by $(\om_g)^n$, the $n^{th}$ exterior power of the $2$-form $\om_g$. It is a $(2n)$-dimensional volume form on $TM$. Let $\Omega_g =_{\mathsf{def}} \{\pm \b_g \wedge (\om_g)^{n-1}\} |_{SM}$ be a $(2n-1)$-dimensional volume form $\Omega_g$ on $SM$. 
In fact, $\Omega_g = \mu\, \rfloor \, (\om_g)^n$. 

Next, we introduce a $(2n-2)$-form $\Theta_g$ on $SM$ by 
\begin{eqnarray}\label{Theta_gg}
\Theta_g =_{\mathsf{def}}\; v^g\, \rfloor \, \Omega_g = (v^g \wedge \mu)\, \rfloor \, \om_g^n. 
\end{eqnarray}

Both forms, $\Omega_g$ and $\Theta_g$, are $v^g$-invariant.\smallskip

Recall the construction of the {\sf Sasaki Riemannian metric} $gg$ on the manifold $TM$, induced by a given metric $g$ on $M$ (\cite{Sa}). For any pair of tangent vectors, $v, u \in T_mM$, we consider the germ $\g_u$ of the geodesic curve through $m$ in the direction of $u$. Using the $g$-induced symmetric connection $\nabla^g$ 
on $M$, we consider the Jacobi vector field $\tilde v$ on $\g_u$, produced by the parallel transport of $v$ along $\g_u$. Using the natural parameter $s$ along $\g_u$, we get a germ at $(m, v)$ of a curve $\delta_{\g_u, v} =_{\mathsf{def}}\; \{s \to \tilde v(s)\}$ in $TM$, which projects on $\g_u$ under the map $\pi: TM \to M$. We denote by $W(v, u) = \frac{d}{ds}\delta_{\g_u, v}(0)$ the velocity vector of $\delta_{\g_u, v}$ at the point $(m, v)$. Thus, $W(v, u) \in T_{(m, v)}(TM)$. 

The correspondence  $H_{(m, v)}: u \to W(v, u)$ produces a linear injection $H_{(m, v)}: T_m(M) \to  T_{(m, v)}(TM)$. We denote by $H_{(m, v)}(TM)$ its image. Consider the {\sf the horizontal subbundle} $H(TM) \hookrightarrow TM$ of the bundle $T(TM) \to TM$ that this construction delivers. The subbundle, formed by vectors that are tangent to the fibers of $\pi: TM \to M$, is called {\sf the vertical subbundle} of $T(TM)$ and is denoted by $V(TM)$.
With its help, one splits the tangent bundle $T(TM) \to TM$ into a direct sum $H(TM) \oplus V(TM)$. 


Then, by the definition of $gg$, this bundle isomorphism $H(TM) \oplus V(TM) \approx TM \oplus TM$ is a an {\sf isometry} with respect to the metric $gg$ in the source and the metric $g \oplus g$ in the target. 

Since, under the parallel transport, the norm of the vectors $\tilde v$ is preserved, the curves $\{\delta_{\g_u, v}\}_u$ reside in $SM$, provided $(m, v) \in SM$.
Therefore, we get a bundle decomposition $T(SM) \approx H^S(TM) \oplus V(SM)$, where the $n$-bundle $H^S(TM) \to SM$ is a restriction of $H(TM) \to TM$ to $SM \subset TM$, and $V(SM) \to SM$ is the $(n-1)$-bundle, tangent to the fibers of $SM \to M$.
\smallskip

These constructions lead to the following, likely well-known, key lemma.

\begin{lemma}\label{natural_Theta} Let $(M, g)$ be a compact connected $n$-manifold with boundary.
The $(2n-2)$-form $\Theta_g := v^g \rfloor \, \Omega_g$ is equal to the form $\pm (\om_g)^{n-1}|_{SM}$ and has the properties, with respect to the involution $\tau_g: \d(SM) \to \d(SM)$, listed in (\ref{properties_of_Theta}). 
\end{lemma}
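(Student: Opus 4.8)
The plan is to establish the displayed identity $\Theta_g = \pm(\om_g)^{n-1}|_{SM}$ first and then read the five properties of (\ref{properties_of_Theta}) off it. Let $\iota\colon SM\hookrightarrow TM$ denote the inclusion. Since the text records $\Omega_g = \pm\,\iota^\ast\b_g\wedge(\iota^\ast\om_g)^{n-1}$, the Leibniz rule for the interior product yields $\Theta_g = v^g\rfloor\Omega_g = \pm[\,\b_g(v^g)\,(\iota^\ast\om_g)^{n-1} - \iota^\ast\b_g\wedge(v^g\rfloor(\iota^\ast\om_g)^{n-1})\,]$. Here $\b_g(v^g) = 1$ (noted after (\ref{omega})); and $v^g$ spans the $1$-dimensional kernel of $\iota^\ast\om_g$, namely the characteristic line field of the hypersurface $SM\subset(TM,\om_g)$ --- equivalently $v^g\rfloor\om_g = \pm dH$ for the kinetic energy $H$ of (\ref{H}), and $\iota^\ast dH = 0$ since $H$ is constant on $SM$. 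Therefore $v^g\rfloor(\iota^\ast\om_g)^{n-1} = (n-1)(v^g\rfloor\iota^\ast\om_g)\wedge(\iota^\ast\om_g)^{n-2} = 0$, the second summand drops out, and $\Theta_g = \pm(\iota^\ast\om_g)^{n-1} = \pm(\om_g)^{n-1}|_{SM}$.

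I would then obtain the first four properties in (\ref{properties_of_Theta}) --- that $\Theta_g$ is integrally dual to $v^g$ in the sense of Definition \ref{dual} --- directly from Lemma \ref{nildivergent_form}. Indeed $\Omega_g$ is a volume $(2n-1)$-form on $SM$, and it is $v^g$-invariant by Liouville's theorem ($\mathcal L_{v^g}\b_g = 0$ and $\mathcal L_{v^g}\om_g = 0$, hence $\mathcal L_{v^g}\Omega_g = 0$). Thus $v^g$ is intrinsically nildivergent with the help of $\Omega_g$, and Lemma \ref{nildivergent_form} applies verbatim to $\Theta_g = v^g\rfloor\Omega_g$: it is closed, its kernel is the $1$-dimensional line spanned by $v^g$, and $\pm\Theta_g|_{\d_1^\pm(SM)}\ge 0$ --- the last sign coming, as in the proof of that lemma, from the factor $\cos\angle(v^g,\nu)$ along $\d(SM)$.

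The remaining, and only genuinely new, point is the reflection invariance $\tau_g^\ast(\Theta_g|_{\d_1(SM)}) = \Theta_g|_{\d_1(SM)}$ (recall $\d_1(SM) = \d(SM)$). The plan is to fix boundary normal (Fermi) coordinates $(q^1,\dots,q^{n-1},q^n,p^1,\dots,p^n)$ on $TM$ near a point of $\d M$, so that $\d M = \{q^n = 0\}$, $g_{nn}\equiv 1$ and $g_{in}\equiv 0$ for $i<n$. In such coordinates the inner unit normal to $\d M$ is $\d_{q^n}$ and $T(\d M) = \mathrm{span}(\d_{q^1},\dots,\d_{q^{n-1}})$, so the $g$-orthogonal reflection $\tau_g$ is exactly the fiberwise sign change $p^n\mapsto -p^n$, fixing the base point and the momenta $p^1,\dots,p^{n-1}$; I expect this identification --- verifying that in adapted coordinates $\tau_g$ really is the coordinate reflection --- to be the main obstacle, with everything else being routine. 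Granting it: since $q^n$ is constant on $\d(SM)$, $dq^n|_{\d(SM)} = 0$, and so by (\ref{beta}) one has $\b_g|_{\d(SM)} = \sum_{i,j<n} g_{ij}(q',0)\,p^i\,dq^j$, a $1$-form involving neither $p^n$, nor $dp^n$, nor $dq^n$; as $\tau_g$ fixes each of $q^i, p^i, dq^i$ ($i<n$), we get $\tau_g^\ast(\b_g|_{\d(SM)}) = \b_g|_{\d(SM)}$. Applying $d$ (which commutes with restriction and with $\tau_g^\ast$) gives $\tau_g^\ast(\om_g|_{\d(SM)}) = \om_g|_{\d(SM)}$, hence $\tau_g^\ast\bigl((\om_g)^{n-1}|_{\d(SM)}\bigr) = (\om_g)^{n-1}|_{\d(SM)}$, which by the identity of the first paragraph is exactly $\tau_g^\ast(\Theta_g|_{\d_1(SM)}) = \Theta_g|_{\d_1(SM)}$. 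Being a pointwise identity of forms valid near every boundary point, it holds on all of $\d_1(SM)$, and the verification of (\ref{properties_of_Theta}) is complete.
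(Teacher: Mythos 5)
Your proposal is correct and takes essentially the same route as the paper: the identical interior-product computation (using $\b_g(v^g)=1$ and the vanishing of $v^g\rfloor\om_g$ on $SM$ because $H\equiv 1$ there) gives $\Theta_g=\pm\,\om_g^{n-1}|_{SM}$, and the same adapted boundary coordinates, in which $\tau_g$ is the sign flip of the normal momentum, yield the $\tau_g$-invariance on $\d(SM)$. The only cosmetic deviations are that you delegate the four ``integrally dual'' properties to Lemma \ref{nildivergent_form} together with the Liouville invariance of $\Omega_g$ (the paper verifies them directly, getting the boundary sign from the Sasaki-metric cosine factor), and that you obtain $\tau_g^\ast(\om_g|_{\d(SM)})=\om_g|_{\d(SM)}$ from the invariance of $\b_g|_{\d(SM)}$ by exactness rather than by applying $\tau_g$ to formula (\ref{omega}).
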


\begin{proof} 
We start the validation of the properties of $\Theta_g$ in the order they are listed in (\ref{properties_of_Theta}). 
First, we notice that $v^g \in \ker((\om_g)^{n-1}|_{SM})$, 
since $d H(w) = \om_g(v^g, w)$ and $H \equiv 1$ on $SM$. Therefore $v^g \, \rfloor \, (\beta_g \wedge (\om_g)^{n-1}) = (v^g \, \rfloor \, \b_g) \wedge (\om_g)^{n-1} = 1 \cdot (\om_g)^{n-1}$ on $SM$. 

Since $d \om_g =0$, we get that $(\om_g)^{n-1}$ is a closed form.

Using that $\Omega_g = \pm \b_g \wedge (\om_g)^{n-1}$ is a volume form on $SM$, we conclude that the form $(\om_g)^{n-1}$ does not vanish on $SM$. So we get $\dim(\ker((\om_g)^{n-1}|_{SM})) = 1$.

 In the vicinity of each point $x \in \d M$, we may pick a local coordinate system $(q^1, \dots, q^n)$ on $M$ so that $\d M$ is given by the equation $\{q^1 = 0\}$,  the gradient field $\d_{q^1}$ is $g$-orthogonal to $\d M$ and has length $1$. In other words, we may choose $q^1$ to be the $g$-induced distance function from $\d M$. The ``vertical" coordinates $(p^1, \dots, p^n) = (\d_{q^1}, \dots, \d_{q^n})$ are correlated in the standard way with $(q^1, \dots, q^n)$. Then the reflection involution is given by 
 $$\tau_g((0, q^2 \dots, q^n,\, p^1, p^2, \dots, p^n)) = (0, q^2 \dots, q^n,\, -p^1, p^2, \dots, p^n).$$

 Applying $\tau_g$ to formulas (6.3) 
 and (\ref{eq.omega}) and letting $q^1 = 0$, implies that $\tau_g^\ast(\b_g |_{\d(SM)}) = \pm \b_g|_{\d(SM)}$ and $\tau_g^\ast(\om_g |_{\d(SM)}) = \om_g|_{\d(SM)}$. 
 \smallskip
 
Let $\nu$ be the unitary vector field, normal in $gg|_{SM}$ to $\d(SM)$ in $SM$, and let $n$ be the unitary vector field, $g$-normal  to $\d M$ in $M$.   

We introduce the $(2n-2)$-volume form $\Omega^\d_g =_{\mathsf{def}} \nu \, \rfloor \,\Omega_g = \big(\nu \,\rfloor (\mu \,\rfloor \, \om_g^n)\big)$ on $\d(SM)$. Because the involution $\tau_g$ is an orientation reversing isometry on each spherical fiber of $SM |_{\d M} \to \d M$ and is an identity on the base $\d M$, we conclude that  $\tau^\ast_g(\Omega^\d_g) = -\Omega^\d_g$.

We notice that the normal vector $\nu = \nu(q, p)$ must be orthogonal in $gg$ to any vector $\theta$, tangent to the fiber of $SM \to M$ over a point $q \in \d M$. In fact, $\nu$ is tangent to the $v^g$-trajectory through the point $(q, n)$, where $n \in T_qM$ is a $g$-normal unit vector to $\d M$ in $M$ at $q \in \d M$. 
Therefore, representing $v^g$ in the local coordinates $(q, p)$ on $TM$ as $(\dot q, \dot p)$, we get $\langle \nu, v^g \rangle_{gg} =  \langle n, \dot q \rangle_{g}$. As a result, $v^g = (\dot q, \dot p)$ points inside of $SM$ if and only if $\dot q$ points inside of $M$.  Thus, the restriction of the $(2n-2)$-form $\Theta_g:=v^g \rfloor \Omega_g$ to $\d_1^\pm(SM)$ is nonnegative/nonpositive. Indeed, $\Omega^\d_g$ is the volume form on $\d(SM)$, and $\Theta_g/\Omega^\d_g = \cos(\angle_{gg}(v^g, \nu)) = \cos(\angle_{g}(\dot q, n))$ on $\d(SM)$.  
At the same time, $\tau_g^\ast(\Theta|_{\d(SM)}) =  \Theta|_{\d(SM)}$ since $$\Theta|_{\d(SM)}\, /\Omega^\d = \cos(\angle_{gg}(v^g, \nu)) = \cos(\angle_{g}(\dot q, n)),\; \text{and}$$ 
$$\tau_g^\ast(\Theta_g\,|_{\d(SM)})/\tau_g^\ast(\Omega^\d_g) =  \cos(\angle_{gg}((\tau_g)_\ast(v^g), (\tau_g)_\ast(\nu))) =\cos(\angle_{g}(\tau_g(\dot q), -n)) = \cos(\angle_{g}(\dot q, n)).$$ 
\end{proof}
%
The next lemma is an abstract of Lemma \ref{natural_Theta}; unlike the latter one, its validation is on the level of definitions.

\begin{lemma}\label{Theta_general} Let $(M, g)$ be a $n$-dimensional compact Riemannian manifold such that a volume form $\Omega$ on $SM$ is $v^g$-invariant. Then the  $(2n-2)$-form $\Theta =_{\mathsf{def}} v^g \rfloor \Omega$, is integrally dual (see Definition \ref{def.dual}) 
to the geodesic vector field $v^g$ on $SM$. So, with the help of $\Omega$, the vector field $v^g$ is nil-divergent. 

Let $N$ be a regular neighborhood of $\d(SM)$ in $SM$  and $\tilde\tau: N \to N$ a smooth involution with the properties: 
\begin{itemize}
\item $\tilde\tau(\d(SM)) = \d(SM)$,
\item $\d(SM)^{\tilde\tau} = S(\d M)$, where $S(\d M)$ denotes the spherical tangent bundle of $\d M$, 
\item $(\tilde\tau)_\ast(v_g) = \pm v_g$ along $\d(SM)$, and
$\tau^\ast(\Omega) = \pm\, \Omega$ along $\d(SM)$.
\end{itemize}
Then $\tilde{\tau}^\ast(\Theta |_{\d_1(SM)}) = \Theta|_{\d_1(SM)}$, and $\pm\Theta |_{\d_1^\pm(SM)} \geq 0$. \hfill $\diamondsuit$
\end{lemma}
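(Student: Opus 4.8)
The plan is to read everything off the definitions, as the statement advertises, invoking the earlier results; the one point deserving a moment's care is the bookkeeping of signs.

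First, the assertion that $\Theta := v^g \rfloor \Omega$ is integrally dual to $v^g$ is exactly Lemma \ref{nildivergent_form} applied to the $(2n-1)$-manifold $X = SM$ and the vector field $v = v^g$. Indeed, $\Omega$ is a volume form on $SM$, hence closed, so Cartan's formula gives $\mathcal L_{v^g}(\Omega) = d(v^g \rfloor \Omega) + v^g \rfloor d\Omega = d\Theta$, and the hypothesised $v^g$-invariance of $\Omega$ forces $d\Theta = 0$ (and then $\mathcal L_{v^g}(\Theta) = d(v^g \rfloor \Theta) + v^g \rfloor d\Theta = 0$, so $\Theta$ is $v^g$-invariant as well). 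From $v^g \rfloor \Theta = v^g \rfloor (v^g \rfloor \Omega) = 0$ one gets $v^g \in \ker(\Theta)$, and the nondegeneracy of $\Omega$ forces $\ker(\Theta)$ to be exactly the line spanned by $v^g$, i.e. $\dim \ker(\Theta) = 1$. For the sign condition, pick any Riemannian metric $h$ on $SM$ and let $\nu$ be the inward $h$-unit normal to $\d(SM)$; since $\Omega$ is a $(2n-1)$-form it pulls back to $0$ on the $(2n-2)$-manifold $\d(SM)$, so, restricting to $\d(SM)$, the quotient of the two $(2n-2)$-forms $(v^g \rfloor \Omega)|_{\d(SM)}$ and $(\nu \rfloor \Omega)|_{\d(SM)}$ equals $h(v^g, \nu)$, which is $\geq 0$ on the interior of $\d_1^+(SM)$ and $\leq 0$ on the interior of $\d_1^-(SM)$ by the very definition of these loci. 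Hence $\Theta$ meets all of Definition \ref{dual}(1); equivalently, by Definition \ref{div}, $v^g$ is intrinsically nildivergent by means of $\Omega$. This also proves the last claim of the lemma, $\pm\Theta|_{\d_1^\pm(SM)} \geq 0$.

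For the statement about $\tilde\tau$, the one ingredient is the naturality of the interior product under a diffeomorphism, exactly as in the proof of Lemma \ref{lem2.3}: for a diffeomorphism $\phi$, a vector field $V$ and a form $\eta$, $\phi^\ast(V \rfloor \eta) = \big((\phi^{-1})_\ast V\big) \rfloor \phi^\ast\eta$. Applying this with $\phi = \tilde\tau$ and using that $\tilde\tau$ is an involution (so $\phi^{-1} = \tilde\tau$ and $(\phi^{-1})_\ast V = \tilde\tau_\ast V$), I obtain on $N$
$$\tilde\tau^\ast(\Theta) \;=\; \tilde\tau^\ast\big(v^g \rfloor \Omega\big) \;=\; \big(\tilde\tau_\ast v^g\big) \rfloor \tilde\tau^\ast\Omega .$$
Evaluating at points of $\d(SM)$ and using the hypotheses $\tilde\tau_\ast v^g = \pm v^g$ and $\tilde\tau^\ast\Omega = \pm\Omega$ there — read with one and the same sign, as in the parallel hypotheses of Theorem \ref{proto-billiard} — the right-hand side becomes $(\pm v^g) \rfloor (\pm\Omega) = v^g \rfloor \Omega = \Theta$, so $\tilde\tau^\ast\Theta = \Theta$ along $\d(SM)$. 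Pulling this identity back along the inclusion $\d(SM) \hookrightarrow SM$ and using that $\tilde\tau$ preserves $\d(SM)$ gives $\tilde\tau^\ast\big(\Theta|_{\d_1(SM)}\big) = \Theta|_{\d_1(SM)}$, as required.

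There is no real obstacle here: the lemma is genuinely definitional, as announced. The only place calling for attention is the sign bookkeeping in the last display — the two ambiguous signs in the hypotheses on $\tilde\tau_\ast v^g$ and $\tilde\tau^\ast\Omega$ are to be read as one and the same sign (this is the intended convention, matching the parallel hypotheses of Theorem \ref{proto-billiard}, where both are $-$), since with opposite signs the same computation would instead yield $\tilde\tau^\ast\Theta = -\Theta$. Everything else is a direct unwinding of Definitions \ref{dual} and \ref{div} together with Lemmas \ref{nildivergent_form} and \ref{lem2.3}.
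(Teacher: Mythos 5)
Your proof is correct and is exactly the argument the paper intends: the paper offers no written proof (declaring the lemma ``on the level of definitions''), and your unwinding via Lemma \ref{nildivergent_form} for the integral duality and the naturality computation $\tilde\tau^\ast(v^g\rfloor\Omega)=(\tilde\tau_\ast v^g)\rfloor\tilde\tau^\ast\Omega$ — the same calculation used in the proof of Theorem \ref{proto-billiard} and in Lemma \ref{lem2.3} — is precisely that definitional check. Your remark that the two $\pm$ signs in the hypotheses on $\tilde\tau$ must be read as correlated (as in Theorem \ref{proto-billiard}, where both are $-$) is a correct and useful clarification of an ambiguity in the statement.
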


The combination of Lemma \ref{Theta_general} with Lemma \ref{lem5.2} leads instantly to the following result. 

\begin{theorem}\label{main_theorem} Let $g$ be a smooth non-trapping Riemannian metric on a compact connected manifold $M$ with boundary. Let $N$ be a regular neighborhood of $\d(SM)$ in $SM$. 
Choose a $v^g$-invariant volume form $\Omega$ on $SM$ and put $\Theta = v^g \rfloor \Omega$.  Assume that a smooth involution $\tilde\tau: N \to N$ satisfies the three bullets in Lemma \ref{Theta_general} with respect to the form $\Theta$.
\smallskip

Then the form $\Theta$ is integrally dual to $v^g$. The billiard map $B_{v^g}$, the composition of the  the scattering map $C_{v^g}: \d_1^+(SM) \to \d_1^-(SM)$ with the diffeomorphism $\tau =_{\mathsf{def}} \tilde\tau|: \d_1^-(SM) \to \d_1^+(SM)$, preserves the $\Theta$-induced measure $\mu_\Theta$ on $ \d_1^+(SM)$. \smallskip

There exists a $v^g$-harmonizing metric $g^\bullet$ on $SM$ so that $\Theta = v^g \rfloor \Omega$ coincides with $\ast_{g^\bullet}(dF)$. 
\hfill $\diamondsuit$
\end{theorem}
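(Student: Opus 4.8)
The plan is to obtain the three assertions by feeding the data $(v^g,\Omega,\tilde\tau,F)$ directly into the machinery of Sections 2--5; essentially nothing new is needed. \emph{Integral duality of $\Theta$.} Since $\Omega$ is a $v^g$-invariant volume form on $SM$ and $v^g$ is non-vanishing, Lemma \ref{nildivergent_form} (equivalently the bookkeeping recorded in Lemma \ref{Theta_general}) shows that $\Theta := v^g\,\rfloor\,\Omega$ is closed, has one-dimensional kernel spanned by $v^g$, and satisfies $\pm\Theta|_{\partial_1^\pm(SM)}\ge 0$. Hence $\Theta$ is integrally dual to $v^g$ in the sense of Definition \ref{dual}, and $v^g$ is intrinsically nildivergent on $SM$ with the help of $\Omega$. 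Here one uses that non-trapping of $g$ makes $v^g$ traversing on $SM$ (\cite{K5}, Lemma 2.2), so that the scattering map $C_{v^g}$ is defined and, for boundary generic $g$ (arranged, if necessary, by an arbitrarily small perturbation), the loci $\partial_1^\pm(SM)$ are smooth manifolds and $\mu_\Theta$ is a genuine measure --- this is the setting required by Lemma \ref{lem5.2}.

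\emph{The billiard map preserves $\mu_\Theta$.} By Lemma \ref{lem5.2} (that is, Theorem \ref{C_v_preserves}), the scattering map $C_{v^g}:\partial_1^+(SM)\to\partial_1^-(SM)$ preserves $\mu_\Theta$. The three bullet conditions imposed on $\tilde\tau$ give, again via Lemma \ref{Theta_general}, the identity $\tilde\tau^\ast(\Theta|_{\partial_1(SM)})=\Theta|_{\partial_1(SM)}$; restricting $\tilde\tau$ to $\partial_1^-(SM)$ produces a diffeomorphism $\tau:\partial_1^-(SM)\to\partial_1^+(SM)$ verifying the Involution Hypothesis (\ref{tau}). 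Therefore $\tau$ is $\mu_\Theta$-preserving, and so is the composition $B_{v^g}=\tau\circ C_{v^g}$. This last step is exactly Theorem \ref{proto-billiard} applied to $(SM,v^g,\Theta,\tau)$, from which the infinite return property of $B_{v^g}$ also follows.

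\emph{The harmonizing metric.} I would choose a well-balanced Lyapunov function $F:SM\to\R$ for $v^g$ (which exists by \cite{K1}, Lemma 4.1, together with Theorem \ref{th2.1}), so $dF$ is closed with $dF(v^g)=1>0$. Since $v^g$ spans $\ker(\Theta)$, the product $dF\wedge\Theta$ is a nowhere-vanishing $(2n-1)$-form on the $(2n-1)$-manifold $SM$, i.e. a volume form up to orientation; by Calabi's lemma \cite{Ca}, used exactly as in the proof of Theorem \ref{th2.1} and recorded in the first bullet of Theorem \ref{th3.2}, there is a metric $g^\bullet$ on $SM$ with $\ast_{g^\bullet}(dF)=\Theta$, and the invariant refinement of Calabi's construction (proof of Theorem \ref{th2.1}(2)--(3)) lets one take $g^\bullet$ to be $v^g$-invariant. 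Then $(g^\bullet,dF)$ is a $v^g$-harmonizing pair realizing $\Theta=\ast_{g^\bullet}(dF)$.

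The hard part --- everything else being assembly --- is the verification that $dF\wedge\Theta$ is everywhere a volume form of one sign, equivalently that $\ker(dF)$ and $\ker(\Theta)$ are transversal and compatibly cooriented; this is precisely where the conditions $dF(v^g)>0$ and $v^g\in\ker(\Theta)$ enter, and once it is in force Calabi's (invariant) construction of $g^\bullet$ goes through verbatim. The remaining content is purely citational: Lemmas \ref{nildivergent_form}, \ref{Theta_general}, \ref{lem5.2} and Theorems \ref{C_v_preserves}, \ref{proto-billiard}, \ref{th2.1}, \ref{th3.2}.
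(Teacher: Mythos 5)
Your proposal is correct and takes essentially the same route as the paper: the paper derives this theorem precisely by combining Lemma \ref{Theta_general} (integral duality of $\Theta$ and $\tilde\tau$-invariance of $\Theta|_{\d_1(SM)}$) with Lemma \ref{lem5.2} and Theorem \ref{proto-billiard} for the measure preservation of $B_{v^g}$, and with the Calabi-type construction of Theorems \ref{th2.1} and \ref{th3.2} to produce the $v^g$-harmonizing metric $g^\bullet$ with $\ast_{g^\bullet}(dF)=\Theta$. The one cosmetic caveat is your aside about achieving boundary genericity by a small perturbation of $g$: perturbing $g$ would change $v^g$, $\Omega$ and $\Theta$, so boundary genericity should simply be taken as a standing hypothesis (as the paper implicitly does throughout this section); this does not affect the substance of your argument.
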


\begin{corollary}\label{infinite_return_for_billiard}
Under the assumptions of Theorem \ref{main_theorem}, the Billiard Map $B_{v^g,\, \tau} = \tau \circ C_{v^g}: \d_1^+(SM) \to \d_1^+(SM)$ has the infinite return property (as in Theorem \ref{proto-billiard}). \hfill $\diamondsuit$
\end{corollary}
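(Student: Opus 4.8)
The plan is to read the statement off directly from Theorem~\ref{proto-billiard}, once the hypotheses of that theorem have been checked for the data $\big(SM,\, v^g,\, \Theta,\, \tau\big)$, where $\Theta := v^g\, \rfloor\, \Omega$ and $\tau := \tilde\tau|_{\d_1^-(SM)}$. First I would record that, since $g$ is non-trapping, the geodesic field $v^g$ is traversing on $SM$ (Lemma~2.2 of \cite{K5}) and — by the boundary genericity in force throughout the billiard discussion — is boundary generic with respect to $\d(SM)$; thus $(SM, v^g)$ is an admissible setting for Theorem~\ref{proto-billiard}.

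Next I would invoke Theorem~\ref{main_theorem} (which itself combines Lemma~\ref{Theta_general} with Lemma~\ref{lem5.2}): it gives that the $(2n-2)$-form $\Theta = v^g\,\rfloor\,\Omega$ is integrally dual to $v^g$ on $SM$, i.e. $d\Theta = 0$, $\dim\ker(\Theta) = 1$, $v^g \in \ker(\Theta)$, and $\pm\Theta|_{\d_1^\pm(SM)} \geq 0$; and that the involution $\tilde\tau$ restricts to a diffeomorphism $\tau\colon \d_1^-(SM) \to \d_1^+(SM)$ with $\tau^\ast\big(\Theta|_{\d_1^+(SM)}\big) = \Theta|_{\d_1^-(SM)}$, which is exactly the Involution Hypothesis~(\ref{tau}). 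In particular $B_{v^g,\tau} = \tau \circ C_{v^g}\colon \d_1^+(SM) \to \d_1^+(SM)$ is precisely the proto-billiard map~(\ref{proto_billiard}) associated to this data. (When the signs in Lemma~\ref{Theta_general} are both $-$, i.e. $\tilde\tau_\ast(v^g) = -v^g$ and $\tilde\tau^\ast(\Omega) = -\Omega$, this also matches verbatim the nildivergent case treated in the second half of Theorem~\ref{proto-billiard}, via Lemma~\ref{lem2.3}.)

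Finally I would apply Theorem~\ref{proto-billiard}: its conclusion \emph{is} the infinite return property for $B_{v^g,\tau}$ — every $x \in \d_1^+(SM)$ has an open neighborhood $U$ such that for $\mu_\Theta$-almost every $x' \in U$ the orbit $\{(B_{v^g,\tau})^{\circ k}(x')\}_k$ meets $U$ for infinitely many $k$. The only point worth spelling out is finiteness of the total measure $\mu_\Theta(\d_1^+(SM))$, which is immediate since $SM$ is compact and $\Theta$ restricts to a smooth $(2n-2)$-form on the compact manifold $\d_1^+(SM)$; this finiteness is the sole input for the Poincar\'e recurrence argument already carried out inside the proof of Theorem~\ref{proto-billiard}. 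There is thus no real obstacle: the substantive work has been front-loaded into Theorem~\ref{main_theorem} and Theorem~\ref{proto-billiard}, and the corollary is a matter of matching hypotheses and applying the recurrence theorem.
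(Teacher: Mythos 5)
Your proposal is correct and is exactly the argument the paper intends: the corollary is stated without a separate proof precisely because Theorem \ref{main_theorem} verifies that $\Theta = v^g\,\rfloor\,\Omega$ is integrally dual to the traversing, boundary generic field $v^g$ and that $\tau = \tilde\tau|$ satisfies the Involution Hypothesis (\ref{tau}), so Theorem \ref{proto-billiard} applies verbatim to $B_{v^g,\tau}$. Your added remarks (boundary genericity carried over from Lemma \ref{lem5.2}, finiteness of $\mu_\Theta(\d_1^+(SM))$, and the match with the nildivergent case of Theorem \ref{proto-billiard}) just make explicit the hypothesis-checking the paper leaves implicit.
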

\smallskip

Viewing the fundamental $1$-form $\b_g$ as a smooth section of the cotangent bundle $T^\ast(SM) \to SM$, we denote by $\b_g^{\d^+}$ the restriction of $\b_g$ to $\d_1^+(SM) \subset SM$.\smallskip

The next theorem claims that, for non-trapping metrics, the scattering map $C_{v^g}$ allows to reconstruct the canonical {\sf contact structure} on $SM$, provided we know it along the boundary $\d(SM)$.

\begin{theorem}\label{eq.beta} Let $g$ be a boundary generic and non-trapping metric on a compact connected manifold $M$ with boundary. Assume that any geodesic curve in $M$ is either transversal to $\d M$ at \emph{some} point or is simply tangent to $\d M$ at \emph{some} point \footnote{This assumption is valid if each connected component of $\d M$ is strictly concave or convex in $g$.}.

We choose a well-balanced Lyapunov function $F: SM \to \R$.  
\smallskip

Then the scattering map $C_{v^g}: \d_1^+(SM) \to \d_1^-(SM)$, together with the function $F^\d: \d(SM) \to \R$ and the $1$-form $\b_g^{\d^+}$, allow for a reconstruction of $SM$, the geodesic field $v^g$, and the contact $1$-form $\b_g$, up to a diffeomorphism $\Phi: SM \to SM$ that is the identity on $\d(SM)$ and preserves the $1$-form  $\b_g^{\d^+}$.
\end{theorem}

\begin{proof} The form $\b_g$ on $SM$ is $v^g$-invariant. Therefore, $\b_g^{\d^+}$ spreads uniquely by the geodesic flow $\{x \to \phi^\theta(x)\}_{\theta \in \R}$ along each $v^g$-trajectory $\tilde\g_x$ that passes through a point $x \in \d_1^+(SM)$. Let $U(\tilde\g_x)$ be a flow-invariant tube around $\tilde\g_x$ and $a, b \in \tilde\g_x$. Because $F$ is well-balanced, 
the $v^g$-flow $\phi$ is delivered by the family of local diffeomorphisms $\phi_{a, b}: U(\tilde\g_x) \to U(\tilde\g_x)$ 
 that map the constant  level set $\hat F^{-1}(F(a))$ to the constant  level set $\hat F^{-1}(F(b))$. Therefore, the grid in $SM$, formed by the pair of transversal foliations $\mathcal F(v^g) = \{\tilde\g_x\}_{x \in \d_1^+(SM)}$ and $\mathcal G(F) = \{F^{-1}(c)\}_{c \in F(\d(SM))}$, allow not only to reconstruct the pair $(SM, \mathcal F(v^g))$, but also the vector field $v^g$ and the 
geodesic flow. Moreover, by the argument above, $\b_g$ may also be recovered from these two foliations and the section $\b_g^{\d^+}$. In turn, the two foliations are determined by the pairs $\big(x \in \d_1^+(SM),\; c \in F(\d(SM))\big)$.

Thus, for a well-balanced Lyapunov function $F: SM \to \R$, the boundary confined data of  the theorem make it possible to reconstruct, up to a diffeomorphism of $SM$, which is constant on its boundary, the standard contact structure $\ker(\b_g)$ on $SM$ and, since $\om_g|_{SM} = d\b_g$, to reconstruct the restriction $\om_g|_{SM}$ of the symplectic $2$-form $\om_g$. 
\end{proof}

\section{On the averages of ergodic billiards for non-trapping metrics}

The chaotic dynamics of billiard maps iterations has been a subject of a well-established and flourishing research industry (for example, see \cite{Si}, \cite{Si1}, \cite{BFK}, and \cite{ChM}). Since the Euclidean billiards with {\sf strictly concave boundary} are {\sf dispersing}, the reader may replace in what follows any assumption about the ergodicity of a billiard $(M, g)$ by the assumption that billiard is Euclidean and its boundary is strictly concave \cite{ChM}, \cite{BFK}.

Let us recall few standard definitions, related to the dynamics of measure-preserving maps, as they apply to the billiard maps.

\begin{definition}\label{def9.7}
 The billiard map $B_{v^g}: \d_1^+(SM)(v^g) \to \d_1^+(SM)(v^g)$ is said to be \textsf{ergodic} with respect to a given $n$-form $\Theta$ on $SM$ as in Theorem  
\ref{main_theorem}, if the invariance of a Lebesgue-measurable set $K \subset \d_1^+(SM)$ under  the billiard map $B_{v^g}$ implies that ether the measure $\mu_\Theta(K)$ or the measure $\mu_\Theta(\d_1^+(SM) \setminus K)$ is zero.  \hfill $\diamondsuit$
\end{definition}

A natural inclination is to anticipate that, for a ``random" pair $(M, g)$, $g$ being non-trapping,  boundary generic, and ``bumpy", the billiard map $B_{v^g}$ is ergodic. This thinking is supported by the theory of {\sf Sinai billiards}. For them, the smooth boundary $\d M$ is strictly concave, so that the billiard is dispersing, which implies not only ergodicity of the billiard map, but also its mixing property and positive entropy \cite{Si}, \cite{Si1}.
\smallskip

Let us recall now the content of the famous Birkhoff Theorem \cite{Bi}, as it applies to the environment of the billiard maps, with the measure $\mu_\Theta$ on $\d_1^+(SM)$ being induced by the appropriate   $(2n-2)$-form $\Theta$ on $SM$. We assume that $\Theta |_{\d(SM)}$ is invariant under an involution $\tau: \d(SM) \to \d(SM)$, a generalized billiard reflection; so the billiard map $B_{v^g}: \d_1^+(SM) \to \d_1^+(SM)$ preserves the measure $\mu_\Theta$.   

Let $f \in L^1(\d_1^+(SM), \mu_\Theta)$ be an integrable (with respect to the measure $\mu_\Theta$) function $f: \d_1^+(SM) \to \R$. 
Its {\sf time average} is defined by the formula
$$\hat f(z) =_{\mathsf{def}} \lim_{m \to \infty}\; \frac{1}{m}\, \sum_{k=0}^{m-1} f\big((B_{v^g})^{\circ k}(z)\big).$$
In fact, the limit $\hat f(z)$ exists almost\footnote{the exceptional $z$'s form a set of $\mu_\Theta$-measure $0$.} for all $z \in \d_1^+(SM)$, and $\hat f \in L^1(\d_1^+(SM),\mu_\Theta)$.
Moreover, $\hat f$ is an invariant function (that is, if $B_{v^g}\circ \hat f = \hat f$ almost everywhere) and  
$$\int_{\d_1^+(SM)} \hat f \, d\mu_\Theta = \int_{\d_1^+(SM)} f \, d\mu_\Theta.$$
\smallskip

The {\sf space average} of $f$ is defined as 
$$\bar f =_{\mathsf{def}} \frac{1}{\mu_\Theta(\d_1^+(SM))} \int_{\d_1^+(SM)} f \, d\mu_\Theta.$$

In particular, if the billiard map $B_{v^g}$ is {\sf ergodic}, then $\hat f$ must be  \emph{constant} almost everywhere: indeed, any level set $\hat f^{-1}((-\infty, c))$ is $B_{v^g}$-invariant for any $c \in \R$. As a result, $\hat f = \bar f$ almost everywhere (for example, see \cite{W}). 

Therefore, using that $\mu_\Theta(\mathcal T(v^g)) =_{\mathsf{def}} \mu_\Theta(\d_1^+(SM))$ and that, by Theorem \ref{main_theorem}, $B_{v^g}$ is a $\mu_\Theta$-measure-preserving transformation, we get the following version of the Birkhoff Theorem:

\begin{theorem}\label{ergodic_billiard} Let $(M, g)$ be a compact connected smooth Riemannian $n$-manifold with boundary, the metric $g$ being non-trapping and boundary generic. 
Let a $(2n-2)$-form $\Theta$ on $SM$ be as in Theorem \ref{main_theorem} or in the list (\ref{properties_of_Theta}). 
 \smallskip

If the billiard map $B_{v^g}$ is ergodic in the measure $\mu_\Theta$, then 
\begin{eqnarray}\label{eqBIRKHOFF}
\lim_{m \to \infty}\; \frac{1}{m}\, \sum_{k=0}^{m-1} f\big((B_{v^g})^{\circ k}(z)\big) =  \frac{1}{\mu_\Theta(\mathcal T(v^g))} \int_{\d_1^+(SM)} f \, d\mu_\Theta
\end{eqnarray}
for any given function $f \in L^1(\d_1^+(SM), \mu_\Theta)$ and almost all points $z \in \d_1^+(SM)$.
\hfill $\diamondsuit$
\end{theorem}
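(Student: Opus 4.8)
The plan is to obtain the statement as an immediate consequence of the classical Birkhoff pointwise ergodic theorem applied to the measure-preserving dynamical system $\big(\d_1^+(SM),\, \mu_\Theta,\, B_{v^g}\big)$, once the hypotheses of that theorem are checked.

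First I would record that $\big(\d_1^+(SM),\, \mu_\Theta,\, B_{v^g}\big)$ is indeed a finite measure-preserving system. By Theorem \ref{main_theorem} (which rests on Lemma \ref{lem5.2}, Lemma \ref{Theta_general} and, for the concrete choice $\Theta = \om_g^{n-1}|_{SM}$, on Lemma \ref{natural_Theta}), the $(2n-2)$-form $\Theta$ is integrally dual to $v^g$ and the billiard map $B_{v^g} = \tau \circ C_{v^g} \colon \d_1^+(SM) \to \d_1^+(SM)$ preserves the measure $\mu_\Theta$; the discontinuity of $C_{v^g}$ along $\mathcal X \cap \mathsf{int}(\d_1^+(SM))$ is harmless because that locus has $\mu_\Theta$-measure zero, as established in the proof of Lemma \ref{lem2.1}. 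Moreover $\mu_\Theta$ is finite: $\d_1^+(SM)$ is a compact piecewise-smooth submanifold of the compact manifold $\d(SM)$ and $\Theta$ is smooth, so $\mu_\Theta(\d_1^+(SM)) = \int_{\d_1^+(SM)} \Theta < \infty$. By Definition \ref{def2.4}, and since $\Gamma^\d \colon \d_1^+(SM) \to \mathcal T(v^g)$ is a measure isomorphism off the zero-measure set $\mathcal K(v^g)$, we have $\mu_\Theta(\d_1^+(SM)) = vol_\Theta(\mathcal T(v^g)) = \mu_\Theta(\mathcal T(v^g))$.

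Now I would invoke Birkhoff's theorem: for every $f \in L^1(\d_1^+(SM), \mu_\Theta)$ the time average
$$\hat f(z) = \lim_{m \to \infty} \frac{1}{m} \sum_{k=0}^{m-1} f\big((B_{v^g})^{\circ k}(z)\big)$$
exists for $\mu_\Theta$-almost every $z$, belongs to $L^1(\d_1^+(SM), \mu_\Theta)$, is $B_{v^g}$-invariant almost everywhere, and satisfies $\int_{\d_1^+(SM)} \hat f \, d\mu_\Theta = \int_{\d_1^+(SM)} f \, d\mu_\Theta$. Next I would bring in the ergodicity hypothesis (Definition \ref{def9.7}): for each $c \in \R$ the set $\{\hat f < c\}$ is $B_{v^g}$-invariant up to a null set, hence has $\mu_\Theta$-measure $0$ or full measure, so $\hat f$ equals a constant $a$ almost everywhere. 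Integrating and using $\mu_\Theta(\d_1^+(SM)) = \mu_\Theta(\mathcal T(v^g))$ gives
$$a = \frac{1}{\mu_\Theta(\mathcal T(v^g))} \int_{\d_1^+(SM)} f \, d\mu_\Theta,$$
which is precisely formula (\ref{eqBIRKHOFF}), valid for $\mu_\Theta$-almost all $z \in \d_1^+(SM)$.

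The only points that require genuine care are the measure-theoretic ones already dealt with upstream — that $B_{v^g}$ truly preserves $\mu_\Theta$ despite the discontinuity of the scattering map, and that $\mu_\Theta$ is a finite measure on the compact domain $\d_1^+(SM)$ — together with the bookkeeping identity $\mu_\Theta(\d_1^+(SM)) = \mu_\Theta(\mathcal T(v^g))$. Once these are in place, no additional obstacle remains: the ergodic-theoretic content is exactly the classical Birkhoff theorem, and the present statement is a clean specialization of it to the billiard setting.
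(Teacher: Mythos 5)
Your proposal is correct and follows essentially the same route as the paper: measure preservation of $\mu_\Theta$ by $B_{v^g}$ via Theorem \ref{main_theorem}, the classical Birkhoff pointwise ergodic theorem, ergodicity forcing the time average to be constant, and the identification $\mu_\Theta(\d_1^+(SM)) = \mu_\Theta(\mathcal T(v^g))$ from Definition \ref{def2.4}. Your explicit attention to the finiteness of $\mu_\Theta$ and to the null discontinuity locus of $C_{v^g}$ only makes the argument slightly more careful than the paper's own exposition.
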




For ergodic billiard maps $B_{v^g}$, some ``\emph{metric-flavored}" holographic properties hold: as Theorem \ref{balanced F-variation} and Corollary \ref{cor.FXT} below claim, for boundary generic non-trapping metrics $g$ on $M$, the $g$-induced volume of the space $M$ can be recovered from the volume of the space of geodesics and the {\sf average length} $\mathsf{av}(\ell)$ of a {\sf free geodesic}. A free geodesic in $M$ is a segment $[\g]$ of a geodesic curve $\g$ in $M$ such that the boundary $\d [\g] \subset \d M$ and $int([\g]) \cap \d M = \emptyset$.\smallskip

In our treatment, we are guided by the Santalo formula \cite{S}, and 
\cite{Ch}, 
 \cite{ChM},
\cite{Tab}. \smallskip

For flat compact billiards $M$ in the Euclidean space $\mathsf E^n$ or in the flat torus $\mathsf T^n$, the average length $\mathsf{av}(\ell)$ of a free trajectory is given  by a beautiful formula (see \cite{S}, \cite{Ch}):
\begin{eqnarray}\label{eq9.8}
\mathsf{av}(\ell) = \frac{vol_\mathsf E(S^{n-1})}{vol_\mathsf E(B^{n-1})} \cdot \frac{vol_\mathsf E(M)}{vol_\mathsf E(\d M)} = 2\sqrt{\pi} \cdot \frac{\Gamma(\frac{n+1}{2})}{\Gamma(\frac{n}{2})} \cdot \frac{vol_\mathsf E(M)}{vol_\mathsf E(\d M)},
\end{eqnarray}
in terms of the Euclidean volume $vol_\mathsf E(B^{n-1})$ of the unit ball $B^{n-1}$ and of the  volume $vol_\mathsf E(S^{n-1})$ of the unit sphere $S^{n-1} \subset \mathsf E^n$.\smallskip

\smallskip

These results admit the following generalizations.

\begin{theorem}\label{F-variation}  Let $(M, g)$ be a compact connected Riemannian $n$-manifold with boundary, such that the metric $g$ is boundary generic and non-trapping. 
Let $F: SM \to \R$ be a Lyapunov function for the geodesic field $v^g$. 
 For each $z \in  \d_1^+(SM)$, consider the variation $$\D_F(z) =_{\mathsf{def}} F(C_{v^g}(z)) - F(z)$$ of $F$ along the segment $[z, C_{v^g}(z)]$ of the $v^g$-trajectory $\tilde\g_z$ through $z$. 
 This construction gives rise to a well-defined measurable function $\D_F: \d_1^+(SM) \to \R.$

Consider a differential $(2n-2)$-form $\Theta$ with the properties as in (\ref{properties_of_Theta}).  \smallskip

With these ingredients in place, the following statements hold:
\begin{itemize}
\item The average of the variation of $F$ along the $v^g$-trajectories can be calculated via the Santalo-type formula
\begin{eqnarray}\label{eq9.9}
\mathsf{av}(\D_F) =_{\sf{def}}  \; \frac{\int_{\d_1^+(SM)} \D_F \cdot \Theta}{\int_{\d_1^+(SM)} \Theta}\; = \, \frac{\int_{SM} dF \wedge \Theta}{\int_{\d_1^+(SM)} \Theta} \; =_{\sf{def}}\; \frac{vol_{dF \wedge \Theta}(SM)}{vol_{\Theta}\big(\mathcal T(v^g)\big)}.
\end{eqnarray}
\item If the billiard map $B_{v^g}$ is ergodic with respect to the measure $\mu_{\Theta}$ (for example, if $g$ is Euclidean and $\d M$ is strictly concave in $g$), then the average of the variation of $F$ can be also calculated via the formula   
\begin{eqnarray}\label{average_under_iterrations}
\mathsf{av}(\D_F) \; = \; \lim_{m \to \infty}\; \frac{1}{m}\, \sum_{k=0}^{m-1} \D_F\big((B_{v^g})^{\circ k}(z)\big)  
\end{eqnarray}
for almost all $z \in \d_1^+(SM)$. 
\end{itemize}
\end{theorem}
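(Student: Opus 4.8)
The plan is to prove the two displayed formulas in turn. For the first formula \eqref{eq9.9}, the key observation is that the function $\D_F$ on $\d_1^+(SM)$ measures exactly the "fiber length" of $SM$ over the trajectory space in the coordinate $F$, so integrating $\D_F \cdot \Theta$ over $\d_1^+(SM)$ should be a disguised Fubini/coarea computation against the $(2n-1)$-volume form $dF \wedge \Theta$ on $SM$. Concretely, I would first recall (from Lemma \ref{lem2.1}, Theorem \ref{C_v_preserves}, and the properties \eqref{properties_of_Theta}) that $\Theta$ is closed, $v^g$-invariant, and $v^g$-horizontal, so that along each $v^g$-trajectory $\tilde\g_z$ the form $\Theta$ restricts trivially and the "tube" $SM(v^g, N)$ over a $\mathsf{PD}$-submanifold $N \subset \d_1^+(SM)$ has the structure exploited in the proof of Lemma \ref{lem2.1}. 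Then, using that $F$ is a Lyapunov function (so $dF(v^g) > 0$ and $F$ is strictly increasing along trajectory segments), I would apply Stokes' theorem to $F \cdot \Theta$ over $SM(v^g, N)$ exactly as in Lemma \ref{lem2.1}: since $d(F\,\Theta) = dF \wedge \Theta$ (using $d\Theta = 0$), and since $\Theta$ vanishes on the "lateral" part $\delta SM(v^g, N)$ built from $v^g$-trajectories, we get
\begin{eqnarray}
\int_{SM(v^g,N)} dF \wedge \Theta \;=\; \int_{C_{v^g}(N)} F\,\Theta \;+\; \int_{N} F\,\Theta. \nonumber
\end{eqnarray}
Combining this with the measure-preservation identity $\int_{C_{v^g}(N)}\Theta = -\int_N \Theta$ (with the sign coming from the orientation conventions in \eqref{eq2.1}, since $\Theta$ is nonpositive on $\d_1^-(SM)$), and using $F(C_{v^g}(z)) - F(z) = \D_F(z)$, the boundary terms collapse to $\int_N \D_F \cdot \Theta$. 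Passing from $\mathsf{PD}$-submanifolds $N$ to arbitrary measurable sets (again as justified in the proof of Lemma \ref{lem2.1} via the local models of \cite{K2}) and taking $N \to \d_1^+(SM)$ yields $\int_{\d_1^+(SM)} \D_F \cdot \Theta = \int_{SM} dF \wedge \Theta$. Dividing by $\int_{\d_1^+(SM)}\Theta = vol_\Theta(\mathcal T(v^g))$ (the last identification being Definition \ref{def2.4}) gives \eqref{eq9.9}.

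For the second formula \eqref{average_under_iterrations}, I would verify that $\D_F \in L^1(\d_1^+(SM),\mu_\Theta)$: it is measurable by the local analytic models of $C_{v^g}$ away from the zero-measure discontinuity locus $\mathcal X \cap \mathrm{int}(\d_1^+(SM))$, and integrable because $\int_{\d_1^+(SM)} |\D_F|\cdot\Theta \leq \int_{\d_1^+(SM)} \D_F \cdot \Theta < \infty$ (here $\D_F \geq 0$ since $F$ strictly increases along $v^g$, so $|\D_F| = \D_F$, and the integral is finite by the first part). Then, since $B_{v^g} = \tau \circ C_{v^g}$ is $\mu_\Theta$-measure-preserving on the finite-measure space $\d_1^+(SM)$ by Theorem \ref{main_theorem} (equivalently Theorem \ref{proto-billiard}), and $B_{v^g}$ is assumed ergodic, the Birkhoff ergodic theorem in the form recorded in Theorem \ref{ergodic_billiard} applies with $f = \D_F$: the time average equals the space average
\begin{eqnarray}
\lim_{m\to\infty}\frac{1}{m}\sum_{k=0}^{m-1}\D_F\big((B_{v^g})^{\circ k}(z)\big) \;=\; \frac{1}{\mu_\Theta(\mathcal T(v^g))}\int_{\d_1^+(SM)}\D_F\, d\mu_\Theta \;=\; \mathsf{av}(\D_F) \nonumber
\end{eqnarray}
for $\mu_\Theta$-almost every $z$, where the last equality is the definition of $\mathsf{av}(\D_F)$ in \eqref{eq9.9}.

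I expect the main obstacle to be purely bookkeeping in the first part: getting the orientation signs right in the Stokes computation over $SM(v^g,N)$ — in particular reconciling the sign in $\int_{C_{v^g}(N)}\Theta = -\int_N\Theta$ (forced by property \eqref{properties_of_Theta}, $\pm\Theta|_{\d_1^\pm(SM)}\geq 0$) with the orientation of the boundary pieces of the tube, and checking that the lateral contribution $\int_{\delta SM(v^g,N)}\Theta$ genuinely vanishes because $\delta SM(v^g,N)$ is foliated by $v^g$-orbits and $\Theta$ is $v^g$-horizontal ($v^g \in \ker\Theta$ and $\Theta(v^g\wedge w)=0$). Once this local model (borrowed verbatim from the proof of Lemma \ref{lem2.1}) is in place, both displayed formulas follow with no further analytic input; the ergodic-theoretic half is a direct citation of Theorem \ref{ergodic_billiard}.
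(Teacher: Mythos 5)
Your proposal is correct and follows essentially the same route as the paper's proof: Stokes' theorem applied to $F\cdot\Theta$ (using $d\Theta=0$ and the vanishing of $\Theta$ on trajectory-foliated lateral pieces), combined with the $C_{v^g}$-measure-preservation change of variables to turn the boundary terms into $\int_{\d_1^+(SM)}\D_F\cdot\Theta$, followed by a direct citation of Theorem \ref{ergodic_billiard} with $f=\D_F$ for the ergodic statement. Your tube-by-tube version of the Stokes step (echoing Lemma \ref{lem2.1}) and the explicit $L^1$/measurability check are just more detailed renderings of what the paper does globally, so no substantive difference remains beyond the sign bookkeeping you already flag.
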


\begin{proof} If the metric $g$ is non-trapping, the vector field $v^g$ is traversing. So there exists a Lyapunov function $F: SM \to \R$ for the geodesic field $v^g$. 

By (\ref{properties_of_Theta}), the billiard map $B_{v^g}$ preserves the measure $\mu_\Theta$ on $\d_1^+(SM)$, and $dF \wedge \Theta$ may serve as the volume form on $SM$ (not to be confused with the $v^g$-invariant volume form $\Omega$ that gave rise to $\Theta$ via the formula $\Theta =_{\mathsf{def}} v^g \rfloor \Omega$ !). 


Since the scattering map $C_{v^g}$ is continuous and smooth away from a set of vanishing Lebesgue measure, the function $\D_F$ is a Lebesgue-measurable, and thus $\mu_{\Theta}$-measurable. 

Thus, using that $d\Theta =0$ and by the Stokes' formula, we get 
$$\int_{\d_1^+(SM)}\; \D_F \cdot \Theta = \int_{ \d_1^+(SM)}^{\d_1^-(SM)}\; F \cdot \Theta \; \stackrel{Stokes}{=}\; \int_{SM} dF \wedge \Theta =_{\sf{def}} vol_{dF \wedge \Theta}(SM)$$

By definition, $vol_{\Theta}\big(\mathcal T(v^g)\big)$---the volume of the space of geodesics---is the integral $\int_{\d_1^+(SM)} \Theta$. Therefore, 
\begin{eqnarray}\label{eq9.11}
\mathsf{av}(\D_F)  \; = \;\frac{\int_{\d_1^+(SM)} \D_F \cdot \Theta}{\int_{\d_1^+(SM)} \Theta} \; = \frac{\int_{SM} dF \wedge \Theta}{\int_{\d_1^+(SM)} \Theta} \; =_{\sf{def}} \; \frac{vol_{dF \wedge \Theta}(SM)}{vol_{\Theta}\big(\mathcal T(v^g)\big)}, \nonumber
\end{eqnarray}
which validates formula (\ref{eq9.9}).
\smallskip

When $B_{v^g}$ is ergodic in $\mu_{\Theta}$, by  applying Theorem \ref{ergodic_billiard}  to the function $f =_{\mathsf{def}} \D_F$, we prove formula (\ref{average_under_iterrations}). \hfill
\end{proof}

For traversing flows $v^g$, their Lyapunov functions $F$ attend extrema on the boundary $\d(SM)$. Hence, the global variation $\mathsf{var}(F)$ of $F$ on $SM$ is equal to its variation $\mathsf{var}(F^\d)$ on $\d(SM)$. On the other hand, evidently, $\D_F(z) \leq \mathsf{var}(F)$ for all $z$. This leads to the following direct corollary of Theorem \ref{F-variation}.

\begin{corollary} Let $(M, g)$ be a compact connected Riemannian $n$-manifold with boundary, such that the metric $g$ is boundary generic and non-trapping. Consider the variation $\mathsf{var}(F^\d) =_{\mathsf{def}} \mathsf{var}(F|_{\d(SM)})$ of the Lyapunov function $F: SM \to \R$ on the boundary $\d(SM)$.\smallskip

Then, for any $\Theta$ as in Theorem \ref{F-variation},
$$\mathsf{var}(F^\d) \cdot vol_\Theta(\mathcal T(v^g))\, \geq \, vol_{dF \wedge \Theta}(SM).$$
In particular, if $\mathsf{var}(F^\d)= 1$, then $vol_\Theta(\mathcal T(v^g))\, \geq \, vol_{dF \wedge \Theta}(SM)$.
\hfill $\diamondsuit$
\end{corollary}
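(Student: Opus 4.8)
The plan is to obtain the inequality as an immediate consequence of the averaging identity (\ref{eq9.9}) in Theorem \ref{F-variation}, combined with a pointwise bound on the function $\D_F$. First I would record that since $g$ is non-trapping, the geodesic field $v^g$ is traversing on $SM$ (\cite{K5}, Lemma 2.2), so every $v^g$-trajectory is a compact segment whose two endpoints lie on $\d(SM)$; because $dF(v^g) > 0$, the function $F$ is strictly increasing along each such segment, hence $F$ attains both its global maximum and its global minimum on $\d(SM)$. Therefore the total variation of $F$ over $SM$ coincides with $\mathsf{var}(F^\d) := \max_{\d(SM)} F - \min_{\d(SM)} F$.

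Next I would note that for every $z \in \d_1^+(SM)$ we have $0 \le \D_F(z) = F(C_{v^g}(z)) - F(z) \le \mathsf{var}(F^\d)$: the left inequality holds because $F$ increases along $v^g$-trajectories, and the right one because $F(z)$ and $F(C_{v^g}(z))$ both lie in the image $F(SM)$, an interval of length $\mathsf{var}(F^\d)$. This is the only genuinely new input beyond Theorem \ref{F-variation}.

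The main step is then to integrate this pointwise bound against the measure $\mu_\Theta$. By the fourth bullet of (\ref{properties_of_Theta}) the restriction $\Theta|_{\d_1^+(SM)}$ is a nonnegative $(2n-2)$-form, so $\int_{\d_1^+(SM)} \D_F \cdot \Theta \le \mathsf{var}(F^\d)\,\int_{\d_1^+(SM)} \Theta$. Identity (\ref{eq9.9}) identifies the left-hand integral with $vol_{dF\wedge\Theta}(SM)$ and the integral $\int_{\d_1^+(SM)}\Theta$ with $vol_\Theta(\mathcal T(v^g))$, so we arrive at $vol_{dF\wedge\Theta}(SM) \le \mathsf{var}(F^\d)\cdot vol_\Theta(\mathcal T(v^g))$, which is exactly the claim. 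Setting $\mathsf{var}(F^\d) = 1$ yields the stated special case.

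In short, there is no serious obstacle; the statement is a one-line corollary of Theorem \ref{F-variation}. The only points that require any attention are verifying that the extrema of $F$ sit on $\d(SM)$ — which is precisely where the non-trapping (traversing) hypothesis enters — and that $\Theta$ is nonnegative on $\d_1^+(SM)$, so that the pointwise estimate $\D_F \le \mathsf{var}(F^\d)$ is preserved under integration against $\mu_\Theta$.
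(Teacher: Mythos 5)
Your proposal is correct and follows essentially the same route as the paper: the author also deduces the corollary directly from formula (\ref{eq9.9}) of Theorem \ref{F-variation}, using that a Lyapunov function of a traversing flow attains its extrema on $\d(SM)$ (so $\mathsf{var}(F)=\mathsf{var}(F^\d)$) together with the pointwise bound $\D_F(z)\le \mathsf{var}(F)$ integrated against the nonnegative form $\Theta|_{\d_1^+(SM)}$. No gaps; your write-up merely makes explicit the steps the paper leaves as a remark preceding the corollary.
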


\begin{corollary} Under the ergodicity hypotheses from the second bullet of Theorem \ref{F-variation}, knowing the $\Theta$-induced volume of the space of geodesics $\mathcal T(v^g)$, together with the limit $$\lim_{m \to \infty}\; \frac{1}{m}\, \sum_{k=0}^{m-1} \D_F\big((B_{v^g})^{\circ k}(z)\big)$$ for almost any  $z \in \d_1^+(SM)$, allows to determine the $(dF \wedge \Theta)$-induced volume of the space $SM$. 

Both $vol_\Theta(\mathcal T(v^g))$ and the limit above may be recovered in terms of the data, confined to the boundary $\d(SM)$.
\hfill $\diamondsuit$
\end{corollary}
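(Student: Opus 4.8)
The plan is to deduce both assertions directly from the two displayed formulas in Theorem \ref{F-variation} together with the holographic reconstruction results already established. The first claim---that knowing $vol_\Theta(\mathcal T(v^g))$ and the ergodic limit $\lim_{m\to\infty}\frac1m\sum_{k=0}^{m-1}\D_F\big((B_{v^g})^{\circ k}(z)\big)$ determines $vol_{dF\wedge\Theta}(SM)$---is essentially an algebraic rearrangement: under the ergodicity hypothesis, formula (\ref{average_under_iterrations}) identifies the ergodic limit with $\mathsf{av}(\D_F)$ for almost every $z\in\d_1^+(SM)$, and then formula (\ref{eq9.9}) reads
\begin{eqnarray}
vol_{dF\wedge\Theta}(SM)\;=\;\mathsf{av}(\D_F)\cdot vol_{\Theta}\big(\mathcal T(v^g)\big).\nonumber
\end{eqnarray}
So once both factors on the right are known, the left side is determined. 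I would phrase this as: pick any $z$ outside the measure-zero exceptional set of Birkhoff's theorem (applied to $f=\D_F$ as in Theorem \ref{ergodic_billiard}), compute the Ces\`aro limit of $\D_F$ along its $B_{v^g}$-orbit, multiply by the given volume of the space of geodesics, and obtain $vol_{dF\wedge\Theta}(SM)$.

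For the second claim---that both $vol_\Theta(\mathcal T(v^g))$ and the ergodic limit are recoverable from boundary-confined data---I would argue as follows. By Definition \ref{def2.4} and Theorem \ref{iso_for_X} (or directly from Theorem \ref{th3.2}, third bullet), $vol_\Theta(\mathcal T(v^g)) = \int_{\d_1^+(SM)}\Theta$, which is manifestly an integral over the boundary $\d(SM)$: it depends only on the restriction $\Theta|_{\d(SM)}$ and on the locus $\d_1^+(SM)$, both boundary data. For the ergodic limit, I would invoke the holography package: by Theorem \ref{ergodic_billiard}, that limit equals $\mathsf{av}(\D_F)=\frac{1}{\mu_\Theta(\mathcal T(v^g))}\int_{\d_1^+(SM)}\D_F\cdot\Theta$, and the function $\D_F(z)=F(C_{v^g}(z))-F(z)$ is expressed purely in terms of the scattering map $C_{v^g}:\d_1^+(SM)\to\d_1^-(SM)$ and the restriction $F^\d=F|_{\d(SM)}$; likewise $\Theta|_{\d(SM)}$ and $\mu_\Theta(\mathcal T(v^g))$ are boundary data. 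Hence the whole expression is a functional of $(C_{v^g},\,F^\d,\,\Theta|_{\d(SM)})$. Alternatively, one may note that under the transversality/simple-tangency hypothesis, Theorem \ref{th3.3} (or Theorem \ref{beta}) reconstructs $SM$, $v^g$, and the ambient structure up to a boundary-fixing diffeomorphism, under which $vol_{dF\wedge\Theta}(SM)$ and all the integrals are invariant, so they are determined by the boundary data used in that reconstruction.

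The main obstacle, such as it is, is bookkeeping rather than mathematics: one must make sure that $\D_F$ is genuinely well-defined and measurable off a zero-measure set---this was already handled in the proof of Theorem \ref{F-variation} using the local real-analytic models of $C_{v^g}$ away from the discontinuity locus $\mathcal X\cap\mathrm{int}(\d_1^+X(v))$ from Lemma \ref{lem2.1}, and that $\mathcal K(v^g)$ has $\mu_\Theta$-measure zero. One should also be careful that the exceptional $z$'s for which the Birkhoff limit fails form a $\mu_\Theta$-null set, which is exactly the ``almost all $z$'' clause inherited from Theorem \ref{ergodic_billiard}; the statement of the corollary already carries the phrase ``for almost any particular $z$,'' so no new subtlety arises. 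Thus the corollary follows by combining Theorem \ref{F-variation}, Theorem \ref{ergodic_billiard}, and the boundary-confinement of the integrals defining $vol_\Theta(\mathcal T(v^g))$ and $\mathsf{av}(\D_F)$.
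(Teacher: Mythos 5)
Your argument is correct and is essentially the paper's own (the corollary is stated there as an immediate consequence of Theorem \ref{F-variation}): under ergodicity, formula (\ref{average_under_iterrations}) identifies the Ces\`aro limit with $\mathsf{av}(\D_F)$, formula (\ref{eq9.9}) then gives $vol_{dF\wedge\Theta}(SM)=\mathsf{av}(\D_F)\cdot vol_\Theta\big(\mathcal T(v^g)\big)$, and both $\int_{\d_1^+(SM)}\Theta$ and $\D_F=F\circ C_{v^g}-F$ are manifestly functionals of boundary-confined data $(C_{v^g},\,F^\d,\,\Theta|_{\d(SM)})$. The alternative detour through Theorem \ref{th3.3} is unnecessary (and would import extra tangency hypotheses), but since it is offered only as a remark it does not affect the validity of the proof.
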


Now we will derive two theorems, Theorem \ref{balanced F-variation} and Theorem \ref{ell(gg)}, that are in the spirit of Theorem \ref{F-variation}, but require some additional analysis. 
\smallskip

We denote by $gg$ the restriction of the Sasaki metric \cite{Sa} on $TM$ to $SM$, and by $\ell_{gg}(z)$ the length (in the metric $gg$) of the segment $[z, C_{v^g}(z)]$ of the $v^g$-trajectory $\tilde\g_z$ through $z \in  \d_1^+(SM)$. In fact, $\ell_{gg}(z)$ is the length of the free geodesic segment $\pi([z, C_{v^g}(z)])$ in the metric $g$ on $M$, where $\pi: SM \to M$ is the obvious projection. 

In general, for any metric $g^\bullet$ on $SM$, we denote by $\ell_{g^\bullet}(z)$ the length, in the metric $g^\bullet$, of the segment $[z, C_{v^g}(z)]$ of the $v^g$-trajectory $\tilde\g_z$ through $z \in  \d_1^+(SM)$. We denote by $\ell_{g^\bullet}: \d_1^+(SM) \to \R_+$ the measurable function that this construction generates.
\smallskip

With these notations fixed, for the classical billiard maps based on the elastic reflections, we get a stronger than Theorem \ref{F-variation} claim. 

\begin{theorem}\label{balanced F-variation}  Let $(M, g)$ be a compact connected Riemannian $n$-manifold with boundary, such that the metric $g$ is boundary generic and non-trapping. 

Let $F: SM \to \R$ be a Lyapunov function for the geodesic vector field $v^g$. 
Consider the differential $(2n-2)$-form  $\Theta_g =_{\mathsf{def}}\, \om_g^{n-1}$, where $\om_g$ is the restriction of the symplectic form on $TM$ to $SM$.\smallskip

With these ingredients in place, the following statements hold: 
\begin{itemize}
\item The measure $\mu_{\Theta_g}$ on $\d_1^+(SM)$,  defined by the closed $(2n-2)$-form $\Theta_g$ with the help of formula (\ref{eq2.2}), is invariant under the billiard map $B_{v^g}$.\smallskip

\item There exists a $v^g$-harmonizing metric $g^\bullet$ on $SM$ such that $\ast_{g^\bullet}(dF) = \Theta_g$. Moreover, in $g^\bullet$, the $v^g$-trajectories are still geodesic curves, and the leaves of the foliation $\mathcal G = \{F^{-1}(c)\}_{c \in \R}$ are volume minimizing (in their relative to $\d(SM) \cap F^{-1}(c)$ class) hypersurfaces. \smallskip

\item 
The average value of the length function $\ell_{g^\bullet}$ on the space of $v^g$-trajectories  can be calculated via the formula
\begin{eqnarray}\label{AA}
\mathsf{av}(\ell_{g^\bullet}) \; =_{\mathsf{def}} \frac{\int_{\d_1^+(SM)} \ell_{g^\bullet} \cdot \om_g^{n-1}}{\int_{\d_1^+(SM)} \om_g^{n-1}} \; \nonumber \\ = \,  \frac{\int_{SM} dF \wedge \om_g^{n-1}}{\int_{\d_1^+(SM)} \om_g^{n-1}}  = \frac{vol_{g^\bullet}(SM)}{vol_{\om_g^{n-1}}(\mathcal T(v^g))}.
\end{eqnarray}

\item If the billiard map $B_{v^g}$ is ergodic with respect to the measure $\mu_{\om^{n-1}}$ (say, if $\d M$ is strictly concave in $g$), 
then the average value of the function $\ell_{g^\bullet}$ can be also calculated via the formula   
\begin{eqnarray}\label{B}
\mathsf{av}(\ell_{g^\bullet}) \; = \; \lim_{m \to \infty}\; \frac{1}{m}\, \sum_{k=0}^{m-1} \ell_{g^\bullet}\big((B_{v^g})^{\circ k}(z)\big)  
\end{eqnarray}
for almost all $z \in \d_1^+(SM)$. 

\end{itemize}
\end{theorem}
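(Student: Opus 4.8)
The plan is to obtain the four bullets by specializing results already in place; the single genuinely new ingredient is the pointwise identity $\|v^g\|_{g^\bullet}=dF(v^g)$, which collapses $\ell_{g^\bullet}$ onto $\D_F$ and lets Theorem~\ref{F-variation} be read geometrically.

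\textbf{Bullets 1 and 2.} Since $\om_g$ is closed, so is $\Theta_g=\om_g^{n-1}$, and by Lemma~\ref{natural_Theta} its restriction to $SM$ equals $\pm\, v^g\rfloor\Omega_g$, where $\Omega_g$ is the Liouville volume form (which is $v^g$-invariant by Liouville's theorem), and $\Theta_g$ satisfies every property in (\ref{properties_of_Theta}) relative to the reflection $\tau_g$: $\dim\ker(\Theta_g)=1$, $v^g\in\ker(\Theta_g)$, $\pm\Theta_g|_{\d_1^\pm(SM)}\ge 0$, and $\tau_g^\ast(\Theta_g|_{\d(SM)})=\Theta_g|_{\d(SM)}$. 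As $g$ is non-trapping and boundary generic, $v^g$ is traversing and boundary generic, so Theorem~\ref{main_theorem} (equivalently: Lemma~\ref{lem5.2} gives $C_{v^g}$-invariance of $\mu_{\Theta_g}$, and the $\tau_g$-invariance transfers measure-preservation across the reflection) yields that $B_{v^g}=\tau_g\circ C_{v^g}$ preserves $\mu_{\Theta_g}$. Next, $dF\wedge\Theta_g$ is a positive top form on $SM$, since $dF$ is positive on the line $\ker(\Theta_g)=\langle v^g\rangle$; hence Theorem~\ref{th3.2}, applied with $\Theta:=\Theta_g$, produces a $v^g$-harmonizing metric $g^\bullet$ on $SM$ with $\ast_{g^\bullet}(dF)=\Theta_g$ and Calabi's normalization $dF\wedge\Theta_g=vol_{g^\bullet}$. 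Corollary~\ref{foliations} (cf. Theorem~\ref{foliations on SM}, Corollary~\ref{Plateau}) then gives that the leaves of $\mathcal F(v^g)$ are $g^\bullet$-geodesics and each $F^{-1}(c)$ minimizes the $g^\bullet$-induced $(2n-2)$-volume in its class relative to $\d(SM)$.

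\textbf{Bullet 3.} From $\ast_{g^\bullet}(dF)=\Theta_g$ the null line of $\Theta_g$ is spanned by $\nabla_{g^\bullet}F$; since also $v^g\in\ker(\Theta_g)$ and $\dim\ker(\Theta_g)=1$, we get $v^g=\lambda\,\nabla_{g^\bullet}F$ with $\lambda>0$ (the sign from $dF(v^g)>0$). The normalization $dF\wedge\Theta_g=dF\wedge\ast_{g^\bullet}(dF)=\|dF\|_{g^\bullet}^2\,vol_{g^\bullet}$ together with $dF\wedge\Theta_g=vol_{g^\bullet}$ forces $\|dF\|_{g^\bullet}=\|\nabla_{g^\bullet}F\|_{g^\bullet}=1$, whence $dF(v^g)=\lambda\|\nabla_{g^\bullet}F\|_{g^\bullet}^2=\lambda=\|v^g\|_{g^\bullet}$. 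Parametrising $\tilde\g_z$ by the $v^g$-flow time $t$, the $g^\bullet$-length of $[z,C_{v^g}(z)]$ is $\int\|v^g\|_{g^\bullet}\,dt=\int dF(v^g)\,dt=F(C_{v^g}(z))-F(z)$, i.e. $\ell_{g^\bullet}\equiv\D_F$ on $\d_1^+(SM)$. Applying Theorem~\ref{F-variation} with $\Theta:=\om_g^{n-1}$ gives $\mathsf{av}(\D_F)=\frac{\int_{SM}dF\wedge\om_g^{n-1}}{\int_{\d_1^+(SM)}\om_g^{n-1}}$; by Calabi's normalization the numerator equals $vol_{g^\bullet}(SM)$ and the denominator is $vol_{\om_g^{n-1}}(\mathcal T(v^g))$ by Definition~\ref{def2.4}, which is (\ref{AA}).

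\textbf{Bullet 4.} When $B_{v^g}$ is ergodic for $\mu_{\om_g^{n-1}}$, apply Theorem~\ref{ergodic_billiard} (the Birkhoff ergodic theorem in this setting) to $f:=\ell_{g^\bullet}=\D_F$, which lies in $L^1(\d_1^+(SM),\mu_{\om_g^{n-1}})$ since it is bounded by $\mathsf{var}(F|_{\d(SM)})$ (a Lyapunov function of a traversing flow attains its extrema on the boundary) and the total measure is finite; the time averages then converge a.e. to the space average $\mathsf{av}(\ell_{g^\bullet})$, which is (\ref{B}). The only step that is not pure bookkeeping is the identity $\ell_{g^\bullet}=\D_F$: it relies not merely on the orthogonality $v^g\perp_{g^\bullet}\ker(dF)$ but on Calabi's exact normalization $dF\wedge\ast_{g^\bullet}(dF)=vol_{g^\bullet}$ built into the notion of a harmonizing pair, which pins the $g^\bullet$-speed of $v^g$ to $dF(v^g)$ and thereby makes the $g^\bullet$-arc length of a free geodesic segment coincide with the $F$-variation along it.
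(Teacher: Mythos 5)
Your proposal is correct and follows essentially the same route as the paper: Lemma~\ref{natural_Theta} plus Theorem~\ref{main_theorem} for the measure invariance, the harmonizing-pair construction ($\ast_{g^\bullet}(dF)=\om_g^{n-1}$, normalized so that $\|dF\|_{g^\bullet}=1$) giving the key identity $\ell_{g^\bullet}=\D_F$, the Stokes-type average computation for (\ref{AA}), and Birkhoff via Theorem~\ref{ergodic_billiard} for (\ref{B}). The only cosmetic differences are that you cite Theorem~\ref{th3.2} (rather than Theorem~\ref{foliations on SM}) for the existence of $g^\bullet$ and make explicit the derivation of $\|dF\|_{g^\bullet}=1$ and $dF(v^g)=\|v^g\|_{g^\bullet}$, which the paper asserts ``by construction,'' and that you invoke Theorem~\ref{F-variation} instead of redoing its Stokes argument.
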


\begin{proof} Our argument is based essentially on Lemma \ref{natural_Theta}. Thanks to that lemma, the form $\pm\b_g \wedge \om_g^{n-1}|_{SM}$ is the volume form $\Omega_{gg}$ on $SM$, and the form $\Theta_g = v^g \rfloor \Omega_{gg} = \om_g^{n-1}|_{SM}$ on $SM$ has all the desired properties from the list (\ref{properties_of_Theta}). In particular, the form $\Theta_g  = \om_g^{n-1}$ produces a measure $\mu_{\om_g^{n-1}}$ on $\d_1^+(SM)$, and by Theorem \ref{main_theorem}, the billiard map $B_{v^g}: \d_1^+(SM) \to \d_1^+(SM)$ preserves this measure. \smallskip

The second bullet is validated by Theorem \ref{foliations on SM}, which, in particular,  claims that there exists a $v^g$-harmonizing metric $g^\bullet$ on $SM$ such that $\ast_{g^\bullet}(dF) = \om_g^{n-1}$ and $\ker(dF) \perp_{g^\bullet} \ker(\om_g^{n-1})$. Thus $dF \wedge \om_g^{n-1}$ is the volume form $dg^\bullet$, produced by $g^\bullet$. 
In the metric $g^\bullet$, by its construction, we have $\|dF\|^\ast_{g^\bullet} = 1$; hence $dF(v^g) = \| v^g\|_{g^\bullet}$. 
\smallskip




Now we will validate formulas (\ref{AA}) in the third bullet. Since $\|dF\|^\ast_{g^\bullet} = 1$, the variation $\D_F(z) := F(C_{v^g}(z)) - F(z) = \ell_{g^\bullet}(z)$. 
So we get 
$$\int_{\d_1^+(SM)}\; \ell_{g^\bullet} \cdot \om_g^{n-1} = \int_{\d_1^+(SM)}\; \D_F \cdot \om_g^{n-1} = $$ 
$$ = \int_{ \d_1^+(SM)}^{\d_1^-(SM)}\; F \cdot \om_g^{n-1}\; \stackrel{Stokes}{=}\; \int_{SM} dF \wedge \om_g^{n-1}  = vol_{g^\bullet}(SM).$$

Finally, formula (\ref{B}) in the last bullet follows by applying (\ref{eqBIRKHOFF}) to $f=\ell_{g^\bullet}$ and the billiard map.
\end{proof}



\begin{remark}\emph{There is a tension between Theorem \ref{balanced F-variation}, which deals with {\sf taught foliations} $\mathcal G$ on $SM$, and Theorem \ref{F-variation}, which deals with {\sf fillable contact structures} induced by $\b_g$ on $SM$ (see \cite{ET} for the relevant definitions).} \hfill $\diamondsuit$
\end{remark}
\smallskip

Let us revisit the classical case of the Sasaki metric $gg$ on $SM$  (Cf. \cite{S}, \cite{Ch}).

\begin{theorem}\label{ell(gg)} Let $(M, g)$ be a compact connected Riemannian $n$-manifold with boundary, such that the metric $g$ is boundary generic and non-trapping. We denote by $gg$ the restriction of the Sasaki metric on $TM$ to $SM$.
 
Consider the measurable function $\ell_{gg}: \d_1^+(SM) \to \R_+$, defined as the length in $gg$ of the segment $[z, C_{v^g}(z)]$ of the $v^g$-trajectory $\tilde\g_z$ through $z \in \d_1^+(SM)$.\footnote{$\ell_{gg}(z)$ equals the length $\ell_g(z)$ of the free geodesic segment $\pi([z, C_{v^g}(z)])$ in the metric $g$ on $M$, where $\pi: SM \to M$ is the obvious projection.} 

Let $\Theta_g =_{\mathsf{def}}\, \om_g^{n-1}$, where $\om_g$ is the restriction of the symplectic form on $TM$ to $SM$.\smallskip

With these ingredients in place, the following statements hold:
\begin{itemize}
\item The measure $\mu_{\Theta_g}$ on $\d_1^+(SM)$, defined by the form $\Theta_g$ with the help of formula (\ref{eq2.2}), is invariant under the billiard map $B_{v^g}$.
\smallskip

\item The average value of the function $\ell_{gg}$, which is equal to the average length of a free geodesic segment in $M$, can be calculated via the formula
\begin{eqnarray}\label{ell_gg}
\mathsf{av}(\ell_{gg})  =_{\sf{def}} \frac{\int_{\d_1^+(SM)} \ell_{gg} \cdot \om_g^{n-1}}{\int_{\d_1^+(SM)}  \om_g^{n-1}} =
\frac{\int_{SM} \b_{g} \wedge \om_g^{n-1}}{\int_{\d_1^+(SM)} \om^{n-1}_g} \nonumber \\  = \frac{vol_{gg}(SM)}{vol_{\om_g^{n-1}}(\mathcal T(v^g))}  \; =  \frac{vol_{\mathsf E}(S^{n-1})}{vol_{\mathsf E}(B^{n-1})}\cdot \frac{vol_g(M)}{vol_g(\d M)}\nonumber \\  = 2\sqrt{\pi} \cdot \frac{\Gamma(\frac{n+1}{2})}{\Gamma(\frac{n}{2})} \cdot \frac{vol_g(M)}{vol_g(\d M)}.
\end{eqnarray}

\item If the billiard map $B_{v^g}$ is ergodic with respect to the measure $\mu_{\om^{n-1}}$, then $\mathsf{av}(\ell_{gg})$ can be also calculated, for almost all $z \in \d_1^+(SM)$, via the formula   
\begin{eqnarray}\label{B}
\mathsf{av}(\ell_{gg}) \; = \; \lim_{m \to \infty}\; \frac{1}{m}\, \sum_{k=0}^{m-1} \ell_{gg}\big((B_{v^g})^{\circ k}(z)\big).  
\end{eqnarray}

\end{itemize}
\end{theorem}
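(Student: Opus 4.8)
The plan is to apply Theorem \ref{F-variation} and Theorem \ref{balanced F-variation} as a template, with the form $\Theta = \Theta_g := \om_g^{n-1}$, and then identify the various volumes appearing in the formula with the intrinsic quantities $vol_g(M)$ and $vol_g(\d M)$. The first bullet is immediate from Lemma \ref{natural_Theta} (which establishes that $\Theta_g = \om_g^{n-1}|_{SM}$ has all the properties listed in (\ref{properties_of_Theta}) with respect to the reflection involution $\tau_g$) combined with Theorem \ref{main_theorem}: the billiard map $B_{v^g}$ preserves $\mu_{\Theta_g}$. The third bullet follows from the second by applying the Birkhoff-type Theorem \ref{ergodic_billiard} to the measurable function $f = \ell_{gg}$; this is routine. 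So the entire content is concentrated in the chain of equalities (\ref{ell_gg}).

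For the first three equalities of (\ref{ell_gg}), I would argue as follows. The key point is that $\ell_{gg}(z)$, the $gg$-length of $[z, C_{v^g}(z)]$, can be written as an integral of the Liouville form: since the geodesic vector field $v^g$ has unit $gg$-norm (the $v^g$-trajectories, lifted to $SM$, are unit-speed) and $\b_g(v^g) = 1$ (established in the paragraph after (\ref{omega})), integrating $\b_g$ along the trajectory segment from $z$ to $C_{v^g}(z)$ gives exactly the $gg$-arclength $\ell_{gg}(z)$. This plays the role that $\D_F(z)$ played in Theorem \ref{F-variation}, but now with the \emph{non-exact} $1$-form $\b_g$ in place of $dF$. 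Then, exactly as in the Stokes' computation in the proof of Theorem \ref{F-variation}, using that $v^g \in \ker(\om_g^{n-1}|_{SM})$, that $d(\om_g^{n-1}) = 0$, and that $\b_g \wedge \om_g^{n-1} = \pm\,\Omega_{gg}$ is the Sasaki volume form on $SM$, one gets
\begin{eqnarray}
\int_{\d_1^+(SM)} \ell_{gg}\cdot \om_g^{n-1} = \int_{\d_1^+(SM)}^{\d_1^-(SM)} \b_g \wedge \om_g^{n-1} = \int_{SM} d\b_g \wedge \om_g^{n-1} + \int_{SM}\b_g\wedge d(\om_g^{n-1}). \nonumber
\end{eqnarray}
Here care is needed: $\b_g$ is not exact, so the "potential" argument of Theorem \ref{F-variation} does not apply verbatim; instead one integrates the $(2n-2)$-form $\b_g\wedge\om_g^{n-1}$ over the piecewise-differentiable manifold $SM$ whose oriented boundary is $\d_1^+(SM) \cup \d_1^-(SM)$ (with the trajectory-segment part $\delta(SM(v^g,\cdot))$ contributing zero since $v^g$ lies in the kernel of the integrand — exactly as in Lemma \ref{lem2.1}), and uses $d(\b_g\wedge\om_g^{n-1}) = -\,\om_g^n$... which is $\pm$ the volume form, not zero. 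The correct bookkeeping: the integral of $\ell_{gg}\cdot\om_g^{n-1}$ over $\d_1^+(SM)$ equals $\int_{SM}\Omega_{gg} = vol_{gg}(SM)$ directly, because $\ell_{gg}(z)\,\om_g^{n-1}$ at $z$ integrates the volume form $\b_g\wedge\om_g^{n-1} = \Omega_{gg}$ along the fibers of the trajectory flow. Dividing by $\int_{\d_1^+(SM)}\om_g^{n-1} = vol_{\om_g^{n-1}}(\mathcal T(v^g))$ (Definition \ref{def2.4}) yields the first three equalities.

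The last equality — $vol_{gg}(SM)/vol_{\om_g^{n-1}}(\mathcal T(v^g)) = \frac{vol_{\mathsf E}(S^{n-1})}{vol_{\mathsf E}(B^{n-1})}\cdot\frac{vol_g(M)}{vol_g(\d M)}$ — is where the real work lies, and I expect it to be the main obstacle. It requires two local fibre-integration computations. First, $vol_{gg}(SM) = vol_g(M)\cdot vol_{\mathsf E}(S^{n-1})$: since the Sasaki metric splits $T(SM)$ orthogonally into the horizontal bundle (isometric to $TM$ over each point) and the vertical bundle tangent to the unit-sphere fibres, the Sasaki volume form on $SM$ is the product of the Riemannian volume on the base $M$ and the standard volume on the fibre $S^{n-1}_qM \cong S^{n-1}$, so Fubini gives the stated product. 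Second, $vol_{\om_g^{n-1}}(\mathcal T(v^g)) = \int_{\d_1^+(SM)}\om_g^{n-1} = vol_g(\d M)\cdot vol_{\mathsf E}(B^{n-1})$: one restricts the symplectic form $\om_g$ (formula (\ref{omega})) to $\d_1^+(SM)$, which fibres over $\d M$ with fibre the hemisphere of inward-pointing unit vectors; in the adapted boundary coordinates of Lemma \ref{natural_Theta} the form $\om_g^{n-1}|_{\d_1^+(SM)}$ reduces, up to the base volume factor, to the form whose integral over the inward hemisphere equals the Euclidean volume of the unit $(n-1)$-ball $B^{n-1}$ (the projection of the hemisphere to the equatorial disc). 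Combining these two identities gives the final equality. The delicate points are (i) getting the orientation signs and the $\pm$ conventions consistent throughout, and (ii) verifying the fibrewise reduction $\om_g^{n-1}|_{\d_1^+(SM)} \mapsto vol_{\mathsf E}(B^{n-1})$ carefully — this is essentially Santaló's classical integral-geometric computation (\cite{S}, \cite{Ch}), adapted to a general Riemannian metric via the fact that $\om_g$ and the reflection/contact structure on $\d(SM)$ are metric-independent in the leading fibre directions.
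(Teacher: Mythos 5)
Your proposal is correct and follows essentially the same route as the paper: the first and third bullets via Lemma \ref{natural_Theta}, Theorem \ref{main_theorem} and Theorem \ref{ergodic_billiard}, and the chain (\ref{ell_gg}) by writing $\ell_{gg}(z)=\int_z^{C_{v^g}(z)}\b_g$ and applying the fiber-integration (generalized Fubini/projection) formula along the trajectory segments to get $\int_{\d_1^+(SM)}\ell_{gg}\cdot\om_g^{n-1}=vol_{gg}(SM)$, followed by the two Fubini computations $vol_{gg}(SM)=vol_{\mathsf E}(S^{n-1})\cdot vol_g(M)$ and $\int_{\d_1^+(SM)}\om_g^{n-1}=vol_{\mathsf E}(B^{n-1})\cdot vol_g(\d M)$ (the cosine-weighted hemisphere projecting onto the equatorial ball), exactly as in the paper. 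The abandoned Stokes detour is harmless since you replace it with the correct fiber-integration argument, which is the paper's ``Fubini'' step via the map $\Pi:SM\to\d_1^+(SM)$.
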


\begin{proof} 
As in Theorem \ref{balanced F-variation}, the argument is based  on Lemma \ref{natural_Theta}. Thanks to the lemma,  $\pm\b_g \wedge \om_g^{n-1}|_{SM}$ is the Sasaki 
volume form $\Omega_{gg}$ on $SM$, the form $\Theta_g = v^g \rfloor \Omega_{gg} = \om_g^{n-1}|_{SM}$ on $SM$ has all the desired properties from the list (\ref{properties_of_Theta}). In particular, the form $\Theta_g  = \om_g^{n-1}$ produces the measure $\mu_{\om_g^{n-1}}$ on $\d_1^+(SM)$, and, by Theorem \ref{main_theorem}, the billiard map $B_{v^g}: \d_1^+(SM) \to \d_1^+(SM)$ preserves this measure.\smallskip

Let $\nu$ be the inner unitary normal to $\d(SM)$ in $SM$ (with respect to $gg$) vector field. Let $\Omega^\d_{gg} =_{\mathsf{def}}\, \nu \, \rfloor \, \Omega_{gg}$ be the Sasaki volume form on $\d(SM)$.\smallskip

Recall that any $v^g$-trajectory $\tilde\g \subset SM$ is a geodesic curve in the metric $gg$ (\cite{Be}, Proposition 1.106). By the definition of the Sasaki metric $gg$, $\tilde\g$ is horizontal (i.e., tangent to the horizontal distribution $H(TM)$ on $TM$, defined by the $g$-induced connection) and orthogonal to each spherical fiber of the fibration $\pi: SM \to M$. Therefore,  $\pi$ projects $\tilde\g$ onto a geodesic curve $\g \subset M$ in the metric $g$ and the lengths of the two geodesics are equal: $\ell_{gg}(\tilde\g) =  \ell_g(\g)$.

Consider the discontinuous map $\Pi: SM \to \d_1^+(SM)$ that takes any point $x \in SM$ to the maximal point $\Pi(x)$ in the finite set $\tilde\g_x \cap \d_1^+(SM)$, so that $\Pi(x)$  lies below $x$ on the trajectory $\tilde\g_x$ (the order in $\tilde\g_x$ is defined by $v^g$). Away from the zero measure set $\Pi^{-1}\big(\d_2(SM)(v^g)\big)$, the map $\Pi$ is a smooth fibration with the fiber being a closed segment. Thus we may integrate the $1$-form $\b_g$ along the $\Pi$-fibers. This integration leads to the following equations: 
\begin{eqnarray}\label{volSM}
\int_{\d_1^+(SM)}\; \ell_{gg} \cdot \om_g^{n-1} = \int_{ \d_1^+(SM)}\; \Big(\int_{z}^{C_{v^g}(z)} \b_g\Big) \cdot \om_g^{n-1} \nonumber \\ 
\stackrel{Fubini}{=} \int_{SM} \b_g \wedge \om_g^{n-1} = \pm\, vol_{gg}(SM).
\end{eqnarray}

The equality marked ``Fubini" is a special case of the generalized Fubini formula, proven by Dieudonn\'{e} \cite{D}. Under the name of ``projection formula" it can be found in \cite{BT}, Proposition 6.15.\footnote{See \cite{Her} or \cite{S}, page 349, formulas (19.64a) and (19.64b), for its generalization.} 
In our context, the projection formula uses that $v^g \in \ker(\om_g^{n-1})$, $\b_g(v^g) = 1$, and $\Theta|_{\ker(\b_g)} > 0$.
\smallskip

The metric $gg$ induces the Euclidean metric $g_{\mathsf E}$ on the fibers of the fibration $\pi: TM \to M$. Let $\mathcal H_g =_{\mathsf{def}} H(SM)$ denote the horizontal $n$-distribution on $SM$. It is the intersection with $SM$ of the horizontal $n$-distribution $H(TM)$ on $TM$ that is defined by the $g$-induced connection on $M$ (as described in paragraphs that preceded Lemma \ref{natural_Theta}). Recall that $v^g \in \mathcal H_g$.

Let $\rho_{\mathsf E}(S^{n-1})$ denote the $(n-1)$-form on $SM$ that coincides with the Euclidean volume form on the $\pi$-fibers $\{S^{n-1}\}$ and vanishes on the polyvectors which contain vectors from the horizontal distribution $\mathcal H_g$. 

 Therefore, by another instance of the Fubini formula, being applied to the forms $\rho_{\mathsf E}(S^{n-1})$ and $\pi^\ast(dg)$, we get $$vol_{gg}(SM) =  vol_{\mathsf E}(S^{n-1})\cdot vol_g(M).$$ 

The manifold $\d_1^+(SM)$ fibers over the boundary $\d M$ with the fiber being a hemisphere $S^{n - 1}_+ \subset S^{n-1} \subset \mathsf E^{n}$. 


As in the proof of Lemma \ref{natural_Theta}, the restriction of the form $\Theta_g = \om_g^{n-1}$ on the boundary $\d(SM)$, is proportional to the $gg$-induced volume $(2n-2)$-form $\Omega^\d_{gg} = \pm\, \nu \, \rfloor \, (\b_g \wedge \om_g^{n-1})$ on $\d(SM)$ with the coefficient of proportionality $\cos(\angle(v^g, \nu)) = \langle v^g, \nu \rangle_{gg}$. This function $\langle v^g, \nu \rangle_{gg} \geq 0$ exactly on the locus $\d_1^+(SM)$.\smallskip

By definition, $vol_{\Theta_g}(\mathcal T(v^g)) =  \mu_{\Theta_g}(\d_1^+(SM))$. 
Therefore, the $gg$-induced volume of the space of geodesics is given by 
 $$vol_{\Theta_g}(\mathcal T(v^g)) =_{\mathsf{def}} \int_{\d_1^+(SM)} \; \Theta_g = \int_{\d_1^+(SM)} \;\langle v^g, \nu \rangle_{gg} \cdot \Omega^\d_{gg}.$$
 
 Consider the $gg$-orthogonal decomposition $T(SM) \approx H(SM) \oplus V(SM)$ of the tangent bundle into horizontal and vertical distributions on $SM$.
 Then both vectors, 
 $v^g$ and the normal to $\d(SM)$ vector $\nu \in T_{(m, v)}(SM)$ at the point $(m, v) \in \d(SM)$, are horizontal. We can interpret the vector $\nu$ as a inner normal vector to the hemisphere $S_+^{n-1} \subset B^n \subset T_{(m, v)}(SM)$, pointing towards the center of the unit ball $B^n$. Then $\langle v^g, \nu \rangle_{gg} = \langle v, \nu \rangle_g$. 

Put $dg^\d  =_{\mathsf{def}} d g|_{\d M}$. Examining the fibration $\pi: \d_1^+(SM) \to \d M$ with the fiber $S_+^{n-1}$, we  get that the volume form $\Omega^\d_{gg} = \pi^\ast(dg^\d) \wedge d_{\mathsf E}(S_+^{n-1})$. 
Note that $$\langle v, \nu \rangle_g \cdot d_{\mathsf E}(S_+^{n-1}) = d_{\mathsf E}(B^{n-1}),$$ the Euclidean volume form on the equatorial ball $B^{n-1} \subset B^n$. Therefore, 
$$\langle v^g, \nu \rangle_{gg}\cdot \Omega^\d_{gg} = \pi^\ast(dg^\d) \wedge \big(\langle v, \nu \rangle_g \cdot d_{\mathsf E}(S_+^{n-1})\big) = \pi^\ast(dg^\d) \wedge d_{\mathsf E}(B^{n-1}).$$
Let $D^\d(SM)$ denote the space of the unit tangent disk bundle over $\d(SM)$. Again, by the Fubini theorem, applied to the fibration $\pi^\d: D^\d(SM) \to \d M,$ we get
$$vol_{\Theta}(\mathcal T(v^g)) =  \int_{\d M}\Big(\int_{{(\pi^\d)}^{-1}(m)} d_{\mathsf E} \Big)\; d g^\d = vol_{\mathsf E}(B^{n-1})\cdot vol_g(\d M).$$
This proves formula (\ref{eq9.11}).

Thus formula (\ref{ell_gg}) follows from formula (\ref{volSM}): 
\begin{eqnarray}
\mathsf{av}(\ell_{gg}) = \frac{\int_{SM} \b_g \wedge \om_g^{n-1}}{\int_{\d_1^+(SM)} \om^{n-1}_g} = \frac{vol_{gg}(SM)}{\int_{\d_1^+(SM)} \om_g^{n-1}}  \; =  \frac{vol_{\mathsf E}(S^{n-1})}{vol_{\mathsf E}(B^{n-1})}\cdot \frac{vol_g(M)}{vol_g(\d M)}. \nonumber
\end{eqnarray} 

Finally, (\ref{B}) follows from Theorem \ref{ergodic_billiard}.
\hfill 
\end{proof}
\smallskip


\begin{definition} Let  $(M, g)$ be a compact Riemannian manifold with boundary, where $g$ is a non-trapping metric.  We denote by $\g_{x, y}$ a free geodesic arc that connects a pair of points $x, y \in M$ and whose interior belongs to the interior of $M$,\footnote{Due to possible concavity of $\d M$, not any pair $x, y \in M$ can be connected by such a geodesic.} and by $\ell_g(\g_{x, y})$  its length.
Consider the length of the longest free geodesic segment in $M$:
$$\mathsf{gd}(M, g) =_{\sf{def}}\;  \sup_{\{x, y \in M\}} \{\ell_g(\g_{x, y})\} = \sup_{\{x, y \in \d M\}} \{\ell_g(\g_{x, y})\},$$
 and call it the {\sf geodesic diameter} of $M$.
\hfill $\diamondsuit$
\end{definition}

\begin{figure}[ht]\label{figBB}
\centerline{\includegraphics[height=1.2in,width=3in]{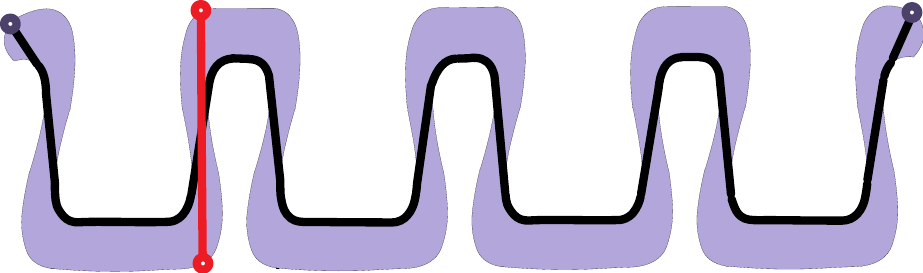}}
\bigskip
\caption{\small{The intrinsic diameter (the length of the wavy black curve) and the geodesic diameter $\mathsf{gd}(M, g)$ (the length of the red segment) of a Riemannian manifold $M \subset \mathbb E^2$ with boundary. The Riemannian metric $g$ on $M$ is Euclidean.}}  
\end{figure}

The relation between the geodesic diameter $\mathsf{gd}(M, g)$ and the ordinary diameter $\mathsf{d}(M, g)$ is subtle due to the boundary effects, as shown in Fig.3. 
\smallskip

If $g$ is trapping, then $\mathsf{gd}(M, g) = +\infty$, while $\mathsf{d}(M, g) < \infty$. At the same time, if $(M, g)$ is geodesically convex and a non-trapping, then $\mathsf{gd}(M, g)= \mathsf{d}(M, g)$.
\smallskip

Using (\ref{ell_gg}), we get the following inequality: 

\begin{corollary}\label{diameter} For any non-trapping metric $g$ on a compact manifold $M$ with boundary, 
\begin{eqnarray}\label{isoM}
vol_g(M)\; \leq \; \frac{vol_{\mathsf E}(B^{n-1})}{vol_{\mathsf E}(S^{n-1})} \cdot \mathsf{gd}(M, g) \cdot vol_{g|}(\d M). \qquad  \qquad   \hfill \diamondsuit 
\end{eqnarray}
\end{corollary}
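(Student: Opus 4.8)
\textbf{Proof proposal for Corollary \ref{diameter}.}

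The plan is to derive inequality (\ref{isoM}) directly from the average-length formula (\ref{ell_gg}) of Theorem \ref{ell(gg)} by bounding the average free-geodesic length pointwise. First I would observe that, by the very definition of the geodesic diameter $\mathsf{gd}(M,g)$ in Definition \ref{geodesic_diameter}, every free geodesic segment $\pi([z, C_{v^g}(z)])$ in $M$ has $g$-length at most $\mathsf{gd}(M,g)$; indeed its endpoints lie in $\d M$ and its interior lies in $\mathsf{int}(M)$, so it is one of the arcs $\g_{x,y}$ over which the supremum is taken. Since $\ell_{gg}(z)$ equals the $g$-length of $\pi([z, C_{v^g}(z)])$ (see the footnote to Theorem \ref{ell(gg)}), we get the pointwise bound $\ell_{gg}(z) \leq \mathsf{gd}(M,g)$ for almost every $z \in \d_1^+(SM)$.

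Next I would integrate this bound against the (nonnegative) measure $\mu_{\om_g^{n-1}}$ on $\d_1^+(SM)$: since $\ell_{gg} \leq \mathsf{gd}(M,g)$ almost everywhere, dividing by the total mass $\int_{\d_1^+(SM)} \om_g^{n-1} = vol_{\om_g^{n-1}}(\mathcal T(v^g))$ gives $\mathsf{av}(\ell_{gg}) \leq \mathsf{gd}(M,g)$. (One should note that $\mathsf{gd}(M,g) < \infty$ precisely because $g$ is non-trapping, so the statement is non-vacuous.) Now I substitute the closed-form expression for $\mathsf{av}(\ell_{gg})$ from (\ref{ell_gg}), namely
$$\mathsf{av}(\ell_{gg}) = \frac{vol_{\mathsf E}(S^{n-1})}{vol_{\mathsf E}(B^{n-1})}\cdot \frac{vol_g(M)}{vol_g(\d M)},$$
to obtain
$$\frac{vol_{\mathsf E}(S^{n-1})}{vol_{\mathsf E}(B^{n-1})}\cdot \frac{vol_g(M)}{vol_g(\d M)} \;\leq\; \mathsf{gd}(M,g).$$
Multiplying both sides by $vol_g(\d M) \cdot \frac{vol_{\mathsf E}(B^{n-1})}{vol_{\mathsf E}(S^{n-1})}$ yields exactly (\ref{isoM}), with the universal constant $c(n) = vol_{\mathsf E}(B^{n-1})/vol_{\mathsf E}(S^{n-1})$.

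One subtlety I would address: Theorem \ref{ell(gg)} as stated assumes $g$ is \emph{boundary generic} in addition to non-trapping, whereas the corollary is claimed for all non-trapping $g$. The main obstacle is therefore the removal of the boundary-genericity hypothesis, which I would handle by a density/continuity argument: the space $\mathcal G^\dagger(M)$ of geodesically boundary generic non-trapping metrics is (conjecturally dense and provably) such that a generic perturbation of any non-trapping $g$ is boundary generic, while both sides of (\ref{isoM}) — the volumes $vol_g(M)$, $vol_{g|}(\d M)$, and the geodesic diameter $\mathsf{gd}(M,g)$ — depend continuously (or at least semicontinuously in the right direction: $\mathsf{gd}$ is lower semicontinuous, which is the favorable side) on $g$ in the $C^\infty$-topology. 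Passing to the limit along boundary generic approximants then gives the inequality for the original $g$. If one wishes to avoid the density question entirely, an alternative is to note that formula (\ref{volSM}) and the Fubini computation in the proof of Theorem \ref{ell(gg)} identifying $\int_{SM}\b_g\wedge\om_g^{n-1} = vol_{\mathsf E}(S^{n-1})\cdot vol_g(M)$ and $\int_{\d_1^+(SM)}\om_g^{n-1} = vol_{\mathsf E}(B^{n-1})\cdot vol_g(\d M)$ only used the non-trapping property to guarantee that $v^g$ is traversing and that $\Pi$ is an almost-everywhere fibration; the boundary-genericity was invoked merely to make $C_{v^g}$ well-behaved on a full-measure set, and the integral identities survive without it. I would check that this weaker input suffices and state (\ref{isoM}) for all non-trapping $g$ accordingly.
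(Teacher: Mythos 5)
Your derivation is exactly the paper's (the paper's proof is the one-line observation that each free geodesic segment has length at most $\mathsf{gd}(M,g)$, so $\mathsf{av}(\ell_{gg}) \leq \mathsf{gd}(M,g)$, combined with formula (\ref{ell_gg})), so the proposal is correct and takes essentially the same approach. Your extra care about the boundary-genericity hypothesis addresses a point the paper silently glosses over, and either of your suggested fixes (perturbation or re-checking that the integral identities need only the traversing property) is a reasonable way to reconcile the corollary's stated generality with the hypotheses of Theorem \ref{ell(gg)}.
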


Corollary \ref{diameter} should be compared with the following classical result of Christopher Croke \cite{Cr}. At the first glance, Theorem \ref{th.CROKE} below looks very similar, but it is based on a quantity $\mathsf {d}(M, g|_M)$ that behaves very differently from $\mathsf{gd}(M, g)$, as illustrated in Fig.3. 
\begin{theorem}{\cite{Cr}} \label{th.CROKE}
Let $M$ be a codimension zero compact smooth submanifold of a closed Riemannian $n$-manifold $(N, g)$. Assume that  the diameter $\mathsf {d}(M, g|_M) < \mathsf{inj\, rad}(N, g),$ the injectivity radius of $(N, g)$. Then
 
$$vol_g(M)\; \leq \; \frac{vol_{\mathsf E}(S^{n})}{2\pi\, vol_{\mathsf E}(S^{n-1})} \cdot \mathsf {d}(M, g|_M) \cdot vol_{g|}(\d M). \qquad  \qquad  \hfill \diamondsuit $$  
\end{theorem}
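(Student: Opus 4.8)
The plan is to \emph{deduce Croke's inequality from the paper's own Theorem \ref{ell(gg)}}, by observing that the injectivity radius hypothesis upgrades the crude chord bound $\ell_{gg}\le \mathsf{gd}(M,g)$ used in Corollary \ref{diameter} to the sharp bound $\ell_{gg}\le \mathsf{d}(M,g|_M)$. The crucial arithmetic observation is that the two constants agree: $vol_{\mathsf E}(B^{n-1})=vol_{\mathsf E}(S^{n})/2\pi$ (both equal $\pi^{(n-1)/2}/\Gamma(\tfrac{n+1}{2})$), so Croke's bound is \emph{literally} inequality (\ref{isoM}) of Corollary \ref{diameter} with the geodesic diameter $\mathsf{gd}(M,g)$ replaced by the ordinary diameter $\mathsf{d}(M,g|_M)$.

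First I would verify that $g|_M$ is non-trapping and that every free geodesic chord is length-minimizing in $N$. Fix an interior point $x$ and a unit direction; for parameters $t<\mathsf{inj\, rad}(N,g)$ the geodesic $\g(t)$ issuing from $x$ is the unique minimizing geodesic of $N$, so $d_N(x,\g(t))=t$. If $\g(t)\in M$ for some $t$ with $\mathsf{d}(M,g|_M)<t<\mathsf{inj\, rad}(N,g)$, then $t=d_N(x,\g(t))\le \mathrm{diam}_N(M)\le \mathsf{d}(M,g|_M)$, a contradiction; hence every geodesic ray leaves $M$ before parameter $\mathsf{d}(M,g|_M)<\mathsf{inj\, rad}(N,g)$. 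Thus $g|_M$ is non-trapping (so $v^g$ is traversing on $SM$ and Theorem \ref{ell(gg)} applies), and each free chord $\pi([z,C_{v^g}(z)])$, having length $<\mathsf{inj\, rad}(N,g)$, is the unique minimizing geodesic of $N$ between its two endpoints $a,b\in\d M$.

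This yields the key estimate $\ell_{gg}(z)\le \mathsf{d}(M,g|_M)$: since the chord equals the minimizing $N$-geodesic from $a$ to $b$, we get $\ell_{gg}(z)=d_N(a,b)\le \mathrm{diam}_N(M)\le \mathsf{d}(M,g|_M)$. Feeding this into the identities already established inside the proof of Theorem \ref{ell(gg)} — namely $vol_{gg}(SM)=\int_{\d_1^+(SM)}\ell_{gg}\cdot\om_g^{n-1}$, together with $vol_{gg}(SM)=vol_{\mathsf E}(S^{n-1})\,vol_g(M)$ and $\int_{\d_1^+(SM)}\om_g^{n-1}=vol_{\mathsf E}(B^{n-1})\,vol_g(\d M)$ — I would estimate
\begin{align*}
vol_{\mathsf E}(S^{n-1})\,vol_g(M)=\int_{\d_1^+(SM)}\ell_{gg}\cdot\om_g^{n-1}\ \le\ \mathsf{d}(M,g|_M)\int_{\d_1^+(SM)}\om_g^{n-1}=\mathsf{d}(M,g|_M)\,vol_{\mathsf E}(B^{n-1})\,vol_g(\d M).
\end{align*}
Dividing by $vol_{\mathsf E}(S^{n-1})$ and substituting $vol_{\mathsf E}(B^{n-1})=vol_{\mathsf E}(S^{n})/2\pi$ gives exactly Croke's inequality.

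The main obstacle is that Theorem \ref{ell(gg)} is stated for \emph{boundary generic} $g$, a hypothesis Croke does not impose. I would remove it either by noting that boundary generic non-trapping metrics are open and dense (Section 1), so the inequality — all of whose terms are continuous in $g$ — passes to the limit over a $C^\infty$-approximating family, or by invoking the Santal\'o disintegration of the Liouville measure $\Omega_{gg}=\pm\b_g\wedge\om_g^{n-1}$ directly, which needs only non-trapping to split $SM$ (off a set of measure zero) into chords. A secondary point requiring care is the boundary-integral identity $\int_{\d_1^+(SM)}\om_g^{n-1}=vol_{\mathsf E}(B^{n-1})\,vol_g(\d M)$, equivalently $\langle v,\nu\rangle_g\,d_{\mathsf E}(S^{n-1}_+)=d_{\mathsf E}(B^{n-1})$ over the inward hemisphere; this is precisely the fibered integration performed at the end of the proof of Theorem \ref{ell(gg)}, so I would simply quote it.
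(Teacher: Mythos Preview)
The paper does not prove this theorem. It is stated, with attribution to Croke \cite{Cr}, purely for comparison with Corollary \ref{diameter}; the $\diamondsuit$ terminates the \emph{statement}, and no proof environment follows. So there is no ``paper's own proof'' to compare against.

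Your proposal is therefore not a reproduction of the paper's argument but an independent derivation of Croke's inequality from the paper's Santal\'o-type machinery (Theorem \ref{ell(gg)}). The argument is essentially sound: the injectivity-radius hypothesis forces every unit-speed geodesic issued from a point of $M$ to exit $M$ before time $\mathsf d(M,g|_M)$, which simultaneously gives non-trapping and the pointwise bound $\ell_{gg}(z)\le \mathsf d(M,g|_M)$; plugging this into the identity $vol_{\mathsf E}(S^{n-1})\,vol_g(M)=\int_{\d_1^+(SM)}\ell_{gg}\cdot\om_g^{n-1}$ and the boundary computation $\int_{\d_1^+(SM)}\om_g^{n-1}=vol_{\mathsf E}(B^{n-1})\,vol_g(\d M)$, together with the dimensional identity $vol_{\mathsf E}(B^{n-1})=vol_{\mathsf E}(S^{n})/2\pi$, yields exactly Croke's bound. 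Your handling of the boundary-generic hypothesis by approximation is reasonable (the paper states this class is open and conjectured dense; alternatively one appeals directly to Santal\'o's formula, which only needs non-trapping). This is a pleasant observation---it shows that, under the injectivity-radius hypothesis, Corollary \ref{diameter} and Croke's theorem are the \emph{same} inequality, since $\mathsf{gd}(M,g)\le \mathsf d(M,g|_M)$ in that regime and the constants coincide---but it is your own contribution, not a reconstruction of anything the paper does.
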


The  conjectures below, which accompany the Definitions \ref{L-volume}-\ref{Swiss}, are quite speculative. 

\begin{definition}\label{L-volume}
Given a compact  Riemannian manifold $(N, g)$, consider the number 
\begin{eqnarray}
\Lambda(N, g) =_{\sf{def}} \sup_{\{(M,\; g|_M)\, \subset \, (N,\; g) \big|\; g|_M \text{ being non-trapping}\}} \, vol_{g|_M}(M)
\end{eqnarray}
and call it the {\sf Lyapunov volume} of $(N, g)$.
Evidently, $\Lambda(N, g) \leq vol_g(N)$. \hfill $\diamondsuit$
\end{definition}

Based on the example of a flat torus and some arguments in \cite{K5}, we formulate 
\begin{conjecture}\label{{conj-L-volume}} For any compact Riemannian manifold $(N, g)$, $\Lambda(N, g) = vol_g(N)$. \hfill $\diamondsuit$
\end{conjecture}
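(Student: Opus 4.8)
The plan is to construct, inside any compact Riemannian $(N,g)$, a nested exhausting family of codimension zero compact submanifolds $\{M_\e\}_{\e > 0}$ each carrying a non-trapping restricted metric $g|_{M_\e}$, and such that $vol_{g|}(M_\e) \to vol_g(N)$ as $\e \to 0$. The guiding example is the one flagged in the statement: on the flat torus $T^n$, removing a small non-convex ball (or a suitable union of two convex balls) from $N$ yields a region on which the induced flat metric has no closed geodesics and no infinitely long geodesics, by the arguments of \cite{K5}, Theorem 2.1 and the accompanying figures. The first step is to localize this construction: cover $N$ by finitely many small geodesic balls $\{B_i\}$ on which $g$ is $C^2$-close to a flat (or constant curvature) model, and in each ball identify a ``scattering obstacle'' $O_i$ --- a small compact region with smooth, appropriately concave boundary --- whose removal destroys any geodesic that would otherwise linger near $B_i$. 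The candidate manifold is then $M_\e := N \setminus \bigcup_i O_i(\e)$, where $O_i(\e)$ shrinks to a point set as $\e \to 0$, so that $vol_{g|}(M_\e) = vol_g(N) - \sum_i vol_g(O_i(\e)) \to vol_g(N)$.

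The substantive step is to verify that $g|_{M_\e}$ is genuinely non-trapping in the sense of Definition \ref{def1.1}, i.e. that $M_\e$ has no closed geodesics and no geodesics of infinite length. Equivalently, by the criterion recalled after Definition \ref{def1.1}, one must exhibit a Lyapunov function $F_\e : SM_\e \to \R$ with $dF_\e(v^{g|}) > 0$. I would build $F_\e$ by the ``well-traveled path'' strategy used repeatedly in the excerpt (e.g. the proof of Theorem \ref{th2.1} and Corollary \ref{hyperbolic}): embed $M_\e$ in a slightly larger closed or open manifold $\hat N$ (one may take $\hat N = N$ itself, or the unit tangent sphere bundle model), observe that every geodesic arc in $M_\e$ extends to an arc in $N$ that either hits an obstacle boundary $\d O_i(\e)$ transversally or escapes a controlled neighborhood, and then set $F_\e$ equal to arc length measured from a transversal entry point, exactly as in the proof of Lemma \ref{how to get well-balanced}. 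The obstacle boundaries $\d O_i(\e)$ must be shaped (concave toward $M_\e$, i.e. ``bulging inward'') precisely so that no trapped orbit can accumulate; this is where the non-convexity in the torus example is essential, and the analogous local requirement is that each $O_i(\e)$ be strictly geodesically convex as a subset of $N$ so that its complement is ``visible'' from a transversal hypersurface.

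The main obstacle I anticipate is \emph{global} trapping that is not destroyed by any collection of small local obstacles: a manifold may possess a closed geodesic (or a geodesic lamination) that cannot be severed by removing a region of arbitrarily small volume without removing the geodesic's entire tube, which could itself have non-negligible volume --- think of a thin closed geodesic band whose every point lies on a closed geodesic. Handling this requires showing that, after an arbitrarily small-volume removal, one can always arrange every geodesic of $M_\e$ to meet $\d M_\e$; one natural route is a transversality/genericity argument --- perturb the positions and shapes of the $O_i(\e)$ within their small support so that the union of geodesics tangent to or missing $\bigcup_i \d O_i(\e)$ has measure zero, invoking that the non-trapping metrics form an open set in $\mathcal G(M_\e)$ (stated after Definition \ref{def1.1}) to get stability of the resulting non-trapping property. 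Assembling the global Lyapunov function from the local arc-length functions via a partition of unity subordinate to $\{B_i\}$, in the manner of Theorem \ref{th2.1}, then completes the argument and yields $\Lambda(N,g) \geq vol_g(N)$; combined with the trivial reverse inequality $\Lambda(N,g) \leq vol_g(N)$ noted in Definition \ref{L-volume}, this gives equality.
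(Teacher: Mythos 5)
You are attempting to prove a statement that the paper itself records only as a conjecture: no proof is given there, the authors merely motivate it by the flat-torus example and arguments from \cite{K5}. So the only question is whether your argument closes the problem, and it does not. The decisive gap is in your handling of the ``global trapping'' obstacle that you yourself flag. Non-trapping in the sense of Definition \ref{def1.1} is an \emph{every-geodesic} condition: $M_\e$ must contain no closed geodesic and no geodesic ray or line of infinite length whatsoever. Your fallback step only arranges (by perturbing the obstacles) that the set of geodesics missing $\bigcup_i \d O_i(\e)$ has \emph{measure zero}; a single surviving closed geodesic, which is a measure-zero object in $SM_\e$, already makes $g|_{M_\e}$ trapping and destroys the existence of any Lyapunov function. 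The paper's Remark 7.1 shows how real this is: removing a small \emph{convex} ball from the flat torus leaves untouched all closed geodesics in rational directions whose parallel family is wider than the ball, so the complement is trapping even though the removed volume is arbitrarily small. For a general $(N,g)$ the situation is worse: if $N$ is foliated by closed geodesics (round or Zoll spheres, or a product $\Sigma\times S^1$, where the vertical circles fill all of $N$ and horizontal closed geodesics fill entire hypersurfaces), a blocking set must meet every leaf of such families simultaneously, and small obstacles supported in finitely many coordinate balls cannot do this; whether a removed set of arbitrarily small volume meeting \emph{every} geodesic ray exists for an arbitrary metric is exactly the open content of the conjecture, and your construction never engages it.

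Two further points would need repair even granting a correct blocking set. First, the openness of the space of non-trapping metrics (quoted after Definition \ref{def1.1}) gives stability of the property on a fixed $M$ once it holds; it cannot be ``invoked'' to produce non-trapping in the first place. Second, the assembly of $F_\e$ from local arc-length functions by a partition of unity is circular as stated: the construction of a global Lyapunov function in Theorem \ref{th2.1} (via Calabi tubes) presupposes that $v^{g}$ is traversing on $SM_\e$, i.e.\ that $g|_{M_\e}$ is already known to be non-trapping, by the equivalence from \cite{K1}, \cite{K5}; local functions with $df_i(v)\ge 0$ do not sum to a global Lyapunov function in the presence of recurrent trajectories (this is precisely the issue behind the Lyapunov genus and Conjecture \ref{conj}). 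Finally, note an internal inconsistency: you require the obstacles to be ``concave toward $M_\e$'' in one sentence and ``strictly geodesically convex as subsets of $N$'' in the next; the torus example of \cite{K5} needs non-convex (or several suitably placed convex) obstacles precisely because a single convex one traps. As it stands, the proposal reproduces the heuristic that motivated the conjecture but does not prove it.
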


\begin{definition}\label{iso(N, g)} For a given closed Riemannian manifold $(N, g)$, consider the following scale-invariant quantity:
$$
\mathsf {A}(N, g) =_{\sf{def}} \sup_{\{(M,\, g|_M)\, \subset \, (N, g)\}}\, \Big\{\frac{vol_{g|_M}(M)}{vol_{g|_{\d M}}(\d M)\cdot \mathsf{gd}(M, g|_M)}\Big\},
$$
where $(M, g|_M)$ runs over all codimension zero smooth compact submanifolds $M \subset N$ such that $g|_M$ is 
non-trapping. (Note that, for a very small convex $M$, the expression in the brackets $\{\sim\}$ 
approaches an universal positive constant.) 
\smallskip

We also introduce the function of the numerical parameter $V \in [0,\, \Lambda(N, g))$ by 
$$
\mathsf {B}(N, g, V) =_{\sf{def}} \inf_{\{(M,\, g|_M) \subset (N, g)|\; vol_{g|_M}(M) = V\}}\; \big\{\mathsf{gd}(M, g|_M) \cdot vol_{g|_{\d M}}(\d M)\big\},
$$
where $(M, g|_M)$ runs over all codimension zero smooth compact submanifolds $M \subset N$ such that $g|_M$ is 
non-trapping, and the volume of $M$ is a fixed number $V$.\smallskip

The function $\mathsf {B}(N, g, V)$ measures how ``tightly" one can isometrically embed  non-trapping domains $(M, g|_M)$ of volume $V$ into a given $(N, g)$.
\hfill $\diamondsuit$
\end{definition}

So, with Definition \ref{iso(N, g)} in place, formula (\ref{isoM}) leads instantly to the following estimate.

\begin{corollary}  Let $(N, g)$ be a closed Riemannian manifold. Then, for $V \in [0, \Lambda(N, g))$,
$$ 
\mathsf {A}(N, g)\, \leq \, \frac{1}{2\sqrt{\pi}} \cdot \frac{\Gamma(\frac{n}{2})}{\Gamma(\frac{n+1}{2})}, \qquad
\mathsf {B}(N, g, V)\, \geq \, 2\sqrt{\pi} \cdot \frac{\Gamma(\frac{n+1}{2})}{\Gamma(\frac{n}{2})} 
\cdot V. \qquad  \qquad  \hfill \diamondsuit
$$
\end{corollary}

With the same mindset, 
we propose the following definition.

\begin{definition}\label{Swiss}
Consider a compact $n$-dimensional Riemannian manifold $(N, g)$ and a finite collection $\mathcal B := \{B_\a\}_\a$ of disjoint closed smooth balls $B_\a$ such that: 

\begin{itemize}
\item each ball $B_\a$ is geodesicaly strictly convex in $N$,  

\item each connected component of the intersection of any geodesic curve $\g$ in $N$  with the complement $M_\mathcal B =_{\sf{def}}  N \setminus \big(\bigcup_\a \mathsf{int}(B_\a)\big)$ is a closed segment or a singleton (so $g|_{M_\mathcal B}$ is non-trapping). 
\end{itemize}

We call the Riemannian manifold $(M_\mathcal B, g|)$ {\sf the geodesic Swiss cheese model of} $(N, g)$.  \smallskip

We call the number $\mathsf{SC}(N, g) =_{\sf{def}}  \sup_{\{M_\mathcal B\}} vol_g(M_\mathcal B)$ the {\sf Swiss cheese volume} of $(N, g)$.

\hfill $\diamondsuit$
\end{definition}

Evidently, $\mathsf{SC}(N, g) \leq \Lambda(N, g)$; we do not anticipate the two quantities to be equal.

\begin{conjecture} Any compact Riemannian manifold $(N, g)$ admits a geodesic Swiss cheese model. \hfill $\diamondsuit$
\end{conjecture}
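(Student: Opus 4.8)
The plan is to reduce this conjecture to a soft dynamical statement about the geodesic flow of $(N,g)$ and then to produce the required balls by a compactness argument refined by hand. First I would reformulate the goal. By Definition~\ref{def1.1} (and the criterion recalled in the Introduction, established in \cite{K5}, that a metric is non-trapping precisely when its geodesic field is traversing), it suffices, for a finite family $\mathcal B=\{B_\a\}$ of pairwise disjoint, geodesically \emph{strictly} convex, smooth balls in $N$, to arrange the \emph{blocking condition}: $(\star)$ for every unit vector $v\in SN$ the forward geodesic ray $\g_v|_{[0,\infty)}$ meets the interior of some $B_\a$. Indeed, since the balls are closed and disjoint and each $\d B_\a$ is strictly convex, no geodesic of $N$ passes from one $\overline{B_\a}$ to another in a single instant, is tangent to some $\d B_\a$ from the inside of $B_\a$, or is contained in a $\d B_\a$; hence along any geodesic $\g$ the set $\g^{-1}(M_{\mathcal B})$ is a union of non-degenerate closed intervals and of rays only. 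Applying $(\star)$ to $v$ and to $-v$ removes the rays, and also kills any closed geodesic of $N$ lying in $M_{\mathcal B}$ (whose forward ray is the whole circle). Thus $(\star)$ forces every component of $\g\cap M_{\mathcal B}$ to be a closed segment, which is exactly the property required in Definition~\ref{Swiss}.

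Second, I would prove the \emph{overlapping} form of $(\star)$ by pure compactness. Let $\rho_\ast>0$ be a lower bound for the convexity radius of $(N,g)$, chosen small enough that every metric ball of radius $\le\rho_\ast$ has strictly convex boundary, and fix $\rho\in(0,\rho_\ast)$. For a ball $B$ put $G_B^+:=\{v\in SN:\ \g_v(t)\in\mathrm{int}(B)\text{ for some }t\ge 0\}$; this is open in $SN$. The family $\{G_{B(q,\rho)}^+:q\in N\}$ covers $SN$, since for any $v$ the point $q:=\g_v(1)$ lies in $\mathrm{int}(B(q,\rho))$, whence $v\in G_{B(q,\rho)}^+$. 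As $SN$ is compact, a finite subcover $\{G_{B_1}^+,\dots,G_{B_k}^+\}$ exists, i.e.\ every forward geodesic ray of $N$ enters $\bigcup_i\mathrm{int}(B_i)$. These $B_i$ are geodesically strictly convex, but in general they overlap, so this step alone does not yet produce a Swiss cheese model.

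Third, I would upgrade to a \emph{disjoint} family by an inductive construction. Start with $\mathcal B_0=\emptyset$. Given a finite pairwise-disjoint family $\mathcal B_j$ of geodesically strictly convex balls, consider the ``trapped set'' $Z_j:=\{v\in SN:\ \g_v|_{[0,\infty)}\cap\bigcup_{B\in\mathcal B_j}\mathrm{int}(B)=\emptyset\}$, which is closed and invariant under the forward geodesic flow. If $Z_j=\emptyset$ we are done. Otherwise $Z_j$ is a union of geodesics lying inside $M_{\mathcal B_j}$, none of them contained in $\bigcup_\a\d B_\a$, so $\pi(Z_j)\cap\mathrm{int}(M_{\mathcal B_j})\ne\emptyset$, where $\pi\colon SN\to N$ is the projection. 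Pick $p$ in this set, a radius $\e_j$ with $B(p,\e_j)\subset\mathrm{int}(M_{\mathcal B_j})$ and $\e_j<\rho_\ast$, and set $\mathcal B_{j+1}:=\mathcal B_j\cup\{B(p,\e_j)\}$: this family is still pairwise disjoint and geodesically strictly convex. Any $v_0\in Z_j$ with $\pi(v_0)=p$ (such $v_0$ exists) lies in the open set $G_{B(p,\e_j)}^+$, so $v_0\notin Z_{j+1}$; therefore $Z_{j+1}=Z_j\setminus G_{B(p,\e_j)}^+\subsetneq Z_j$, shrunk by a non-empty relatively open subset.

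The \textbf{main obstacle} is to prove that this process \emph{terminates}. If it did not, then $Z_\infty:=\bigcap_j Z_j$ would, by compactness of $SN$ and the nesting of the non-empty $Z_j$, be a non-empty closed flow-invariant set generating geodesic rays that dodge all of the (countably many) balls --- a ``thin'' invariant set that finitely many disjoint small convex balls fail to puncture (compare the horizontal closed geodesics of a flat torus sliding between a regular array of small disks). Closing this gap seems to require quantitative input: a lower bound, uniform in $j$, on the relatively open portion of $Z_j$ destroyed at each step (which would follow from an equicontinuity/transversality estimate for the entry-time function $v\mapsto\inf\{t\ge0:\g_v(t)\in\mathrm{int}(B(p,\e_j))\}$), or control of the geometry of the trapped set (its Hausdorff dimension, the number of its minimal subsets, or the semi-algebraic models near tangency loci developed in \cite{K2}, \cite{K4}), or an invariant-measure/recurrence argument in the spirit of \cite{W} ruling out such a residual set. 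An alternative, possibly more robust route would bypass the iteration: fix a small $\delta$ and show that a suitably \emph{generic} (``jittered'') maximal $\delta$-separated family $\{B(q,\delta/3)\}_q$ already satisfies $(\star)$, by arguing that the set of configurations admitting a geodesic ray that threads all the gaps has measure zero. This is the direction I would try first, since it yields the disjoint family in one stroke; but it still rests on a genuinely new estimate on how geodesics of a general compact manifold can be aimed through a prescribed pattern of holes, and I regard that estimate --- equivalently, the finiteness of the ball count --- as the crux of the conjecture.
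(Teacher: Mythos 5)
There is nothing in the paper to compare your argument against: the statement is left as an open conjecture (it accompanies Definition \ref{Swiss}; no proof is offered), so the only question is whether your attempt settles it, and by your own admission it does not. The genuine content of the conjecture is exactly the step you flag as the ``main obstacle'': producing a \emph{finite}, pairwise \emph{disjoint} family of strictly convex balls that every complete geodesic must enter. Everything before that point is soft. The reduction to the blocking condition $(\star)$ is fine (strict convexity within the convexity radius does rule out geodesics tangent from inside or contained in a sphere $\d B_\a$, and disjointness of the closed balls keeps the components of $\g\cap M_{\mathcal B}$ from degenerating badly), and the compactness argument with \emph{overlapping} balls $G^{+}_{B(q,\rho)}$ is immediate. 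But the passage from an overlapping cover to a disjoint blocking family is precisely where the difficulty lives, and your inductive scheme has no mechanism forcing termination: each step removes only a relatively open piece of the compact, flow-invariant trapped set $Z_j$, and a nested sequence of such sets can stay non-empty after every finite (indeed every countable) stage --- the limit can be a minimal set or a large hyperbolic invariant set threading the gaps between all balls chosen so far, and even a countable exhaustion would not yield the finite family the definition demands. The alternative ``jittered maximal $\delta$-separated family'' route is likewise only a plan: the measure-zero claim about configurations admitting an avoiding geodesic is the same missing quantitative estimate in different clothing.

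It is worth stressing that the difficulty is not an artifact of your bookkeeping. The paper's own Remark 7.1 (flat torus minus one convex disk is trapping, while a carefully chosen non-convex hole or a pair of convex holes is not) shows that whether a given pattern of convex holes blocks all geodesics is delicate even in the flat, two-dimensional case; in variable curvature no general tool in the paper (or in your proposal) controls how geodesics can be aimed through a prescribed pattern of disjoint convex obstacles. So your write-up is a reasonable framing of the problem --- reduce to $(\star)$, then ask for a finite disjoint blocking family --- but it should be presented as a reduction plus an identified open estimate, not as a proof; as it stands the conjecture remains open after your argument exactly where it was open before it.
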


For $(N, \mathcal B, g)$ as in Definition \ref{Swiss}, the geodesic vector field $v^g$ on $SM_\mathcal B := \pi^{-1}(M_\mathcal B)$ is traversing and boundary concave. Thus, $v^g$ is boundary generic with respect to the union of the tori $\d(SM_\mathcal B) = S^{n-1} \times \big(\bigcup_\a \d B_\a\big)$ and admits  Lyapunov functions both in $SM_\mathcal B$ and in $S\mathcal B =_{\mathsf{def}} \pi^{-1}(\mathcal B) \approx S^{n-1} \times \mathcal B$. 
By \cite{Si}, \cite{Si1}, \cite{ChM}, and \cite{BFK}, 
the billiard on $M_\mathcal B$ is dispersing and thus ergodic, provided that $g$ is Euclidean.  Therefore, Corollary \ref{cor.FXT} is applicable to such a Swiss cheese billiard. 
\smallskip

Assuming that we know the $g$-induced volume of $\d M$ and the \emph{time record} of the billiard ball hitting the boundary $\d M$ (say, as an infinite sequence of bell rings that broadcast each collision of the billiard ball with the boundary of a curved billiard table $(M, g)$), for an ergodic billiard and a non-trapping $g$, we can ``hear" the volume $vol_g(M)$!

\begin{corollary}\label{cor.FXT} 
For a non-trapping boundary generic $g$ and an ergodic billiard map $B_{v^g}$, knowing 
the limit  $$\mathsf{av}(\ell_{gg}) = \lim_{m \to \infty}\; \frac{1}{m}\, \sum_{k=0}^{m-1} \ell_{gg}\big((B_{v^g})^{\circ k}(z)\big),$$ for almost any $z \in \d_1^+(SM)$, allows to determine the isoperimetric proportion  
$\frac{vol_g(M)}{vol_g(\d M)}$.

The time intervals $\big\{\ell_{gg}\big((B_{v^g})^{\circ k}(z)\big)\big\}_{k \in \Z_+}$ and thus  quantity $\mathsf{av}(\ell_{gg})$ are data, accessible to an observer who is confined to the boundary $\d M$. \hfill $\diamondsuit$
\end{corollary}

\noindent {\bf Question 7.1.} What are other metric quantities of $(M, g)$ that can be recovered from the sequence $\big\{\ell_{gg}\big((B_{v^g})^{\circ k}(z)\big)\big\}_{k \in \Z_+}$ of time intervals (these data may be registered at $\d M$) for ergodic non-trapping billiards? 
\hfill $\diamondsuit$
\smallskip

Combining Theorem \ref{balanced F-variation} with Theorem \ref{ell(gg)} leads to the following claim. 

\begin{corollary}\label{BLA} For a well-balanced Lyapunov function $F: SM \to \R$ and a $v^g$-harmonizing metric $g^\bullet$ on $SM$ such that $\ast_{g^\bullet}(dF) = \om_g^{n-1}$, we get  $$vol_{g^\bullet}(SM) = vol_{gg}(SM) = vol_{\mathsf E}(S^{n-1}) \cdot vol_g(M).$$

Thus, for an ergodic billiard map $B_{v^g}$ and for almost any $z \in \d_1^+(SM)$, the numerical sequence $\big\{\D_F \big((B_{v^g})^{\circ k}(z)\big)\big\}_{k \in \Z_+}$ and $vol_g(\d M)$  allow to reconstruct $vol_g(M)$.
\hfill $\diamondsuit$
\end{corollary}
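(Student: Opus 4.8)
The plan is to obtain \ref{BLA} by welding together the two volume computations already performed in Theorem \ref{balanced F-variation} and in the proof of Theorem \ref{ell(gg)}. Theorem \ref{balanced F-variation} expresses $vol_{g^\bullet}(SM) = \int_{SM} dF \wedge \om_g^{n-1}$ as $\mathsf{av}(\ell_{g^\bullet})\cdot vol_{\om_g^{n-1}}(\mathcal T(v^g))$, while the proof of Theorem \ref{ell(gg)} computes $vol_{gg}(SM) = \int_{SM}\b_g\wedge\om_g^{n-1} = vol_{\mathsf{E}}(S^{n-1})\cdot vol_g(M)$ and $vol_{\om_g^{n-1}}(\mathcal T(v^g)) = vol_{\mathsf{E}}(B^{n-1})\cdot vol_g(\d M)$. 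The only new ingredient needed is the identity $vol_{g^\bullet}(SM) = vol_{gg}(SM)$, and I would get it from the stronger pointwise equality of $(2n-1)$-forms on $SM$:
\[
dF \wedge \om_g^{n-1} \;=\; \b_g \wedge \om_g^{n-1}.
\]

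To prove this identity, recall from Lemma \ref{natural_Theta} that $\om_g^{n-1}$ is nowhere zero on the $(2n-1)$-dimensional manifold $SM$ and that $\ker(\om_g^{n-1})$ is exactly the line spanned by $v^g$. Hence, for any two $1$-forms $\g,\g'$ with $\g(v^g) = \g'(v^g)$, the difference $\g-\g'$ annihilates $v^g$, and a one-line linear-algebra check (choose a basis whose first vector is $v^g$, in which $\om_g^{n-1}$ involves none of the remaining coordinate $1$-forms) gives $(\g-\g')\wedge\om_g^{n-1} = 0$; thus $\g\wedge\om_g^{n-1}$ depends only on the scalar $\g(v^g)$. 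Since $F$ is well-balanced we have $dF(v^g) = 1$, and by (\ref{beta}) and (\ref{H}) we have $\b_g(v^g) = 1$, so the two products coincide. Integrating and invoking the interpretations of the two sides — $dF\wedge\om_g^{n-1}$ is the $g^\bullet$-volume form by property (5) of Definition \ref{harmonizing pair} applied to $(g^\bullet, dF)$ with $\ast_{g^\bullet}(dF) = \om_g^{n-1}$, while $\pm\,\b_g\wedge\om_g^{n-1}$ is the Sasaki volume form $\Omega_{gg}$ by Lemma \ref{natural_Theta} — yields the displayed chain $vol_{g^\bullet}(SM) = vol_{gg}(SM) = vol_{\mathsf{E}}(S^{n-1})\cdot vol_g(M)$.

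For the reconstruction statement I would first note that $\D_F = \ell_{g^\bullet}$: since $v^g \perp_{g^\bullet}\ker(dF)$ and $dF(v^g)=1$ force $\|v^g\|_{g^\bullet} = 1$ (as in the proof of Theorem \ref{balanced F-variation}), the $v^g$-flow parametrizes each segment $[z, C_{v^g}(z)]$ by $g^\bullet$-arclength, so $\D_F(z) = \int_z^{C_{v^g}(z)} dF(v^g) = \ell_{g^\bullet}(z)$. Assuming $B_{v^g}$ is ergodic for $\mu_{\om_g^{n-1}}$, the second bullet of Theorem \ref{balanced F-variation} (equivalently Theorem \ref{F-variation} with $f = \D_F$) gives, for almost every $z$, that $\lim_{m\to\infty}\frac1m\sum_{k=0}^{m-1}\D_F\big((B_{v^g})^{\circ k}(z)\big)$ equals $\mathsf{av}(\D_F) = vol_{g^\bullet}(SM)\big/ vol_{\om_g^{n-1}}(\mathcal T(v^g))$. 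Combining this with $vol_{\om_g^{n-1}}(\mathcal T(v^g)) = vol_{\mathsf{E}}(B^{n-1})\cdot vol_g(\d M)$ and $vol_{g^\bullet}(SM) = vol_{\mathsf{E}}(S^{n-1})\cdot vol_g(M)$ and solving for $vol_g(M)$ gives
\[
vol_g(M) \;=\; \frac{vol_{\mathsf{E}}(B^{n-1})}{vol_{\mathsf{E}}(S^{n-1})}\cdot vol_g(\d M)\cdot \lim_{m\to\infty}\frac1m\sum_{k=0}^{m-1}\D_F\big((B_{v^g})^{\circ k}(z)\big),
\]
so the sequence $\{\D_F((B_{v^g})^{\circ k}(z))\}_{k\in\Z_+}$ together with $vol_g(\d M)$ and the universal constant $vol_{\mathsf{E}}(B^{n-1})/vol_{\mathsf{E}}(S^{n-1})$ determines $vol_g(M)$.

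I do not anticipate a real obstacle, since the corollary is a synthesis of results already established; the only points that need a little care are (i) making precise the elementary fact that $\g\mapsto \g\wedge\om_g^{n-1}$ factors through $\g\mapsto \g(v^g)$, which rests squarely on $\dim\ker(\om_g^{n-1}) = 1$ from Lemma \ref{natural_Theta}, and (ii) reconciling the orientation conventions so that $dF\wedge\om_g^{n-1}$ and $\pm\,\b_g\wedge\om_g^{n-1}$ are compared with matching signs — harmless, as only absolute volumes enter.
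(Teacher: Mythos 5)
Your proposal is correct, and its overall architecture is the same synthesis the paper intends (the paper's ``proof'' is literally the phrase ``Combining Theorem \ref{balanced F-variation} with Theorem \ref{ell(gg)}''). Where you genuinely diverge is in how you justify the key equality $vol_{g^\bullet}(SM) = vol_{gg}(SM)$: the paper's implicit route (made explicit in the proof of Corollary \ref{D}) is to observe that for a well-balanced $F$ one has $\ell_{g^\bullet} = \D_F = \ell_{gg}$ on $\d_1^+(SM)$, so the averages in (\ref{AA}) and (\ref{ell_gg}) coincide and, having the same denominator $vol_{\om_g^{n-1}}(\mathcal T(v^g))$, force the equality of the numerators. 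You instead prove the pointwise identity of top forms $dF \wedge \om_g^{n-1} = \b_g \wedge \om_g^{n-1}$ on $SM$, using that $v^g \rfloor \om_g^{n-1}|_{SM} = 0$ (Lemma \ref{natural_Theta}) and $dF(v^g) = \b_g(v^g) = 1$, and then integrate; this is slightly stronger (the two volume forms agree pointwise, not merely their totals), avoids any comparison of boundary integrals, and leans only on property (5) of Definition \ref{harmonizing pair} and the Fubini computation $vol_{gg}(SM) = vol_{\mathsf E}(S^{n-1})\cdot vol_g(M)$ from the proof of Theorem \ref{ell(gg)}. Your linear-algebra step is sound (with $e_0 = v^g$ and $e_0 \rfloor \Theta = 0$, the form $\Theta$ is a multiple of the basis $(2n-2)$-form omitting the covector dual to $e_0$, so any $1$-form annihilating $v^g$ wedges it to zero), though your phrasing ``involves none of the remaining coordinate $1$-forms'' should read that $\om_g^{n-1}$ involves \emph{only} the coordinates complementary to $v^g$. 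The ergodic reconstruction part is handled exactly as in the paper, via $\D_F = \ell_{g^\bullet}$, Birkhoff's theorem (Theorem \ref{ergodic_billiard}), and $vol_{\om_g^{n-1}}(\mathcal T(v^g)) = vol_{\mathsf E}(B^{n-1})\cdot vol_g(\d M)$, so no gap there.
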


\begin{theorem}\label{A(g_bullet)} Let $(M, g)$ be a compact connected Riemannian $n$-manifold with boundary, such that the metric $g$ is boundary generic and non-trapping. Let $F: SM \to \R$ be a smooth Lyapunov function for $v^g$, and $var(F)$ its variation on $SM$. We denote by $g^\bullet$ a $v^g$-harmonizing metric on $SM$ such that $\ast_{g^\bullet}(dF) = \om_g^{n-1}$, where $\om_g$ is the restriction of the symplectic form on $TM$ to $SM$. 
 
For each $t \in \R$, consider the minimal hypersurface $F^{-1}(t) \subset SM$ and its $g^\bullet$-induced $(2n-2)$-volume $A_{g^\bullet}(t)$. This construction gives rise to a well-defined measurable function $A_{g^\bullet}: F(SM) \to \R_+$ on the segment $F(SM) \subset \R$. \smallskip

With these ingredients in place, the following statements hold:
\begin{itemize}
\item
\begin{eqnarray}\label{isoF} 
\quad A_{g^\bullet}(t) =  \int_{F^{-1}(t)} \om_g^{n-1} = \Big | \int_{F^{-1}(t)\, \cap \, \d(SM)} \b_g \wedge \om_g^{n-2}\Big |\, \leq \, vol_{\{\om_g^{n-1}\}}\big(\mathcal T(v^g)\big).
\end{eqnarray}

\item The average of the $g^\bullet$-induced volumes $A_{g^\bullet}$ of the $F$-constant lever sections can be calculated via the formula
\begin{eqnarray}\label{A}
\mathsf{av}(A_{g^\bullet}) =_{\mathsf{def}}\;  \frac{\int_{F(SM)}\big(\int_{F^{-1}(t)} \om_g^{n-1}\big)\, dt}{var(F)}  \nonumber \\
= \frac{\int_{SM} dF \wedge \om_g^{n-1}}{var(F)}   = \frac{vol_{g^\bullet}(SM)}{var(F^\d)}.  
\end{eqnarray}
\end{itemize}
\end{theorem}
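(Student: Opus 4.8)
The plan is to verify formula (\ref{isoF}) first, then deduce the averaging formula (\ref{A}) by a Fubini-type integration against $dt$.

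For the first bullet, observe that by the construction of the $v^g$-harmonizing metric $g^\bullet$ (Theorem \ref{foliations on SM}, or Theorem \ref{th3.2}) we have $\ast_{g^\bullet}(dF) = \om_g^{n-1}$ and $\ker(dF) \perp_{g^\bullet} \ker(\om_g^{n-1})$; in particular, on the leaf $F^{-1}(t)$ the form $\om_g^{n-1}$ restricts to the $g^\bullet$-induced volume form (this is exactly the Calabi normalization $\a \wedge \ast_g \a = vol_g$ applied fiberwise to the taut slice, cf.\ the proof of Corollary \ref{Plateau}). Hence $A_{g^\bullet}(t) = vol_{g^\bullet|}(F^{-1}(t)) = \int_{F^{-1}(t)} \om_g^{n-1}$, giving the first equality. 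For the second equality I would apply Stokes' theorem: since $\om_g^{n-1}$ is closed and $\b_g \wedge \om_g^{n-2}$ is a primitive of $\om_g^{n-1}$ on an appropriate chain (using $\om_g = d\b_g$, so $\om_g^{n-1} = d(\b_g \wedge \om_g^{n-2})$ up to sign), the integral of $\om_g^{n-1}$ over the $(2n-2)$-cycle-with-boundary $F^{-1}(t)$ reduces to the integral of $\b_g \wedge \om_g^{n-2}$ over $F^{-1}(t) \cap \d(SM)$, up to sign, which is why absolute values appear. For the inequality, I would note that $F^{-1}(t)$, being a proper hypersurface transversal to $v^g$, projects injectively (away from a measure-zero set) onto the trajectory space $\mathcal T(v^g)$ via $\Gamma$; the form $\om_g^{n-1}$ being $v^g$-horizontal and $v^g$-invariant descends to the measure on $\mathcal T(v^g)$, and since all the $\om_g^{n-1}$-mass concentrated on $F^{-1}(t)$ is at most the total mass $vol_{\{\om_g^{n-1}\}}(\mathcal T(v^g)) = \int_{\d_1^+(SM)} \om_g^{n-1}$, the bound follows. (Alternatively, move the slice to $\d_1^+(SM)$ along the flow and use $C_{v^g}$-invariance of the measure.)

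For the second bullet, I would integrate the identity from the first bullet over $t \in F(SM)$ with respect to $dt$. By the coarea formula applied to $F: SM \to \R$ — or, more directly here, by the observation that $dF \wedge \om_g^{n-1}$ is precisely the volume form $dg^\bullet$ on $SM$ and that slicing by level sets of $F$ and integrating gives $\int_{SM} dF \wedge \om_g^{n-1} = \int_{F(SM)} \big(\int_{F^{-1}(t)} \om_g^{n-1}\big)\, dt$ — we obtain $\int_{F(SM)} A_{g^\bullet}(t)\, dt = vol_{g^\bullet}(SM)$. Dividing by $var(F)$, the length of the segment $F(SM)$, yields the middle and right-hand expressions of (\ref{A}); the first equality is then just the definition of the average. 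Finally, since $F$ is a Lyapunov function for the traversing field $v^g$, it attains its extrema on the boundary $\d(SM)$, so $var(F) = var(F^\d)$, which justifies writing $var(F^\d)$ in the last denominator.

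The main obstacle I anticipate is getting the Stokes/primitive step in the first bullet completely rigorous: $F^{-1}(t)$ is a hypersurface with boundary on $\d(SM)$, and for singular values $t$ of $F|_{\d(SM)}$ the leaf can be singular, so one must either restrict to regular values (a full-measure set, which suffices for the averaging integral) or invoke the now-standard device of viewing $\mathcal G(F)$ as the trace of a nonsingular foliation $\mathcal G(\hat F)$ on an ambient $\widehat{SM}$, exactly as in the proofs of Corollary \ref{Plateau} and Theorem \ref{th2.2}. A secondary, milder point is confirming the sign bookkeeping between $\om_g^{n-1} = \pm d(\b_g \wedge \om_g^{n-2})$ and the induced orientations of $F^{-1}(t)$ and $F^{-1}(t) \cap \d(SM)$, which accounts for the absolute value signs in (\ref{isoF}) and is routine once the orientation conventions from Lemma \ref{natural_Theta} are fixed.
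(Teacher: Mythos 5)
Your proposal is correct and follows essentially the same route as the paper's own proof: the identification $dg^\bullet|_{F^{-1}(t)} = \om_g^{n-1}$, Stokes' theorem with the primitive $\b_g \wedge \om_g^{n-2}$, the injection of the slice $F^{-1}(t)$ into the trajectory space $\mathcal T(v^g)$ for the inequality, and the Fubini/coarea identity $\int_{F(SM)}\bigl(\int_{F^{-1}(t)}\om_g^{n-1}\bigr)dt = \int_{SM} dF\wedge\om_g^{n-1}$ together with $var(F)=var(F^\d)$ for the averaging formula. Your cautionary remarks about singular values of $F|_{\d(SM)}$ and sign conventions correspond to the paper's ``almost a fibration'' device and its use of $\pm$ and absolute values, so no genuine gap remains.
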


\begin{proof} For a $v^g$-harmonizing metric $g^\bullet$ that satisfies the hypotheses of the theorem, \hfill \break $dg^\bullet|_{F^{-1}(t)} = \om_g^{n-1}$. Thus $A_{g^\bullet}(t) = \int_{F^{-1}(t)} \om_g^{n-1}$. Since $\om_g^{n-1} = \pm d(\b_g \wedge \om_g^{n-2})$, by Stokes' theorem, 
 $$\int_{F^{-1}(t)\, \cap \, \d(SM)} \b_g \wedge \om_g^{n-2} = \int_{F^{-1}(t)} \om_g^{n-1}.$$ 

Using that $F(v^g) > 0$, we conclude that $F(SM) = F(\d(SM))$. Consider the measurable set $\mathcal X(t)$ of $v^g$-trajectories that have a nonempty intersection with the compact locus $F^{-1}(t)$, where $t \in F(\d(SM))$. Since  $\mathcal X(t) \subset \mathcal T(v^g)$, its $\om_g^{n-1}$-induced measure does not exceed the measure of $\mathcal T(v^g)$. Therefore we get the isoperimetric inequality (\ref{isoF}) for all $t \in F(\d(SM))$ and constant level hypersurfaces $F^{-1}(t)$:  $$\int_{F^{-1}(t)\, \cap \, \d(SM)} \b_g \wedge \om_g^{n-2} = \int_{F^{-1}(t)} \om_g^{n-1} \leq \int_{\d_1^+(SM)} \om_g^{n-1} =_{\mathsf{def}} vol_{\{\om_g^{n-1}\}}\big(\mathcal T(v^g)\big).$$ This validates the claim in the first bullet.\smallskip

Since $F$ has no critical values and the critical values of $F^\d$ have zero measure, we may treat $F: SM \to \R$ as ``almost a fibration", whose fibers are compact $(2n-2)$-manifolds. So the integration over the $F$-fibers is well-defined. Therefore the Fubini formula holds: $\int_{F(SM)}\big(\int_{F^{-1}(t)} \om_g^{n-1}\big)\, dt =  \int_{SM} dF \wedge \om_g^{n-1}$, which proves  (\ref{A}). 
\end{proof}

\begin{corollary}\label{D} 
Under the hypotheses and notations of Theorem \ref{A(g_bullet)}, we get 
\begin{itemize}
\item
 $$vol_{g^\bullet}(SM) \leq  var(F^\d) \cdot \int_{\d_1^+(SM)} \om_g^{n-1} = var(F^\d) \cdot vol_{\{\om_g^{n-1}\}}\big(\mathcal T(v^g)\big).$$
\item The \emph{proportion} 
\begin{eqnarray}\label{C}
\frac{\mathsf{av}(A_{g^\bullet})}{\mathsf{av}(\ell_{g^\bullet})}\; = \; \frac{vol_{\om_g^{n-1}}\big(\mathcal T(v^g)\big)}{var(F^\d)},
\end{eqnarray}
and thus does not depend on the choice of the $(v^g, dF)$-harmonizing metric $g^\bullet$ that satisfies the hypotheses of Theorem \ref{A(g_bullet)}.  
In fact, the proportion in (\ref{C}) depends only on the data that are confined to the boundary $\d(SM)$. \smallskip

\item If, in addition, $F$ is well-balanced, then a $g^\bullet$-independent inequality is valid:  $$ vol_g(M)\; \leq \;  \frac{1}{vol_{\mathsf E}(S^{n-1})} \cdot var(F^\d) \cdot vol_{\{\om_g^{n-1}\}}\big(\mathcal T(v^g)\big).$$
\end{itemize}
\end{corollary}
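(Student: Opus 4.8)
The plan is to obtain all three bullets as purely bookkeeping consequences of Theorem \ref{A(g_bullet)}, Theorem \ref{balanced F-variation}, and Corollary \ref{BLA}; no fresh geometric input is needed, since every volume identity involved has already been established. Throughout I assume, as in the hypotheses, that $g$ is boundary generic and non-trapping (so $v^g$ is traversing and boundary generic on $SM$ and Theorem \ref{A(g_bullet)} applies), that $F$ is a Lyapunov function for $v^g$, and that $g^\bullet$ is the $v^g$-harmonizing metric on $SM$ with $\ast_{g^\bullet}(dF) = \om_g^{n-1}$ whose existence is guaranteed by Theorem \ref{foliations on SM}.

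For the first bullet, I would start from the pointwise estimate in (\ref{isoF}), which gives $A_{g^\bullet}(t) \le vol_{\{\om_g^{n-1}\}}(\mathcal T(v^g))$ for every $t \in F(SM)$, and from the averaging identity (\ref{A}), which gives $\mathsf{av}(A_{g^\bullet}) = vol_{g^\bullet}(SM)/var(F^\d)$. Since the average of a function bounded above by a constant is bounded above by the same constant, we get $vol_{g^\bullet}(SM)/var(F^\d) \le vol_{\{\om_g^{n-1}\}}(\mathcal T(v^g))$; multiplying through by $var(F^\d)$ and recalling that $vol_{\{\om_g^{n-1}\}}(\mathcal T(v^g)) = \int_{\d_1^+(SM)} \om_g^{n-1}$ by definition yields the inequality of the first bullet.

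For the second bullet, I would divide the two averaging formulas. Theorem \ref{A(g_bullet)}, formula (\ref{A}), gives $\mathsf{av}(A_{g^\bullet}) = vol_{g^\bullet}(SM)/var(F^\d)$, while Theorem \ref{balanced F-variation}, formula (\ref{AA}), gives $\mathsf{av}(\ell_{g^\bullet}) = vol_{g^\bullet}(SM)/vol_{\om_g^{n-1}}(\mathcal T(v^g))$. The common factor $vol_{g^\bullet}(SM)$ cancels, leaving exactly the proportion (\ref{C}), which therefore does not depend on the particular harmonizing $g^\bullet$ chosen. For the boundary-confinement assertion I would note that $var(F^\d)$ is by definition the variation of the restriction $F|_{\d(SM)}$, and that $vol_{\om_g^{n-1}}(\mathcal T(v^g)) = \int_{\d_1^+(SM)} \om_g^{n-1}$ is the integral over a piece of $\d(SM)$ of the restriction of $\om_g^{n-1}$; both quantities are thus determined by data on $\d(SM)$ alone.

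For the third bullet, I would invoke Corollary \ref{BLA}: when $F$ is additionally well-balanced — which can always be arranged by Theorem \ref{th2.1} — and $g^\bullet$ satisfies $\ast_{g^\bullet}(dF) = \om_g^{n-1}$, one has $vol_{g^\bullet}(SM) = vol_{\mathsf E}(S^{n-1}) \cdot vol_g(M)$. Substituting this into the inequality of the first bullet and dividing by $vol_{\mathsf E}(S^{n-1})$ gives the desired bound on $vol_g(M)$. The only point that requires a little care — and it is the closest thing to an obstacle here — is making sure all the hypotheses of the quoted results hold simultaneously for a single choice of $(F, g^\bullet)$; once that is checked, the three bullets are immediate.
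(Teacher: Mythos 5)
Your proposal is correct and follows essentially the same route as the paper: the first bullet from the pointwise bound (\ref{isoF}) averaged via (\ref{A}), the second by dividing (\ref{A}) by (\ref{AA}), and the third by combining the first bullet with $vol_{g^\bullet}(SM)=vol_{\mathsf E}(S^{n-1})\cdot vol_g(M)$, which you quote as Corollary \ref{BLA} and the paper re-derives from $\mathsf{av}(\ell_{gg})=\mathsf{av}(\ell_{g^\bullet})$ and (\ref{ell_gg}). Your write-up merely makes the paper's terse argument explicit, including the needed compatibility of the choices of $F$ and $g^\bullet$.
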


\begin{proof} The claim in the first bullet of Corollary \ref{D} follows from (\ref{isoF}).
Formula (\ref{C}) is implied by combining (\ref{A}) with (\ref{AA}).
For a well-balanced $F$ and the $(v^g, dF)$-harmonizing $g^\bullet$, we have $\mathsf{av}(\ell_{gg}) = \mathsf{av}(\ell_{g^\bullet})$. Therefore, the inequality in the third bullet follows from the inequality in the first bullet and formula (\ref{ell_gg}).
\end{proof}

\noindent{\bf Remark 8.1} Contemplating about the difference $vol_{\{\om_g^{n-1}\}}\big(\mathcal T(v^g)\big) - \max_{t \in F(SM)} \{A_{g^\bullet}(t)\}$ between the volume of the trajectory space and the maximum of the volumes of the $F$-slices, we may view it as  measuring the complexity of the geodesic flow. More accurately, the difference measures  ``how slanted on average" is the geodesic flow $v^g$ with respect to $\d_1^+(SM)$. \hfill $\diamondsuit$ 

\end{document}